\documentclass[11pt,reqno]{amsart}
\usepackage{amsmath}
\usepackage{amsfonts}
\usepackage{amssymb, amsthm, epsfig}
\usepackage{mathrsfs}
\usepackage{hyperref}
 \usepackage{yhmath}
\usepackage{cite}
\usepackage[utf8]{inputenc}
\usepackage[T1]{fontenc}
\usepackage{geometry}\geometry{a4paper}
\usepackage{graphicx}
\usepackage{verbatim}
\usepackage{color}

% ------------------------------------------------------------------------
% Colors

% ------------------------------------------------------------------------
%% Text body
%\textwidth=15.0cm \textheight=23cm \hoffset=0cm \voffset=0cm
%\renewcommand{\baselinestretch}{1.2}
% page style
\setlength{\oddsidemargin}{0cm} \setlength{\evensidemargin}{0cm}
\setlength{\topmargin}{-0.5cm} \setlength{\footskip}{1cm}
\setlength{\textheight}{24cm} \setlength{\textwidth}{16.2cm}

% List environment--------------------------------------------------------

% ------------------------------------------------------------------------
% Theorems
%\newtheorem{definition}{Definition}[section]
%\newtheorem{lemma}{Lemma}[section]
%\newtheorem{theorem}{Theorem}[section]
%\newtheorem{proposition}{Proposition}[section]
%\newtheorem{corollary}{Corollary}[section]
%\theoremstyle{remark}
%\newtheorem{rem}{Remark}[section]

\theoremstyle{plain}
\newtheorem{thm}{Theorem}[section]

\newtheorem{lem}[thm]{Lemma}
\newtheorem{prop}[thm]{Proposition}

\theoremstyle{definition}
\newtheorem{defn}{Definition}[section]
\theoremstyle{remark}
\newtheorem{rem}{Remark}[section]

\numberwithin{equation}{subsection}%{section}

\allowdisplaybreaks

%\usepackage{lineno}% show line number for draft. disable for final version
%\linenumbers
% MATH -----------------------------------------------------------

%\newcommand{\defs}{\stackrel{\triangle}{=}}

% New Functions ------------------------------------------------------------

% Special Math Denotations ----------------------------------------------------

\DeclareSymbolFont{lettersA}{U}{pxmia}{m}{it}
\DeclareMathSymbol{\piup}{\mathord}{lettersA}{"19}

% Rome Numbers --------------------------------------------------------------
\makeatletter

\newcommand{\Rmnum}[1]{\expandafter\@slowromancap\romannumeral#1@}
\makeatother
%%%---------------------------------------------------------------------------------------------------------

% Denotations ----------------------------------------------------

%\newcommand{\eps}{\epsilon}
%\newcommand{\BX}{\mathbf{B}(X)}

%\newcommand{\ci}{\mathcal{c}}

%\newcommand{\ml}{\mathfrak{L}}

% ----------------------------------------------------------------

\begin{document}
\title[2D steady supersonic ramp flows of BZT fluids]
{Two-dimensional steady supersonic ramp flows of Bethe-Zel'dovich-Thompson fluids}

\author{Geng Lai}\address{Department of Mathematics, Shanghai University,
Shanghai, 200444, P.R. China;  Newtouch Center for Mathematics, Shanghai University, Shanghai, 20044, P.R. China.}
\email{\tt laigeng@shu.edu.cn}

\keywords{BZT fluid; shock-fan-shock composite wave; fan-shock-fan composite; sonic shock; hodograph transformation; characteristic decomposition}
%\subjclass[]{35L65, 35L60, 35L67.}
\date{\today}

\begin{abstract}
Two-dimensional steady supersonic ramp flows are important and well-studied flow patterns in aerodynamics.
 Vimercati, Kluwick $\&$ Guardone [J. Fluid Mech., 885 (2018) 445--468] constructed various self-similar composite wave solutions to the supersonic flow of Bethe-Zel'dovich-Thompson (BZT) fluids past compressible and rarefactive ramps.
We study the stabilities of the self-similar fan-shock-fan and shock-fan-shock composite waves constructed by  Vimercati  et al. in that paper. %In order to study the stabilities of the composite waves, we solve some classes of shock free boundary problems.
In contrast to ideal gases,
the flow downstream  (or upstream) of a shock of a BZT fluid may possibly be sonic in the sense of the flow velocity relative to the shock
front.
In order to study the stabilities of the composite waves,
we establish some a priori estimates about the type of the shocks and solve some classes of sonic shock free boundary problems.
We find that
the sonic shocks are envelopes of one out of the two families of wave characteristics, and not characteristics. This results in a fact that the flow downstream (or upstream) a sonic shock is not $C^1$ smooth up to the shock boundary.
We use a characteristic decomposition method and a hodograph transformation method
 to overcome the difficulty cased by the singularity on sonic shocks,
and derive several groups of structural conditions to establish the existence of curved sonic shocks. %Several non-selfsimilar composite wave solutions past compressive and rarefactive ramps are constructed.
\end{abstract}

\maketitle
\tableofcontents

\section{Introduction}

Two-dimensional (2D) steady supersonic flows past compressible and rarefactive ramps
are fundamental in gas dynamics, as they provide global or local structures in various flow fields.
%Courant and Friedrichs (Section
%111).
Referring to Figure \ref{Figure1},
there is an infinite long wedge with a horizontal wall which is straight up to a sharp
corner O and a straight ramp. A supersonic flow arrives with a constant state along
the horizontal wall and turns at O into a new direction.
If the incoming flow is a polytropic ideal gas, the well-known oblique shock and centered simple wave may occur at a compressive and rarefactive ramp, respective.
%\begin{itemize}
 % \item If the slope angle is less than $0$, then the turn is effected by a simple wave, and the simple wave is swept by a set of
%straight characteristics issued from the center O.
 % \item If the slope angle is greater than $0$ and less than an extreme angle, then the turn is effected by an attached oblique shock issued from the corner O. Moreover, there are two possible attached shocks, a weak or a strong one, with small or large change in velocity, respectively.

%\end{itemize}

%=============
\begin{figure}[htbp]
\begin{center}
\includegraphics[scale=0.4]{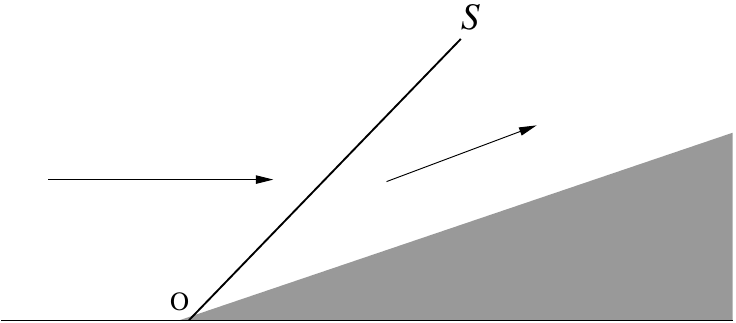}\qquad\qquad\qquad\includegraphics[scale=0.38]{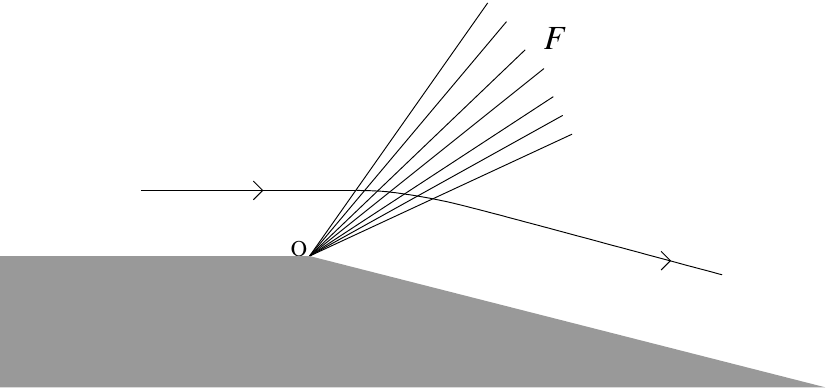}
\caption{\footnotesize 2D steady supersonic flow past a compressive (left) and rarefactive (right) ramp for a polytropic ideal gas.}
\label{Figure1}
\end{center}
\end{figure}
%==========

For polytropic ideal gases, the isentropes are always convex downward in the $(\tau, p)$-plane, where $\tau$ is the specific volume and $p$ the pressure.
So, a natural and interesting question is to determine whether the above result
can be extended to more general gases, for example, gases with nonconvex isentropes.
The possibility that isentropes are nonconvex was first explored independently by Bethe \cite{Bethe} and Zel'dovich \cite{ZE} for van der Waals gases. Later, Thompson \cite{Thompson1}
introduced a non-dimensional thermodynamic quantity $\mathcal{G}=-\frac{\tau p_{\tau\tau}(\tau, s)}{2p_{\tau}(\tau, s)}$ called the fundamental derivative of gasdynamics to
delineate the dynamic behavior of compressible fluids, where $s$ is the specific entropy.
Fluids with $\mathcal{G}<0$ within a certain thermodynamic region are now commonly referred to as Bethe-Zel'dovich-Thompson (BZT) fluids.
This type of fluids may significantly differ from the polytropic ideal gases. For example, physically admissible rarefactive shocks may occur in gases with nonconvex isentropes; see e.g., \cite{BBKN,Cra,NSMGC,Thompson2,Weyl,ZGC}.

%=============
\begin{figure}[htbp]
\begin{center}
\includegraphics[scale=0.39]{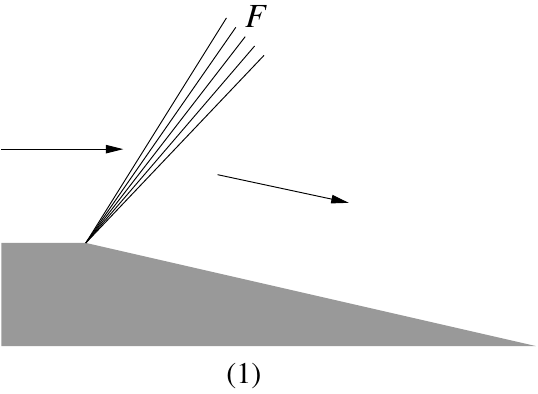}\qquad\includegraphics[scale=0.39]{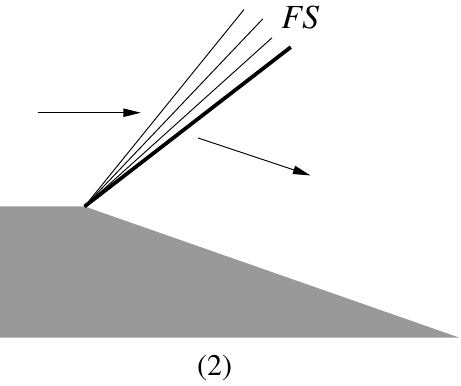}\qquad \includegraphics[scale=0.39]{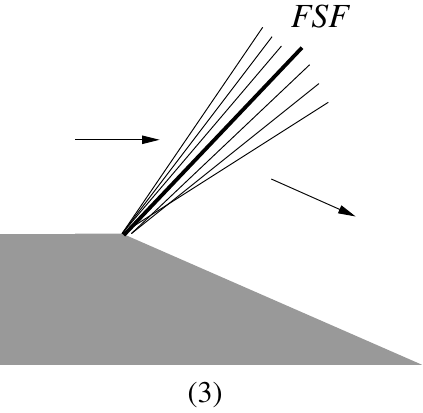} \\
\includegraphics[scale=0.39]{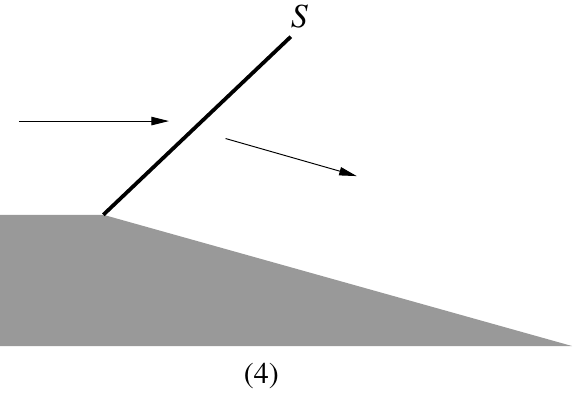}\qquad\includegraphics[scale=0.39]{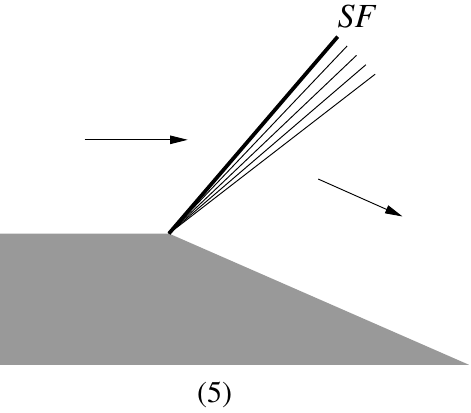}
\caption{\footnotesize 2D steady supersonic flows of BZT fluids past a rarefactive ramp.}
\label{Figure2}
\end{center}
\end{figure}
%==========

 %If the incoming flow in a 2D steady supersonic flow past a compressible or rarefactive ramp is a nonconvex gas, there will be some other interesting and important flow patterns.

In the past few decades, there have been many important progresses on the Riemann problem for nonconvex gases; see \cite{LeFloch1,LeFloch2,Liu1,Liu2,MP,Wen1,Wen2}. The 2D steady supersonic ramp problem can be seen as a Riemann-type initial-boundary value problem.
In a recent paper \cite{VKG1}, Vimercati, Kluwick $\&$ Guardone constructed self-similar solutions to the 2D steady supersonic flow of BZT fluids past compressible or rarefactive ramps.
For the BZT fluids, many additional wave configurations may possibly occur in the 2D steady supersonic ramp flows.
The wave configuration at a rarefactive ramp may possibly be a fan wave (${\it F}$), fan-shock composite wave (${\it FS}$), fan-shock-fan composite wave (${\it FSF}$), shock wave (${\it S}$), or shock-fan composite wave (${\it SF}$); see Figure \ref{Figure2}.
 The wave configuration at a compressive ramp may possibly be
 a shock wave (${\it S}$), fan-shock composite wave (${\it FS}$), shock-fan-shock composite wave (${\it SFS}$), fan wave (${\it F}$), or fan-shock composite wave (${\it FS}$); see Figure \ref{Figure3}.
Recently, Li $\&$ Sheng \cite{Li-Sheng1,Li-Sheng2} also constructed self-similar solutions to the supersonic ramp problem for a van der Waals gas supplemented by Maxwell's rule.

%=============
\begin{figure}[htbp]
\begin{center}
\includegraphics[scale=0.43]{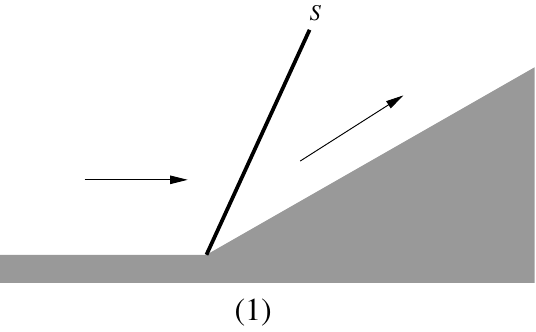}\quad \includegraphics[scale=0.43]{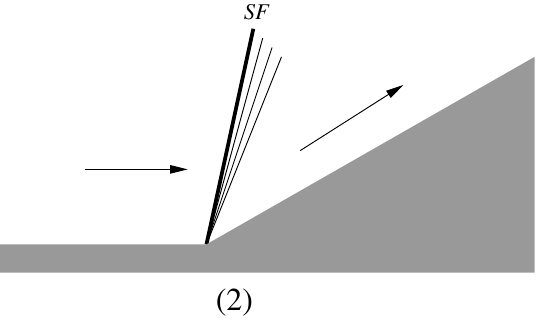}\quad \includegraphics[scale=0.43]{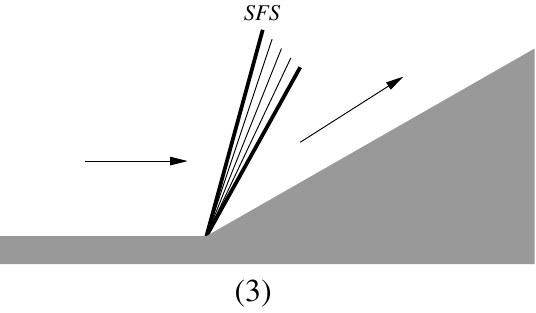} \\
\includegraphics[scale=0.43]{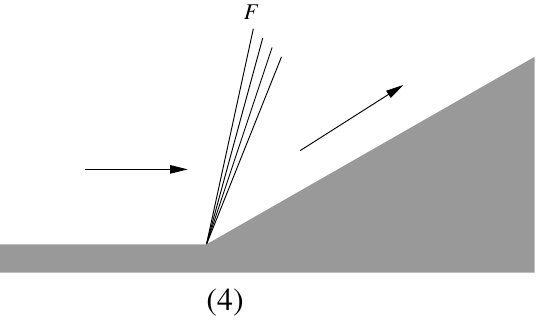}\qquad \includegraphics[scale=0.43]{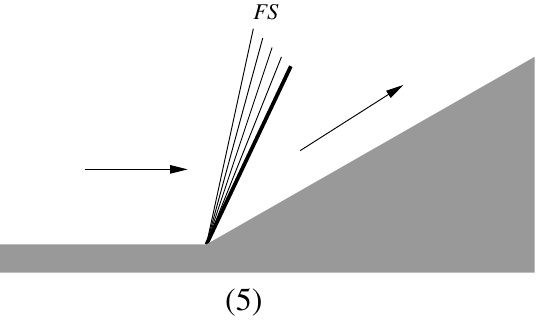}
\caption{\footnotesize 2D steady supersonic flows of BZT fluids past a compressive ramp.}
\label{Figure3}
\end{center}
\end{figure}
%==========

We are concerned with the stabilities of the oblique composite waves in 2D steady supersonic ramp flows of BZT fluids.
There have been many important works on the stabilities of the oblique shocks in steady supersonic flows past a compressive ramp and cone for polytropic ideal gases.
%In order to study the stabilities of the attached shocks, one need to study shock free boundary value problems.
The local existence of a curved shock for the supersonic flow past a curved compressive ramp was established in \cite{Gu,Li-Yu,Sch}. Yin \cite{Yin} applied a reflected characteristic method to obtain the global existence of a curved shock, provided that the curved ramp is a small perturbation of a straight one.
If the curved ramp is not a small perturbation of a straight one, the global existence of a curved shock was established in \cite{HD,HQ} with the assumption that the Mach number of the incoming flow is sufficiently large.
The local existence of a curve conical shock for the supersonic flow past a curved cone was established in
\cite{CLD1,CLD2}.
For the global existence of a curved conical shock, we refer the reader to \cite{CXY,CY1,CY2,LWY1,HZ,XY1,XY2}.
Chen {\it et al.} \cite{CCX,CFf} also obtained the global existence of a curved shock for the supersonic flow past a three-dimension compressible ramp.
%There are also some important works on the global existence of non-steady supersonic flows past a wedge;
%see \cite{ELiu2,LWY2}.
We also refer to \cite{CKXZ,CKZ,CZZ,LL,WZ,Zhang} for the global existence of supersonic flows past Lipschitz perturbed wedges and cones in the frame work of BV solutions.

In this paper we study the stabilities of the self-similar fan-shock-fan and shock-fan-shock composite waves
in the  supersonic ramp flows of BZT fluids; see Figures \ref{Figure2} (3) and \ref{Figure3} (3).
In order to study the stabilities of these composite waves, we need to solve some shock free boundary problems.
In contrast to ideal gases,
the flow downstream  (or upstream) of a shock of a BZT fluid may be sonic in the sense of the flow velocity relative to the shock
front, and the shock is usually called post-sonic (or pre-sonic) shock (see, e.g., \cite{VKG1,VKG2}) and also refereed to as sonic phase transition (see \cite{Colombo,Corli}).
For example, in the oblique shock-fan-shock composite wave, the head shock is post-sonic, while the tail shock is pre-sonic;
in the oblique fan-shock-fan composite wave, the shock is double-sonic (see \cite{Cra}).
One main difficulty of the shock free boundary problems is that the type of each shock away from the corner is a priori unknown. This differs from the case of ideal gases and makes the formulation of the boundary conditions on the shocks is a priori unknown. In order to overcome this difficulty, we establish some a priori estimates for the type of the shocks.
%To this aim, we calculate the curvature of the shock at the corner by using the Rankine-Hugoniot conditions and the flows downstream and upstream of the shock.
The other difficulty is that curved sonic shocks may occur.
The post-sonic (pre-sonic, resp.) shocks are envelopes of one out of the two families of wave characteristics of the downstream (upstream, resp.) flow fields, and not characteristics. This will result in a fact that the flow downstream (upstream, resp.) of a post-sonic (pre-sonic, resp.) shock is not $C^1$ smooth up to the shock boundary.
We use a characteristic decomposition method and a hodograph transformation method
 to overcome the difficulty cased by the singularity on sonic shocks,
and derive several groups of structural conditions to ensure the existence of the flows adjacent up to sonic shocks. %Colombo and Corli \cite{Colombo} used the front tracking method to establish the stability of the sonic shock.

The rest of the paper is organized as follows.

In Section 2 we study the stability of the fan-shock-fan composite wave in the supersonic flow past a rarefactive ramp for the 2D steady Euler equations.
We assume that the wedge is straight and the incoming flow is not uniform.
By a weighted characteristic decomposition method, we obtain the existence of a flow configuration near the corner with a curved post-sonic shock, provided that the incoming flow satisfies some monotonicity conditions.
The main result of this section is stated in Theorem \ref{main}.% We also prove that the shock is an envelope of one of the acoustic families of characteristics of the downstream flow, and not characteristic.

In section 3 we study the stability of the shock-fan-shock composite wave in the supersonic flow past a compressive ramp for 2D steady potential flow equations.
To study the stability of the composite wave,
we first assume that the incoming flow is a uniform flow and the ramp of the wedge is curved.
By calculating the curvature of the tail shock, we find that if and only if
the ramp is convex-up (concave-up, resp.) then the tail shock is  pre-sonic (transonic, resp.). By solving two different types of shock free boundary problems, we establish the existence of a local solution near the corner; see Theorem \ref{Thm39}.
We also consider the case that the incoming flow is not a uniform flow and the ramp is curved. We find some sufficient conditions on the incoming flow and the ramp to establish the existence of a flow configuration near the corner with a post-sonic head shock and a transonic tail shock; see Theorem \ref{thm23}.
We also use a hodograph transformation method to establish the existence of supersonic ramp flows with a curved post-sonic shock and a pre-sonic shock; see Theorem \ref{thm25}. %We deduce some structural conditions for the existence of curved pre-sonic as well as post-sonic shocks for the potential flow equations.

\section{Supersonic flows past a rarefactive ramp}

\subsection{van der Waals gas}
We consider the 2D steady Euler equations
\begin{equation}\label{PSEU}
\left\{
  \begin{array}{ll}
    (\rho u)_x+(\rho v)_y=0, \\[4pt]
   (\rho u^2+p)_x+(\rho uv)_y=0, \\[4pt]
  (\rho uv)_x+(\rho v^2+p)_y=0, \\[4pt]
  (\rho uE)_x+(\rho vE)_y=0,
  \end{array}
\right.
\end{equation}
where $(u, v)$ is the velocity, $p$ the pressure, $\rho$ the density, $E=\frac{u^2+v^2}{2}+\epsilon(\tau, s)$ the specific total energy, $\epsilon$ the specific internal energy, $\tau$ the specific volume, and $s$ the specific entropy.
In the following discussion, we shall use $h=\epsilon+p\tau$ to denote the specific enthalpy and $c=\sqrt{-\tau^2p_{\tau}(\tau, s)}$ the speed of sound.

A typical example of BZT fluid is the well-known van der Waals gas.
In this paper, we take a polytropic van der Waals gas with the equation of state
\begin{equation}\label{van}
\epsilon=\epsilon(\tau, s)=\frac{1}{(\tau-1)^{\gamma-1}}\exp\left(\frac{(\gamma-1)s}{R}\right)-\frac{1}{\tau},
\end{equation}
where $R$ is the gas constant and $\gamma$ is a adiabatic constant between $1$ and $2$.

From (\ref{van}) and the relation $\mathrm{d}\epsilon=T{\rm d}s-p{\rm d}\tau$,  we have
\begin{equation}
p=-\epsilon_{\tau}(\tau, s)=\frac{\gamma -1}{(\tau-1)^{\gamma }}\exp\left(\frac{(\gamma -1)s}{R}\right)-\frac{1}{\tau^2}\label{p}
\end{equation}
and
\begin{equation}
T=\epsilon_{s}(\tau, s)=\frac{\gamma -1}{R(\tau-1)^{\gamma -1}}\exp\left(\frac{(\gamma -1)s}{R}\right),\label{T}
\end{equation}
 where $T$ is the temperature.
The internal energy can be written as a function of $p$ and $\tau$, i.e.,
\begin{equation}\label{61001}
\epsilon=\frac{1}{\gamma-1}\Big(p+\frac{1}{\tau^2}\Big)(\tau-1)-\frac{1}{\tau}.
\end{equation}
%From (\ref{p}) and (\ref{T}) we have
%\begin{equation}\label{T1}
%T=\frac{1}{R}\Big(p+\frac{1}{\tau^2}\Big)(\tau-1).
%\end{equation}

For convenience, we
set $S=(\gamma -1)\exp\big(\frac{(\gamma -1)s}{R}\big)$. Then
\begin{equation}\label{sig}
p=p(\tau, S)=\frac{ S}{(\tau-1)^{\gamma }}-\frac{1}{\tau^2}\quad\mbox{and}\quad S= S(p,\tau)=\Big(p+\frac{1}{\tau^2}\Big)(\tau-1)^{\gamma }.
\end{equation}
A direct computation yields
\begin{equation}
p_{\tau}(\tau, S)=-\frac{\gamma  S}{(\tau-1)^{\gamma+1}}+\frac{2}{\tau^3}\quad\mbox{and}\quad p_{\tau\tau}(\tau, S)=\frac{\gamma (\gamma+1) S}{(\tau-1)^{\gamma+2}}-\frac{6}{\tau^4}.\label{inflection}
\end{equation}
Thus the curve in the $(\tau, p)$-plane where the inflection points of the isentropes are located is given by
the function
\begin{equation}\label{9901}
p=i(\tau)=\frac{1}{\tau^2}\left[\frac{6}{\gamma (\gamma+1)}\Big(1-\frac{1}{\tau}\Big)^2-1\right];
\end{equation}
see the red curves in Figure \ref{Figure4}.

By (\ref{sig}) and (\ref{9901}) we have
\begin{equation}\label{122801}
S\big(i(\tau), \tau\big)=\frac{6(\tau-1)^{\gamma+2}}{\gamma (\gamma+1)\tau^4}.
\end{equation}
So, inflection points can exist only provided that $ S$ is less then a limiting value $S_*$.
A direct calculation shows
\begin{equation}\label{1811701}
 S_*= S\big(i(\tau_*), \tau_*\big)
=\frac{3(2-\gamma)^{2-\gamma}(\gamma+2)^{\gamma+2}}{256\gamma (\gamma+1)},\quad\mbox{where}\quad\tau_{*}=\frac{4}{2-\gamma}.
\end{equation}

\begin{rem}
There exists a $S_{cr}<S_*$ such that
if $S_{cr}<S< S_*$, then $p_{\tau}(\tau, S)<0$ for $\tau>1$ and the isentrope $p=\frac{S}{(\tau-1)^{\gamma}}-\frac{1}{\tau^2}$ has two inflection points denoted by $\tau_1^{i}( S)$ and $\tau_2^{i}( S)$; see appendix C of Fossati and  Quartapelle \cite{Fossati} and the references cited therein.
\end{rem}

%=============
\begin{figure}[htbp]
\begin{center}
\includegraphics[scale=0.38]{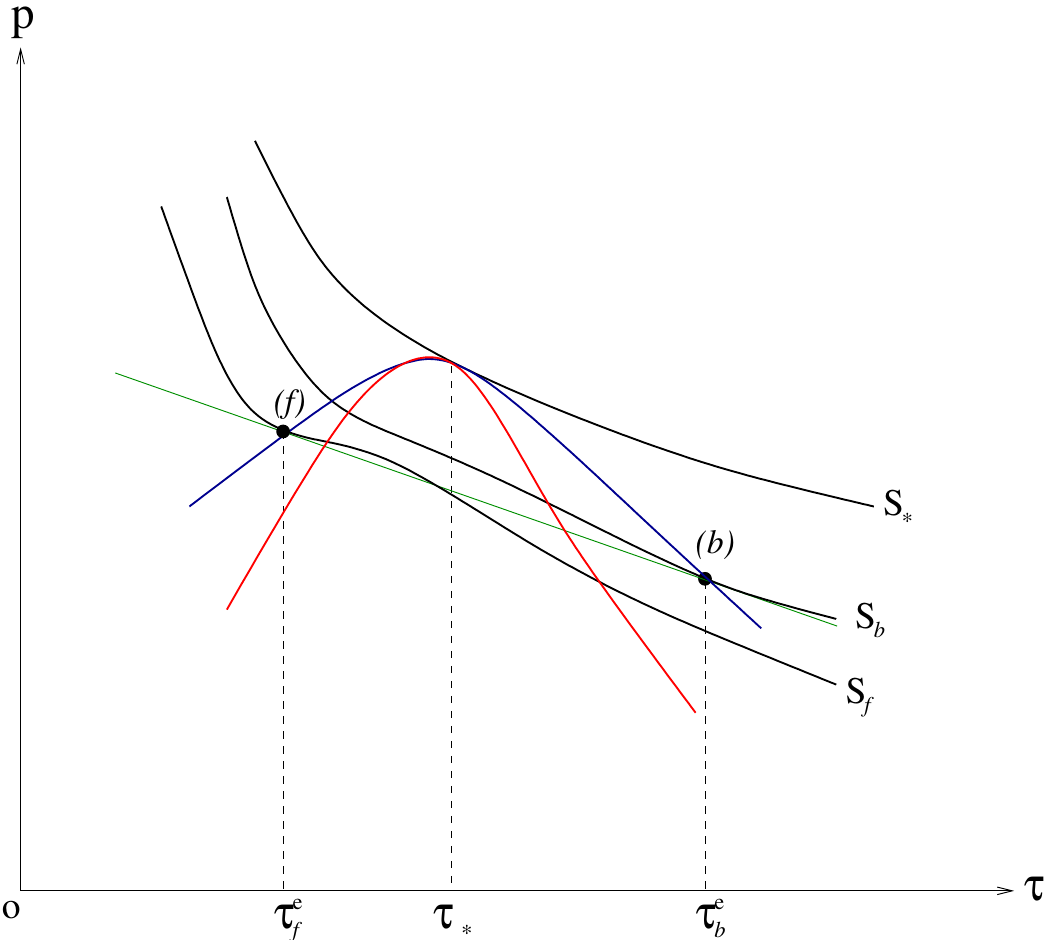}\qquad \qquad\includegraphics[scale=0.38]{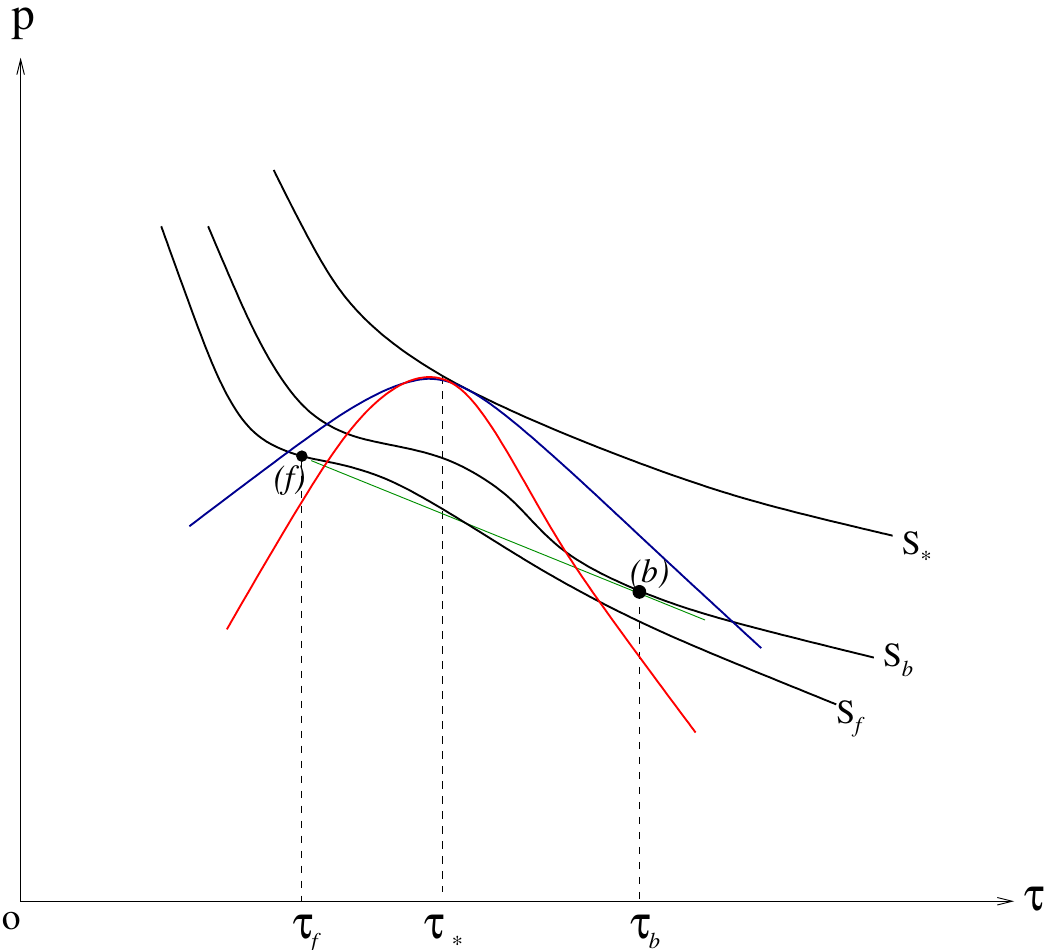}
\caption{\footnotesize Double-sonic shock and post-sonic shock.}
\label{Figure4}
\end{center}
\end{figure}
%==========

\subsection{Double-sonic shock and post-sonic shock}
We now discuss oblique shocks for (\ref{PSEU}) for the polytropic van der Waals gas.

We assume the shock line in the $(x, y)$-plane to be straight and the states on both its sides to be constant.
We use subscripts `$f$' and `$b$' to denote the front side and the back side states  of the shock, respectively.
%That is,
%We denote the values of $(u, v, \tau, c, \alpha, \beta, \sigma)$  the forward and the back side states of the post-sonic shock $\wideparen{BG}$, respectively.
We denote by $\phi$ the inclination angle of the shock front. % ; see Figure \ref{Fig0} (right).
%Note that $\phi(B)=\pi-\theta$. We shall show that $\wideparen{BG}$ is convex and stays in the region $\Sigma^1$.
Let
\begin{equation}\label{62604}
L=u\cos \phi+v\sin \phi, \quad N=u\sin \phi- v\cos \phi.
\end{equation}
From (\ref{62604}) we have
\begin{equation}\label{52304}
u=N\sin\phi+L\cos\phi\quad \mbox{and}\quad v=L\sin\phi-N\cos\phi.
\end{equation}
Then the shock relations are (see \cite{CaF}):
\begin{equation}\label{RHH}
\left\{
  \begin{array}{ll}
  \rho_f N_f=\rho_b N_b=m, \\[4pt]
\rho_f N_f^2+p_f=\rho_b N_b^2+p_b,\\[4pt]
    L_{f}=L_b,\\[4pt]
   N_f^2+2h_f= N_b^2+2h_b.
  \end{array}
\right.
\end{equation}
In addition to the jump condition (\ref{RHH}), admissible shock waves must satisfy the entropy inequality $s_b>s_f$.

From the first and the second relations of (\ref{RHH}) we have
\begin{equation}\label{6810a}
m^2=-\frac{p_f-p_b}{\tau_f-\tau_b}.
\end{equation}
From  (\ref{RHH}) we have
\begin{equation}
\epsilon_f-\epsilon_b=-\frac{1}{2}(\tau_f-\tau_b)(p_f+p_b).\label{Hugoniot}
\end{equation}

%The Hugoniot equation (\ref{Hugoniot}) gives the locus of thermodynamic states accessible through a shock wave.
%That is, it yields a relation
%$$
%\phi(p_f, \tau_f, p_b, \tau_b)=0
%$$
%which is referred to as the shock adiabat.

Set
$$
\eta_f=\frac{\tau_b}{\tau_f}\quad\mbox{and}\quad \eta_b=\frac{\tau_f}{\tau_b}.
$$
Then by (\ref{61001}), (\ref{6810a}), and (\ref{Hugoniot}) we have
\begin{equation}\label{zeta}
(\eta_k-1)\left(m^2\tau_k\Big(\frac{\gamma+1}{2}\Big)\eta_k^3-\Big(m^2+\frac{\gamma-1}{2} \tau_km^2+\gamma p_k\Big)\eta_k^2-
\Big(\frac{1}{\tau_k^3}+\frac{\gamma-2}{\tau_k^2}\Big)\eta_k-\frac{1}{\tau_k^3}\right)=0,
\end{equation}
where $k=f,~ b$.

If we now set $|m|=\sqrt{-p_{\tau}(\tau_k, S_k)}$ ($k=f~\mbox{or}~b$), then by (\ref{sig}) and (\ref{inflection}) we have
\begin{equation}
m^2=f(p_k, \tau_k):=\gamma \Big(\frac{p_k \tau_k^2+1}{\tau_k^2(\tau_k-1)}\Big)-\frac{2}{\tau_k^3}.\label{a}
\end{equation}
Combining this with (\ref{zeta}), we obtain
\begin{equation}\label{92902}
(\eta_k-1)\Bigg[\Big(\frac{\gamma+1}{2}\Big)f(p_k, \tau_k)\tau_k \eta_k^2+
\Big(\frac{2}{\tau_k^3}+\frac{\gamma-2}{\tau_k^2}\Big)\eta_k+\frac{1}{\tau_k^3}\Bigg]=0.
\end{equation}

If $\rho_fc_f=\rho_bc_b=|m|$ of a shock, then the shock is called a double-sonic shock. See Figure \ref{Figure4} (left). The chord connecting the points $(\tau_f, p_f)$ and $(\tau_b, p_b)$ is tangent to the isentropes $S(p, \tau)=S_f$ and $S(p, \tau)=S_f$ at $(\tau_f, p_f)$ and $(\tau_b, p_b)$, respectively.

Double-sonic shocks occur when
\begin{equation}\label{100501}
\Big(\frac{2}{\tau_k^3}+\frac{\gamma-2}{\tau_k^2}\Big)^2-\frac{2(\gamma+1)f(p_k, \tau_k)}{\tau_k^2}=0.
\end{equation}
Inserting (\ref{a}) into (\ref{100501}) we have that double-sonic shocks occur when
\begin{equation}\label{double}
p_k=d(\tau_k):=\frac{(2-\gamma)^2\tau_k^3-(2-\gamma)(4-3\gamma)\tau_k^2-8(\gamma-1)\tau_k-4}{2\gamma (\gamma+1)\tau_k^4},\quad k=f,~b;
\end{equation}
see the blue curves in Figure \ref{Figure4}.
This relation was first derived by Cramer and Sen \cite{Cra}.

From (\ref{9901}) and (\ref{double}) we have
\begin{equation}\label{122803}
\begin{aligned}
&d(\tau)-i(\tau)\\=~&\displaystyle\frac{(2-\gamma)^2\tau^3-(2-\gamma)(4-3\gamma)\tau^2-8(\gamma-1)\tau-4}{2\gamma (\gamma+1)\tau^4}-
\frac{1}{\tau^2}\Big[\frac{6}{\gamma (\gamma+1)}\Big(1-\frac{1}{\tau}\Big)^2-1\Big]\\
=~&\displaystyle\frac{(\tau-1)[(\gamma-2)\tau+4]^2}{2\gamma (\gamma+1)\tau^4}~\geq~0.
\end{aligned}
\end{equation}

Let
\begin{equation}\label{122802}
\begin{array}{rcl}
\Hat{S}(\tau)=S\big(d(\tau), \tau\big)=\displaystyle\frac{\big[(2-\gamma)^2\tau^3-(\gamma^2-12\gamma+9)\tau^2-8(\gamma-1)\tau-4\big](\tau-1)^{\gamma }}
{2\gamma (\gamma+1)\tau^4}.
\end{array}
\end{equation}
From (\ref{122801}), (\ref{122803}), and (\ref{122802}) we know that
when  $S_*-S$ is positive and sufficiently small
there exist $\tau_f^e(S)$ and $\tau_b^e(S)$, where $\tau_f^{e}(S)<\tau_1^{i}(S)<\tau_{*}<\tau_2^{i}(S)<\tau_b^{e}(S)$, such that
$$p\big(\tau_k^e( S),  S\big)~=~d\big(\tau_k^e( S)\big), \quad k=f,~ b.$$

From (\ref{92902}) and (\ref{100501}) we also have that for double sonic shocks,
\begin{equation}\label{zetai}
\eta_k=\hat{\eta}(\tau_k):=-\frac{1}{(\gamma+1)\tau_k f(p_k, \tau_k)}\Big(\frac{\gamma-2}{\tau_k^2}+\frac{2}{\tau_k^3}\Big)
=
\frac{2}{(2-\gamma)\tau_k-2}, \quad k=f,~ b.
\end{equation}
Thus, for double-sonic shocks,
 $\eta_f>1$ and $\eta_f\tau_f>\tau_{*}$, where $\tau_{*}$ is given in (\ref{1811701}); see \cite{Cra}.

If $\rho_bc_b=|m|>\rho_fc_f$ of a shock, then the shock is called a post-sonic shock; see Figure \ref{Figure4} (right).
The chord connecting the points $(\tau_f, p_f)$ and $(\tau_b, p_b)$ is tangent to the isentrope $S(p, \tau)=S_b$ at $(\tau_b, p_b)$.
 The following proposition will be used for the existence of a post-sonic shock solution to the stability of the fan-shock-fan composite wave.
\begin{prop}\label{81702}
(Post-sonic) For a given $S_f\in (S_{cr}, S_*)$. Assume that $S_*-S_f$ is sufficiently small. Then there exist functions $\tau_{po}(\tau_f)$ and $S_{po}(\tau_f)$ defined on  $\big[\tau_f^e( S_f), \tau_1^i(S_f)\big]$ such that
\begin{equation}
-p_{\tau}(\tau_f, S_f)<-\frac{p_f-p_b}{\tau_f-\tau_b}=-p_{\tau}(\tau_b, S_b)\quad \mbox{for}\quad \tau_f\in\big(\tau_f^e( S_f), \tau_1^i(S_f)\big),
\end{equation}
 where $p_f=p(\tau_f, S_f)$, $\tau_b=\tau_{po}(\tau_f)$, $S_b=S_{po}(\tau_f)$, and $p_b=p(\tau_{po}(\tau_f), S_{po}(\tau_f))$; see Figure \ref{Figure4} (right).
 Moreover, the solution satisfies
\begin{equation}\label{73002}
\begin{aligned}
&\tau_{po}(\tau_f^e(S_f))=\hat{\eta}(\tau_f^e(S_f))\tau_f^e(S_f),\quad S_{po}(\tau_f^e(S_f))=\hat{S}(\hat{\eta}(\tau_f^e(S_f))\tau_f^e(S_f)), \\[2pt]&\qquad\qquad\quad\frac{{\rm d}\tau_{po}}{{\rm d}\tau_f}\bigg|_{\tau_f=\tau_f^e( S_f)}=0,\quad\mbox{and}\quad \frac{{\rm d}S_{po}}{{\rm d}\tau_f}\bigg|_{\tau_f=\tau_f^e( S_f)}=0.
\end{aligned}
\end{equation}
\end{prop}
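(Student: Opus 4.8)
The plan is to characterize the post-sonic back state as the solution of a two-by-two system built from the Hugoniot relation and the back-sonic tangency condition, and to continue it from the double-sonic shock by the implicit function theorem. With the front state fixed through $p_f=p(\tau_f,S_f)$, I treat $(\tau_b,S_b)$ as the unknown and impose
\[
H(\tau_f,\tau_b,S_b):=\epsilon(\tau_b,S_b)-\epsilon(\tau_f,S_f)-\tfrac12(\tau_f-\tau_b)\big(p(\tau_f,S_f)+p(\tau_b,S_b)\big)=0,
\]
which is (\ref{Hugoniot}) rewritten via (\ref{61001}), together with
\[
Q(\tau_f,\tau_b,S_b):=p_\tau(\tau_b,S_b)-\frac{p(\tau_b,S_b)-p(\tau_f,S_f)}{\tau_b-\tau_f}=0,
\]
which is the back-sonic condition $-p_\tau(\tau_b,S_b)=-\tfrac{p_f-p_b}{\tau_f-\tau_b}=m^2$ (equivalently the $k=b$ form of (\ref{92902})). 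The anchor is the double-sonic shock at $\tau_f=\tau_f^e(S_f)$: since $p(\tau_f^e,S_f)=d(\tau_f^e)$ by the definition of $\tau_f^e$, the prior analysis leading to (\ref{zetai}) and (\ref{122802}) supplies the back state $\tau_b=\hat\eta(\tau_f^e)\tau_f^e$, $S_b=\hat{S}(\hat\eta(\tau_f^e)\tau_f^e)$, which satisfies both $H=0$ (it is a genuine shock) and $Q=0$ (a double-sonic shock is in particular back-sonic). This already yields the two endpoint values asserted in (\ref{73002}).

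I then apply the implicit function theorem at the anchor. A short computation, replacing $p_\tau(\tau_b,S_b)$ by the chord slope through the back-sonic equality, gives at the anchor $\partial_{\tau_b}H=\tfrac12\big[(p_f-p_b)-(\tau_f-\tau_b)p_\tau(\tau_b,S_b)\big]=0$ and $\partial_{\tau_b}Q=p_{\tau\tau}(\tau_b,S_b)$, so that
\[
\det\frac{\partial(H,Q)}{\partial(\tau_b,S_b)}=-\,\partial_{S_b}H\cdot p_{\tau\tau}(\tau_b,S_b).
\]
Here $\partial_{S_b}H=\epsilon_S-\tfrac12(\tau_f-\tau_b)p_S>0$ because $\epsilon_S>0$, $p_S>0$, and $\tau_b>\tau_f$ (recall $\hat\eta>1$ from (\ref{zetai})), while $p_{\tau\tau}(\tau_b,S_b)>0$ since the back state lies on the double-sonic locus $d$, which by (\ref{122803}) is strictly above the inflection locus $i$ away from $\tau_*$. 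Hence the Jacobian is invertible, producing $C^1$ functions $\tau_{po},S_{po}$ near $\tau_f^e$. Differentiating $H=0$ and $Q=0$ in $\tau_f$ and using the \emph{front}-sonic equality that also holds at the double-sonic anchor, I find $\partial_{\tau_f}H=\partial_{\tau_f}Q=0$ there, whence $\tau_{po}'(\tau_f^e)=S_{po}'(\tau_f^e)=0$, the remaining content of (\ref{73002}).

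Finally, two facts must be promoted from a neighborhood of the anchor to the whole interval $[\tau_f^e,\tau_1^i]$. The branch must be continued up to $\tau_1^i$ by an open-closed argument, checking that $\det\partial(H,Q)/\partial(\tau_b,S_b)$ stays nonzero and the state stays admissible (in particular $s_b>s_f$); and the strict inequality $-p_\tau(\tau_f,S_f)<m^2$ must hold on the open interval. For the latter I set $\Phi(\tau_f)=m^2+p_\tau(\tau_f,S_f)$, note $\Phi(\tau_f^e)=0$, and compute $\Phi'(\tau_f^e)=p_{\tau\tau}(\tau_f^e,S_f)>0$, the chord-slope term dropping out because $\tau_{po}'(\tau_f^e)=0$ and the front is sonic at the anchor; thus $\Phi>0$ just past $\tau_f^e$. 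I expect the genuine difficulty to be precisely this uniform control over the full interval rather than the behavior near the anchor: one must exclude further zeros of $\Phi$ and degeneracies of the Jacobian before reaching $\tau_1^i$, where $p_{\tau\tau}(\tau_1^i,S_f)=0$ makes the sign margins thinnest. This is exactly where the hypothesis that $S_*-S_f$ be sufficiently small is decisive: then $[\tau_f^e,\tau_1^i]$ is a short interval shrinking to $\{\tau_*\}$ as $S_f\uparrow S_*$, and every quantity above is a small perturbation of its double-sonic value, so the sign conditions persist by continuity throughout the interval and the argument closes.
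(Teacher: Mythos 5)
The paper does not actually prove this proposition in-house: its ``proof'' is the single line that the statement is Lemma~2.3 of \cite{Lai2}, so there is no internal argument to compare yours against. On its own terms, your setup is sound and your local analysis at the double-sonic anchor is correct: the system $H=0$, $Q=0$ is the right characterization of a back-sonic Hugoniot state, the computations $\partial_{\tau_b}H=\frac12\big[(p_f-p_b)-(\tau_f-\tau_b)p_{\tau}(\tau_b,S_b)\big]=0$ and $\partial_{\tau_b}Q=p_{\tau\tau}(\tau_b,S_b)$ at a back-sonic point are right, $\partial_{S_b}H>0$ follows from $\epsilon_{S}>0$, $p_{S}>0$, $\tau_b>\tau_f$, and the vanishing of $\partial_{\tau_f}H$ and $\partial_{\tau_f}Q$ at the anchor (using the front-sonic equality) correctly yields $\tau_{po}'(\tau_f^e)=S_{po}'(\tau_f^e)=0$ and hence, via your $\Phi$, the strict post-sonic inequality just past $\tau_f^e$. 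This much genuinely establishes (\ref{73002}) and the inequality near the left endpoint.

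The gap is in the passage from a neighborhood of the anchor to the whole interval $\big[\tau_f^e(S_f),\tau_1^i(S_f)\big]$, and your closing paragraph does not close it. First, note that your own computation shows $\partial_{\tau_b}H\equiv 0$ \emph{everywhere} on the branch $Q=0$ (not just at the anchor), so the Jacobian equals $-\partial_{S_b}H\cdot p_{\tau\tau}(\tau_b,S_b)$ along the entire branch, and the continuation hinges exactly on showing $p_{\tau\tau}\big(\tau_{po}(\tau_f),S_{po}(\tau_f)\big)>0$ for all $\tau_f$ in the interval; you verify this only at the anchor. Second, the appeal to ``$S_*-S_f$ small, hence everything is a small perturbation and the signs persist'' is a non sequitur here, because the sign margins you rely on degenerate in the same limit: as $S_f\uparrow S_*$ the anchor back state $(\tau_b,S_b)=\big(\hat{\eta}(\tau_f^e)\tau_f^e,\hat{S}(\cdot)\big)$ converges to $(\tau_*,S_*)$, where $p_{\tau\tau}=0$ by (\ref{1811701}) and (\ref{122803}), and likewise $p_{\tau\tau}(\tau_f^e,S_f)\to 0$, so the quantities whose positivity you need are themselves tending to zero. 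A perturbation that is ``small'' in absolute terms need not be small relative to a vanishing margin; one must compare rates (e.g.\ show that the branch moves by $O\big((\tau_f-\tau_f^e)^2\big)$ thanks to the vanishing first derivatives, and that this is dominated by the size of $p_{\tau\tau}$ at the anchor uniformly as $S_f\uparrow S_*$). Without such a quantitative comparison the open--closed argument does not terminate at $\tau_1^i(S_f)$. Finally, the admissibility condition $s_b>s_f$ is mentioned but never checked; for weak shocks in a BZT regime the entropy jump is a higher-order effect in the shock strength, so it cannot be waved through by continuity either.
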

\begin{proof}
This Proposition was due to Lai (\cite{Lai2}, Lemma 2.3).
\end{proof}

For more results on double-sonic shocks and post-sonic shocks, we refer the reader to  Cramer $\&$ Sen \cite{Cra} and Menikoff $\&$ Plohr \cite{MP}.

\subsection{Characteristics of 2D steady supersonic flow}
For smooth flow, system (\ref{PSEU}) can be written as
\begin{equation}
\left\{
  \begin{array}{ll}
    (\rho u)_x+(\rho v)_y=0, \\[4pt]
  uu_x+vu_y+\tau p_{x}=0,  \\[4pt]
    uv_x+vv_y+\tau p_{y}=0,\\[4pt]
    uS_{x}+vS_{y}=0.
  \end{array}
\right.\label{PsEuler}
\end{equation}

The eigenvalues of  (\ref{matrix1}) are determined
by
\begin{equation}
\Big(\lambda-\frac{v}{u}\Big)^2\big[(v-\lambda u)^{2}-c^{2}(1+\lambda^{2})\big]=0,
\end{equation}
which yields
\begin{equation}
\lambda=\lambda_{\pm}(u,v,c)=\frac{uv\pm
c\sqrt{u^{2}+v^{2}-c^{2}}}{u^{2}-c^{2}}\quad\mbox{and}\quad\lambda=\lambda_0=\frac{v}{u}.
\end{equation}
So, if and only if $u^{2}+v^{2}>c^{2}$,  system (\ref{matrix1}) is hyperbolic and has two families of wave characteristics $C_{\pm}$
defined as the integral curves of
$\frac{{\rm d}y}{{\rm d}x}=\lambda_{\pm}$. We also denote by $C_0$ the stream lines defined as the integral curves of $\frac{{\rm d}y}{{\rm d}x}=\lambda_{0}$.
In the flowing discussion we  will only consider supersonic flows.

%=============
\begin{figure}[htbp]
\begin{center}
\includegraphics[scale=0.55]{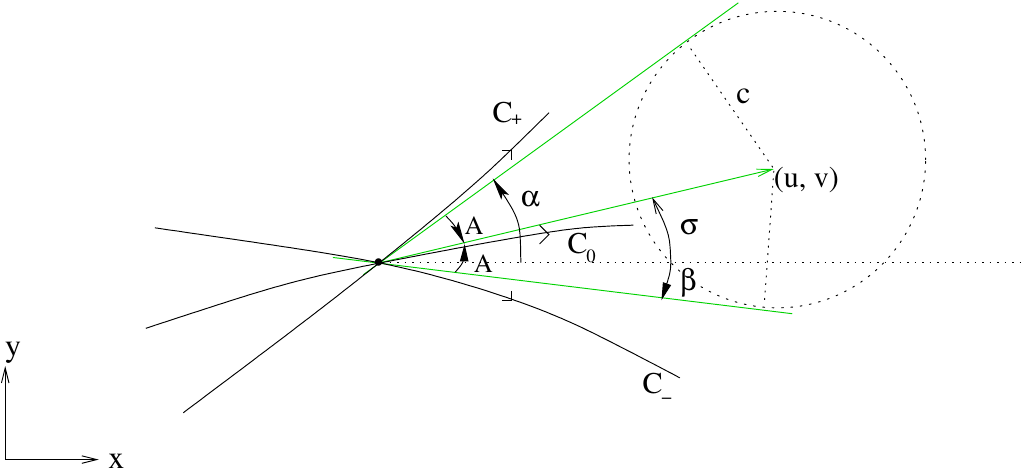}
\caption{ \footnotesize Characteristic angles and characteristic direction.}
\label{Characteristicd}
\end{center}
\end{figure}
%==========

See Figure \ref{Characteristicd}.
The direction of the wave
characteristic is defined as the tangent direction that forms an
acute angle $A$ with the flow velocity
$(u, v)$. By simple computation, we see that the $C_+$
characteristic direction forms with the flow velocity
$(u, v)$ the angle $A$ from $C_{+}$ to  $(u, v)$ in
the clockwise direction, and the $C_-$ characteristic direction forms with the
flow direction the angle $A$ from $C_{-}$ to $(u, v)$
in the counterclockwise direction.
 By computation, we have
\begin{equation}
c^{2}=q^{2}\sin^{2} A,\label{210cqo}
\end{equation}
in which $q^{2}=u^{2}+v^{2}$. The angle $A$ is called the
Mach angle.

The $C_{+}$ ($C_{-}$) characteristic angle is defined as the counterclockwise angle from the positive $x$-axis to the $C_{+}$ ($C_{-}$) characteristic direction.
We denote by $\alpha$ and
 $\beta$ the $C_{+}$ and $C_{-}$ characteristic angles,  respectively,
 where $0<\alpha-\beta<\pi$. Let $\sigma$ be the counterclockwise angle from the positive $x$-axis to the direction of the flow velocity. Then we have
\begin{equation}
\alpha=\sigma+A,\quad \beta=\sigma-A,\quad\sigma=\frac{\alpha+\beta}{2},\quad A=\frac{\alpha-\beta}{2}. \label{tau}
\end{equation}
Therefore, the relations between $(u,v,c)$ and $(\sigma, A, c)$ are
\begin{equation}\label{U}
u=\frac{c\cos\sigma}{\sin A}\quad \mbox{and}\quad v=\frac{c\sin\sigma}{\sin A}.
\end{equation}
By computation we also have
\begin{equation}\label{4402}
q^{2}\cos\alpha\cos\beta=u^2-c^2,\quad q^{2}\sin\alpha\sin\beta=v^2-c^2.
\end{equation}

Let $B=\frac{q^{2}}{2}+h$.
%It is easy to see that
Then by the last three equations of (\ref{PsEuler}) and $h_{\tau}(\tau, S)=\tau p_{\tau}(\tau, S)$, we have
\begin{equation}\label{TB}
uB_{x}+vB_{y}=0.
\end{equation}
In what follows we assume
\begin{equation}\label{BERC}
B=\mbox{Const.}
\end{equation}
in the flow field. This assumption is satisfied if $B$ is constant in the incoming supersonic flow.

From (\ref{inflection}) and (\ref{BERC}), we also have
\begin{equation}\label{72301}
\rho_{x}=\frac{1}{c^2\tau}\left(-uu_{x}-vv_{x}-h_{_S} S_{x}\right),\quad
\rho_{y}=\frac{1}{c^2\tau}\left(-uu_{y}-vv_{y}-h_{_S}S_{y}\right),
\end{equation}
where $h_{_S}(\tau, S)=\frac{\tau}{(\tau-1)^{\gamma }}+
\frac{1}{(\gamma-1)(\tau-1)^{\gamma-1}}$ for the van der Waals gas.

The first equation of (\ref{PsEuler}) can by (\ref{72301}) be written in the form
\begin{equation}
  (c^{2}-u^{2})u_{x}-uv(u_{y}+v_{x})+(c^{2}-v^{2})v_{y}=0.\label{mass}
\end{equation}
Multiplying
$$
\begin{array}{rcl}
\left(
 \begin{array}{cc}
c^{2}-u^{2} & -uv \\
  0 & -1\\
  \end{array}
  \right)\left(
           \begin{array}{c}
             u \\
             v \\
           \end{array}
         \right)_{x}+\left(
                         \begin{array}{ccccc}
                          -uv &  c^{2}-v^{2}\\
                           1 & 0 \\
                         \end{array}
                       \right)\left(
                                \begin{array}{c}
                                  u \\
                                  v \\
                                \end{array}
                              \right)_{y}=\left(
                                               \begin{array}{c}
                                                 0 \\
                                                 \omega \\
                                               \end{array}
                                             \right)
                                             \label{matrix1}
                                             \end{array}
$$
on the left
by
$(1,\mp c\sqrt{u^{2}+v^{2}-c^{2}})$, we get
\begin{equation}
\left\{
  \begin{array}{ll}
  \displaystyle \bar{\partial}_{+}u+\lambda_{-}\bar{\partial}_{+}v
  =\frac{\omega\sin A\cos A}{\cos\beta},  \\[10pt]
    \displaystyle  \bar{\partial}_{-}u+\lambda_{+}\bar{\partial}_{-}v=-\frac{\omega\sin A\cos A}{\cos\alpha},
  \end{array}
\right.\label{form}
\end{equation}
where $\omega=u_{y}-v_{x}$ is the vorticity, and $\bar{\partial}_{\pm}$ are characteristic directional derivatives defined by
\begin{equation}\label{72801}
\bar{\partial}_{+}:=\cos\alpha\partial_{x}+\sin\alpha\partial_{y}\quad\mbox{and}\quad \bar{\partial}_{-}:=\cos\beta\partial_{x}+\sin\beta\partial_{y}.
\end{equation}

By the fourth equation of (\ref{PsEuler}) we get
\begin{equation}\label{101102}
\bar{\partial}_{+}S=-\bar{\partial}_{-}S.
\end{equation}
From the second and the third equations of (\ref{PsEuler}) we have
\begin{equation}
B_x=-v\omega+\frac{ S_{x}}{(\gamma-1)(\tau-1)^{\gamma-1}}\quad \mbox{and} \quad B_y=u\omega+\frac{ S_{y}}{(\gamma-1)(\tau-1)^{\gamma-1}}.\label{72601}
\end{equation}
Then by (\ref{BERC}) we have
\begin{equation}\label{61402}
\omega=\mp\frac{\bar{\partial}_{\pm}S}{c(\gamma-1)(\tau-1)^{\gamma-1}}.
\end{equation}
By the last three equations of (\ref{PsEuler}) we also have
\begin{equation}\label{72803}
u\omega_{x}+v\omega_{y}+(u_{x}+v_{y})\omega=\frac{\tau_{x} S_{y}-\tau_{y} S_{x}}{(\tau-1)^{\gamma }}.
\end{equation}

From (\ref{72801}) we have
\begin{equation}\label{72802}
\partial_{x}=-\frac{\sin\beta\bar{\partial}_{+}-\sin\alpha\bar{\partial}_{-}}{\sin2A},\quad
\partial_{y}=\frac{\cos\beta\bar{\partial}_{+}-\cos\alpha\bar{\partial}_{-}}{\sin2A},
\end{equation}
and
\begin{equation}\label{32403}
\bar{\partial}_{+}+\bar{\partial}_{-}=2\cos A \bar{\partial}_{0},
\end{equation}
where
\begin{equation}\label{82900a}
\bar{\partial}_{0}=\cos\sigma\partial_x+\sin\sigma\partial_y.
\end{equation}
\subsection{Characteristic decompositions for the 2D steady Euler equations}
%We are going to discuss the stability of the fan-shock-fan composite wave. {\color{red} We shall use the characteristic decomposition method to overcome the difficulty}
From the Bernoulli law (\ref{BERC}) we have
\begin{equation}\label{726021}
u\bar{\partial}_{+}u+v\bar{\partial}_{+}v+\kappa c\bar{\partial}_{+}c=-\left(h_{_S}+\frac{\kappa \tau^2p_{_{\tau S}}}{2}\right)\bar{\partial}_{+}S
\end{equation}
and
\begin{equation}\label{41102}
u\bar{\partial}_{-}u+v\bar{\partial}_{-}v+\kappa c\bar{\partial}_{-}c=-\left(h_{_S}+\frac{\kappa \tau^2p_{_{\tau S}}}{2}\right)\bar{\partial}_{-}S,
\end{equation}
where
$$
\kappa=-\frac{2p_{\tau}}{2p_{\tau}+\tau p_{\tau\tau}}.
$$

From (\ref{U}) we have
\begin{equation}\label{72701}
\bar{\partial}_{\pm}u=\frac{\cos\sigma}{\sin A}\bar{\partial}_{\pm}c+\frac{c\cos\alpha\bar{\partial}_{\pm}\beta
-c\cos\beta\bar{\partial}_{\pm}\alpha}{2\sin^{2}A},
\end{equation}
\begin{equation}\label{72702}
\bar{\partial}_{\pm}v=\frac{\sin\sigma}{\sin A}\bar{\partial}_{\pm}c
+\frac{c\sin\alpha\bar{\partial}_{\pm}\beta
-c\sin\beta\bar{\partial}_{\pm}\alpha}{2\sin^{2}A}.
\end{equation}
Inserting (\ref{72701}) and (\ref{72702}) into (\ref{form}), we obtain
\begin{equation}\label{3}
\bar{\partial}_{+}c=\frac{c}{\sin2A}
(\bar{\partial}_{+}\alpha-\cos2A\bar{\partial}_{+}\beta)+\omega\sin^{2}A,
\end{equation}
\begin{equation}
\bar{\partial}_{-}c=\frac{c}{\sin2A}
(\cos2A\bar{\partial}_{-}\alpha-\bar{\partial}_{-}\beta)-\omega\sin^{2}A.\label{4}
\end{equation}

Inserting (\ref{72701})--(\ref{72702}) into (\ref{726021}), we get
\begin{equation}
\left(\frac{1}{\sin^{2}A}+\kappa\right)\bar{\partial}_{+}c=
\frac{c\cos A}{2\sin^{3}A}(\bar{\partial}_{+}\alpha-\bar{\partial}_{+}\beta)
+\omega+j\bar{\partial}_{+} S,\label{bB1}
\end{equation}
where
$$
j=-\frac{cp_{_{\tau S}}}{2 p_{\tau}+\tau p_{\tau\tau}}-\frac{\tau}{c(\tau-1)^{\gamma }}.
$$
Similarly, we have
\begin{equation}
\left(\frac{1}{\sin^{2}A}+\kappa \right)\bar{\partial}_{-}c=
\frac{c\cos A}{2\sin^{3}A}(\bar{\partial}_{-}\alpha-\bar{\partial}_{-}\beta)
-\omega+j\bar{\partial}_{-} S.\label{bB2}
\end{equation}

Inserting (\ref{bB1}) into (\ref{3}), we obtain
\begin{equation}\label{6}
c\bar{\partial}_{+}\alpha=\Omega\cos^{2}Ac\bar{\partial}_{+}\beta-\Big(\frac{\kappa\sin2A\sin^{2} A}{1+\kappa}\Big)\omega+\frac{j\sin2A}{1+\kappa}\bar{\partial}_{+} S,
\end{equation}
where
\begin{equation}\label{40}
\Omega=-\frac{4p_{\tau}+\tau p_{\tau\tau}}{\tau p_{\tau\tau}}-\tan^2 A.
\end{equation}
Similarly, inserting (\ref{bB2}) into (\ref{4}), we obtain
\begin{equation}\label{5}
c\bar{\partial}_{-}\beta=\Omega\cos^{2}Ac\bar{\partial}_{-}\alpha-\Big(\frac{\kappa\sin2A\sin^{2} A}{1+\kappa}\Big)\omega-\frac{j\sin2A}{1+\kappa}\bar{\partial}_{-} S.
\end{equation}

Combining (\ref{3}) with (\ref{6}), we have
%\begin{equation}
%\bar{\partial}_{+}c=-\left(\frac{\cot A}{1+\kappa}\right)\big(c\bar{\partial}_{+}\beta+2\sin^{2} A\big)+\Big(\frac{\sin^2A}{1+\kappa}\Big)\omega+\frac{j}{1+\kappa}\bar{\partial}_{+} S,
%\end{equation}
\begin{equation}
c\bar{\partial}_{+}\beta=-(1+\kappa)\tan A\bar{\partial}_{+}c+\omega\sin^2A\tan A
+j\tan A \bar{\partial}_{+} S\label{9}
\end{equation}
and
\begin{equation}
c\bar{\partial}_{+}\alpha=-\left(\frac{1+\kappa}{2}\right)\Omega\sin2 A\bar{\partial}_{+}c-\omega\sin^2A\tan A+
j\tan A\cos 2A\bar{\partial}_{+} S.\label{10}
\end{equation}
Combining (\ref{4}) with (\ref{5}), we have
%\begin{equation}
%\bar{\partial}_{-}c=\left(\frac{\cot A}{1+\kappa}\right)\big(c\bar{\partial}_{-}\alpha-2\sin^{2} A\big)-\Big(\frac{\sin^2A}{1+\kappa}\Big)\omega+\frac{j}{1+\kappa}\bar{\partial}_{-} S,\label{7}
%\end{equation}
\begin{equation}
c\bar{\partial}_{-}\alpha=(1+\kappa)\tan A\bar{\partial}_{-}c+\omega\sin^2A\tan A
-j\tan A \bar{\partial}_{-} S
\end{equation}
and
\begin{equation}
c\bar{\partial}_{-}\beta=\left(\frac{1+\kappa}{2}\right) \Omega\sin2 A\bar{\partial}_{-}c-\omega\sin^2A\tan A-j\tan A\cos2A\bar{\partial}_{-} S.\label{8}
\end{equation}

Equations (\ref{9})--(\ref{8}) can by $c=\sqrt{-\tau^2p_{\tau}(\tau, S)}$ be written as
\begin{equation}\label{1}
c\bar{\partial}_{+}\alpha=-\frac{p_{\tau\tau}}{4c\rho^4}\Omega\sin2 A\bar{\partial}_{+}\rho-\omega\sin^2A\tan A+j_1\bar{\partial}_{+} S,
\end{equation}
\begin{equation}\label{8301}
c\bar{\partial}_{-}\beta=\frac{p_{\tau\tau}}{4c\rho^4}\Omega\sin2 A\bar{\partial}_{-}\rho-\omega\sin^2A\tan A-j_1\bar{\partial}_{-} S,
\end{equation}
\begin{equation}\label{8302}
c\bar{\partial}_{-}\alpha=\frac{p_{\tau\tau}}{2c\rho^4}\tan A\bar{\partial}_{-}\rho+\omega\sin^2A\tan A-j_2\bar{\partial}_{-} S,
\end{equation}
\begin{equation}\label{2}
c\bar{\partial}_{+}\beta=-\frac{p_{\tau\tau}}{2c\rho^4}\tan A\bar{\partial}_{+}\rho+\omega\sin^2A\tan A+j_2\bar{\partial}_{+} S,
\end{equation}
where
$$
j_1=\frac{\tau^2p_{_{\tau S}}}{2c}\left(\frac{1+\kappa}{2}\right)\Omega\sin2 A+j\tan A\cos 2A\quad \mbox{and}\quad
j_2=(1+\kappa)\tan A\frac{\tau^2 p_{_{\tau S}}}{2c}+j\tan A.
$$

Inserting (\ref{1})--(\ref{2}) into (\ref{72701}) and (\ref{72702}),
we get
\begin{equation}\label{11}
\bar{\partial}_{+}u=c\tau\sin\beta\bar{\partial}_{+}\rho+\omega\cos \sigma\sin A+j_3\bar{\partial}_{+} S,
\end{equation}
\begin{equation}\label{73001}
\bar{\partial}_{-}u=-c\tau\sin\alpha\bar{\partial}_{-}\rho-\omega\cos \sigma\sin A+j_4\bar{\partial}_{-} S,
\end{equation}
\begin{equation}\label{72804}
\bar{\partial}_{+}v=-c\tau\cos\beta\bar{\partial}_{+}\rho+\omega\sin \sigma\sin A+j_5\bar{\partial}_{+} S,
\end{equation}
\begin{equation}\label{81005}
\bar{\partial}_{-}v=c\tau\cos\alpha\bar{\partial}_{-}\rho-\omega\sin \sigma\sin A+j_6\bar{\partial}_{-} S,
\end{equation}
where
$$
\begin{aligned}
&j_3=-\kappa\sin\beta\frac{\tau^2p_{_{\tau S}}}{2c}-j\sin\beta,
\quad
j_4=\kappa\sin\alpha\frac{\tau^2p_{_{\tau S}}}{2c}+j\sin\alpha,\\&
j_5=\kappa\cos\beta\frac{\tau^2p_{_{\tau S}}}{2c}+j\cos\beta,
\quad \mbox{and}\quad
j_6=-\kappa\cos\alpha\frac{\tau^2p_{_{\tau S}}}{2c}-j\cos\alpha.
\end{aligned}
$$

\begin{prop}\label{10603}
(Commutator relations) We have the following commutator relations:
\begin{equation}
\begin{array}{rcl}
\bar{\partial}_{-} \bar{\partial}_{+}- \bar{\partial}_{+} \bar{\partial}_{-}=
\displaystyle\frac{1}{\sin2 A}\Big[\big(\cos2 A \bar{\partial}_{+}\beta- \bar{\partial}_{-}\alpha\big) \bar{\partial}_{-}-
\big( \bar{\partial}_{+}\beta-\cos2 A \bar{\partial}_{-}\alpha\big) \bar{\partial}_{+}\Big],
\end{array}
\label{comm}
\end{equation}
\begin{equation}
\begin{array}{rcl}
\bar{\partial}_{0} \bar{\partial}_{+}- \bar{\partial}_{+} \bar{\partial}_{0}=
\displaystyle\frac{1}{\sin A}\Big[\big(\cos A \bar{\partial}_{+}\sigma- \bar{\partial}_{0}\alpha\big) \bar{\partial}_{0}-
\big( \bar{\partial}_{+}\sigma-\cos A \bar{\partial}_{0}\alpha\big) \bar{\partial}_{+}\Big].
\end{array}
\label{comm1}
\end{equation}
\end{prop}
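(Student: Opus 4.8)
The plan is to establish both identities by a single direct computation, recognizing that each is a special instance of a general commutator formula for two directional derivatives whose directions are prescribed by angle functions on the flow field. I would introduce auxiliary angle functions $a=a(x,y)$ and $b=b(x,y)$ and set $\bar{\partial}_a=\cos a\,\partial_x+\sin a\,\partial_y$ and $\bar{\partial}_b=\cos b\,\partial_x+\sin b\,\partial_y$, so that $(\alpha,\beta)$ recovers (\ref{comm}) and $(\alpha,\sigma)$ recovers (\ref{comm1}). Applying $\bar{\partial}_b\bar{\partial}_a$ and $\bar{\partial}_a\bar{\partial}_b$ to an arbitrary smooth $f$ and subtracting, the first key observation is that the pure second-order part is symmetric in $a$ and $b$: in both compositions the coefficients of $f_{xx}$, $f_{xy}$, $f_{yy}$ are $\cos a\cos b$, $\sin(a+b)$, $\sin a\sin b$. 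Hence, by equality of mixed partials $f_{xy}=f_{yx}$, all second-order terms cancel and the commutator is a genuine first-order operator, as the stated right-hand sides require.

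The surviving first-order terms arise solely from differentiating the trigonometric coefficients, and I would collect them as $(\bar{\partial}_b\bar{\partial}_a-\bar{\partial}_a\bar{\partial}_b)f=P f_x+Q f_y$ with $P=\sin b\,\bar{\partial}_a b-\sin a\,\bar{\partial}_b a$ and $Q=\cos a\,\bar{\partial}_b a-\cos b\,\bar{\partial}_a b$. Next I would re-express $f_x$ and $f_y$ in the $\{\bar{\partial}_a,\bar{\partial}_b\}$ frame by inverting the two-by-two linear system that defines $\bar{\partial}_a,\bar{\partial}_b$ (the inversion specializing to (\ref{72802}) when $a=\alpha$, $b=\beta$), namely $f_x=[\sin b\,\bar{\partial}_a f-\sin a\,\bar{\partial}_b f]/\sin(b-a)$ and $f_y=[\cos a\,\bar{\partial}_b f-\cos b\,\bar{\partial}_a f]/\sin(b-a)$. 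Substituting and collecting coefficients, the only simplification needed is the angle-subtraction identity $\sin a\sin b+\cos a\cos b=\cos(a-b)$ together with $\sin^2+\cos^2=1$; these collapse the coefficient of $\bar{\partial}_a f$ to $[\cos(a-b)\,\bar{\partial}_b a-\bar{\partial}_a b]/\sin(a-b)$ and the coefficient of $\bar{\partial}_b f$ to $[\cos(a-b)\,\bar{\partial}_a b-\bar{\partial}_b a]/\sin(a-b)$.

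Finally I would specialize the general formula. Taking $a=\alpha$, $b=\beta$, so that $a-b=2A$ by (\ref{tau}), reproduces (\ref{comm}) with $\bar{\partial}_b a=\bar{\partial}_-\alpha$ and $\bar{\partial}_a b=\bar{\partial}_+\beta$; taking $a=\alpha$, $b=\sigma$, so that $a-b=A$, reproduces (\ref{comm1}) with $\bar{\partial}_0$ in place of $\bar{\partial}_-$ and $\sigma$ in place of $\beta$. There is no genuine analytic obstacle here, since the content is purely algebraic. The one place that demands care is the bookkeeping of the first-order part: verifying that the second-order terms really cancel, assembling the differentiated-coefficient terms with correct signs, and tracking $\sin(b-a)=-\sin(a-b)$ so that the final denominators emerge as $\sin 2A$ and $\sin A$ with the signs matching the stated expressions. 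I would carry out this sign check explicitly against the right-hand sides of (\ref{comm}) and (\ref{comm1}) as the last step.
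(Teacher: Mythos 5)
Your computation is correct: I checked the second-order cancellation, the first-order coefficients $P=\sin b\,\bar{\partial}_a b-\sin a\,\bar{\partial}_b a$, $Q=\cos a\,\bar{\partial}_b a-\cos b\,\bar{\partial}_a b$, the inversion (which indeed reduces to (\ref{72802}) when $(a,b)=(\alpha,\beta)$, with the sign absorbed by $\sin(b-a)=-\sin(a-b)$), and the final coefficients $[\cos(a-b)\bar{\partial}_b a-\bar{\partial}_a b]/\sin(a-b)$ for $\bar{\partial}_a$ and $[\cos(a-b)\bar{\partial}_a b-\bar{\partial}_b a]/\sin(a-b)$ for $\bar{\partial}_b$; specializing to $(\alpha,\beta)$ with $a-b=2A$ and to $(\alpha,\sigma)$ with $a-b=A$ reproduces (\ref{comm}) and (\ref{comm1}) exactly, signs included. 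The comparison with the paper is therefore lopsided: the paper does not prove the proposition at all, but simply attributes (\ref{comm}) to Li, Zhang and Zheng and (\ref{comm1}) to Lai. Your argument supplies a self-contained verification, and the decision to work with two generic direction angles $a$ and $b$ is a genuine improvement in economy, since it delivers both identities (and any further commutator of this type, e.g.\ one involving $\bar{\partial}_0$ and $\bar{\partial}_-$) from a single algebraic identity rather than two separate computations. The only caveat is that what you have written is a plan rather than the worked computation; since every step you describe checks out, carrying out the bookkeeping as you propose closes the proof.
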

\begin{proof}
The commutator relation (\ref{comm}) was first derived by Li, Zhang $\&$ Zheng \cite{Li-Zhang-Zheng}.
The commutation relation (\ref{comm1}) was due to Lai \cite{Lai2}.
\end{proof}

From (\ref{32403}), (\ref{8302}), (\ref{2}),  and (\ref{comm1})  we have
\begin{equation}\label{81301}
\begin{aligned}
\bar{\partial}_{0}\bar{\partial}_{+} S&=\frac{1}{\sin A}\big(\cos A\bar{\partial}_{0}\alpha-
\bar{\partial}_{+}\sigma\big)\bar{\partial}_{+} S
=\frac{1}{\sin A}\Big[\frac{1}{2}(\bar{\partial}_{+}\alpha+\bar{\partial}_{-}\alpha)
-\bar{\partial}_{+}\sigma\Big]\bar{\partial}_{+} S\\&=\frac{1}{2\sin A}\Big[\bar{\partial}_{-}\alpha
-\bar{\partial}_{+}\beta\Big]\bar{\partial}_{+} S=\frac{p_{\tau\tau}}{2c^2\rho^4\cos A}\Big[\bar{\partial}_{-}\rho
+\bar{\partial}_{+}\rho\Big]\bar{\partial}_{+} S=-\frac{\tau^2p_{\tau\tau}}{c^2}\bar{\partial}_{0}\tau
\bar{\partial}_{+}S.
\end{aligned}
\end{equation}
Let
$
\mathcal{F}(\tau,S)=e^{\int^{\tau}-\frac{p_{\tau\tau}(\tau, S)}{p_{\tau}(\tau, S)}{\rm d}\tau}.
$
Then the equation (\ref{81301}) can be written as
\begin{equation}\label{7403}
\bar{\partial}_{0}\big(\mathcal{F}(\tau,S)\bar{\partial}_{+} S\big)
~=~\mathcal{F}_{\tau}(\tau,S)\bar{\partial}_{0}\tau
\bar{\partial}_{+} S-\mathcal{F}(\tau,S)\frac{\tau^2p_{\tau\tau}}{c^2}\bar{\partial}_{0}\tau
\bar{\partial}_{+} S~=~0.
\end{equation}
This implies that $\mathcal{F}(\tau,S)\bar{\partial}_{+} S$ is invariant along each stream line.

From (\ref{tau})  and (\ref{72804})--(\ref{73001}) we have
\begin{equation}\label{41105}
\begin{aligned}
u_x+v_y&=\frac{1}{\sin 2A}\Big(\sin\alpha\bar{\partial}_{-}u-\sin\beta\bar{\partial}_{+}u+
\cos\beta\bar{\partial}_{+}v-\cos\alpha\bar{\partial}_{-}v\Big)=-
\frac{c\tau}{\sin A}\bar{\partial}_{0}\rho
\end{aligned}
\end{equation}
and
\begin{equation}\label{41106}
\begin{aligned}
&\tau_y S_x-\tau_x S_y\\[4pt]
=&\Bigg[\Big(\frac{\cos\alpha+\cos\beta}{\sin2A}\Big)\frac{\sin\beta\bar{\partial}_{+}\tau
-\sin\alpha\bar{\partial}_{-}\tau}{\sin2A}-
\Big(\frac{\sin\alpha+\sin\beta}{\sin2A}\Big)\frac{\cos\beta\bar{\partial}_{+}\tau
-\cos\alpha\bar{\partial}_{-}\tau}{\sin2A}\Bigg\}\bar{\partial}_{+}S
\\=&-\frac{\bar{\partial}_{+}S}{\sin 2A}(\bar{\partial}_{+}\tau+\bar{\partial}_{-}\tau)
=
\frac{\bar{\partial}_{+}S}{\rho^2\sin A}\bar{\partial}_{0}\rho.
\end{aligned}
\end{equation}
Inserting (\ref{41105}) and (\ref{41106}) into (\ref{72803}), we get
\begin{equation}\label{4801}
\begin{aligned}
\bar{\partial}_{0}\omega&~=~\frac{\bar{\partial}_{0}\rho}{\rho}\omega+
\frac{\bar{\partial}_{+}S}{\rho^2(\tau-1)^{\gamma }\sin A}\bar{\partial}_{0}\rho.
\end{aligned}
\end{equation}

\begin{prop}\label{10604}
For the variable $\rho$, we have the characteristic decompositions
\begin{equation}\label{cd}
\left\{
  \begin{array}{ll}
\begin{aligned}
   \bar{\partial}_{+}\bar{\partial}_{-}\rho~=~&\displaystyle
    \frac{\tau^4p_{\tau\tau}}{4c^2\cos^{2}A }\Big[(\bar{\partial}_{-}\rho)^2+(\varphi-1)\bar{\partial}_{-}\rho\bar{\partial}_{+}\rho\Big]
+g_1\bar{\partial}_{-}\rho \bar{\partial}_{+}S+g_2\bar{\partial}_{+}\rho\bar{\partial}_{+} S\\[4pt]&+g_3(\bar{\partial}_{+} S)^2+\bar{\partial}_{+}(g_4\bar{\partial}_{+} S),
\end{aligned}\\[18pt]
\begin{aligned}
  \bar{\partial}_{-}\bar{\partial}_{+}\rho~=~&\displaystyle
    \frac{\tau^4p_{\tau\tau}}{4c^2\cos^{2}A }\Big[(\bar{\partial}_{+}\rho)^2+(\varphi-1)\bar{\partial}_{-}\rho\bar{\partial}_{+}\rho\Big]
+g_5\bar{\partial}_{-}\rho \bar{\partial}_{+}S+g_6\bar{\partial}_{+}\rho\bar{\partial}_{+} S\\[4pt]&+g_7(\bar{\partial}_{+} S)^2+\bar{\partial}_{+}(g_8\bar{\partial}_{+} S),
\end{aligned}
  \end{array}
\right.
\end{equation}
where
$$
\varphi=2\sin^2A-\frac{8p_{\tau}\cos^4A}{\tau p_{\tau\tau}},
$$
and $g_{i}$ $(i=1, \cdot\cdot\cdot, 8)$ are elementary functions of the variables $\alpha$, $\beta$, $\tau$ and $S$.
\end{prop}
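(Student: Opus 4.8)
The plan is to derive the second decomposition in \eqref{cd}, namely the formula for $\bar{\partial}_{-}\bar{\partial}_{+}\rho$, by a direct characteristic differentiation, and then to read off the first formula (for $\bar{\partial}_{+}\bar{\partial}_{-}\rho$) by subtracting the commutator identity \eqref{comm} applied to $\rho$. Two preliminary reductions make all coefficients depend only on the state: using \eqref{61402} I would eliminate the vorticity $\omega$ in favour of $\bar{\partial}_{+}S$, and using \eqref{101102} I would replace every $\bar{\partial}_{-}S$ by $-\bar{\partial}_{+}S$. After these substitutions each of \eqref{1}, \eqref{8301}, \eqref{8302}, \eqref{2} expresses one angular derivative as a state-dependent coefficient times $\bar{\partial}_{\pm}\rho$ plus a coefficient times $\bar{\partial}_{+}S$, with all coefficients functions of $(\alpha,\beta,\tau,S)$ only. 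Since $\rho=1/\tau$ gives $\bar{\partial}_{\pm}\rho=-\tau^{2}\bar{\partial}_{\pm}\tau$, I can pass freely between $\tau$- and $\rho$-derivatives and turn $\bar{\partial}_{\pm}$ of any such coefficient into products of first-order derivatives.

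For the core step I introduce the shorthands $P=\tfrac{p_{\tau\tau}\tan A}{2c^{2}\rho^{4}}$ and $Q=\tfrac{p_{\tau\tau}\Omega\sin 2A}{4c^{2}\rho^{4}}$, so that \eqref{2} and \eqref{8301} read $\bar{\partial}_{+}\beta=-P\bar{\partial}_{+}\rho+(\text{entropy})$ and $\bar{\partial}_{-}\beta=Q\bar{\partial}_{-}\rho+(\text{entropy})$. Solving the first for $\bar{\partial}_{+}\rho$ and applying $\bar{\partial}_{-}$, the Leibniz terms on $P^{-1}$ and on the entropy part are harmless, producing only quadratic products of $\bar{\partial}_{\pm}\rho$ and $\bar{\partial}_{+}S$; the sole genuinely second-order term is $\bar{\partial}_{-}\bar{\partial}_{+}\beta$. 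I handle it in two moves: first the commutator \eqref{comm} on $\beta$ trades $\bar{\partial}_{-}\bar{\partial}_{+}\beta$ for $\bar{\partial}_{+}\bar{\partial}_{-}\beta$ plus quadratic commutator products; then differentiating $\bar{\partial}_{-}\beta=Q\bar{\partial}_{-}\rho+(\text{entropy})$ along $\bar{\partial}_{+}$ evaluates $\bar{\partial}_{+}\bar{\partial}_{-}\beta$ and, crucially, reintroduces $\bar{\partial}_{+}\bar{\partial}_{-}\rho$ with coefficient $Q$. The outcome is a relation $\bar{\partial}_{-}\bar{\partial}_{+}\rho=a\,\bar{\partial}_{+}\bar{\partial}_{-}\rho+Q_{1}$ with $a=-Q/P=-\Omega\cos^{2}A$. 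Pairing it with \eqref{comm} applied to $\rho$, which after inserting \eqref{2} and \eqref{8302} gives $\bar{\partial}_{-}\bar{\partial}_{+}\rho-\bar{\partial}_{+}\bar{\partial}_{-}\rho=Q_{2}$, yields a $2\times2$ linear system for the two mixed derivatives; the normalizing factor is $1-a=1+\Omega\cos^{2}A=\tfrac{4c^{2}\cos^{2}A}{\tau^{3}p_{\tau\tau}}$, using \eqref{40} and $c^{2}=-\tau^{2}p_{\tau}$.

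It remains to identify the coefficients. The leading quadratic term $\tfrac{\tau^{4}p_{\tau\tau}}{4c^{2}\cos^{2}A}(\bar{\partial}_{+}\rho)^{2}$ survives cleanly because the $-(\bar{\partial}_{+}\beta)^{2}$ piece of the $\beta$-commutator appearing in $Q_{1}$ and the corresponding piece of $Q_{2}$ both carry exactly this coefficient, so it passes through the division by $1-a$ unchanged (indeed $\tfrac{\tau^{4}p_{\tau\tau}}{4c^{2}\cos^{2}A}=\tau/(1-a)$). For the entropy part I collect all products into $\bar{\partial}_{-}\rho\,\bar{\partial}_{+}S$, $\bar{\partial}_{+}\rho\,\bar{\partial}_{+}S$ and $(\bar{\partial}_{+}S)^{2}$; the single second-order entropy derivative that arises, through $\bar{\partial}_{-}\omega$ and $\bar{\partial}_{-}\bar{\partial}_{+}S$, I reduce by writing $\bar{\partial}_{-}=2\cos A\,\bar{\partial}_{0}-\bar{\partial}_{+}$ from \eqref{32403}, so that $\bar{\partial}_{-}\bar{\partial}_{+}S=2\cos A\,\bar{\partial}_{0}\bar{\partial}_{+}S-\bar{\partial}_{+}\bar{\partial}_{+}S$, where $\bar{\partial}_{0}\bar{\partial}_{+}S$ is already a product by \eqref{81301} (and \eqref{7403}). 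The remaining irreducible second-order piece $\bar{\partial}_{+}\bar{\partial}_{+}S$ is packaged as the stated term $\bar{\partial}_{+}(g_{4}\bar{\partial}_{+}S)$, and the coefficients $g_{1},\dots,g_{8}$ then come out as elementary functions of $(\alpha,\beta,\tau,S)$.

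I expect the genuine obstacle to be the cross term $\bar{\partial}_{-}\rho\,\bar{\partial}_{+}\rho$: unlike the pure squares, its coefficient receives contributions from three distinct sources — the $\bar{\partial}_{-}$-derivative of $P^{-1}$ multiplying $\bar{\partial}_{+}\beta$, the term $-P^{-1}(\bar{\partial}_{+}Q)\,\bar{\partial}_{-}\rho$ produced when $\bar{\partial}_{-}\beta=Q\bar{\partial}_{-}\rho$ is differentiated, and the mixed products of the two commutators — which must combine, after division by $1-a$, to precisely $\tfrac{\tau^{4}p_{\tau\tau}}{4c^{2}\cos^{2}A}(\varphi-1)$ with $\varphi=2\sin^{2}A-\tfrac{8p_{\tau}\cos^{4}A}{\tau p_{\tau\tau}}$, and to nothing else trigonometric. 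A reliable internal check throughout is that the difference of the two formulas in \eqref{cd} must reproduce \eqref{comm} applied to $\rho$ term by term: since $Q_{2}$ carries $\tfrac{\tau^{4}p_{\tau\tau}}{4c^{2}\cos^{2}A}\big[(\bar{\partial}_{+}\rho)^{2}-(\bar{\partial}_{-}\rho)^{2}\big]$ and no cross term, this simultaneously fixes the leading coefficient and forces the $(\varphi-1)$ cross term to be identical in both formulas, which I would verify.
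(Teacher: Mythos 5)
Your proposal is correct and, at its core, follows the same strategy as the paper: produce one linear relation between $\bar{\partial}_{+}\bar{\partial}_{-}\rho$ and $\bar{\partial}_{-}\bar{\partial}_{+}\rho$ by applying the commutator relation (\ref{comm}) to an auxiliary quantity whose characteristic derivatives are proportional to those of $\rho$, pair it with (\ref{comm}) applied to $\rho$ itself (i.e.\ (\ref{61401})), and solve the resulting $2\times 2$ system, cleaning up the vorticity and entropy terms via (\ref{61402}), (\ref{101102}), (\ref{81301}) and (\ref{4801}). The only genuine difference is the choice of pivot: the paper commutes on $u$, using (\ref{11}) and (\ref{73001}) to convert $\bar{\partial}_{+}u$ and $\bar{\partial}_{-}u$ into $c\tau\sin\beta\,\bar{\partial}_{+}\rho$ and $-c\tau\sin\alpha\,\bar{\partial}_{-}\rho$, which leads to the system (\ref{12}), (\ref{61401}) with determinant $\sin\alpha+\sin\beta$; you commute on $\beta$, using (\ref{2}) and (\ref{8301}), which leads to a system with determinant $1-a=1+\Omega\cos^{2}A=\frac{4c^{2}\cos^{2}A}{\tau^{3}p_{\tau\tau}}$. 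Your identifications $a=-Q/P=-\Omega\cos^{2}A$ and $\tau/(1-a)=\frac{\tau^{4}p_{\tau\tau}}{4c^{2}\cos^{2}A}$ check out, and your consistency test (the difference of the two formulas in (\ref{cd}) must reproduce (\ref{61401}) term by term, with the cross terms cancelling) is exactly the right control on the $(\varphi-1)$ coefficient. One caveat worth recording: your route solves $\bar{\partial}_{+}\beta=-P\bar{\partial}_{+}\rho+\cdots$ for $\bar{\partial}_{+}\rho$, i.e.\ divides by $P\propto p_{\tau\tau}$, which degenerates precisely on the inflection locus of a BZT fluid; the paper's route divides instead by $\sin\alpha+\sin\beta=2\sin\sigma\cos A$, which degenerates for horizontal flow. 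Since the final identity (\ref{cd}) is polynomial in the data it extends by continuity across either degeneracy, and in the regions where the decomposition is actually invoked one has $p_{\tau\tau}>0$, so this is harmless --- but you should say a word about it if you write the computation out in full.
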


\begin{proof}
From (\ref{11}), (\ref{73001}), and (\ref{comm}) we have
$$
\begin{array}{rcl}
&&\bar{\partial}_{+}\big[c\tau\sin\alpha\bar{\partial}_{-}\rho+\omega\cos \sigma\sin A\big]+
\bar{\partial}_{-}\big[c\tau\sin\beta\bar{\partial}_{+}\rho+\omega\cos \sigma\sin A\big]
\\[6pt]&=&\displaystyle\frac{1}{\sin2 A}( \bar{\partial}_{-}\alpha- \cos2 A\bar{\partial}_{+}\beta)(c\tau\sin\alpha\bar{\partial}_{-}\rho+\omega\cos \sigma\sin A-j_4\bar{\partial}_{-} S)+\bar{\partial}_{+}(j_4\bar{\partial}_{-} S)\\[8pt]&&\displaystyle-\frac{1}{\sin2 A}(\bar{\partial}_{+}\beta-\cos2 A \bar{\partial}_{-}\alpha)(c\tau\sin\beta\bar{\partial}_{+}\rho+\omega\cos \sigma\sin A+j_3\bar{\partial}_{+} S)
-\bar{\partial}_{-}(j_3\bar{\partial}_{+} S).
\end{array}
$$
Hence
\begin{equation}
\begin{array}{rcl}
&&\displaystyle(\sin\alpha+\sin\beta)\frac{\partial(c\tau)}{\partial\rho}\frac{1}{c\tau }\bar{\partial}_{+}\rho\bar{\partial}_{-}\rho+\sin\alpha\bar{\partial}_{+}\bar{\partial}_{-}\rho+
\sin\beta\bar{\partial}_{-}\bar{\partial}_{+}\rho\\[8pt]&=&\displaystyle\frac{1}{\sin2 A}
\Big[( \sin\alpha\bar{\partial}_{-}\alpha- \sin\alpha\cos2 A\bar{\partial}_{+}\beta-\cos\alpha\sin2 A\bar{\partial}_{+}\alpha) \bar{\partial}_{-}\rho\\[4pt]&&\displaystyle\qquad\quad-(\sin\beta \bar{\partial}_{+}\beta-\sin\beta \cos2 A \bar{\partial}_{-}\alpha+\cos\beta\sin2 A\bar{\partial}_{-}\beta) \bar{\partial}_{+}\rho\Big]\\[6pt]&&\displaystyle+
\frac{1}{c\tau\sin2 A}\Big[\big(\cos2 A \bar{\partial}_{+}\beta- \bar{\partial}_{-}\alpha\big) j_4\bar{\partial}_{-}S-
\big( \bar{\partial}_{+}\beta-\cos2 A \bar{\partial}_{-}\alpha\big)j_3\bar{\partial}_{+}S\Big]\\[6pt]&&-\displaystyle
\frac{1}{c\tau\sin2 A}\big(\cos2 A \bar{\partial}_{+}\beta- \bar{\partial}_{-}\alpha+
 \bar{\partial}_{+}\beta-\cos2 A \bar{\partial}_{-}\alpha\big)\omega\cos\sigma\sin A
\\[8pt]&&\displaystyle-\frac{1}{c\tau}\Big[\bar{\partial}_{+}(\omega\cos \sigma\sin A)+\bar{\partial}_{-}(\omega\cos \sigma\sin A)\Big]
+\frac{\bar{\partial}_{+}(j_4\bar{\partial}_{-} S)}{c\tau}-\frac{\bar{\partial}_{-}
(j_3\bar{\partial}_{+} S)}{c\tau}\\[8pt]&&\displaystyle-\sin\alpha\frac{\partial(c\tau)}{\partial S}\frac{1}{c\tau }\bar{\partial}_{+}S\bar{\partial}_{-}\rho-\sin\beta\frac{\partial(c\tau)}{\partial S}\frac{1}{c\tau }\bar{\partial}_{-}S\bar{\partial}_{+}\rho.
\end{array}\label{12}
\end{equation}
Applying the commutator relation (\ref{comm}) for $\rho$, we obtain
\begin{equation}\label{61401}
\bar{\partial}_{-} \bar{\partial}_{+}\rho- \bar{\partial}_{+} \bar{\partial}_{-}\rho=
\frac{1}{\sin2 A}\big[(\cos2 A \bar{\partial}_{+}\beta- \bar{\partial}_{-}\alpha) \bar{\partial}_{-}\rho-
(\bar{\partial}_{+}\beta-\cos2 A \bar{\partial}_{-}\alpha) \bar{\partial}_{+}\rho\big].
\end{equation}
Combining this with (\ref{12})
and using (\ref{101102}), (\ref{61402}), (\ref{1})--(\ref{2}), (\ref{81301}), and (\ref{4801})  we obtain
(\ref{cd}).
This completes the proof.
\end{proof}

We define
$$
\mathcal{W}_{+}:=\bar{\partial}_{+}\rho-g_4\bar{\partial}_{+}S\quad \mbox{and} \quad \mathcal{W}_{-}:=\bar{\partial}_{-}\rho-g_8\bar{\partial}_{+}S.
$$
Then by (\ref{cd}) we have
\begin{equation}\label{cd1}
\left\{
  \begin{array}{ll}
   \bar{\partial}_{+}\mathcal{W}_{-}=
   \displaystyle \frac{\tau^4p_{\tau\tau}}{4c^2\cos^{2}A }\Big[\mathcal{W}_{-}^2+(\varphi-1)\mathcal{W}_{-}\mathcal{W}_{+}\Big]
+\hat{g}_1\mathcal{W}_{-}\bar{\partial}_{+}S+\hat{g}_2\mathcal{W}_{+}\bar{\partial}_{+} S+\hat{g}_3(\bar{\partial}_{+} S)^2,\\[12pt]
  \bar{\partial}_{-}\mathcal{W}_{+}=
   \displaystyle  \frac{\tau^4p_{\tau\tau}}{4c^2\cos^{2}A }\Big[\mathcal{W}_{+}^2+(\varphi-1)\mathcal{W}_{-}\mathcal{W}_{+}\Big]
+\hat{g}_4\mathcal{W}_{-} \bar{\partial}_{+}S+\hat{g}_5\mathcal{W}_{+}\bar{\partial}_{+} S+\hat{g}_6(\bar{\partial}_{+} S)^2,
  \end{array}
\right.
\end{equation}
where
$\hat{g}_{i}$ $(i=1, \cdot\cdot\cdot, 6)$ are  elementary functions of $\alpha$, $\beta$, $\tau$ and $S$.
We also use the first equation of (\ref{cd1}) to derive
\begin{equation}\label{cd4}
\bar{\partial}_{+}\left(\frac{1}{\mathcal{W}_{-}}\right)=
-\frac{\tau^4p_{\tau\tau}}{4c^2\cos^{2}A }-\left(\frac{(\varphi-1)\tau^4p_{\tau\tau}\mathcal{W}_{+}}{4c^2\cos^{2}A }
+\hat{g}_1\bar{\partial}_{+}S\right)\frac{1}{\mathcal{W}_{-}}-\frac{\hat{g}_2\mathcal{W}_{+}\bar{\partial}_{+} S+\hat{g}_3(\bar{\partial}_{+} S)^2}{\mathcal{W}_{-}^2}.
\end{equation}

\vskip 4pt
Let $n_1$ be a positive constant.
We define
\begin{equation}\label{81401}
\mathcal{Z}_{+}:=\tau^{n_1}\mathcal{W}_{+}\quad \mbox{and}\quad \mathcal{Z}_{-}:=\tau^{n_1}\mathcal{W}_{-}.
\end{equation}
Then by (\ref{cd1}) we have
\begin{equation}\label{cd2}
\left\{
  \begin{array}{ll}
   \tau^{n_1}\bar{\partial}_{+}\mathcal{Z}_{-}=
   \displaystyle \frac{\tau^4p_{\tau\tau}}{4c^2\cos^{2}A }\Big[\mathcal{Z}_{-}^2+(\varphi-1)\mathcal{Z}_{-}\mathcal{Z}_{+}\Big]-n_1\tau\mathcal{Z}_{-}\mathcal{Z}_{+}
+\Big(\tilde{g}_1\mathcal{Z}_{-}+\tilde{g}_2\mathcal{Z}_{+}\Big)\bar{\partial}_{+} S+\tilde{g}_3(\bar{\partial}_{+} S)^2,\\[12pt]
  \tau^{n_1}\bar{\partial}_{-}\mathcal{Z}_{+}=
   \displaystyle  \frac{\tau^4p_{\tau\tau}}{4c^2\cos^{2}A }\Big[\mathcal{Z}_{+}^2+(\varphi-1)\mathcal{Z}_{-}\mathcal{Z}_{+}\Big]-n_1\tau\mathcal{Z}_{-}\mathcal{Z}_{+}
+\Big(\tilde{g}_4\mathcal{Z}_{-} +\tilde{g}_5\mathcal{Z}_{+}\Big)\bar{\partial}_{+} S+\tilde{g}_6(\bar{\partial}_{+} S)^2,
  \end{array}
\right.
\end{equation}
where
$\tilde{g}_{i}$ $(i=1, \cdot\cdot\cdot, 6)$ are  elementary functions of $\alpha$, $\beta$, $\tau$ and $S$.

\vskip 4pt
Let $n_2$ be a positive constant.
We define
\begin{equation}\label{81402}
\mathcal{R}_{+}:=\rho^{n_2}\mathcal{W}_{+}\quad \mbox{and}\quad \mathcal{R}_{-}:=\rho^{n_2}\mathcal{W}_{-}.
\end{equation}
Then by (\ref{cd1}) we have
\begin{equation}\label{cd3}
\left\{
  \begin{array}{ll}
   \tau^{n_2}\bar{\partial}_{+}\mathcal{R}_{-}=
   \displaystyle \frac{\tau^4p_{\tau\tau}}{4c^2\cos^{2}A }\Big[\mathcal{Z}_{-}^2+(\varphi-1)\mathcal{Z}_{-}\mathcal{Z}_{+}\Big]+n_2\tau\mathcal{Z}_{-}\mathcal{Z}_{+}
+\Big(\breve{g}_1\mathcal{Z}_{-}+\breve{g}_2\mathcal{Z}_{+}\Big)\bar{\partial}_{+} S+\breve{g}_3(\bar{\partial}_{+} S)^2,\\[12pt]
  \tau^{n_2}\bar{\partial}_{-}\mathcal{R}_{+}=
   \displaystyle  \frac{\tau^4p_{\tau\tau}}{4c^2\cos^{2}A }\Big[\mathcal{Z}_{+}^2+(\varphi-1)\mathcal{Z}_{-}\mathcal{Z}_{+}\Big]+n_2\tau\mathcal{Z}_{-}\mathcal{Z}_{+}
+\Big(\breve{g}_4\mathcal{Z}_{-}+\breve{g}_5\mathcal{Z}_{+}\Big)\bar{\partial}_{+} S+\breve{g}_6(\bar{\partial}_{+} S)^2,
  \end{array}
\right.
\end{equation}
where
$\breve{g}_{i}$ $(i=1, \cdot\cdot\cdot, 6)$ are functions of $\alpha$, $\beta$, $\tau$ and $S$.

\vskip 4pt

%We will use this decomposition to overcome the difficulty caused by a singularity on the post-sonic shock.

\subsection{Self-similar fan-shock-fan composite wave}
%We consider supersonic flows past a rarefactive ramp and  look for a self-similar fan-shock-fan composite wave at the corner of
We consider (\ref{PSEU}) with the initial-boundary conditions
\begin{equation}\label{RBD}
\left\{
  \begin{array}{ll}
    (u, v, \tau, S)(0, y)=(u_0, 0, \tau_0, S_0), & \hbox{$y>0$;} \\[4pt]
 v=u\tan\theta_{w}, & \hbox{$(x, y)=r(\cos\theta_w, \sin\theta_w)$, $r>0$,}
  \end{array}
\right.
\end{equation}
where $-\pi<\theta_w<0$ and $(u_0, 0, \tau_0, S_0)$ is a constant state.
We assume that the incoming flow satisfies the following assumptions:
\begin{equation}\label{61801}
 S_{cr}<S_0<S_*; \quad  1<\tau_0<\tau_{f}^e(S_0); \quad u_0>c_0:=\sqrt{-\tau_0p_{\tau}(\tau_0, S_0)}.
\end{equation}
In what follows, we are going to construct a self-similar fan-shock-fan composite wave solution to the problem (\ref{PSEU}), (\ref{RBD}).

\begin{lem}
Assume that $(\bar{q}(\theta), \bar{\tau}(\theta), \bar{\sigma}(\theta), \bar{S}(\theta))$ satisfies the equations
\begin{equation}\label{102302}
\left\{
  \begin{array}{ll}
    \bar{q}(\theta)'\cos(\bar{A}(\theta))+\bar{q}(\theta)\bar{\sigma}'(\theta)\sin(\bar{A}(\theta))=0, \\[4pt]
    \bar{q}(\theta)\bar{q}'(\theta)+\bar{\tau}(\theta)p_{\tau}(\bar{\tau}(\theta), \bar{S}(\theta))\bar{\tau}'(\theta)=0,  \\[4pt]
    \bar{\sigma}(\theta)+\bar{A}(\theta)=\theta,\\[4pt]
\bar{S}'(\theta)=0
  \end{array}
\right.
\end{equation}
and $\bar{q}(\theta)>\bar{c}(\theta)$
 for $\theta\in (\theta_1, \theta_2)$,
where
\begin{equation}\label{102303}
\bar{A}(\theta)=\arcsin\Big(\frac{\bar{c}(\theta)}{\bar{q}(\theta)}\Big)\quad \mbox{and}\quad \bar{c}(\theta)=\bar{\tau}(\theta)\sqrt{-p_{\tau}(\bar{\tau}(\theta), \bar{S}(\theta))}.
\end{equation}
And let
\begin{equation}\label{102306}
\bar{u}(\theta)=\bar{q}(\theta)\cos(\bar{\sigma}(\theta))\quad \mbox{and}\quad \bar{v}(\theta)=\bar{q}(\theta)\sin(\bar{\sigma}(\theta)).
\end{equation}
Then
\begin{equation}
 (u, v, \tau, S)=(\bar{u}, \bar{v}, \bar{\tau}, \bar{S})\big(\arctan(y/x)\big)
\end{equation}
is a centered simple wave (Prandtl-Meyer fan) solution of (\ref{PSEU}) on the triangle region
$$
\begin{aligned}
\Theta&=\Big\{(x, y)~\big|~(x, y)=(r\cos\theta, r\sin\theta),~ r>0,
~\theta\in(\theta_1, \theta_2)
\Big\}.
\end{aligned}
$$
%where $\bar{r}(\theta)=\sqrt{\bar{q}^2(\theta)-\bar{c}^2(\theta)}$.
%The curve $x=\bar{r}(\theta)\cos\theta$, $y=\bar{r}(\theta)\sin\theta$, $\theta \in(\theta_1, \theta_2)$ is called the sonic edge of this simple wave.
\end{lem}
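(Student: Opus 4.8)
The plan is to verify directly that the proposed self-similar field solves the non-conservative smooth form (\ref{PsEuler}), which is equivalent to (\ref{PSEU}) for $C^1$ flows. Writing $r=\sqrt{x^2+y^2}$ and $\theta=\arctan(y/x)$, the only analytic input is the chain rule, since $\theta_x=-\sin\theta/r$ and $\theta_y=\cos\theta/r$. Hence any quantity $\bar f(\theta)$ obeys $\partial_x\bar f=-\bar f'\sin\theta/r$ and $\partial_y\bar f=\bar f'\cos\theta/r$, so after multiplying each equation of (\ref{PsEuler}) by $r$ the system collapses to ordinary differential relations in $\theta$. The task is then to show that these relations coincide with (\ref{102302}), using $\bar q>\bar c$ so that $\bar A\in(0,\pi/2)$ and $\sin\bar A\neq0$.

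First I would dispose of the two easy equations. Writing $\bar u=\bar q\cos\bar\sigma$ and $\bar v=\bar q\sin\bar\sigma$, the third equation of (\ref{102302}) gives $\bar\sigma-\theta=-\bar A$, whence $-\bar u\sin\theta+\bar v\cos\theta=\bar q\sin(\bar\sigma-\theta)=-\bar q\sin\bar A=-\bar c$. The entropy equation $uS_x+vS_y=0$ therefore reduces to $\bar S'(-\bar c)=0$, i.e. to $\bar S'=0$, which is exactly the fourth equation of (\ref{102302}).

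The heart of the argument is a pair of identities for $\bar u'$ and $\bar v'$. Differentiating $\bar u,\bar v$ and eliminating $\bar\sigma'$ through the first equation of (\ref{102302}), which yields $\bar q\bar\sigma'=-\bar q'\cot\bar A$, the addition formulas together with $\bar\sigma+\bar A=\theta$ give the clean identities $\bar u'=\bar q'\sin\theta/\sin\bar A$ and $\bar v'=-\bar q'\cos\theta/\sin\bar A$. With these, and using the Mach-angle relation $\bar c=\bar q\sin\bar A$ together with $\bar p'=p_\tau\bar\tau'$ for $\bar p=p(\bar\tau,\bar S)$ (valid since $\bar S'=0$), each of the two momentum equations of (\ref{PsEuler}) collapses to the single relation $\bar q\bar q'+\bar\tau p_\tau\bar\tau'=0$, which is the second equation of (\ref{102302}).

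Finally, the mass equation $(\rho u)_x+(\rho v)_y=0$ becomes $\bar\rho'(-\bar u\sin\theta+\bar v\cos\theta)+\bar\rho(-\bar u'\sin\theta+\bar v'\cos\theta)=0$; the first bracket equals $-\bar c$ and the second equals $-\bar q'/\sin\bar A$ by the identities above, so substituting $\bar\rho=1/\bar\tau$, $\bar\rho'=-\bar\tau'/\bar\tau^2$ and $\bar c^2=-\bar\tau^2p_\tau$ again funnels everything into $\bar q\bar q'+\bar\tau p_\tau\bar\tau'=0$. Thus all four equations of (\ref{PsEuler}) hold throughout $\Theta$, where $r>0$ keeps the factors $1/r$ finite; and since the field is constant along each ray $\theta=\mathrm{const}$, whose direction $\alpha=\bar\sigma+\bar A=\theta$ is precisely the $C_+$ characteristic direction, the solution is a centered simple wave. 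The computation is purely mechanical, and the only point requiring care is the consistent use of $\bar c=\bar q\sin\bar A$ and the angle convention $\bar\sigma+\bar A=\theta$ to make both the momentum and the mass equations funnel into the same ordinary differential equation; I regard this as the main bookkeeping obstacle rather than a genuine difficulty.
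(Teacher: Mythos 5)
Your proof is correct, but it takes a genuinely different route from the paper's. You verify the non-conservative system (\ref{PsEuler}) head-on by the chain rule, reducing each of the mass, momentum and entropy equations (after multiplication by $r$) to the single ODE $\bar q\bar q'+\bar\tau p_\tau\bar\tau'=0$ via the identities $\bar u'=\bar q'\sin\theta/\sin\bar A$, $\bar v'=-\bar q'\cos\theta/\sin\bar A$ and $-\bar u\sin\theta+\bar v\cos\theta=-\bar c$; I checked these identities and the resulting cancellations, and they are right. The paper instead works inside the characteristic framework it has already built: it notes that the second and fourth equations of (\ref{102302}) give the Bernoulli law, that $\omega=0$, that the tangency relation $(\bar u,\bar v)\cdot(\sin\theta,-\cos\theta)=\bar c$ makes each ray a $C_+$ characteristic (hence $\alpha=\theta$ and $\bar\partial_+u=\bar\partial_+v=\bar\partial_+\tau=0$), and then checks the two characteristic equations (\ref{form}). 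What your approach buys is self-containedness and elementarity — you never need to invoke the equivalence of (\ref{form}) with the system or the machinery of Sections 2.3--2.4. What the paper's approach buys is that it directly exhibits the structural facts used later (the rays are straight $C_+$ characteristics along which the state is constant, i.e.\ the simple-wave property), whereas in your write-up the identification of the rays as $C_+$ characteristics appears only as a closing remark; if you wanted the lemma's ``centered simple wave'' conclusion in the precise sense of Definition \ref{defn1}, you would still want to record explicitly that $\alpha=\theta$ follows from the tangency relation, which your computation of $-\bar u\sin\theta+\bar v\cos\theta=-\bar c$ already contains. The only other point worth stating rather than assuming is the standard equivalence of (\ref{PSEU}) and (\ref{PsEuler}) for $C^1$ flows with $\rho>0$, which the paper itself asserts, so this is not a gap.
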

\begin{proof}
Firstly, by the second and the fourth equations (\ref{102302}) we know that the Bernoulli law (\ref{BERC}) holds.
By the definition of $(u, v)$ we have $\omega=0$ in $\Theta$.
From the third equation of (\ref{102302}) we have
\begin{equation}\label{82104}
\big(\bar{u}(\theta), \bar{v}(\theta)\big)\cdot(\sin\theta, -\cos\theta)=\bar{c}(\theta).
\end{equation}
So, for any fixed $\theta\in (\theta_1, \theta_2)$, the ray $C_{+}^{\theta}:$ $x=r\cos\theta$, $y=r\sin\theta$ ($0\leq r>0$)
is tangent to the sonic circle $(x-\bar{u}(\theta))^2+(y-\bar{v}(\theta))^2=\bar{c}^2(\theta)$.
Therefore,  for any $\theta\in (\theta_1, \theta_2)$, $C_{+}^{\theta}$
 is
a $C_{+}$ characteristic line. Hence, we have $$\alpha=\theta, \quad \bar{\partial}_{+}u=0,\quad \bar{\partial}_{+}v=0,\quad\mbox{and} \quad\bar{\partial}_{+}\tau=0\quad \mbox{in}\quad \Theta.$$ Consequently,
\begin{equation}\label{82101}
\bar{\partial}_{+}u+\lambda_{-}\bar{\partial}_{+}v=0\quad\mbox{in}\quad \Theta.
\end{equation}

By the first equation of (\ref{102302}) and $\alpha=\theta$, we have
\begin{equation}\label{82102}
\begin{aligned}
\bar{\partial}_{-}u+\lambda_{+}\bar{\partial}_{-}v
&=\big( \bar{u}'(\theta)+\tan\theta  \bar{v}'(\theta)\big)\bar{\partial}_{-}\theta\\&=\big(\bar{q}(\theta)'\cos(\bar{A}(\theta))
+\bar{q}(\theta)\bar{\sigma}'(\theta)\sin(\bar{A}(\theta))\big)\frac{\bar{\partial}_{-}\theta}{\cos\theta}=0 \quad\mbox{in}\quad \Theta.
\end{aligned}
\end{equation}
This completes the proof of the Lemma.
\end{proof}

System (\ref{102302}) can by (\ref{102303}) be written as
\begin{equation}\label{102304}
\left\{
  \begin{array}{ll}
    \displaystyle\bar{q}'=\frac{2\bar{c}p_{\tau}(\bar{\tau}, \bar{S})\cos \bar{A}}{\bar{\tau}p_{\tau\tau}(\bar{\tau}, \bar{S})},\\[10pt]
     \displaystyle\bar{\tau}'=-\frac{2\bar{q}\bar{c}\cos \bar{A}}{\bar{\tau}^2p_{\tau\tau}(\bar{\tau}, \bar{S})},\\[10pt]
 \displaystyle\bar{\sigma}'=-\frac{2 p_{\tau}(\bar{\tau}, \bar{S})\cos^2 \bar{A}}{\bar{\tau}p_{\tau\tau}(\bar{\tau}, \bar{S})},\\[10pt]
 \displaystyle\bar{S}'=0.
  \end{array}
\right.
\end{equation}
%The system (\ref{102304}) is a nonlinear ordinary differential equations. From the second equation of (\ref{102304}) we also have
%\begin{equation}
%\bar{c}'=\frac{(2p_{\tau}(\bar{\tau}, S_0)+\bar{\tau}p_{\tau\tau}(\bar{\tau}, S_0))\bar{q}\cos \bar{A}}{\bar{\tau}p_{\tau\tau}(\bar{\tau}, S_0)}.
%\end{equation}

We first consider (\ref{102304}) with data
\begin{equation}\label{102305a}
(\bar{q}, \bar{\tau}, \bar{\sigma}, \bar{S})(\alpha_0)=(u_0, \tau_0, 0, S_0),
\end{equation}
where $\alpha_0=\arcsin(\frac{c_0}{u_0})$.
Since $\tau_0<\tau_{f}^{e}(S_0)<\tau_1^i(S_0)$, there exits a $\phi_d<\alpha_0$
such that the initial value problem (\ref{102304}), (\ref{102305a}) admits a solution for $\theta\in [\phi_{d}, \alpha_0)$, and the solution satisfies
$$
\bar{q}'(\theta)<0, \quad  \bar{\sigma}'(\theta)>0, \quad \bar{\tau}'(\theta)<0,  \quad \mbox{and}\quad \bar{\tau}(\phi_{d})=\tau_f^e(S_0).
$$
For convenience, we denote the solution of the problem (\ref{102304}), (\ref{102305a})
by $(\bar{q}_l, \bar{\tau}_l, \bar{\sigma}_l, \bar{S}_l)(\theta)$, $\theta\in [\phi_{d}, \alpha_0]$.

Set
$\bar{u}_{l}(\theta)=\bar{q}_l(\theta)\cos(\bar{\sigma}_l(\theta))$ and $\bar{v}_{l}(\theta)=\bar{q}_l(\theta)\sin(\bar{\sigma}_l(\theta))$.
Then
$$(u, v, \tau, S)=(\bar{u}_{l}, \bar{v}_{l}, \bar{\tau}_{l}, \bar{S}_{l})(\arctan(y/x))$$ is a centered simple wave  solution of (\ref{PSEU}) on $\big\{(x, y)~\big|~(x, y)=(r\cos\theta, r\sin\theta),~ r>0,
~\theta\in(\phi_d, \alpha_0)
\big\}$.
Since $\bar{\tau}_{l}(\phi_d)=\tau_f^e(S_0)$ and $\bar{S}_{l}(\phi_d)=S_0$, the fan wave can be  followed by a double-sonic shock located at $\theta=\phi_d$, and the front side state of the  shock is $(\bar{u}_l(\phi_d), \bar{v}_l(\phi_d), \bar{\tau}_l(\phi_d), S_0)$. We denote by $(u_d, v_d, \tau_d, S_d)$ the back side state of the double-sonic shock. By the previous discussion we have
\begin{equation}\label{81405}
\tau_d=\hat{\eta}(\tau_f^e(S_0))\tau_f^e(S_0), \quad S_d=\hat{S}(\tau_d).
\end{equation}
The velocity $(u_d, v_d)$ can then be determined by (\ref{52304}), (\ref{6810a}) and the first and the third relations of (\ref{RHH}).
Let
$$
\sigma_d=\arctan\Big(\frac{v_d}{u_d}\Big)\quad \mbox{and}\quad A_d=\frac{c_d}{q_d},
$$
where $q_d=\sqrt{u_d^2+v_d^2}$ and $c_d=\sqrt{-\tau_d^2p_{\tau}(\tau_d, S_d)}$.
By $\tau_d>\bar{\tau}_l(\phi_d)$ we have $\sigma_d<\bar{\sigma}_{l}(\phi_{d})$.

We next consider (\ref{102304}) with data
\begin{equation}\label{102305}
(\bar{q}, \bar{\tau}, \bar{\sigma}, \bar{S})(\phi_{d})=(q_d, \tau_d, \sigma_d, S_d).
\end{equation}

Let
$$
\hat{q}(\tau)=\sqrt{q_d^2+2h(\tau_d, S_d)-h(\tau, S_d)}, \quad \tau\geq\tau_d.
$$
Since $\tau_d=\tau_{b}^e(S_d)>\tau_2^i(S_d)$,
 the function $q=\hat{q}(\tau)$ is strictly monotonically increasing for $\tau>\tau_d$ and has an inverse function $\tau=\hat{\tau}(q)$ for $q>q_d$.
Let
\begin{equation}\label{3506}
\alpha_v=-\int_{q_d}^{q_{_{lim}}}\frac{\sqrt{q^2-\hat{c}^2(q)}}{q\hat{c}(q)}{\rm d}q.
\end{equation}
where
$$
q_{_{lim}}=\sqrt{q_d^2+2h(\tau_d, S_d)-h_{\infty}}, \quad h_{\infty}=\lim\limits_{\tau\rightarrow +\infty}h(\tau, S_d), \quad \mbox{and}\quad  \hat{c}(q)=\hat{\tau}(q)\sqrt{-p_{\tau}(\hat{\tau}(q), S_d)}.
$$
Then the initial value problem (\ref{102304}), (\ref{102305}) admits a solution for $\theta\in (\alpha_v, \phi_{d})$. Moreover, the solution satisfies
$$
\bar{q}'(\theta)<0, \quad  \bar{\sigma}'(\theta)>0, \quad \bar{\tau}'(\theta)<0, \quad \lim\limits_{\theta\rightarrow \alpha_v}\bar{\tau}(\theta)=+\infty, \quad \mbox{and}\quad \lim\limits_{\theta\rightarrow \alpha_v}\bar{q}(\theta)=q_{_{lim}}.
$$
For convenience, we denote the solution of the problem (\ref{102304}), (\ref{102305})
by $(\bar{q}_r, \bar{\tau}_r, \bar{\sigma}_r, \bar{S}_r)(\theta)$, $\theta\in (\alpha_v, \phi_d]$.
Let
$\bar{u}_{r}(\theta)=\bar{q}_r(\theta)\cos(\bar{\sigma}_r(\theta))$ and $\bar{v}_{r}(\theta)=\bar{q}_r(\theta)\sin(\bar{\sigma}_r(\theta))$.
Then
$$(u, v, \tau, S)=(\bar{u}_{r}, \bar{v}_{r}, \bar{\tau}_{r}, \bar{S}_{r})(\arctan(y/x)), \quad \alpha_v<\arctan(y/x)\leq \phi_d$$ is a centered simple  wave  solution of (\ref{PSEU}) on
$\big\{(x, y)~\big|~(x, y)=(r\cos\theta, r\sin\theta),~ r>0,
~\theta\in(\alpha_v, \phi_d)
\big\}$.

When
 $\alpha_v<\theta_{w}<\sigma_d$, there exists a unique $\alpha_w\in (\alpha_v, \phi_d)$ such that
$\bar{v}_r(\alpha_w)=\bar{u}_r(\alpha_w)\tan\theta_{w}$. In this case, the problem (\ref{PSEU}), (\ref{RBD}) has a self-similar fan-shock-fan composite wave solution which can be represented by
\begin{equation}\label{72801a}
(u, v, \tau, S)=\left\{
               \begin{array}{ll}
                 (u_0, 0, \tau_0, S_0), & \hbox{$\alpha_0\leq\theta\leq \frac{\pi}{2}$;} \\[2pt]
                 (\bar{u}_l, \bar{v}_l, \bar{\tau}_l, \bar{S}_l)(\theta), & \hbox{$\phi_d<\theta<\alpha_0$;} \\[2pt]
 (\bar{u}_r, \bar{v}_r, \bar{\tau}_r, \bar{S}_r)(\theta), & \hbox{${\alpha}_w<\theta<\phi_d$;} \\[2pt]
                 (u_w, v_w, \tau_w, S_d), & \hbox{$\theta_w<\theta<{\alpha}_w$,}
               \end{array}
             \right.
\end{equation}
where  $(u_w, v_w, \tau_w)=(\bar{u}_r, \bar{v}_r, \bar{\tau}_r)(\alpha_w)$ and $(x,y)=(r\cos\theta, r\sin\theta)$; see Figure \ref{Figure5}.

%The function $(\bar{u}, \bar{v}, \bar{\tau}, \bar{S})(\theta)$ is discontinuous at $\theta=\phi_d$.

%=============
\begin{figure}[htbp]
\begin{center}
\includegraphics[scale=0.45]{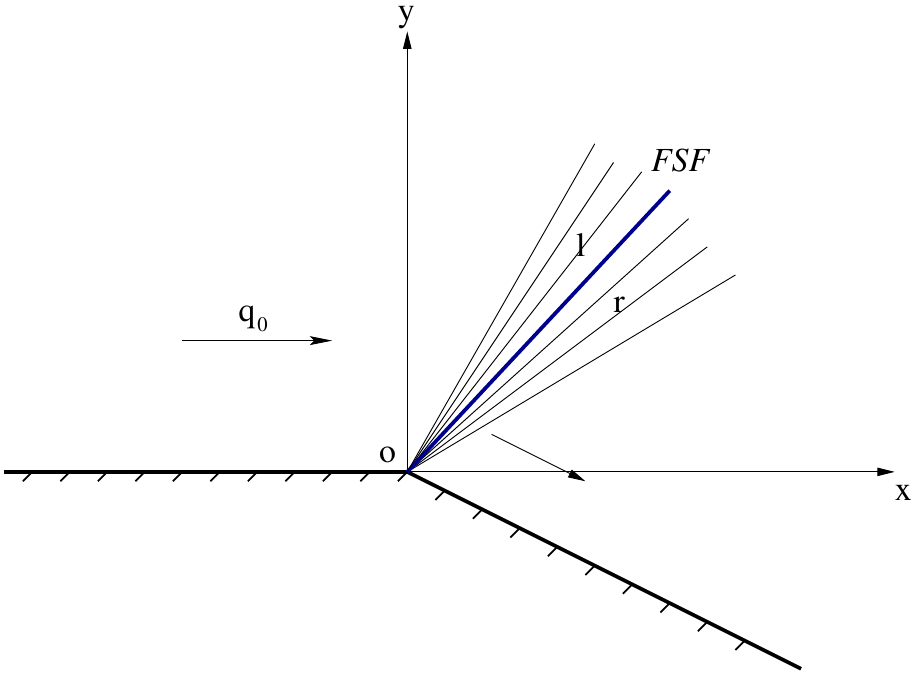}
\caption{\footnotesize An oblique fan-shock-fan composite wave.}
\label{Figure5}
\end{center}
\end{figure}
%==========

When
 $\theta_{w}\leq \alpha_v$, the problem (\ref{PSEU}), (\ref{RBD}) has a self-similar fan-shock-fan composite wave solution which can be represented by
$$
(u, v, \tau, S)=\left\{
               \begin{array}{ll}
                 (u_0, 0, \tau_0, S_0), & \hbox{$\alpha_0\leq\theta\leq \frac{\pi}{2}$;} \\[2pt]
                 (\bar{u}_l, \bar{v}_l, \bar{\tau}_l, \bar{S}_l)(\theta), & \hbox{$\phi_d<\theta<\alpha_0$;} \\[2pt]
 (\bar{u}_r, \bar{v}_r, \bar{\tau}_r, \bar{S}_r)(\theta), & \hbox{${\alpha}_v<\theta<\phi_d$;} \\[2pt]
                \mbox{vacuum}, & \hbox{$\theta_w<\theta<{\alpha}_v$.}
               \end{array}
             \right.
$$

\subsection{Stability of the fan-shock-fan composite wave}

\subsubsection{\bf Problem and the main result}
We now study the stability of the oblique fan-shock-fan composite wave (\ref{72801a}).
%We assume that the slope of the wedge is straight and the incoming flow is not a uniform flow.
We consider (\ref{PSEU}) with the boundary conditions
\begin{equation}\label{RBD1}
\left\{
  \begin{array}{ll}
    (u, v, \tau, S)(0, y)=(u_i, v_{i}, \tau_i, S_i)(y), & \hbox{$y\geq 0$;} \\[4pt]
 v(x,y)=u(x,y)w'(x), & \hbox{$y=w(x)$, $x>0$.}%x\tan\theta_w$,}
  \end{array}
\right.
\end{equation}
In (\ref{RBD1}),
 $(u_i, v_{i}, \tau_i, S_i)(y)\in C^1[0, +\infty)$ and $w(x)\in C^2[0,+\infty)$ are given functions which satisfy
\begin{equation}\label{8280a}
q_{i}(y)>c_{i}(y)\quad \mbox{and} \quad -\frac{\pi}{2}<\beta_{i}(y)<\alpha_i(y)<\frac{\pi}{2}\quad  \mbox{for}\quad y>0,
\end{equation}
where $q_{i}=\sqrt{u_i^2+v_i^2}$, $c_i=\tau_i\sqrt{-p_{\tau}(\tau_i, S_i)}$, $\beta_i=\arctan(\frac{v_i}{u_i})-\arcsin(\frac{c_i}{q_i})$, and $\alpha_i=\arctan(\frac{v_i}{u_i})+\arcsin(\frac{c_i}{q_i})$.
So, the problem (\ref{PSEU}), (\ref{RBD1}) is a initial-bundary value problem.

Set $(u_0, v_0, \tau_0, S_0)=(u_i, v_{i}, \tau_i, S_i)(0)$,  $c_0=\sqrt{-\tau_0^2p_{\tau}(\tau_0, S_0)}$ and $\theta_w=\arctan w'(0)$. We assume
\begin{description}
\item[(A1)] $u_0>c_0$, $v_0=0$, $S_{cr}<S_0<S_*$, $1<\tau_0<\tau_f^e(S_0)$, and $\alpha_v<\theta_w<\sigma_d$, where the constants $\alpha_v$ and $\sigma_d$ are in the last subsection.
\end{description}
By the result of the last subsection we know that if $(u_i, v_i, \tau_i, S_i)(y)\equiv(u_0, 0, \tau_0, S_0)$ for $y>0$ and $w''(x)\equiv 0$ for $x>0$, then the problem (\ref{PSEU}), (\ref{RBD1}) admits a self-similar fan-shock-fan composite wave solution (\ref{72801a}).

On the angular $\{(x,y)\mid w(x)<y<x\tan\alpha_w, x>0\}$, we consider (\ref{PSEU}) with the boundary conditions
\begin{equation}\label{72805}
\left\{
  \begin{array}{ll}
    (u, v, \tau, S)= (\bar{u}_r, \bar{v}_r, \bar{\tau}_r, \bar{S}_r)(\alpha_w), & \hbox{$y=x\tan\alpha_w$, $x>0$;} \\[4pt]
    v(x,y)=u(x,y)w'(x), & \hbox{$y=w(x)$, $x>0$,}%x\tan\theta_w$,}
  \end{array}
\right.
\end{equation}
where the function $(\bar{u}_r, \bar{v}_r, \bar{\tau}_r, \bar{S}_r)(\theta)$ and the constant $\alpha_w$ are given in the last subsection.
By \cite{CaF} and \cite{Li-Yu} we know that the problem (\ref{PSEU}), (\ref{72805}) admits a local simple wave solution, even if $w''(0)\neq0$.  This implies that if $(u_i, v_i, \tau_i, S_i)(y)\equiv(u_0, 0, \tau_0, S_0)$ for $y>0$, then near the corner the self-similar fan-shock-fan composite wave will not be perturbed by the curved ramp.
So, we  assume that the ramp is straight, i.e., $w(x)=x\tan\theta_w$ for $x>0$,  and the incoming flow is not uniform.

%=============
\begin{figure}[htbp]
\begin{center}
\includegraphics[scale=0.5]{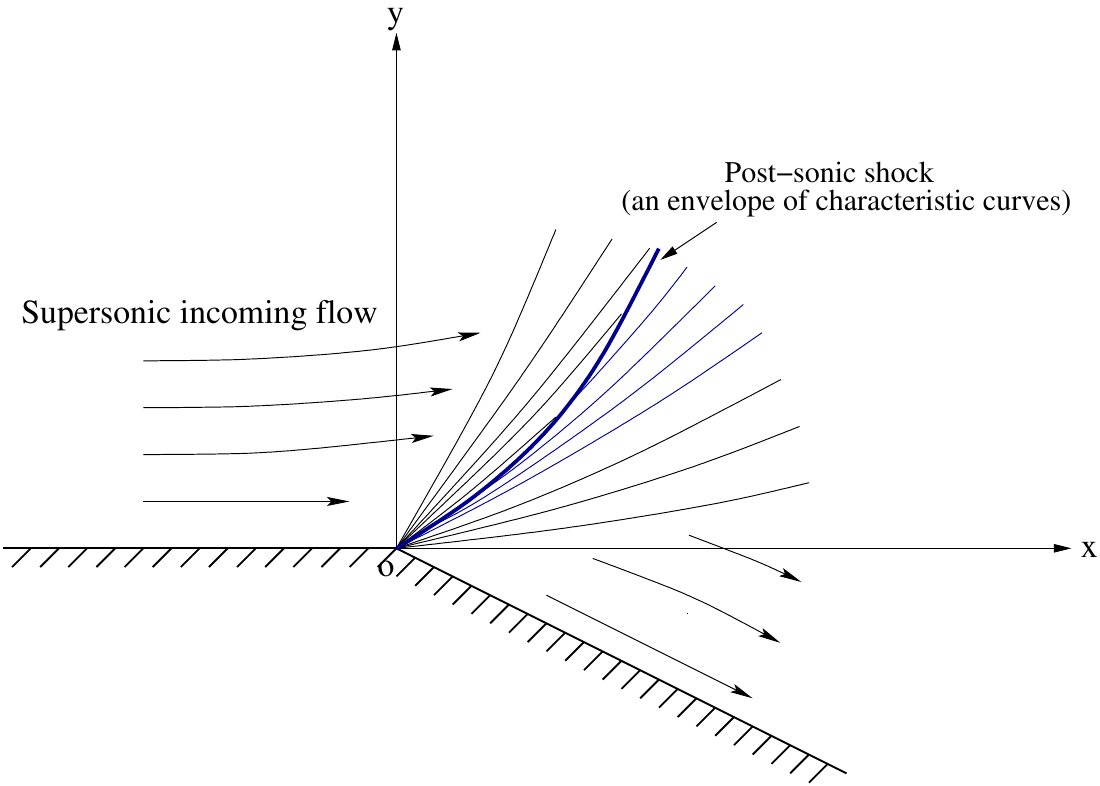}
\caption{\footnotesize A 2D steady supersonic flow with a post-sonic shock and two centered waves past a rarefactive ramp.}
\label{Figure6}
\end{center}
\end{figure}
%==========

By the structure of the self-similar solution (\ref{72801a}), we can deduce that there is a shock issued from the origin and the shock is double-sonic at the origin. However, the type of the shock away from the origin is a priori unknown.
Actually, the type will be determined by the incoming flow.
We purpose to construct a supersonic solution with a post-sonic shock near the corner,
so we also make the following assumptions about the incoming flow:
\begin{description}
  \item[(A2)] $B(u_i, v_{i}, \tau_i, S_i)=\mbox{Const.}$ and $S_i\equiv S_0$;
  \item[(A3)] $u_{i}'(0)+v_{i}'(0)\tan\alpha_0>0$, where $\alpha_0=\arcsin(\frac{c_0}{u_0})$.
\end{description}
The main result is stated as follows.
\begin{thm}\label{main}
Assume that assumptions {\bf(A1)--(A3)} hold. Then there exist small $\delta>0$ and $\delta'>0$ such that the problem (\ref{PSEU}), (\ref{RBD1}) admits a local piecewise Lipschitz continuous solution on the domain $\{(x, y)\mid x\tan\theta_{w}\leq y\leq\delta', 0\leq x\leq\delta\}$. The solution contains a single shock issued from the origin. The shock is post-sonic and an envelop of $C_{+}$ characteristic curves of the flow downstream of it,  and not a characteristic; see Figure \ref{Figure6}.
\end{thm}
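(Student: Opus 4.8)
The plan is to treat the configuration as a one-sided free boundary problem in which the shock $y=s(x)$, with $s(0)=0$ and $s'(0)=\tan\phi_d$, separates an upstream region above it from a downstream region between it and the straight ramp $\{y=x\tan\theta_w\}$. First I would observe that, by \textbf{(A2)}, the transport of entropy along streamlines, and (\ref{61402}), the whole upstream flow is isentropic ($S\equiv S_0$) and irrotational ($\omega=0$); there $\bar{\partial}_{\pm}S=0$, the entropy weights in $\mathcal{W}_{\pm}$ drop out, and the system (\ref{cd1}) reduces to the homogeneous Riccati pair for $\mathcal{W}_{\pm}=\bar{\partial}_{\pm}\rho$. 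I would then solve the upstream Cauchy problem from the data on $\{x=0\}$ by the standard supersonic characteristic method; since the upstream flow is supersonic relative to the shock it is determined by the incoming data alone, and evaluating it along the shock produces the front state $(\tau_f,S_0)$ as a function of position, with $\tau_f=\tau_f^e(S_0)$ at the corner by \textbf{(A1)}.

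Next I would fix the type of the shock. A short computation from (\ref{72804}) and the relation $\bar{\partial}_{+}u+\tan\beta\,\bar{\partial}_{+}v=0$ (the $\omega=0$ form of (\ref{form})) gives $u_i'(0)+v_i'(0)\tan\alpha_0=-c_0\tau_0\,\bar{\partial}_{+}\rho$ at the corner, so \textbf{(A3)} is exactly the monotonicity $\mathcal{W}_{+}=\bar{\partial}_{+}\rho<0$ there. Propagating this sign through the homogeneous Riccati equation shows that $\tau$ increases along $C_{+}$, hence $\tau_f$ increases along the shock and leaves the double-sonic value into $(\tau_f^e(S_0),\tau_1^i(S_0))$. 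On this interval Proposition \ref{81702} applies: it furnishes the back state $(\tau_b,S_b)=(\tau_{po}(\tau_f),S_{po}(\tau_f))$, the post-sonic inequality $\rho_f c_f<|m|=\rho_b c_b$, and the tangency $m^2=-p_\tau(\tau_b,S_b)$. The equality $\rho_b c_b=|m|$ says precisely that the downstream normal velocity is sonic, i.e. the shock is tangent to the downstream $C_{+}$ direction; combined with the vanishing first derivatives in (\ref{73002}), this pins the shock slope to $\lambda_{+}$ of the back state and identifies the shock as an envelope of the downstream $C_{+}$ family rather than a member of it.

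The core of the argument is the downstream region, where the shock is a sonic (envelope) boundary and the flow is only Lipschitz up to it. Because the curved shock carries a varying entropy $S_b$ inward, the downstream flow is rotational with $\bar{\partial}_{\pm}S\neq0$, so I would keep the full decomposition (\ref{cd1}) and control the entropy gradient through the streamline invariance of $\mathcal{F}(\tau,S)\bar{\partial}_{+}S$ from (\ref{7403}), reducing $\bar{\partial}_{+}S$ in the interior to its shock values. The degeneracy is that $\mathcal{W}_{-}=\bar{\partial}_{-}\rho-g_8\bar{\partial}_{+}S$ blows up as the envelope is approached; I would instead track its reciprocal, which by (\ref{cd4}) obeys a regular transport equation along $C_{+}$ whose leading term is $-\tau^4 p_{\tau\tau}/(4c^2\cos^2 A)$. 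Since downstream $\tau>\tau_2^i(S_d)$ forces $p_{\tau\tau}>0$, this term has a definite sign, so $1/\mathcal{W}_{-}$ decreases to $0$ at a controlled rate and pins down the order of the singularity. To close a priori bounds on the unbounded fields I would pass to the weighted variables $\mathcal{Z}_{\pm}=\tau^{n_1}\mathcal{W}_{\pm}$ (or $\mathcal{R}_{\pm}=\rho^{n_2}\mathcal{W}_{\pm}$) and choose $n_1$ (resp. $n_2$) large enough that the linear term $-n_1\tau\,\mathcal{Z}_{-}\mathcal{Z}_{+}$ in (\ref{cd2}) dominates the cross term coming from $\frac{\tau^4 p_{\tau\tau}}{4c^2\cos^2 A}(\varphi-1)$, yielding one-sided invariant regions for the Riccati system. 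This is the step I expect to be the main obstacle, because the weight must simultaneously absorb the sonic blow-up, respect the sign of $p_{\tau\tau}$, and dominate the source terms involving $\bar{\partial}_{+}S$ in (\ref{cd2}).

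Finally I would close the free boundary problem by iteration. Passing to characteristic coordinates to straighten the shock, one maps a candidate shock to: the upstream front state, then via Proposition \ref{81702} the back state and the sonic slope, then the downstream flow integrated up to the shock with the weighted estimates, and finally an updated shock determined by $s'(x)=\tan\alpha_b$. The uniform bounds from the weighted characteristic decomposition make this map well defined and allow a fixed-point/continuation argument on a small domain $\{x\tan\theta_w\le y\le\delta',\ 0\le x\le\delta\}$, provided $\delta,\delta'$ are small and $S_*-S_0$ is small so that Proposition \ref{81702} is in force. The resulting flow is smooth off the shock and Lipschitz up to it on each side, hence piecewise Lipschitz continuous; the shock issues from the origin, is post-sonic for $x>0$, and is the envelope of the downstream $C_{+}$ characteristics, which proves the theorem.
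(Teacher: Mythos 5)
Your outline captures the right upstream analysis (isentropy/irrotationality from \textbf{(A2)}, the sign $\bar{\partial}_{+}\rho<0$ from \textbf{(A3)}, Proposition \ref{81702} for the post-sonic back state, and the weighted quantities $\mathcal{Z}_{\pm}$, $\mathcal{R}_{\pm}$ for the downstream Riccati system), but it has a genuine gap at the central difficulty of the theorem: you never explain how to \emph{solve} the downstream boundary value problem when the data curve $\Gamma_{\delta}$ is an envelope of the downstream $C_{+}$ characteristics. "Integrating the downstream flow up to the shock with the weighted estimates" presupposes a solution whose existence is exactly what is in question: the boundary is everywhere tangent to one characteristic family of the unknown flow, $\mathcal{W}_{-}$ blows up there, and the standard local existence theory for non-characteristic or Goursat data does not apply. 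The paper's device is to tilt the boundary, replacing $\psi$ by $\psi_{\varepsilon}$ with $\psi_{\varepsilon}'=\psi'+\varepsilon$ and transplanting the back-side data onto it (problem (\ref{PSEU}), (\ref{PBD})); for each $\varepsilon>0$ this is a solvable discontinuous boundary value problem, the invariant-region estimates for $\mathcal{Z}_{\pm}$, $\mathcal{R}_{\pm}$ are made uniform in $\varepsilon$ on interior subdomains (with (\ref{cd4}) controlling the rate at which $1/\mathcal{W}_{-}$ vanishes), and the Lipschitz solution is obtained by Arzel\`a--Ascoli and a diagonal limit as $\varepsilon\to 0$. Some such regularization-and-compactness step (or an equivalent, e.g. a hodograph reformulation) is indispensable and is absent from your plan.

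A second, related misstep is the closing fixed-point iteration. Because the shock is post-sonic, the Rankine--Hugoniot conditions together with the sonic condition $\rho_b c_b=|m|$ determine the back state \emph{and} the shock inclination $\phi=\alpha_b$ from the front state alone; the shock is therefore obtained by solving the scalar ODE (\ref{61115}), (\ref{61114}) inside the already-constructed upstream centered wave (in the $(x,\zeta)$ coordinates that resolve the corner singularity), with no input from the downstream flow. An iteration that updates the shock from the downstream trace of $\alpha_b$ is both unnecessary and fragile, since the downstream solution is only Lipschitz at the shock and its characteristic angle has no well-controlled trace there. Two further ingredients you omit are needed to make the downstream estimates start: the a priori verification that the shock enters the interior of the upstream fan with $\tau_f\in(\tau_f^e(S_0),\tau_1^i(S_0))$ (the computation of $\partial_{_\Gamma}(\phi-\alpha_1)>0$ at the corner, which uses both $\partial_x\rho_1<0$ and $\partial_\zeta\rho_1>0$, not just sign propagation along $C_{+}$), and the structural conditions of Lemma \ref{61302}, which give the signs of $\cos\alpha_b\bar{\partial}_{_\Gamma}u_b+\sin\alpha_b\bar{\partial}_{_\Gamma}v_b$ and $\cos\beta_b\bar{\partial}_{_\Gamma}u_b+\sin\beta_b\bar{\partial}_{_\Gamma}v_b$ on $\Gamma_{\delta}$ and thereby seed the invariant regions $\mathcal{Z}_{\pm}<-\tfrac12\mathcal{C}_1$, $\mathcal{R}_{+}>-2\mathcal{C}_2$ on the boundary.
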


In the following discussion, we will construct the solution piece by piece. We first show that under assumption {\bf(A3)} the shock away from the origin is post-sonic, and then use a characteristic decomposition method to overcome the difficulty caused by the singularity on the back side of the  post-sonic shock.

\subsubsection{\bf Cauchy problem}
We first consider (\ref{PSEU}) with the data
\begin{equation}\label{61901}
(u, v, \tau, S)(0, y)=(u_i, v_{i}, \tau_i, S_i)(y), \quad y\geq0.
\end{equation}
By (\ref{8280a}) the problem (\ref{PSEU}), (\ref{61901}) is a Cauchy problem.
\begin{lem}
Under assumptions {\bf (A1)--(A3)}, there exists a small $\delta>0$ such that the Cauchy problem (\ref{PSEU}), (\ref{61901}) admits a classical solution on the domain  $\{(x, t)\mid y_2(x)\leq y\leq y_2(\delta), 0\leq x\leq\delta\}$, where $y=y_2(x)$ is a $C_{+}$ characteristic curve issued from the origin, i.e.,
$$
\frac{{\rm d}y_2(x)}{{\rm d}x}=\tan\alpha(x, y_2(x)), \quad y_2(0)=0;
$$
see Figure \ref{Figure7} (left).
Moreover, the solution satisfies
\begin{equation}\label{61902}
-3\mathcal{L}_0<\bar{\partial}_{+}\rho<-\mathcal{L}_0\quad \mbox{on}\quad y=y_2(x),\quad 0\leq x\leq\delta,
\end{equation}
where $$\mathcal{L}_0:=\frac{u_{i}'(0)+v_{i}'(0)\tan\alpha_0}{2c_0\tau_0}.$$
\end{lem}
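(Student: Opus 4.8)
The plan is to first use assumption \textbf{(A2)} to collapse the system to an isentropic, irrotational flow, then invoke standard local existence for the non-characteristic Cauchy problem, and finally pin down $\bar{\partial}_{+}\rho$ at the corner by an \emph{exact} computation and propagate the bound by continuity. For the reduction, I would show $S\equiv S_0$ and $\omega\equiv0$ throughout the solution region. Since $S_i\equiv S_0$ on $\{x=0\}$ and the fourth equation of (\ref{PsEuler}) transports $S$ along the stream lines $C_0$, whose direction $(\cos\sigma,\sin\sigma)$ satisfies $\sigma=(\alpha+\beta)/2\in(-\tfrac{\pi}{2},\tfrac{\pi}{2})$ by (\ref{8280a}) and is hence transversal to $\{x=0\}$, the value $S_0$ is carried into $x>0$; thus $S\equiv S_0$ and $\bar{\partial}_{\pm}S\equiv0$. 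Combined with $B=\mbox{Const.}$ from \textbf{(A2)} (itself propagated by (\ref{TB})), formula (\ref{61402}) forces $\omega\equiv0$. Consequently $\mathcal{W}_{\pm}=\bar{\partial}_{\pm}\rho$ and all the inhomogeneous terms carrying $\bar{\partial}_{\pm}S$ and $\omega$ in (\ref{11})--(\ref{81005}) vanish.

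For existence, I would observe that under (\ref{8280a}) the system (\ref{PSEU}) is strictly hyperbolic and the initial line $\{x=0\}$ is non-characteristic, because the characteristic slopes $\tan\alpha_i,\tan\beta_i$ are finite ($|\alpha_i|,|\beta_i|<\tfrac{\pi}{2}$). The classical local existence theory for the $C^1$ Cauchy problem of quasilinear hyperbolic systems (see \cite{Li-Yu}) then yields a $C^1$ solution on a strip $\{0\le x\le\delta,\ y\ge y_2(x)\}$ whose lower boundary is exactly the $C_{+}$ characteristic $y_2(x)$ issuing from the lowest data point, the origin: points below $y_2(x)$ would be determined by data at $y<0$, which is not prescribed. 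Since $\alpha$ is continuous, $y_2(x)$ is a well-defined $C^1$ curve, and after shrinking $\delta$ the stated curvilinear-triangular domain lies inside the existence region.

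The heart of the matter is the bound (\ref{61902}), and I would begin by evaluating $\bar{\partial}_{+}\rho$ at the origin. There $v_0=0$ gives $\sigma=0$ and $\alpha=\alpha_0=-\beta$, so with $\omega=0$, $\bar{\partial}_{\pm}S=0$ the relations (\ref{11}), (\ref{73001}), (\ref{72804}), (\ref{81005}) express $\bar{\partial}_{\pm}u,\bar{\partial}_{\pm}v$ linearly in $\bar{\partial}_{\pm}\rho$. Rewriting the transversal derivative $\partial_y$ through (\ref{72802}) and forming the combination $u_y+v_y\tan\alpha_0$, the $\bar{\partial}_{-}\rho$ contributions cancel and one is left with the clean identity
\begin{equation*}
u_i'(0)+v_i'(0)\tan\alpha_0=-c_0\tau_0\,\bar{\partial}_{+}\rho\big|_{(0,0)},
\end{equation*}
so that $\bar{\partial}_{+}\rho|_{(0,0)}=-2\mathcal{L}_0$, precisely the midpoint of the target interval; assumption \textbf{(A3)} guarantees $\mathcal{L}_0>0$. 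Because the solution is $C^1$, the map $x\mapsto\bar{\partial}_{+}\rho(x,y_2(x))$ is continuous with value $-2\mathcal{L}_0$ at $x=0$, so a further shrinking of $\delta$ keeps it inside the open interval $(-3\mathcal{L}_0,-\mathcal{L}_0)$, which is (\ref{61902}).

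The step I expect to be the main obstacle is this exact corner computation: it requires carefully combining the simplified characteristic relations with the change of frame (\ref{72802}) and exploiting $\sigma=0$, $\alpha_0=-\beta$ so that the $C_{-}$-directional derivatives drop out and leave \emph{exactly} the combination $u_i'(0)+v_i'(0)\tan\alpha_0$ appearing in $\mathcal{L}_0$. Everything surrounding it is either a citation (existence on the characteristic triangle) or a soft continuity-and-smallness argument, so the delicacy lies in verifying that the algebra at the origin produces the midpoint value $-2\mathcal{L}_0$ rather than merely an order-of-magnitude bound.
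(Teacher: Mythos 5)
Your proposal is correct and follows essentially the same route as the paper: reduce to an isentropic, irrotational flow via \textbf{(A2)} (the paper's citation of \textbf{(A3)} here appears to be a typo), invoke Li--Yu for local existence on the characteristic-bounded domain, compute the exact value $\bar{\partial}_{+}\rho(0,0)=-2\mathcal{L}_0$ from the characteristic relations (\ref{form}), (\ref{11}), (\ref{72804}) with $\omega=\bar{\partial}_{+}S=0$ and $\alpha(0,0)=-\beta(0,0)=\alpha_0$, and conclude (\ref{61902}) by continuity after shrinking $\delta$. The only cosmetic difference is that you derive the corner identity by decomposing $\partial_y$ via (\ref{72802}) and cancelling the $\bar{\partial}_{-}$ terms, whereas the paper uses $\bar{\partial}_{+}u=-\tan\beta\,\bar{\partial}_{+}v$ directly; both yield the same identity.
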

\begin{proof}
The local existence of a classical solution follows routinely from the idea of Li and Yu \cite{{Li-Yu}} (Chap. 2).
Moreover, by assumption {\bf(A3)} and (\ref{61402}) we know that the solution is isentropic and irrotational.
Furthermore, by (\ref{form}) we have that the solution satisfies
$$
\bar{\partial}_{+}u=-\tan\beta\bar{\partial}_{+}v=-\cos\alpha\tan\beta(u_y+\tan\alpha v_y)\quad \mbox{for}\quad x=0,\quad y\geq 0.
$$
Hence, by (\ref{11}) and $\alpha(0, 0)=-\beta(0,0)=\alpha_0$ we have that the solution satisfies
$$
(\bar{\partial}_{+}\rho)(0, 0)=-\frac{u_{i}'(0)+ v_{i}'(0)\tan\alpha_0}{c_0\tau_0}.
$$
Thus, when $\delta$ is sufficiently small there holds the estimate (\ref{61902}).
This completes the proof.
\end{proof}

For convenience, we denote by $(u, v, \tau, S)=(u_1(x,y), v_1(x,y), \tau_1(x,y), S_0)$ the local solution of the Cauchy problem (\ref{PSEU}), (\ref{61901}).

%=============
\begin{figure}[htbp]
\begin{center}
\includegraphics[scale=0.48]{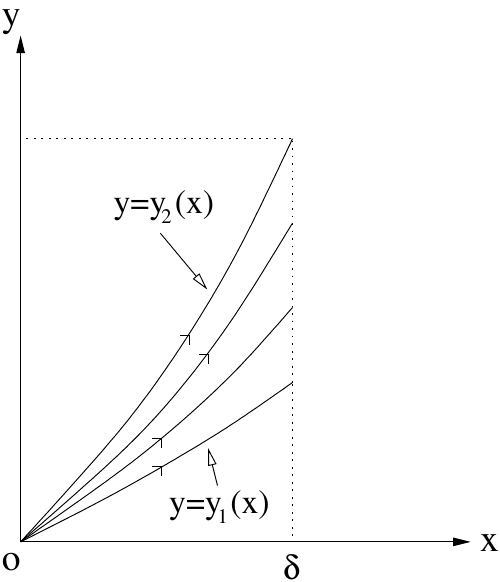}\qquad\quad \includegraphics[scale=0.48]{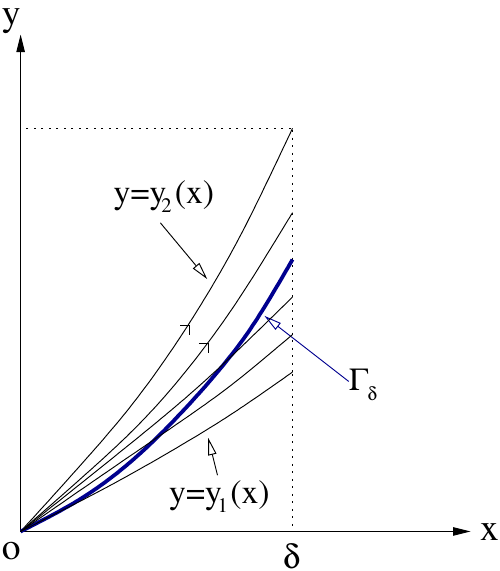}\quad \qquad \includegraphics[scale=0.48]{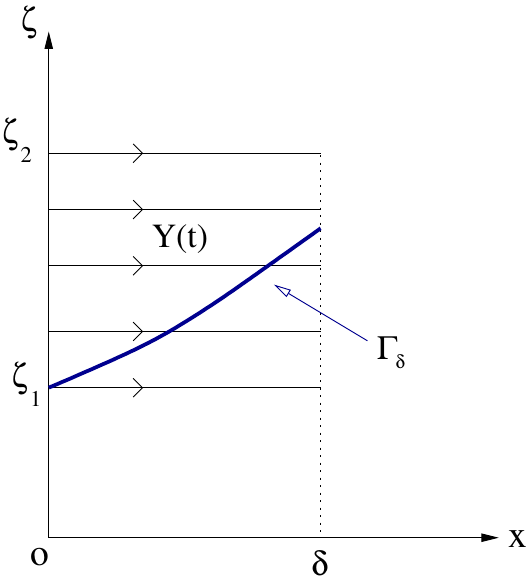}
\caption{\footnotesize Centered wave and post-sonic shock.}
\label{Figure7}
\end{center}
\end{figure}
%==========

\subsubsection{\bf Centered wave problem}
\begin{defn}\label{defn1}
(Li and Yu \cite{Li-Yu}) Let $Y(\delta)$ be an angular domain with curved boundaries:
\begin{equation}
Y(\delta)~:=~\{(x,y)\mid 0< x\leq\delta,~ y_{1}(x)\leq y\leq y_{2}(x)\},
\end{equation}
where
$y_{1}(0)=y_{2}(0)=0$.
A function $(u, v, \tau, S)(x,y)$ is called a $C_{+}$ type centered wave of (\ref{PsEuler}) with $(0,0)$ as the center point if the following properties are satisfied:
\begin{enumerate}
  \item $(u, v, \tau, S)$ can be implicitly determined by the functions
$y=\eta(x,\zeta)$ and
$(u, v, \tau, S)=(\tilde{u}, \tilde{v}, \tilde{\tau}, \tilde{S})(x,\zeta)$
defined on a rectangular domain
$T(\delta)~:=~\{(x,\zeta)\mid 0\leq x\leq\delta, ~~\zeta_1\leq \zeta\leq \zeta_2\}$, where $\zeta_1=y_{1}'(0)<y_{2}'(0)=\zeta_2$.
Moreover, $\eta$ and $(\tilde{u}, \tilde{v}, \tilde{\tau}, \tilde{S})$ belong to $C^{1}\big(T(\delta)\big)$, and for any $(x,\zeta)\in T(\delta)\setminus\{x=0\}$ there holds
$\eta_{_{\zeta}}(x,\zeta)>0$.
\item The function $(u, v, \tau, S)(x,y)$ defined above satisfies (\ref{PsEuler})  on $Y(\delta)$;
\item For any $\zeta\in[\zeta_1,\zeta_2]$, $y=\eta(x,\zeta)$ gives the $C_{+}$ characteristic line issued from $(0,0)$ with the slope $\zeta$ at $(0,0)$, i.e.,
\begin{equation}\label{pgz}
    \eta_{x}(x,\zeta)=\lambda_{+}(\tilde{u}, \tilde{v}, \tilde{c})=\tan\tilde{\alpha},\quad
    \eta(0,\zeta)=0,\quad
\eta_{x}(0,\zeta)=\zeta;
\end{equation}
\item $\zeta=\zeta_1$ and $\zeta=\zeta_2$ correspond to $y=y_1(x)$ and $y=y_2(x)$, respectively.
\end{enumerate}
We call $(\tilde{u}, \tilde{v}, \tilde{\tau}, \tilde{S})(0,\zeta)$ the principal part of this $C_{+}$ type centered wave and $\zeta$ the characteristic parameter.
\end{defn}

We take the coordinate transformation
\begin{equation}
x=x,\quad y=\eta(x, \zeta).
\end{equation}
By computation we have
\begin{equation}\label{transd}
\partial_{x}= \partial_{x}-\eta_x \eta_\zeta^{-1}\partial_\zeta, \quad
\partial_{y}= \eta_\zeta^{-1}\partial_\zeta.
\end{equation}

Let
\begin{equation}\label{61905a}
\partial_{+}=\partial_x+\tan\alpha\partial_y, \quad \partial_{-}=\partial_x+\tan\beta\partial_y.
\end{equation}
Then, in terms of the $(x, \zeta)$ coordinates,
\begin{equation}\label{61905}
\partial_{+}=\partial_x, \quad \partial_{-}=\partial_x+(\tan\tilde{\beta}-\tan\tilde{\alpha})\eta_{\zeta}^{-1}\partial_{\zeta}.
\end{equation}

From (\ref{pgz}) we have
\begin{equation}\label{21901}
\frac{\partial \eta}{\partial \zeta}(x,\zeta)=\int_{0}^{x}\sec^2(\tilde{\alpha}(r, \zeta))\frac{\partial\tilde{\alpha}(r, \zeta)}{\partial \zeta}{\rm d}r.
\end{equation}
Moreover, by (\ref{pgz}) we have
\begin{equation}\label{61908}
\sec^2(\tilde{\alpha}(0, \zeta))\frac{\partial\tilde{\alpha}(0, \zeta)}{\partial \zeta}=1.
\end{equation}

Let $(\tilde{u}_{+}, \tilde{v}_{+}, \tilde{\tau}_{+}, \tilde{S}_{+})(\zeta)=(\tilde{u}, \tilde{v}, \tilde{\tau}, \tilde{S})(0,\zeta)$.
Then by  (\ref{pgz}) one has $\zeta=\lambda_{+}(\tilde{u}_{+}, \tilde{v}_{+}, \tilde{c}_{+})$.
Inserting (\ref{transd}) into (\ref{PsEuler}) and letting $x\rightarrow 0$, we have that $(\tilde{u}_{+}, \tilde{v}_{+}, \tilde{\tau}_{+}, \tilde{S}_{+})(\zeta)$ satisfies
\begin{equation}\label{principal}
\left\{
  \begin{array}{ll}
   \displaystyle -\zeta\tilde{\rho}_{+}\frac{{\rm d} \tilde{u}_{+}}{{\rm d} \zeta}-
\zeta\tilde{u}_{+}\frac{{\rm d} \tilde{\rho}_{+}}{{\rm d} \zeta} +\tilde{\rho}_{+}\frac{{\rm d} \tilde{v}_{+}}{{\rm d} \zeta}+
\tilde{v}_{+}\frac{{\rm d} \tilde{\rho}_{+}}{{\rm d} \zeta}=0,\\[8pt]
 \displaystyle -\zeta\tilde{u}_{+}\frac{{\rm d}  \tilde{u}_{+}}{{\rm d} \zeta}+\tilde{v}_{+}\frac{{\rm d}  \tilde{u}_{+}}{{\rm d} \zeta}-
\zeta\tilde{\tau}_{+}\frac{{\rm d} \tilde{p}_{+}}{{\rm d} \zeta}=0,\\[8pt]
\displaystyle -\zeta\tilde{u}_{+}\frac{{\rm d}  \tilde{v}_{+}}{{\rm d} \zeta}+\tilde{v}_{+}\frac{{\rm d}  \tilde{v}_{+}}{{\rm d} \zeta}+
\tilde{\tau}_{+}\frac{{\rm d} \tilde{p}_{+}}{{\rm d} \zeta}=0,\\[8pt]
\displaystyle (\tilde{v}_{+}-\tilde{u}_{+}\zeta)\frac{{\rm d} \tilde{S}_{+}}{{\rm d} \zeta}=0.
  \end{array}
\right.
\end{equation}
This implies
$$
\frac{{\rm }d\tilde{S}_{+}}{{\rm d}\zeta}=0,  \quad \frac{{\rm d} \tilde{u}_{+}}{{\rm d} \zeta}+\zeta\frac{{\rm d} \tilde{v}_{+}}{{\rm d} \zeta}=0,\quad \mbox{and}\quad
\frac{{\rm }d}{{\rm d}\zeta}B(\tilde{u}_{+}, \tilde{v}_{+}, \tilde{\tau}_{+}, \tilde{S}_{+})=0.
$$

%In order to determine the principal part of the centered wave of the flow problem, we consider
%(\ref{prin}) with the data

By the previous result we know that if we let
$$(\tilde{u}_{+}, \tilde{v}_{+}, \tilde{\tau}_{+}, \tilde{S}_{+})(\zeta)=(\bar{u}_l, \bar{v}_l, \bar{\tau}_l, \bar{S}_l)(\arctan\zeta), \quad\zeta_1\leq \zeta\leq\zeta_2,$$
where $\zeta_1=\tan\phi_d$ and $\zeta_2=\tan\alpha_0$.
Then $(\tilde{u}_{+}, \tilde{v}_{+}, \tilde{\tau}_{+}, \tilde{S}_{+})(\zeta)$ satisfies (\ref{principal}) and
\begin{equation}\label{61903}
(\tilde{u}_{+}, \tilde{v}_{+}, \tilde{\tau}_{+}, \tilde{S}_{+})(\zeta_2)=(u_0, 0, \tau_0, S_0).
\end{equation}

In order to construct the flow upstream of the shock issued from the origin, we consider the following {\bf Centered wave problem}. Construct a $C_{+}$ type centered wave
such that it
 satisfies the boundary condition
\begin{equation}
(u, v, \tau, S)=(u_1(x,y), v_1(x,y), \tau_1(x,y), S_0)\quad \mbox{on} \quad y=y_2(x),\quad  0\leq x\leq\delta,
\end{equation}
and has the principle part
\begin{equation}\label{61906}
(\tilde{u}, \tilde{v}, \tilde{\tau}, \tilde{S})(0,\zeta)=(\bar{u}_l, \bar{v}_l, \bar{\tau}_l, \bar{S}_l)(\arctan\zeta), \quad \zeta_1\leq \zeta\leq \zeta_2.
\end{equation}
%and
%where $\zeta_1=\tan\phi_d$ and $\zeta_2=\tan\alpha_0$.

\begin{lem}
Assume that $\delta$ is sufficiently small. Then the centered wave problem admits a classical solution on $Y(\delta)$. Moreover, in the domain $Y(\delta)$ the solution satisfies
\begin{equation}\label{61902a}
\rho_c\leq \rho\leq \rho_0, \quad
\bar{\partial}_{-}\rho<0,\quad \mbox{and}\quad
-3e^{2(\rho_0-\rho_c)\mathcal{J}_1}\mathcal{L}_0<\bar{\partial}_{+}\rho<-e^{4\mathcal{J}_1\mathcal{J}_2(\rho_c-\rho_0)}\mathcal{L}_0,
\end{equation}
where
$$
\begin{aligned}
&\rho_c=\tau_c^{-1}, \quad \tau_c=\frac{1}{2}\big[\tau_1^i(S_0)+\tau_f^e(S_0)\big],\quad
\mathcal{J}_1=\max\limits_{\theta\in [\phi_d, \alpha_0]}\left\{\frac{\bar{\tau}_{l}^4p_{\tau\tau}(\bar{\tau}_l, S_0)}{4\bar{c}_{l}^2\cos\bar{A}_{l}}\right\}\\&\quad \mbox{and}\quad \mathcal{J}_2=\max\limits_{\theta\in [\phi_d, \alpha_0]}\left\{2\sin^2\bar{A}_{l}-\frac{8p_{\tau}(\bar{\tau}_{l}, S_0)\cos^4\bar{A}_{l}}{\bar{\tau}_{l}p_{\tau\tau}(\bar{\tau}_{l}, S_0)}\right\}.
\end{aligned}
$$
\end{lem}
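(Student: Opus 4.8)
The plan is to carry everything out in the $(x,\zeta)$ coordinates of Definition~\ref{defn1}, in which the $C_+$ characteristics are the coordinate lines $\zeta=\mathrm{const}$ and $\bar\partial_+$ is (a multiple of) $\partial_x$, and to obtain the solution as the fixed point of the Li--Yu iteration for centered waves. The first simplification is that, since the Cauchy solution of the previous lemma is isentropic and irrotational and the principal part (\ref{61906}) satisfies $\bar S_l\equiv S_0$, the whole centered wave carries $S\equiv S_0$ and $\omega\equiv0$; hence $\bar\partial_\pm S=0$ and the decompositions (\ref{cd1}) reduce to the homogeneous Riccati system for $\mathcal W_\pm=\bar\partial_\pm\rho$,
\begin{equation*}
\bar\partial_+\mathcal W_-=\frac{\tau^4p_{\tau\tau}}{4c^2\cos^2A}\Big[\mathcal W_-^2+(\varphi-1)\mathcal W_-\mathcal W_+\Big],\qquad
\bar\partial_-\mathcal W_+=\frac{\tau^4p_{\tau\tau}}{4c^2\cos^2A}\Big[\mathcal W_+^2+(\varphi-1)\mathcal W_-\mathcal W_+\Big].
\end{equation*}
The two unknowns are propagated in opposite families: $\mathcal W_-=\bar\partial_-\rho$ is transported along each $C_+$ ray from the value carried by the principal part at $x=0$, while $\mathcal W_+=\bar\partial_+\rho$ is transported along the $C_-$ lines down from the upper boundary $y=y_2(x)$ (the ray $\zeta=\zeta_2$), on which the previous lemma's estimate (\ref{61902}) furnishes the data $-3\mathcal L_0<\bar\partial_+\rho<-\mathcal L_0$.

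The core is to produce a closed invariant region for $(\rho,\mathcal W_+,\mathcal W_-)$ that the iteration preserves. On $\{\tau\le\tau_c\}$ one has $\tau<\tau_1^i(S_0)$, hence $p_{\tau\tau}>0$, so the Riccati coefficient $K:=\tau^4p_{\tau\tau}/(4c^2\cos^2A)$ and the quantity $\varphi$ are both positive; the constants $\mathcal J_1,\mathcal J_2$ are precisely the suprema over the principal part controlling $K$ (up to the geometric $\cos$-factors produced by the change of variables) and $\varphi$, respectively. I would first establish $\bar\partial_-\rho<0$ from the linear equation (\ref{cd4}) for $1/\mathcal W_-$ along $C_+$: the principal part is a genuine fan with $\bar\partial_-\rho<0$, and linearity keeps $1/\mathcal W_-$ from reaching $0$ for small $x$. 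For the two-sided bound on $\bar\partial_+\rho$ I would write the second Riccati equation as $\bar\partial_-\ln|\mathcal W_+|=K\mathcal W_++(\varphi-1)K\,\bar\partial_-\rho$ and integrate along $C_-$, using $\bar\partial_\pm\rho=\mathcal W_\pm$ to change the integration variable from arclength to $\rho$. The coupling and self-interaction terms, after integrating against $\mathrm d\rho$ (whose total variation is at most $\rho_0-\rho_c$) with coefficient maxima $\mathcal J_1,\mathcal J_2$, produce through Gronwall's inequality the two one-sided factors $e^{2(\rho_0-\rho_c)\mathcal J_1}$ and $e^{4\mathcal J_1\mathcal J_2(\rho_c-\rho_0)}$ that amplify the boundary window $(-3\mathcal L_0,-\mathcal L_0)$ into (\ref{61902a}). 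The density bounds follow once these gradient signs are known: $\rho\le\rho_0$ because $\bar\partial_\pm\rho<0$ forces the maximum onto the corner where $\rho=\rho_0$, and $\rho\ge\rho_c$ because the principal part stays $\ge1/\tau_f^e(S_0)>\rho_c$ and, with $\delta$ small, $\rho$ cannot descend through the gap to $\rho_c$.

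With these estimates the lower bound $\bar\partial_+\rho<-e^{4\mathcal J_1\mathcal J_2(\rho_c-\rho_0)}\mathcal L_0<0$ gives $\partial_\zeta\tilde\alpha>0$, whence $\eta_\zeta>0$ by (\ref{21901}) and (\ref{61908}); thus the rays do not focus, the map $(x,\zeta)\mapsto(x,\eta(x,\zeta))$ is a $C^1$ diffeomorphism away from $x=0$, and the Li--Yu contraction converges on $Y(\delta)$ for $\delta$ small, giving a classical solution that inherits (\ref{61902a}). \textbf{The main obstacle} is closing the bootstrap self-consistently: the bounds for $\mathcal W_+$ and $\mathcal W_-$ feed into one another through the $(\varphi-1)\mathcal W_-\mathcal W_+$ coupling, so the explicit exponential constants must be chosen so that the two integrations reproduce the same region rather than enlarging it, and $\delta$ must simultaneously be kept small enough that $\rho$ never leaves $[\rho_c,\rho_0]$, where $p_{\tau\tau}>0$ keeps $K$ from changing sign (a sign change would break the Riccati comparison). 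A secondary subtlety is the degeneracy at the center $(0,0)$, where $\eta_\zeta\to0$ and the derivative data must be read from the principal-part ODE (\ref{principal}) rather than from pointwise values.
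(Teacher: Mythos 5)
Your overall strategy coincides with the paper's: work in the $(x,\zeta)$ coordinates, use isentropy/irrotationality to reduce (\ref{cd}) to the homogeneous Riccati system for $\mathcal{W}_{\pm}=\bar{\partial}_{\pm}\rho$, get $\bar{\partial}_{-}\rho<0$ from the centered-wave structure, and obtain the two-sided bound on $\bar{\partial}_{+}\rho$ by one-sided Riccati comparison along $C_{-}$ followed by Gronwall with $\mathrm{d}\rho=\bar{\partial}_{-}\rho\,\mathrm{d}s$ as integration variable. The existence statement via the hodograph/Li--Yu machinery and the bounds $\rho_c\leq\rho\leq\rho_0$ by continuity are also as in the paper.

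However, the step you flag as ``the main obstacle'' is a genuine gap in your write-up, and its resolution is the one substantive idea your proposal is missing. For the upper bound $\bar{\partial}_{+}\rho<-e^{4\mathcal{J}_1\mathcal{J}_2(\rho_c-\rho_0)}\mathcal{L}_0$ you must majorize the right-hand side of the second equation of (\ref{cd}) by $\frac{\tau^4p_{\tau\tau}}{4c^2\cos^2A}\,\varphi\,\bar{\partial}_{-}\rho\,\bar{\partial}_{+}\rho$, which requires $(\bar{\partial}_{+}\rho)^2<\bar{\partial}_{-}\rho\,\bar{\partial}_{+}\rho$, i.e.\ $|\bar{\partial}_{+}\rho|<|\bar{\partial}_{-}\rho|$ everywhere in $Y(\delta)$; you cannot simply ``choose the exponential constants so the two integrations reproduce the same region,'' because the needed inequality compares $\mathcal{W}_{+}$ with $\mathcal{W}_{-}$, not $\mathcal{W}_{+}$ with itself. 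The paper closes this not by a bootstrap between the two families but by exploiting the centered-wave degeneracy you relegate to a ``secondary subtlety'': by (\ref{21901}), (\ref{61908}), and (\ref{61907}), $\eta_{\zeta}=O(x)$ while $\tilde{\rho}_{\zeta}(0,\zeta)>0$ is bounded away from zero, so $\bar{\partial}_{-}\rho\to-\infty$ as $x\to0$ uniformly in $\zeta$; hence for $\delta$ small one has the quantitative estimate (\ref{7301}), $\bar{\partial}_{-}\rho<-3e^{2(\rho_0-\rho_*)\mathcal{J}_1}\mathcal{L}_0$, which dominates the already-established lower bound $\bar{\partial}_{+}\rho>-3e^{2(\rho_0-\rho_c)\mathcal{J}_1}\mathcal{L}_0$ and validates the comparison. (Note the order matters: the crude lower bound on $\bar{\partial}_{+}\rho$, obtained by dropping the terms $(\bar{\partial}_{+}\rho)^2\geq0$ and $\varphi\,\bar{\partial}_{-}\rho\,\bar{\partial}_{+}\rho>0$, must be proved first and needs no information on the size of $\bar{\partial}_{-}\rho$ beyond its sign.) Your alternative derivation of $\bar{\partial}_{-}\rho<0$ from (\ref{cd4}) with initial value $1/\mathcal{W}_{-}=0$ at the center is workable and would in fact also deliver the required largeness of $|\mathcal{W}_{-}|$ for small $x$, but you would still need to state and use that largeness explicitly to justify the upper-bound comparison.
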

\begin{proof}
By (\ref{61402}), (\ref{81301}), and $\bar{\partial}_{+}S=0$ on $y=y_2(x)$ we know that
the centered wave solution is isentropic and irrotational. Then the local existence can be obtained by a hodograph transformation method; see Li and Yu  \cite{{Li-Yu}} (Chapter 7.7).
The reader can also see Section 3.4.3 in the present paper.
It only remains to prove (\ref{61902a}).

By (\ref{61905}) we have
\begin{equation}\label{61907}
\bar{\partial}_{-}\rho=\cos\tilde{\beta}\left(\tilde{\rho}_x
+(\tan\tilde{\beta}-\tan\tilde{\alpha})\eta_{\zeta}^{-1}\tilde{\rho}_{\zeta}\right)=\cos\tilde{\beta}\tilde{\rho}_x-\frac{\sin (2\tilde{A})}{\cos\alpha}\eta_{\zeta}^{-1}\tilde{\rho}_{\zeta}.
\end{equation}
From (\ref{61906}) and (\ref{102304}) we have
\begin{equation}\label{8290a}
\tilde{\rho}_{\zeta}(0, \zeta)=\frac{\bar{\rho}_{l}'(\arctan\zeta)}{1+\zeta^2}>0\quad \mbox{for}\quad \zeta_1\leq \zeta\leq \zeta_2.
\end{equation}
Thus, by (\ref{21901}), (\ref{61908}), (\ref{61907}) we have that when $\delta$ is sufficiently small,
$\bar{\partial}_{-}\rho<0$ in $Y(\delta)$.

For any point in $Y(\delta)$ the backward $C_{-}$ characteristic curve issued from this point stays in
 $Y(\delta)$ until it intersects $y=y_2(x)$ ($0<x<\delta$) at some point.
So, by  (\ref{61902}) and the second equation of (\ref{cd}) we know that the solution satisfies $\bar{\partial}_{+}\rho<0$.
Furthermore, by continuity we have that when
 $\delta$ is sufficiently small,
\begin{equation}\label{61910}
\rho_c\leq \rho\leq \rho_0, \quad \varphi<2\mathcal{J}_2, \quad \mbox{and}\quad  \frac{\tau^4p_{\tau\tau}}{4c^2\cos^{2}A }<2\mathcal{J}_1\quad\mbox{in}\quad  Y(\delta).
\end{equation}

From the second equation of (\ref{cd}) we have
$$
 \bar{\partial}_{-}\bar{\partial}_{+}\rho~=~
    \frac{\tau^4p_{\tau\tau}}{4c^2\cos^{2}A }\Big[(\bar{\partial}_{+}\rho)^2+(\varphi-1)\bar{\partial}_{-}
\rho\bar{\partial}_{+}\rho\Big]>-\frac{\tau^4p_{\tau\tau}}{4c^2\cos^{2}A }\bar{\partial}_{-}\rho\bar{\partial}_{+}\rho\quad\mbox{in}\quad  Y(\delta).
$$
Integrating this along $C_{-}$ characteristic curves issued from points on $y=y_2(x)$, $0<x<\delta$ and recalling (\ref{61910}) and  (\ref{61902}) we have $\bar{\partial}_{+}\rho>-3e^{2(\rho_0-\rho_c)\mathcal{J}_1}\mathcal{L}_0$ in $Y(\delta)$.

From (\ref{21901}) and (\ref{61907}) we know that when $\delta$ is sufficiently small,
\begin{equation}\label{7301}
\bar{\partial}_{-}\rho<-3e^{2(\rho_0-\rho_*)\mathcal{J}_1}\mathcal{L}_0  \quad\mbox{in}\quad  Y(\delta).
\end{equation}

From the second equation of (\ref{cd}) and (\ref{7301}) we have that when $\delta$ is sufficiently small, the solution satisfies
$$
 \bar{\partial}_{-}\bar{\partial}_{+}\rho~=~
    \frac{\tau^4p_{\tau\tau}}{4c^2\cos^{2}A }\Big[(\bar{\partial}_{+}\rho)^2+(\varphi-1)\bar{\partial}_{-}\rho\bar{\partial}_{+}\rho\Big]
<\frac{\tau^4p_{\tau\tau}}{4c^2\cos^{2}A }\varphi\bar{\partial}_{-}\rho\bar{\partial}_{+}\rho\quad\mbox{in}\quad  Y(\delta).
$$
Integrating this along $C_{-}$ characteristic curves issued from $y=y_2(x)$ and recalling (\ref{61910}) and  (\ref{61902}) we have $\bar{\partial}_{+}\rho<-e^{4\mathcal{J}_1\mathcal{J}_2(\rho_c-\rho_0)}\mathcal{L}_0$ in $Y(\delta)$.

This completes the proof.
\end{proof}

For convenience, we still denote by $(u, v, \tau, S)=(u_1(x,y), v_1(x,y), \tau_1(x,y), S_0)$, $(x, y)\in Y(\delta)$ the centered wave solution.

\subsubsection{\bf Existence of a post-sonic shock}
We denote by $\Gamma$ the shock issued front the origin. By the self-similar solution we know that the inclination angle of the shock at the origin is $\phi_d$.
% We assume that the shock curve $\Gamma$ is smooth.
We define the directional derivative along the shock front:
$$
\bar{\partial}_{_\Gamma}:=\cos\phi\partial_{x}+\sin\phi\partial_{y},
$$
where the variable $\phi$ denotes the inclination angle of the shock.

We compute
$$
N=u\sin\phi-v\cos\phi=q\cos\sigma\sin\phi-q\sin\sigma\cos\phi=q\sin(\phi-\sigma).
$$
Thus, we have
\begin{equation}\label{61112}
\phi=\sigma+\arcsin\Big(\frac{N}{q}\Big)\quad \mbox{on}\quad\Gamma.
\end{equation}

%Since $\tau_f(\mathrm{B})=\tau_f^e$ and $\tau_d(\mathrm{B})=\tau_d^e$,
%we have
%\begin{equation}\label{250401}
%N_d=c_d=q_d\sin A_b\quad \mbox{and}\quad \alpha_b=\phi=\alpha_f^e\quad  \mbox{at}\quad \mathrm{O};
%\end{equation}
%\begin{equation}\label{102002}
%N_f=c_f=q_f\sin A_f\quad \mbox{and}\quad \alpha_f=\phi=\alpha_f^e\quad \mbox{at}\quad \mathrm{O}.
%\end{equation}

%A direct computation yields
%$$
%(\cos\phi, \sin\phi)=\frac{\sin (\beta_b-\phi)}{\sin (\beta_b-\alpha_b)}(\cos\alpha_b, \sin\alpha_b)-
%\frac{\sin (\alpha_b-\phi)}{\sin (\beta_b-\alpha_b)}(\cos\beta_b, \sin\beta_b).
%$$
%Thus, we have
%\begin{equation}\label{250109}
%\bar{\partial}_{_\Gamma}=\frac{\sin (\phi-\beta_b)}{\sin (2A_b)}\bar{\partial}_{+}+\frac{\sin (\alpha_b-\phi)}{\sin %(2A_b)}\bar{\partial}_{-}.
%\end{equation}

%From (\ref{6810a}) we have
%\begin{equation}\label{250104}
%\begin{aligned}
%&2m \bar{\partial}_{_\Gamma}m \\~=~&\frac{1}{\tau_f-\tau_b}\left(-p_{\tau}(\tau_f, S_f)+\frac{p(\tau_f, S_f)-p(\tau_b, S_b)}{\tau_f-\tau_b}\right)
%\bar{\partial}_{_\Gamma}\tau_f-\frac{p_{_S}(\tau_f, S_f)}{\tau_f-\tau_b}\partial_sS_f\\&
%\frac{1}{\tau_b-\tau_f}\left(-p_{\tau}(\tau_b, S_b)+\frac{p(\tau_f, S_f)-p(\tau_b, S_b)}{\tau_f-\tau_b}\right)
%\bar{\partial}_{_\Gamma}\tau_b-\frac{p_{_S}(\tau_b, S_b)}{\tau_b-\tau_f}\partial_sS_b\quad \mbox{on}\quad \Gamma.
%\end{aligned}
%\end{equation}
%Combining this with $\tau_f(\mathrm{O})=\tau_f^e$ and $\tau_b(\mathrm{O})=\tau_b^e$,  we have
%\begin{equation}\label{250402}
%\bar{\partial}_{_\Gamma}m=0\quad \mbox{at}\quad \mathrm{O}.
%\end{equation}

From the first relation of (\ref{RHH})  we have
$$
m=\rho  N =\rho  q  (\cos\sigma \sin\phi-\sin\sigma \cos\phi).
$$
Thus, we have
\begin{equation}\label{4250103}
\bar{\partial}_{_\Gamma}m=\frac{N}{q}\bar{\partial}_{_\Gamma}(\rho q )-\rho L \bar{\partial}_{_\Gamma}\sigma +\rho L \bar{\partial}_{_\Gamma}\phi\quad\mbox{on}\quad\Gamma.
\end{equation}

From the Bernoulli law (\ref{BERC}) and the last two relations of (\ref{RHH}) we have
\begin{equation}\label{62010}
q \bar{\partial}_{_\Gamma} q +\tau  p_{\tau}\bar{\partial}_{_\Gamma} \tau+ h_{_S}\bar{\partial}_{_\Gamma} S  =0\quad \mbox{on}\quad \Gamma.
\end{equation}
This yields
$$
\bar{\partial}_{_\Gamma} (\rho q )=\frac{1}{q }(q ^2+\tau ^2 p_{\tau})\bar{\partial}_{_\Gamma}\rho -\frac{\rho h_{_S}\bar{\partial}_{_\Gamma} S }{q }\quad \mbox{on}\quad \Gamma.
$$
Inserting this into (\ref{4250103}), we get
\begin{equation}\label{250102}
\bar{\partial}_{_\Gamma} \phi=\frac{\bar{\partial}_{_\Gamma} m}{\rho L }-\frac{N }{\rho L q ^2}(q ^2+\tau ^2 p_{\tau})\bar{\partial}_{_\Gamma}\rho +\frac{N h_{_S}\bar{\partial}_{_\Gamma} S  }{Lq ^2}+\bar{\partial}_{_\Gamma}\sigma \quad \mbox{on}\quad \Gamma.
\end{equation}

%From the pseudo-Bernoulli law (\ref{BERC}), we have
%\begin{equation}\label{42803a}
%\bar{\partial}_{_\Gamma} q =\frac{\tau ^3 p_{\tau}\bar{\partial}_{_\Gamma} \rho }{q }-\frac{h_{_S}\bar{\partial}_{_\Gamma} S }{q }=-\tau q \sin^2 A \bar{\partial}_{_\Gamma} \rho -\frac{h_{_S}\bar{\partial}_{_\Gamma} S  }{q }\quad \mbox{on}\quad \Gamma.
%\end{equation}
From (\ref{250102}) we have
\begin{equation}\label{42804a}
\bar{\partial}_{_\Gamma} \sigma =\bar{\partial}_{_\Gamma} \phi-\frac{\bar{\partial}_{_\Gamma} m}{\rho L }+\frac{N }{\rho L q ^2}(q ^2+\tau ^2 p_{\tau})\bar{\partial}_{_\Gamma}\rho -\frac{N h_{_S}\bar{\partial}_{_\Gamma} S  }{Lq ^2}\quad \mbox{on}\quad \Gamma.
\end{equation}
We also have
\begin{equation}\label{50701}
\bar{\partial}_{_\Gamma}u=\bar{\partial}_{_\Gamma}(q\cos\sigma)=\cos\sigma\bar{\partial}_{_\Gamma}q
-q\sin\sigma\bar{\partial}_{_\Gamma}\sigma,
\end{equation}
\begin{equation}\label{50702}
\bar{\partial}_{_\Gamma}v=\bar{\partial}_{_\Gamma}(q\sin\sigma)=\sin\sigma\bar{\partial}_{_\Gamma}q
+q\cos\sigma\bar{\partial}_{_\Gamma}\sigma.
\end{equation}

We must emphasize that the above differential equations hold on both sides of the shock $\Gamma$. In the following discussion, we shall use subscripts `$f$' and `$b$' to denote the front side and back side states of the shock  $\Gamma$, respectively.

%In what follows,
%we are going to show that under assumption (A2) the shock is post-sonic.
%We look for a post-sonic shock $\Gamma$ issuing from the origin.
We first make an a priori assumption that away from the origin, the shock is post-sonic and stays in the interior of $Y(\delta)$.
Then $(u_f, v_f, \tau_f)=(u_1, v_1, \tau_1)(x, y)$ and $S_f\equiv S_0$ on $\Gamma$.
We also assume a priori that
\begin{equation}\label{72202}
\tau_f^e(S_0)<\tau_1(x,y)<\tau_1^i(S_0)\quad \mbox{on}\quad \Gamma\setminus \mathrm{O}.
\end{equation}
Then by Proposition \ref{81702} we have
\begin{equation}\label{62603}
\tau_b=\tau_{po}(\tau_1), \quad S_b=S_{po}(\tau_1),\quad \mbox{and}\quad
m^2=-p_{\tau}(\tau_{b},S_b)>-p_{\tau}(\tau_1, S_0)\quad \mbox{on}\quad \Gamma\setminus \mathrm{O}.
\end{equation}
This implies
\begin{equation}\label{6810}
N_b=c_b:=\sqrt{-\tau_b^2p_{\tau}(\tau_b, S_b)}\quad \mbox{and}\quad N_f>c_f:=\sqrt{-\tau_1^2p_{\tau}(\tau_1, S_0)}\quad \mbox{on}\quad \Gamma\setminus \mathrm{O}.
\end{equation}
In the following discussion we shall use subscript `$1$' to denote various quantities corresponding to the centered wave solution.

%We still denote by $\phi$ the inclination angle of the shock curve $\Gamma$.
By (\ref{6810}) and (\ref{61112}) we have
\begin{equation}\label{61113}
\phi=\sigma_b+A_b=\alpha_b\quad\mbox{and}\quad \phi>\sigma_1+A_1=\alpha_1  \quad \mbox{on}\quad \Gamma\setminus \mathrm{O}.
\end{equation}
%It is easily to see
%\begin{equation}\label{61114}
%\phi(B)=\pi+\theta.
%\end{equation}
%Since $\tau_1(\mathrm{O})=\tau_1^e$, we have $N_1(\mathrm{O})=c_1(\mathrm{O})$, and consequently $\phi=\alpha_1$ at $\mathrm{O}$. This implies
%\begin{equation}\label{61114}
%\phi(\mathrm{O})=\alpha_1(\mathrm{O})=\pi+\theta.
%\end{equation}
We differentiate $m^2=-p_{\tau}(\tau_b, S_b)$ along $\Gamma$ to obtain
\begin{equation}\label{61105}
2m\bar{\partial}_{_\Gamma} m=-\Big(p_{\tau}(\tau_b, S_b)\tau_{po}'(\tau_1)+
p_{_S}(\tau_b, S_b)S_{po}'(\tau_1)\Big)\bar{\partial}_{_\Gamma} \tau_1\quad \mbox{on}\quad \Gamma.
\end{equation}
%Since the shock is assumed to be post-sonic, by Proposition we have
%\begin{equation}\label{250402}
%\bar{\partial}_{_\Gamma} m=0\quad \mbox{at}\quad \mathcal{O}.
%\end{equation}
%As in (\ref{4250103}), we have
%\begin{equation}\label{42802}
%\bar{\partial}_{_\Gamma} m=\frac{N_1}{q_1}\bar{\partial}_{_\Gamma}(\rho_1q_1)-\rho_1L_1\bar{\partial}_{_\Gamma}\sigma_1+\rho_1L_1\bar{\partial}_{_\Gamma} \phi\quad \mbox{on}\quad \Gamma.
%\end{equation}
From (\ref{250102}) and $S_1\equiv S_0$ on $\Gamma$, we have
\begin{equation}\label{61115}
\bar{\partial}_{_\Gamma} \phi=\frac{\bar{\partial}_{_\Gamma} m}{\rho_1L_1}-\frac{N_1}{\rho_1L_1q_1^2}\Big(q_1^2+\tau_1^2 p_{\tau}(\tau_1, S_0)\Big)\bar{\partial}_{_\Gamma}\rho_1+\bar{\partial}_{_\Gamma}\sigma_1\quad \mbox{on}\quad \Gamma.
\end{equation}
%This immediately implies that the curved shock wave $\wideparen{BG}$ is convex.

%In what follows we are going to discuss the existence of the post-sonic shock curve $\Gamma$.
Let
$\psi(x)$ be determined by
\begin{equation}\label{61102}
\left\{
  \begin{array}{ll}
    \psi'(x)=\tan(\phi(\psi(x), x)), & \hbox{$x>0$;} \\[2pt]
    \psi(0)=0.
  \end{array}
\right.
\end{equation}
Then the shock front can be represented by the function $y=\psi(x)$.
%For the post-sonic shock $\wideparen{BG}$, the front side state of it is $(u_1, v_1, \tau_1)$ and the back side state of it can be determined by the front side state.
Actually, in order to obtain $\psi(x)$, we only need to consider (\ref{61115}) with data
\begin{equation}\label{61114}
\phi(0, 0)=\phi_d.
\end{equation}

Since the the initial value problem (\ref{61115}), (\ref{61114}) uses the flow states in $Y(\delta)$, in order to prove the existence of the post-sonic front we need to prove (\ref{72202}) and
\begin{equation}\label{61101}
\psi(x)>y_{1}(x), \quad x> 0
\end{equation}
where $y=y_1(x)$, $0<x<\delta$ represents the lower $C_{+}$ characteristic boundary of the centered wave.

Let $\partial_{_\Gamma}=\partial_x+\tan\phi\partial_{y}$.
Then by (\ref{61105}) and (\ref{61115}) we have
\begin{equation}\label{61601}
{\partial}_{_\Gamma} \phi=\left(\chi_{11}-\frac{N_1\cos^2 A_1}{\rho_1L_1}\right){\partial}_{_\Gamma}\rho_1 +{\partial}_{_\Gamma}\left(\frac{\alpha_1+\beta_1}{2}\right),
\end{equation}
where
$$
\chi_{11}=\frac{p_{\tau}(\tau_b, S_b)\tau_{po}'(\tau_1)+
p_{_S}(\tau_b, S_b)S_{po}'(\tau_1)}{2\rho_1^4N_1L_1}.
$$

By (\ref{61905a}) we have
$$
\partial_{_\Gamma}=(1-\chi_{12})\partial_{+}+\chi_{12}\partial_{-},
$$
where $$\chi_{12}=\frac{\tan\phi-\tan\alpha_1}{\tan\beta_1-\tan\alpha_1}.$$
Hence, by (\ref{1})--(\ref{2}) and noticing that the flow upstream of the shock is isentropic and irrotational,  we have
\begin{equation}\label{61602}
\begin{aligned}
\partial_{_\Gamma}\alpha_1=&-\frac{(1-\chi_{12})p_{\tau\tau}(\tau_1, S_0)}{4c_1^2\rho_1^4}\left(-\frac{4p_{\tau}(\tau_1,S_0)}{\tau_1 p_{\tau\tau}(\tau_1, S_0)}-1-\tan^2A_1\right)\sin (2A_1)\partial_+\rho_1\\&+\frac{\chi_{12} p_{\tau\tau}(\tau_1, S_0)\tan A_1}{2c_1^2\rho_1^4}\partial_{-}\rho_1
\end{aligned}
\end{equation}
and
\begin{equation}\label{61603}
\begin{aligned}
\partial_{_\Gamma}\beta_1=&\frac{\chi_{12}p_{\tau\tau}(\tau_1, S_0)}{4c_1^2\rho_1^4}\left(-\frac{4p_{\tau}(\tau_1,S_0)}{\tau_1 p_{\tau\tau}(\tau_1, S_0)}-1-\tan^2A_1\right)\sin (2A_1)\partial_-\rho_1\\&-\frac{(1-\chi_{12}) p_{\tau\tau}(\tau_1, S_0)\tan A_1}{2c_1^2\rho_1^4}\partial_{+}\rho_1.
\end{aligned}
\end{equation}

Since $(u_1, v_1, \rho_1)(x, y)$ has a multi-valued singularity at the origin, in the following discussion we shall use the $(x,\zeta)$ coordinates defined in Section 2.6.3 to compute $\partial_{_\Gamma}\phi$ and $\partial_{_\Gamma}\alpha_1$.
%We denote $(\tilde{u}_1, \tilde{v}_1, \tilde{\tau}_1)(x, \zeta)=(u_1, v_1, \tau_1)(x, \eta(x, \zeta))$.

In terms of the $(x, \zeta)$ coordinates,
\begin{equation}\label{62002}
\partial_{_\Gamma}=\partial_x+\eta_{\zeta}^{-1}(\tan\phi-\tan\alpha_1)\partial_{\zeta}.
\end{equation}
Then by (\ref{61601})--(\ref{61603}) and (\ref{61905}) we get
\begin{equation}\label{62001}
\partial_{_\Gamma}(\phi-\alpha_1)=\chi_{13}\partial_x\rho_1+\chi_{13}\eta_{\zeta}^{-1}(\tan\phi-\tan\alpha_1)\partial_{\zeta}\rho_1,
\end{equation}
where
$$
\chi_{13}=\chi_{11}+\frac{\sin (2A_1)}{2\rho_1}-\frac{N_1\cos^2 A_1}{\rho_1L_1}-\frac{p_{\tau\tau}(\tau_1, S_0)\tan A_1}{2c_1^2\rho_1^4}.
$$
Since $\tau_1(0, \zeta_1)=\tau_f^e(S_0)$, by (\ref{73002}) we have $\chi_{11}(0, \zeta_1)=0$.
Moreover, at the point $(x, \zeta)=(0, \zeta_1)$, $N_1=c_1=q_1\sin A_1$ and $L_1=q_1\cos A_1$. Hence, we have
$$
\chi_{13}(0, \zeta_1)=\left(-\frac{p_{\tau\tau}(\tau_1, S_0)\tan A_1}{2c_1^2\rho_1^4}\right)(0, \zeta_1)<0.
$$

By the previous results about the centered wave, see (\ref{61905}), (\ref{61902a}), and (\ref{8290a}), we have
$$
\partial_x\rho_1(0, \zeta_1)<0\quad \mbox{and}\quad \partial_\zeta\rho_1(0, \zeta_1)>0.
$$
Therefore, by solving (\ref{62001}) with data $(\phi-\alpha_1)(0, \zeta_1)=0$ and recalling (\ref{21901}) and (\ref{61908}), we have
\begin{equation}\label{62003}
\big(\partial_{_\Gamma}(\phi-\alpha_1)\big)(0, \zeta_1)=\frac{\chi_{13}(0, \zeta_1)\partial_x\rho_1(0, \zeta_1)}{1-\chi_{13}(0, \zeta_1)\partial_\zeta\rho_1(0, \zeta_1)\sec^2\phi_d}>0.
\end{equation}
This implies
$$
\psi'(0)-y_1'(0)=\frac{\chi_{13}(0, \zeta_1)\partial_x\rho_1(0, \zeta_1)\sec^2\phi_d}{1-\chi_{13}(0, \zeta_1)\partial_\zeta\rho_1(0, \zeta_1)\sec^2\phi_d}>0.
$$

From (\ref{62002}) and (\ref{62003}) we have
\begin{equation}\label{62012}
\partial_{_\Gamma}\rho_1(0, \zeta_1)=\frac{\partial_x\rho_1(0, \zeta_1)}{1-\chi_{13}(0, \zeta_1)\partial_\zeta\rho_1(0, \zeta_1)\sec^2\phi_d}<0.
\end{equation}

Combining (\ref{62001})--(\ref{62012}) we know that
when $\delta$ is sufficiently small, the function $\psi(x)$, $0\leq x\leq \delta$ exists. Moreover,
\begin{equation}\label{9401}
\lim\limits_{x\rightarrow 0+}\tau_1(x, \psi(x))=\tau_f^e(S_0),
\end{equation}
and
$$
\psi(x)>y_{1}(x)\quad \mbox{and}\quad \tau_f^e(S_0)<\tau_1(x, \psi(x))<\tau_1^i(S_0) \quad \mbox{for}\quad 0<x\leq\delta.
$$
Therefore, we obtain the post-sonic shock front and the states $(u_b, v_b, \tau_b, S_b)(x,y)$ on its back side.
For convenience, we denote
$\Gamma_{\delta}=\{(x, y)\mid y=\psi(x), 0\leq x\leq\delta\}$.

\vskip 4pt
The following estimates (monotonicity conditions) are crucial for the existence of a downstream flow of the post-sonic shock $\Gamma_{\delta}$.
\begin{lem}\label{61302}
(Structural conditions)
Assume $\delta$ to be sufficiently small. Then there hold the following estimates:
\begin{equation}\label{61111}
\cos\alpha_b \bar{\partial}_{_\Gamma} u_b+\sin\alpha_b \bar{\partial}_{_\Gamma} v_b>0\quad \mbox{on}\quad \Gamma_{\delta};
\end{equation}
\begin{equation}\label{61305}
\cos\beta_b \bar{\partial}_{_\Gamma} u_b+\sin\beta_b \bar{\partial}_{_\Gamma} v_b<0\quad \mbox{on}\quad \Gamma_{\delta}.
\end{equation}
\end{lem}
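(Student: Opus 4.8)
The plan is to reduce the two stated inequalities to a single positivity statement about the turning of the flow along the shock, to verify that statement at the corner $\mathrm{O}$ where the double-sonic degeneracy trivializes almost every term, and then to propagate it to all of $\Gamma_\delta$ by continuity. First I would rewrite the two combinations in terms of $\bar{\partial}_{_\Gamma}q_b$ and $\bar{\partial}_{_\Gamma}\sigma_b$. Inserting (\ref{50701})--(\ref{50702}) on the back side and using $\alpha_b=\sigma_b+A_b$, $\beta_b=\sigma_b-A_b$ from (\ref{tau}), the angle-addition formulas collapse the cross terms and give
\[
\cos\alpha_b\bar{\partial}_{_\Gamma}u_b+\sin\alpha_b\bar{\partial}_{_\Gamma}v_b=\cos A_b\,\bar{\partial}_{_\Gamma}q_b+q_b\sin A_b\,\bar{\partial}_{_\Gamma}\sigma_b,
\]
\[
\cos\beta_b\bar{\partial}_{_\Gamma}u_b+\sin\beta_b\bar{\partial}_{_\Gamma}v_b=\cos A_b\,\bar{\partial}_{_\Gamma}q_b-q_b\sin A_b\,\bar{\partial}_{_\Gamma}\sigma_b.
\]
Hence (\ref{61111}) and (\ref{61305}) hold simultaneously precisely when $q_b\sin A_b\,\bar{\partial}_{_\Gamma}\sigma_b>|\cos A_b\,\bar{\partial}_{_\Gamma}q_b|$; in particular it suffices to prove $\bar{\partial}_{_\Gamma}\sigma_b>0$ together with a bound making $\bar{\partial}_{_\Gamma}q_b$ subordinate to $q_b\tan A_b\,\bar{\partial}_{_\Gamma}\sigma_b$.

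Next I would express the two shock-trace derivatives through the jump relations. Since $\tau_b=\tau_{po}(\tau_1)$ and $S_b=S_{po}(\tau_1)$ on $\Gamma\setminus\mathrm{O}$, the Bernoulli identity (\ref{62010}) gives $\bar{\partial}_{_\Gamma}q_b$ as an explicit multiple of $\bar{\partial}_{_\Gamma}\tau_1$, while (\ref{61105}) gives $\bar{\partial}_{_\Gamma}m$, and hence $\bar{\partial}_{_\Gamma}\rho_b$ and $\bar{\partial}_{_\Gamma}S_b$, as further multiples of $\bar{\partial}_{_\Gamma}\tau_1$. Substituting these into the shock identity (\ref{42804a}) yields $\bar{\partial}_{_\Gamma}\sigma_b$ as a combination of $\bar{\partial}_{_\Gamma}\phi$ and $\bar{\partial}_{_\Gamma}\tau_1$. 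The decisive feature is the degeneracy at the corner: by (\ref{73002}) we have $\tau_{po}'(\tau_f^e(S_0))=S_{po}'(\tau_f^e(S_0))=0$, so in the limit $x\to0+$ along the shock all of $\bar{\partial}_{_\Gamma}q_b$, $\bar{\partial}_{_\Gamma}m$, $\bar{\partial}_{_\Gamma}\rho_b$, $\bar{\partial}_{_\Gamma}S_b$ vanish (recall $\bar{\partial}_{_\Gamma}\tau_1$ stays finite by (\ref{62012})), identity (\ref{42804a}) reduces to $\bar{\partial}_{_\Gamma}\sigma_b|_{\mathrm{O}}=\bar{\partial}_{_\Gamma}\phi|_{\mathrm{O}}$, and the two target expressions become $\pm q_b\sin A_b\,\bar{\partial}_{_\Gamma}\phi|_{\mathrm{O}}$. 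Thus at $\mathrm{O}$ the whole lemma collapses to the single inequality $\bar{\partial}_{_\Gamma}\phi|_{\mathrm{O}}>0$ (equivalently $\partial_{_\Gamma}\phi|_{\mathrm{O}}>0$, the two differing only by the positive factor $\cos\phi$).

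To reach $\partial_{_\Gamma}\phi|_{\mathrm{O}}>0$ I would work in the $(x,\zeta)$ coordinates and evaluate the front-side formula (\ref{61601}). At the corner the shock is double-sonic, so the front state is sonic as well; hence $\chi_{11}(0,\zeta_1)=0$ by (\ref{73002}) and $\chi_{12}(0,\zeta_1)=0$ because $\phi=\alpha_1$ there, which reduces the coefficient of $\partial_{_\Gamma}\rho_1$ in (\ref{61601}) to $-\sin(2A_1)/(2\rho_1)$. Combining this with the resolved limit $\partial_{_\Gamma}\rho_1|_{\mathrm{O}}=\partial_x\rho_1(0,\zeta_1)/\bigl(1-\chi_{13}(0,\zeta_1)\partial_\zeta\rho_1(0,\zeta_1)\sec^2\phi_d\bigr)$ from (\ref{62012}) and the analogous resolved limits of $\partial_{_\Gamma}\alpha_1$, $\partial_{_\Gamma}\beta_1$ obtained from (\ref{61602})--(\ref{61603}) and (\ref{62002}), (\ref{21901}), (\ref{61908}), together with the signs $\partial_x\rho_1(0,\zeta_1)<0$ and $\partial_\zeta\rho_1(0,\zeta_1)>0$ from (\ref{61902a}), (\ref{8290a}), the claim reduces to a concrete sign inequality for the front thermodynamic data at $\tau_1=\tau_f^e(S_0)$ expressed through $\Omega$ in (\ref{40}). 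A useful simplification I would exploit is the identity $1+\Omega\cos^2 A_1=-\,4p_\tau(\tau_1,S_0)\cos^2 A_1/\bigl(\tau_1 p_{\tau\tau}(\tau_1,S_0)\bigr)$, which is strictly positive because $p_\tau<0$ while $p_{\tau\tau}(\tau_f^e(S_0),S_0)>0$ (as $\tau_f^e(S_0)<\tau_1^i(S_0)$); this is the mechanism forcing $\partial_{_\Gamma}\phi|_{\mathrm{O}}>0$. Alternatively one may use that (\ref{62003}) already yields $\partial_{_\Gamma}(\phi-\alpha_1)|_{\mathrm{O}}>0$ and only estimate the remaining term $\partial_{_\Gamma}\alpha_1|_{\mathrm{O}}$.

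Once $\bar{\partial}_{_\Gamma}\phi|_{\mathrm{O}}>0$ is established, the equivalent inequality $q_b\sin A_b\,\bar{\partial}_{_\Gamma}\sigma_b>|\cos A_b\,\bar{\partial}_{_\Gamma}q_b|$ holds strictly at the corner because $\bar{\partial}_{_\Gamma}q_b\to0$ while $\bar{\partial}_{_\Gamma}\sigma_b\to\bar{\partial}_{_\Gamma}\phi|_{\mathrm{O}}>0$; by continuity of all coefficients along $\Gamma$ and the a priori bounds (\ref{72202}), (\ref{61902a}) the strict inequalities then persist on the whole of $\Gamma_\delta$ for $\delta$ small. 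The hard part will be the sign bookkeeping of the previous paragraph: extracting $\partial_{_\Gamma}\phi|_{\mathrm{O}}>0$ requires the careful $0\cdot\infty$ resolution of the characteristic derivatives at the center of the fan and the use of the double-sonic tangency relations (\ref{double}), (\ref{zetai}) together with the degeneracy (\ref{73002}), and it is exactly here that the nonconvexity of the van der Waals isentrope and the hypothesis that $S_*-S_0$ is small must be brought to bear.
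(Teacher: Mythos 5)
Your proposal is correct and follows essentially the same route as the paper's proof: rewrite both combinations as $\cos A_b\,\bar{\partial}_{_\Gamma}q_b\pm q_b\sin A_b\,\bar{\partial}_{_\Gamma}\sigma_b$ via (\ref{50701})--(\ref{50702}), use the shock relations (\ref{62010}), (\ref{42804a}) together with the corner degeneracy $\tau_{po}'=S_{po}'=0$ from (\ref{73002}) to reduce everything at $\mathrm{O}$ to $\pm q_b\sin A_b\,\bar{\partial}_{_\Gamma}\phi$, establish $(\partial_{_\Gamma}\phi)(0,\zeta_1)>0$ from (\ref{61601}), (\ref{62003}) and the sign information on $\partial_x\rho_1$, $\partial_\zeta\rho_1$, and conclude by continuity for small $\delta$. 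The only cosmetic differences are that you let all back-side derivatives vanish at the corner rather than exploiting the exact cancellation of the $\bar{\partial}_{_\Gamma}\rho_b$ terms forced by $N_b=c_b$, and you package the two inequalities as the single condition $q_b\sin A_b\,\bar{\partial}_{_\Gamma}\sigma_b>\left|\cos A_b\,\bar{\partial}_{_\Gamma}q_b\right|$; neither changes the substance of the argument.
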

\begin{proof}
{\bf 1.}
Using  (\ref{61602}) and (\ref{61603}) and recalling (\ref{61905}), we have
$$
\partial_{_\Gamma}(\alpha_1+\beta_1)=
\frac{\sin 2A_1}{\rho_1}\Big((2\chi_{12}-1)\partial_{x}\rho_1+(\tan\phi-\tan\alpha_1)\eta_{\zeta}^{-1}\partial_{\zeta}\rho_1\Big).
$$
Thus, by (\ref{61601}) and (\ref{62003}) we have
\begin{equation}\label{62011}
(\partial_{_\Gamma}\phi)(0, \zeta_1)=\left[\frac{\sin 2A_1}{\rho_1}\bigg(
\frac{-\partial_x\rho_1+\chi_{13}\partial_x\rho_1\partial_\zeta\rho_1\sec^2\phi_d}{1-\chi_{13}\partial_\zeta\rho_1\sec^2\phi_d}\bigg)\right](0, \zeta_1)>0.
\end{equation}

By (\ref{50701})--(\ref{50702}) we have
\begin{equation}\label{42801}
\begin{aligned}
&\cos\alpha_b \bar{\partial}_{_\Gamma} u_b+\sin\alpha_b \bar{\partial}_{_\Gamma} v_b~=~
\cos A_b \bar{\partial}_{_\Gamma} q_b+q_b\sin A_b \bar{\partial}_{_\Gamma} \sigma_b\quad \mbox{on}\quad \Gamma_{\delta}.
\end{aligned}
\end{equation}

From (\ref{62010}), we have
\begin{equation}\label{42803}
\bar{\partial}_{_\Gamma} q_b=-\tau q_b \sin^2 A_b \bar{\partial}_{_\Gamma} \rho_b -\frac{h_{_S}(\tau_b, S_b)S_{po}'(\tau_1)\bar{\partial}_{_\Gamma} \tau_1 }{q_b }\quad \mbox{on}\quad \Gamma_{\delta}.
\end{equation}

From (\ref{42804a}) one has
\begin{equation}\label{42804}
\bar{\partial}_{_\Gamma} \sigma_b=\bar{\partial}_{_\Gamma} \phi-\frac{\bar{\partial}_{_\Gamma} m}{\rho_bL_b}+\frac{N_b}{\rho_bL_bq_b^2}(q_b^2+\tau_b^2 p_{\tau}(\tau_b, S_b))\bar{\partial}_{_\Gamma}\rho_b-\frac{N_b h_{_S}(\tau_b, S_b)S_{po}'(\tau_1)\bar{\partial}_{_\Gamma} \tau_1 }{L_bq_b ^2}\quad \mbox{on}\quad \Gamma_{\delta}.
\end{equation}

Inserting (\ref{42803}) and  (\ref{42804}) into  (\ref{42801}) and recalling the first equality in (\ref{6810}), we get
$$
\begin{aligned}
&\cos\alpha_b \bar{\partial}_{_\Gamma} u_b+\sin\alpha_b \bar{\partial}_{_\Gamma} v_b\\=~&
q_b\sin A_b \left(\bar{\partial}_{_\Gamma} \phi-\frac{\bar{\partial}_{_\Gamma} m }{\rho_bL_b}\right)-\frac{ h_{_S}(\tau_b, S_b)S_{po}'(\tau_1)\bar{\partial}_{_\Gamma} \tau_1 }{L_b} \quad \mbox{on}\quad \Gamma_{\delta}.
\end{aligned}
$$
Therefore, by (\ref{73002}), (\ref{61105}), (\ref{62011}), and (\ref{9401}) we know that when $\delta$ is sufficiently small the estimate (\ref{61111}) holds.

{\bf 2.} By (\ref{50701})--(\ref{50702}) we have
\begin{equation}\label{82006}
\cos\beta_b \bar{\partial}_{_\Gamma} u_b+\sin\beta_b \bar{\partial}_{_\Gamma} v_b~=~
\cos A_b \bar{\partial}_{_\Gamma} q_b-q_b\sin A_b \bar{\partial}_{_\Gamma} \sigma_b\quad \mbox{on}\quad \Gamma_{\delta}.
\end{equation}
Inserting (\ref{42803}) and (\ref{42804}) into (\ref{82006}), we get
\begin{equation}\label{7302}
\begin{aligned}
&\cos\beta_b \bar{\partial}_{_\Gamma} u_b+\sin\beta_b \bar{\partial}_{_\Gamma} v_b\\=~&
-q_b\sin A_b \left(\bar{\partial}_{_\Gamma} \phi-\frac{\bar{\partial}_{_\Gamma} m }{\rho_bL_b}\right)+
2\rho_bq_b\cos A_b\sin^2 A_b\tau_{po}'(\tau_1)\bar{\partial}_{_\Gamma} \tau_1\\&\qquad -\frac{\cos (2A_b) h_{_S}(\tau_b, S_b)S_{po}'(\tau_1)\bar{\partial}_{_\Gamma} \tau_1 }{L_b}\quad \mbox{on}\quad \Gamma_{\delta}.
\end{aligned}
\end{equation}
Therefore, by (\ref{73002}), (\ref{61105}), (\ref{62011}), and (\ref{9401})  we know that when $\delta$ is sufficiently small the estimate (\ref{61305}) holds.
This completes the proof.
\end{proof}

 %The estimates (\ref{61111}) and (\ref{61305}) can be also geometrically described in Figure \ref{Fig14}.

\subsubsection{\bf Singular boundary value problem}
In order to obtain the flow downstream of the post-sonic shock. We consider (\ref{PSEU}) with the boundary conditions
\begin{equation}\label{BDD}
\left\{
  \begin{array}{ll}
    (u, v, \rho, S)(x, y)=(u_b, v_b, \rho_b, S_b)(x, y), & \hbox{$y=\psi(x)$, $0<x<\delta$;} \\[4pt]
    v=u\tan\theta_w, & \hbox{$y=x\tan\theta_w$, $x>0$.}
  \end{array}
\right.
\end{equation}

%=============
\begin{figure}[htbp]
\begin{center}
\includegraphics[scale=0.55]{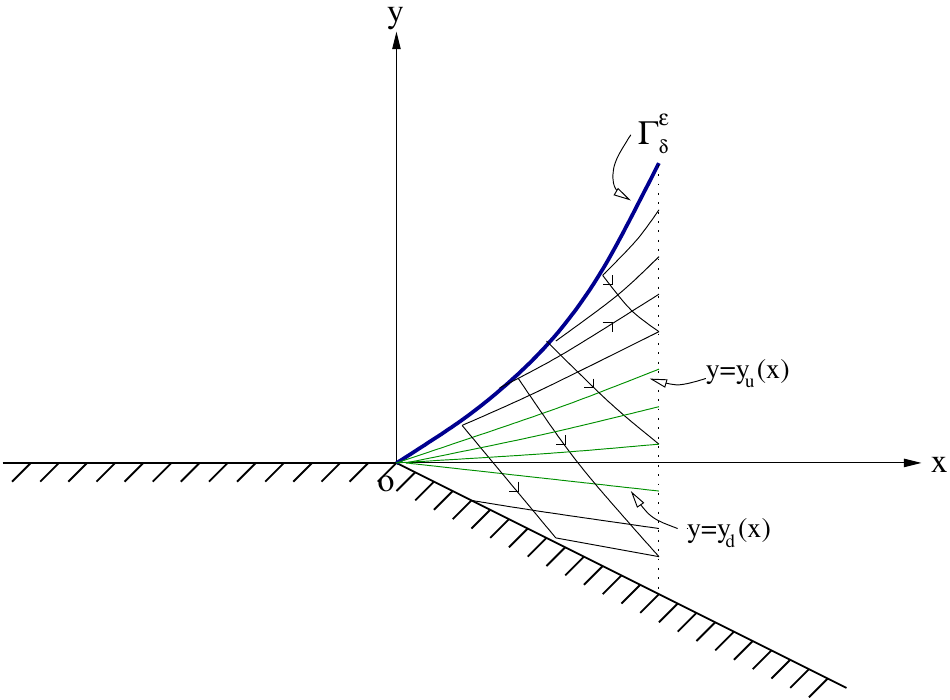}
\caption{\footnotesize A regularized problem.}
\label{Figure8}
\end{center}
\end{figure}
%==========

From (\ref{61113}) we know that the direction of the post-sonic shock $\Gamma_{\delta}$ is the same as the $C_{+}$ characteristic direction of the downstream flow field. However, by  (\ref{61402}), (\ref{62011}), (\ref{7302}), and $\bar{\partial}_{_\Gamma}S_b=S_{po}'(\tau_1)\bar{\partial}_{_\Gamma}\tau_1$ on $\Gamma_{\delta}$ we know that when $\delta$ is sufficiently small the first characteristic equation of (\ref{form}) does not hold along $\Gamma_{\delta}$. This implies that $\Gamma_{\delta}$ is not a characteristic and will result in a fact that the solution of the boundary value problem (\ref{PSEU}), (\ref{BDD}) is not regular on the boundary $\Gamma_{\delta}$.

In order to overcome the difficulty caused by the singularity, we consider a regularized problem.
For any small $\varepsilon>0$, we let the function $\psi_{\varepsilon}(x)$ be defined by
\begin{equation}
\left\{
  \begin{array}{ll}
    \psi_{\varepsilon}'(x)=\psi'(x)+\varepsilon, & \hbox{$0\leq x\leq\delta$;} \\[2pt]
    \psi_{\varepsilon}(0)=0.
  \end{array}
\right.
\end{equation}
We consider (\ref{PSEU}) with the boundary conditions
\begin{equation}\label{PBD}
\left\{
  \begin{array}{ll}
    (u, v, \rho, S)(x, y)=(u_b, v_b, \rho_b, S_b)(x, \psi(x)), & \hbox{$y=\psi_{\varepsilon}(x)$, $0<x<\delta$;} \\[4pt]
    v=u\tan\theta_w, & \hbox{$y=x\tan\theta_w$, $x>0$.}
  \end{array}
\right.
\end{equation}

For convenience, we denote by $\Gamma_{\delta}^{\varepsilon}$ the curve $y=\psi_{\varepsilon}(x)$, $0\leq x<\delta$.
Set $$\phi_{\varepsilon}(x, \psi_{\varepsilon}(x))=\arctan \psi_{\varepsilon}'(x).$$ We define the directional derivative along $\Gamma_{\delta}^{\varepsilon}$,
$$
\bar{\partial}_{\varepsilon}:=\cos\phi_{\varepsilon}\partial_x+\sin\phi_{\varepsilon}\partial_y.
$$
Then we have that for $0<x<\delta$,
\begin{equation}
\big(\bar{\partial}_{\varepsilon}u, \bar{\partial}_{\varepsilon}v, \bar{\partial}_{\varepsilon}\rho, \bar{\partial}_{\varepsilon}S\big)(x, \psi_{\varepsilon}(x))=\frac{\cos\phi_\varepsilon(x, \psi_{\varepsilon}(x))}{\cos\phi(x, \psi(x))}\big(\bar{\partial}_{_\Gamma}u_b, \bar{\partial}_{_\Gamma}v_b, \bar{\partial}_{_\Gamma}\rho_b, \bar{\partial}_{_\Gamma}S_b\big)(x, \psi(x)).
\end{equation}

By (\ref{72802}) we have
$$
\bar{\partial}_{\varepsilon}=\frac{\sin(\phi_{\varepsilon}-\beta)}{\sin 2A}\bar{\partial}_{+}+
\frac{\sin(\alpha-\phi_{\varepsilon})}{\sin 2A}\bar{\partial}_{-}\quad \mbox{on}\quad \Gamma_{\delta}^{\varepsilon}.
$$
Thus
\begin{equation}\label{61501}
\left\{
  \begin{array}{ll}
   \displaystyle\frac{\sin(\phi_{\varepsilon}-\beta)}{\sin 2A}\bar{\partial}_{+}u+
\frac{\sin(\alpha-\phi_{\varepsilon})}{\sin 2A}\bar{\partial}_{-}u=\bar{\partial}_{\varepsilon}u,\\[12pt]
 \displaystyle\frac{\sin(\phi_{\varepsilon}-\beta)}{\sin 2A}\bar{\partial}_{+}v+
\frac{\sin(\alpha-\phi_{\varepsilon})}{\sin 2A}\bar{\partial}_{-}v=\bar{\partial}_{\varepsilon}v, %\\[12pt]
 %  \displaystyle\frac{\sin(\phi_{\varepsilon}-\beta)}{\sin 2A}\bar{\partial}_{+}\rho+
%\frac{\sin(\alpha-\phi_{\varepsilon})}{\sin 2A}\bar{\partial}_{-}\rho=\frac{\cos\phi}{\cos\phi_s}\bar{\partial}_{_\Gamma}\rho_b, \\[12pt]
 %  \displaystyle
  \end{array}
\right.
\end{equation}
and
\begin{equation}\label{61502}
\frac{\sin(\phi_{\varepsilon}-\beta)}{\sin 2A}\bar{\partial}_{+}S+
\frac{\sin(\alpha-\phi_{\varepsilon})}{\sin 2A}\bar{\partial}_{-}S=\bar{\partial}_{\varepsilon}S
\end{equation}
on $\Gamma_{\delta}^{\varepsilon}$.

From (\ref{101102}) and (\ref{61502}), we have
\begin{equation}
\bar{\partial}_{+}S=-\bar{\partial}_{-}S=\frac{\sin (2A)\bar{\partial}_{\varepsilon}S}{\sin(\phi_{\varepsilon}-\beta)-\sin(\alpha-\phi_{\varepsilon})} \quad \mbox{on}\quad \Gamma_{\delta}^{\varepsilon}.
\end{equation}
By (\ref{73002}) and (\ref{9401}) we have
\begin{equation}\label{82900g}
\lim\limits_{x\rightarrow 0+}\bar{\partial}_{+}S(x, \psi_{\varepsilon}(x))= 0.
\end{equation}

From (\ref{form}) and (\ref{61501}) we have that on $\Gamma_{\delta}^{\varepsilon}$,
$$
\bar{\partial}_{+}v=\frac{\cos\beta(\cos\alpha\bar{\partial}_{\varepsilon}u+\sin\alpha\bar{\partial}_{\varepsilon}v)}
{\sin(\phi_{\varepsilon}-\beta)}
-\frac{\omega\sin 2A}{\sin(\phi_{\varepsilon}-\beta)(\tan\alpha-\tan\beta)}\left(\frac{\sin (\phi_{\varepsilon}-\beta)}{2\cos\beta}-
\frac{\sin (\alpha-\phi_{\varepsilon})}{2\cos\alpha}\right)
$$
and
$$
\bar{\partial}_{-}v=\frac{\cos\alpha(\cos\beta\bar{\partial}_{\varepsilon}u+\sin\beta\bar{\partial}_{\varepsilon}v)}
{\sin(\phi_{\varepsilon}-\alpha)}
-\frac{\omega\sin 2A}{\sin(\phi_{\varepsilon}-\alpha)(\tan\alpha-\tan\beta)}\left(\frac{\sin (\phi_{\varepsilon}-\beta)}{2\cos\beta}-
\frac{\sin (\alpha-\phi_{\varepsilon})}{2\cos\alpha}\right).
$$
Combining these with (\ref{72804}) and (\ref{81005}), we have that on $\Gamma_{\delta}^{\varepsilon}$,
\begin{equation}\label{51503}
\begin{aligned}
\bar{\partial}_{+}\rho~=~&-\frac{\cos\alpha\bar{\partial}_{\varepsilon}u+\sin\alpha\bar{\partial}_{\varepsilon}v}
{c\tau\sin(\phi_{\varepsilon}-\beta)}+\frac{\omega\sin \sigma\sin A+j_5\bar{\partial}_{+} S}{c\tau\cos\beta}
\\&\quad~+~\frac{\omega\sin 2A}{c\tau\cos\beta\sin(\phi_{\varepsilon}-\beta)(\tan\alpha-\tan\beta)}\left(\frac{\sin (\phi_{\varepsilon}-\beta)}{2\cos\beta}-
\frac{\sin (\alpha-\phi_{\varepsilon})}{2\cos\alpha}\right)
\end{aligned}
\end{equation}
and
\begin{equation}\label{51504}
\begin{aligned}
\bar{\partial}_{-}\rho~=~&\frac{\cos\beta\bar{\partial}_{\varepsilon}u+\sin\beta\bar{\partial}_{\varepsilon}v}
{c\tau\sin(\phi_{\varepsilon}-\alpha)}+\frac{\omega\sin \sigma\sin A+j_6\bar{\partial}_{+} S}{c\tau\cos\alpha}
\\&\quad~-~\frac{\omega\sin 2A}{c\tau\cos\alpha\sin(\phi_{\varepsilon}-\alpha)(\tan\alpha-\tan\beta)}\left(\frac{\sin (\phi_{\varepsilon}-\beta)}{2\cos\beta}-
\frac{\sin (\alpha-\phi_{\varepsilon})}{2\cos\alpha}\right).
\end{aligned}
\end{equation}

We define
$$
\Xi:=\Big\{(q, \tau, \sigma, S)~ \big|~ (q, \tau, \sigma, S)=(\bar{q}_{r}, \bar{\tau}_{r}, \bar{\sigma}_{r}, \bar{S}_{r})(\theta), ~\alpha_w\leq \theta\leq \phi_d\Big\}.
$$
For small $\nu>0$, we define
$$
\Xi_{\nu}:=\Big\{(q, \tau, \sigma, S)~\big|~ |(q-q', \tau-\tau', \sigma-\sigma', S-S')|\leq \nu, ~(q',\tau', \sigma',S')\in\Xi\Big\}.
$$
So, when $\nu$ is sufficiently small,
\begin{equation}\label{82900c}
p_{\tau\tau}(\tau, S)>0\quad \mbox{and} \quad 0<\frac{c(\tau, S)}{q}<1\quad  \mbox{in} \quad \Xi_{\nu},
\end{equation}
where $c(\tau, S)=\sqrt{-\tau^2p_{\tau}(\tau, S)}$.

We define the following constants:
$$
\mathcal{N}_1:=\max\limits_{(q, \tau, \sigma, S)\in\Xi_{\nu}}\frac{\tau^3\varphi p_{\tau\tau}}{4c^2\cos^2 A}, \quad \mathcal{N}_2:=\max\limits_{(q, \tau, \sigma, S)\in\Xi_{\nu}}\frac{\tau^3 p_{\tau\tau}}{4c^2\cos^2 A},
 $$
$$
\mathcal{N}_3:=\max\limits_{(q, \tau, \sigma, S)\in\Xi_{\nu}}\frac{4c^2\cos^2 A}{\tau^4\varphi p_{\tau\tau}}, \quad
\mathcal{N}_4:=\max\limits_{(q, \tau, \sigma, S)\in\Xi_{\nu}}\frac{-p_{\tau\tau}}{p_{\tau}}.
$$
where we use the relations
$\alpha=\sigma+\arcsin(\frac{c}{q})$,  $\beta=\sigma-\arcsin(\frac{c}{q})$, and $A=\arcsin(\frac{c}{q})$.

Let $n_1>\mathcal{N}_1+1$ and $n_2>\mathcal{N}_2+1$ be two large positive constants.
We then define the following constants:
$$
\begin{aligned}
&\mathcal{M}:=\max\limits_{(q, \tau, \sigma, S)\in\Xi_{\nu} }\Big|\big(\tau^{-1}\tilde{g}_{1},\cdot\cdot\cdot, \tau^{-1}\tilde{g}_{6},  \tau^{-1}\breve{g}_{1},\cdot\cdot\cdot, \tau^{-1}\breve{g}_{6},  \tau^{n_1}g_4, \tau^{n_1}g_8\big)\Big|,\\
&\mathcal{C}_1:=\frac{\partial_{_\Gamma}\phi(0, \zeta_1)\cos\phi_d}{\tau_d^{1-n_1}\sin (2A_d)}, \quad \mathcal{C}_2:=\frac{\partial_{_\Gamma}\phi(0, \zeta_1)\cos\phi_d}{\tau_d^{n_2+1}\sin (2A_d)},   \quad \mathcal{C}_3:=\frac{\partial_{_\Gamma}\phi(0, \zeta_1)\cos\phi_d}{\tau_d^{n_2+1}\varepsilon},
\end{aligned}
$$
where the constants $\phi_d$, $A_d$, and $\tau_d$ are defined in Section 2.5.

By (\ref{51503}) and (\ref{51504}) we know that
when $\delta$ and $\varepsilon$ are sufficiently small, where $\delta$ is independent of $\varepsilon$, the boundary data (\ref{PBD}) satisfy
\begin{equation}\label{62201}
(q, \tau, \sigma, S)\in \Xi_{\nu/2},\quad
\mathcal{Z}_{\pm}<-\frac{1}{2}\mathcal{C}_1, \quad \mathcal{R}_{+}>-2\mathcal{C}_2,\quad \mbox{and} \quad \mathcal{R}_{-}>-2\mathcal{C}_3\quad \mbox{on}\quad \Gamma_{\delta}^{\varepsilon}.
\end{equation}
where the variables $\mathcal{Z}_{\pm}$ and $\mathcal{R}_{\pm}$ are defined in (\ref{81401}) and (\ref{81402}), respectively.

Let $\mathcal{D}=\sup\limits_{\Gamma_{\delta}^{\varepsilon}}|\bar{\partial}_{+}S|$.
By (\ref{73002}) and (\ref{82900g}) we have
$$
\mathcal{D}\rightarrow 0  \quad \mbox{as}\quad \delta\rightarrow 0.
$$
Hence, when $\delta$ is sufficiently small,
\begin{equation}\label{82900e}
(\mathcal{M}+1)\mathcal{D}e^{\mathcal{N}_4(\tau_w-\tau_d+2\nu)}<\frac{1}{4}\mathcal{C}_1.
\end{equation}

\begin{lem}
There exists a small $\delta_0<\delta$ such that
the boundary value problem (\ref{PSEU}), (\ref{PBD}) admits a classical solution on $\Sigma(\delta_0):=\{(x,y)\mid x\tan\theta_w \leq y\leq\psi_{\varepsilon}(x), 0<x\leq \delta_0\}$.
\end{lem}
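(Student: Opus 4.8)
The plan is to treat (\ref{PSEU}), (\ref{PBD}) as a characteristic boundary value problem in the angular domain $\Sigma(\delta_0)$ and to solve it by the method of characteristics, the essential work being a set of uniform $C^1$ a priori estimates extracted from the weighted characteristic decompositions (\ref{cd2}) and (\ref{cd3}). Since the regularised curve $\Gamma_{\delta}^{\varepsilon}$ has slope $\psi'(x)+\varepsilon>\tan\alpha_b\ge\tan\beta_b$, it is strictly non-characteristic, so (\ref{PBD}) prescribes a genuine Cauchy datum for $(u,v,\rho,S)$ on $\Gamma_{\delta}^{\varepsilon}$, while the wall carries the single reflection condition $v=u\tan\theta_w$; marching in $x$ from the corner, this is a well-posed problem of the type solved in Li and Yu \cite{Li-Yu}. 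I would work in the characteristic coordinates adapted to the $C_{\pm}$ families as in Definition \ref{defn1} and (\ref{61905}), so that $\mathcal{W}_{\pm}$ and their weightings $\mathcal{Z}_{\pm}$, $\mathcal{R}_{\pm}$ evolve along characteristics through the Riccati-type relations (\ref{cd2}), (\ref{cd3}). It then suffices to produce bounds keeping the state in $\Xi_{\nu}$ and keeping $\mathcal{W}_{\pm}$ two-sidedly controlled, after which a standard iteration yields the classical solution.

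The heart of the argument will be a pair of one-sided estimates propagated from $\Gamma_{\delta}^{\varepsilon}$. By (\ref{82900c}) one has $p_{\tau\tau}>0$ in $\Xi_{\nu}$, so the quadratic term $\frac{\tau^4p_{\tau\tau}}{4c^2\cos^2A}\mathcal{Z}_-^2$ in (\ref{cd2}) is positive and the decompositions are genuinely of Riccati type. The choice $n_1>\mathcal{N}_1+1$ forces the net coefficient of the cross term, $\frac{\tau^4p_{\tau\tau}(\varphi-1)}{4c^2\cos^2A}-n_1\tau$, to be strictly negative; since the boundary data satisfy $\mathcal{Z}_{\pm}<-\tfrac12\mathcal{C}_1<0$ by (\ref{62201}), this cross term acts as a damping term and, after absorbing the entropy source (negligible since $\bar{\partial}_{+}S$ is small by (\ref{82900g}), (\ref{82900e}) and vanishes at the corner by (\ref{73002})), it keeps $\mathcal{Z}_{\pm}<-\tfrac14\mathcal{C}_1$ throughout $\Sigma(\delta_0)$, i.e.\ an upper bound holding $\mathcal{W}_{\pm}$ negative and away from $0$. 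Symmetrically, the weighting $n_2>\mathcal{N}_2+1$ in (\ref{cd3}) turns the cross term into $+n_2\tau\mathcal{Z}_-\mathcal{Z}_+$, which prevents $\mathcal{R}_{\pm}$ from decreasing to $-\infty$; with the boundary bounds $\mathcal{R}_+>-2\mathcal{C}_2$, $\mathcal{R}_->-2\mathcal{C}_3$ of (\ref{62201}) this gives a lower bound for $\mathcal{R}_{\pm}$, hence for $\mathcal{W}_{\pm}$. Because $\mathcal{W}_{\pm}=\bar{\partial}_{\pm}\rho-g\,\bar{\partial}_{+}S$ with $\bar{\partial}_{+}S$ small, the two families of bounds are exactly the uniform $C^1$ control of $\bar{\partial}_{\pm}\rho$ needed; the $C^0$ estimate $(q,\tau,\sigma,S)\in\Xi_{\nu}$ then follows by integrating these bounded derivatives over the short interval $[0,\delta_0]$, starting from $(q,\tau,\sigma,S)\in\Xi_{\nu/2}$ on $\Gamma_{\delta}^{\varepsilon}$.

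With these bounds in hand, the signs $\mathcal{W}_{\pm}<0$ (equivalently $\bar{\partial}_{\pm}\rho<0$), together with the structural conditions (\ref{61111})--(\ref{61305}) of Lemma \ref{61302} that produced the boundary signs in (\ref{62201}), guarantee that neighbouring characteristics of each family do not focus before $x=\delta_0$, so the characteristic coordinate map stays non-degenerate (its $\eta_{\zeta}$-type Jacobian bounded below) and its image covers $\Sigma(\delta_0)$ up to the wall. The wall condition $v=u\tan\theta_w$ fixes the reflected Riemann-type invariant there, closing the problem. A Picard/characteristic iteration as in Li and Yu \cite{Li-Yu} (see also Section 3.4.3 of the present paper) then converges: the estimates are uniform along the iteration, they confine the state to $\Xi_{\nu}$ and the characteristic derivatives to the trapping band, so no two characteristics of the same family cross and the domain is not exhausted before $x=\delta_0$, giving the classical solution on $\Sigma(\delta_0)$.

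The main obstacle will be the $C^1$ estimate of the second paragraph, namely controlling the competition between the destabilising Riccati term and the stabilising cross term in (\ref{cd2})--(\ref{cd3}) near the sonic shock; the two weightings $n_1,n_2$ are introduced solely to make the respective cross terms dominate and trap $\mathcal{W}_{\pm}$ from both sides. The delicate feature is that the lower bound degenerates like $\mathcal{C}_3\sim\varepsilon^{-1}$ as the regularisation is removed, which is the analytic signature of the genuine loss of $C^1$ regularity on the sonic boundary; for each fixed $\varepsilon>0$ the bound is finite and the argument closes, at the price that $\delta_0$ must be chosen small depending on $\varepsilon$. The remaining technical point is to verify that the entropy contributions $\bar{\partial}_{+}S$ are truly negligible, which rests on (\ref{73002}), (\ref{82900g}) and (\ref{82900e}); otherwise they could overwhelm the damping and spoil the sign of $\mathcal{Z}_{\pm}$.
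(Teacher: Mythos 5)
Your proposal has a genuine gap at the corner. The defining feature of the problem (\ref{PSEU}), (\ref{PBD}) is that it is a \emph{discontinuous} boundary value problem in the sense of Li and Yu: the boundary datum on $\Gamma_{\delta}^{\varepsilon}$ tends, as $x\to 0^{+}$, to the back-side state $(u_d,v_d,\tau_d,S_d)$ of the double-sonic shock, whose flow direction is $\sigma_d$, while the wall at the origin forces the direction $\theta_w$ with $\theta_w<\sigma_d$ by assumption {\bf(A1)}. The two boundary conditions are therefore incompatible at their common point, and no ``marching from the corner'' of a standard non-characteristic Cauchy problem with a single wall reflection can resolve this: the domain of determinacy of the data on $\Gamma_{\delta}^{\varepsilon}$ only reaches down to the forward $C_{+}$ characteristic $y=y_u(x)$ with $y_u'(0)=\tan\phi_d$, and the angular sector between $y_u$ and the characteristic $y_d$ with $y_d'(0)=\tan\alpha_w$ is not covered by your scheme. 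What must be inserted there is a $C_{+}$ type centered wave in the sense of Definition \ref{defn1} with the prescribed principal part $(\bar{u}_r,\bar{v}_r,\bar{\tau}_r,\bar{S}_r)(\arctan\zeta)$, $\tan\alpha_w\leq\zeta\leq\tan\phi_d$, dictated by the self-similar fan-shock-fan solution; this is exactly the content of the paper's proof, which otherwise simply invokes Li--Yu's theory for such discontinuous problems. A related consequence you overlook is that the resulting solution is only continuous and piecewise $C^1$, with weak discontinuities along $y_u$ and $y_d$; your claim that ``a standard iteration yields the classical solution'' with globally non-degenerate characteristic coordinates is not justified across these curves, where $\eta_\zeta$ vanishes as $x\to 0$ by (\ref{21901}).

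A secondary remark: the bulk of your argument (trapping $\mathcal{Z}_{\pm}$ below $-\tfrac12\mathcal{C}_1$ and $\mathcal{R}_{\pm}$ above $-2\mathcal{C}_2$, $-2\mathcal{C}_3$ via the weighted Riccati decompositions (\ref{cd2})--(\ref{cd3}), with the entropy source controlled by (\ref{73002}), (\ref{82900g}), (\ref{82900e})) is essentially correct, but it reproduces the content of the \emph{subsequent} a priori estimate lemmas (Lemmas \ref{7401}, \ref{61205}, \ref{61210}), whose role is to extend the local solution to a fixed $\delta$ independent of the iteration and then to pass to the limit $\varepsilon\to 0$. Those estimates are not what the present lemma requires; what it requires, and what your proposal omits, is the identification of the corner singularity and of the centered wave that resolves it.
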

\begin{proof}
The boundary value problem (\ref{PSEU}), (\ref{PBD}) is a discontinuous boundary value problem, since the boundary data is discontinuous at the origin.
The local existence of a continuous and piecewise $C^1$ solution follows from  Li and Yu \cite{Li-Yu}.
According to the local structure of the fan-shock-fan composite wave, there is a $C_{+}$ centered wave issued from the origin with the principle part
\begin{equation}\label{82900b}
(\tilde{u}, \tilde{v}, \tilde{\tau}, \tilde{S})(0,\zeta)=(\bar{u}_r, \bar{v}_r, \bar{\tau}_r, \bar{S}_r)(\arctan\zeta), \quad \tan\alpha_w\leq \zeta\leq \tan\phi_d.
\end{equation}
The centered wave flow region is $\Sigma_2(\delta_0)=\{(x, y)\mid  y_{d}(x)\leq y\leq y_{u}(x), 0<x\leq \delta_0\}$,
where $y=y_{u}(x)$ ($0\leq x\leq \delta_0$) and $y_{d}(x)$ ($0\leq x\leq \delta_0$) are two $C_{+}$ characteristic boundaries of this centered wave, i.e., $y_u'(0)=\tan\phi_d$ and $y_d'(0)=\tan\alpha_w$.
The two characteristic curves are also the weakly discontinuity curves of the solution.
\end{proof}

\begin{lem}\label{7401}
Assume that the boundary value problem (\ref{PSEU}), (\ref{PBD}) admits a classical solution on $\Sigma(\delta_1)$ for some $\delta_1\in (\delta_0/2, \delta)$. Then the solution satisfies
\begin{equation}\label{61201}
(q, \tau, \sigma, S)\in \Xi_{\nu},\quad
\mathcal{Z}_{\pm}<-\frac{1}{2}\mathcal{C}_1, \quad \mathcal{R}_{+}>-2\mathcal{C}_2.
\end{equation}
\end{lem}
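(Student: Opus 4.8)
The proof is a continuation (invariant-region) argument in the variable $x$. Since a classical solution exists on $\Sigma(\delta_1)$, the weighted quantities $\mathcal{Z}_{\pm}$ and $\mathcal{R}_{\pm}$ of (\ref{81401})--(\ref{81402}) are continuous there, and by (\ref{62201}) the strict inequalities $(q,\tau,\sigma,S)\in\Xi_{\nu/2}$, $\mathcal{Z}_{\pm}<-\tfrac12\mathcal{C}_1$ and $\mathcal{R}_{+}>-2\mathcal{C}_2$ already hold on the data curve $\Gamma_{\delta}^{\varepsilon}$. The plan is to suppose, for contradiction, that (\ref{61201}) fails somewhere on $\Sigma(\delta_1)$, and to let $x^{*}\in(0,\delta_1]$ be the smallest abscissa at which one of the three estimates degenerates to an equality. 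By the boundary margin and continuity, all three hold strictly for $x<x^{*}$, so along the backward characteristic through a first violation point $P$ (of abscissa $x^{*}$) every quantity still satisfies its bound; it then suffices to evaluate the relevant characteristic transport equation at $P$ and show it forces the offending quantity back into the admissible region.

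The core step is the barrier $\mathcal{Z}_{\pm}<-\tfrac12\mathcal{C}_1$, which I would establish along $C_{+}$ (for $\mathcal{Z}_{-}$, via the first equation of (\ref{cd2})) and along $C_{-}$ (for $\mathcal{Z}_{+}$, via the second equation). Suppose $\mathcal{Z}_{-}(P)=-\tfrac12\mathcal{C}_1$ while $\mathcal{Z}_{+}(P)\leq-\tfrac12\mathcal{C}_1$. Collecting the quadratic terms on the right of the first equation of (\ref{cd2}) gives $\frac{\tau^4 p_{\tau\tau}}{4c^2\cos^2A}\mathcal{Z}_{-}^2+\big(\frac{\tau^4p_{\tau\tau}}{4c^2\cos^2A}(\varphi-1)-n_1\tau\big)\mathcal{Z}_{-}\mathcal{Z}_{+}$; since $p_{\tau\tau}>0$ in $\Xi_{\nu}$ by (\ref{82900c}) and $n_1>\mathcal{N}_1+1$, the definition of $\mathcal{N}_1$ forces the combined coefficient $\frac{\tau^4p_{\tau\tau}}{4c^2\cos^2A}\varphi-n_1\tau$ to be at most $-\tau<0$, so that (using $\mathcal{Z}_{-}\mathcal{Z}_{+}>0$ and $|\mathcal{Z}_{\pm}|\geq\tfrac12\mathcal{C}_1$) the quadratic part is bounded above by $-\tfrac14\tau\mathcal{C}_1^2<0$. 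The only competing contributions are the entropy-gradient sources $(\tilde{g}_1\mathcal{Z}_{-}+\tilde{g}_2\mathcal{Z}_{+})\bar{\partial}_{+}S+\tilde{g}_3(\bar{\partial}_{+}S)^2$, whose magnitude is governed by $\mathcal{M}$ and $\mathcal{D}=\sup_{\Gamma_{\delta}^{\varepsilon}}|\bar{\partial}_{+}S|$; the calibration (\ref{82900e}) together with $\mathcal{D}\to0$ as $\delta\to0$ makes them strictly smaller than the negative quadratic margin. Hence $\bar{\partial}_{+}\mathcal{Z}_{-}(P)<0$, so $\mathcal{Z}_{-}$ is decreasing at $P$, contradicting that the threshold is reached from below; the symmetric argument along $C_{-}$ treats $\mathcal{Z}_{+}$.

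The lower bound $\mathcal{R}_{+}>-2\mathcal{C}_2$ is propagated along $C_{-}$ through the second equation of (\ref{cd3}), where the weight $\rho^{n_2}$ produces a $+n_2\tau\,\mathcal{Z}_{-}\mathcal{Z}_{+}$ term rather than the $-n_1\tau\,\mathcal{Z}_{-}\mathcal{Z}_{+}$ of (\ref{cd2}). At a would-be first violation $\mathcal{R}_{+}=-2\mathcal{C}_2$ the quadratic contribution $\frac{\tau^4p_{\tau\tau}}{4c^2\cos^2A}\big[\mathcal{Z}_{+}^2+(\varphi-1)\mathcal{Z}_{-}\mathcal{Z}_{+}\big]+n_2\tau\,\mathcal{Z}_{-}\mathcal{Z}_{+}$ is strictly positive, since $p_{\tau\tau}>0$, $\mathcal{Z}_{-}\mathcal{Z}_{+}>0$, and $n_2>\mathcal{N}_2+1$ makes the cross-term coefficient positive; the bound on $\mathcal{Z}_{\pm}$ already secured turns this into a definite positive lower bound, while the entropy terms are again absorbed for $\delta$ small, giving $\bar{\partial}_{-}\mathcal{R}_{+}(P)>0$ and the desired contradiction. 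Finally, the membership $(q,\tau,\sigma,S)\in\Xi_{\nu}$ follows by continuity: the characteristic derivatives of $q,\tau,\sigma,S$ are expressed through $\bar{\partial}_{\pm}\rho=\mathcal{W}_{\pm}+O(\bar{\partial}_{+}S)$ by (\ref{11}), (\ref{73001}), (\ref{72804}), (\ref{81005}), hence are uniformly bounded once $\mathcal{Z}_{\pm}$ is controlled, so integration along characteristics of length $O(\delta)$ from the set $\Xi_{\nu/2}$ on $\Gamma_{\delta}^{\varepsilon}$ displaces the state by at most $\nu/2$ when $\delta$ is small, keeping it in $\Xi_{\nu}$.

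I expect the delicate point to be the sign bookkeeping in the Riccati-type quadratics of (\ref{cd2})--(\ref{cd3}): the argument hinges on $p_{\tau\tau}>0$ in the post-sonic region and on the weights $\tau^{n_1}$, $\rho^{n_2}$ (with $n_1>\mathcal{N}_1+1$, $n_2>\mathcal{N}_2+1$) being chosen precisely so that the dominant $\mp n_i\tau\,\mathcal{Z}_{-}\mathcal{Z}_{+}$ terms carry the favorable sign at the respective thresholds, while the entropy-gradient sources — genuinely present here because $\bar{\partial}_{+}S\neq0$ on the back side of the post-sonic shock, in contrast with the isentropic upstream flow — must be shown via (\ref{82900e}) to be strictly dominated by the quadratic margin uniformly as $\delta\to0$.
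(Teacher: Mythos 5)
Your core barrier computation is correct and coincides with the paper's: at a first point where $\mathcal{Z}_{-}$ (or $\mathcal{Z}_{+}$, or $\mathcal{R}_{+}$) touches its threshold, the Riccati structure of (\ref{cd2})--(\ref{cd3}), the positivity of $p_{\tau\tau}$ in $\Xi_{\nu}$, the choice $n_1>\mathcal{N}_1+1$, $n_2>\mathcal{N}_2+1$, and the calibration (\ref{82900e}) force the characteristic derivative to have the wrong sign, exactly as in the paper's displays (\ref{8300a})--(\ref{8300b}). (One small omission there: to invoke (\ref{82900e}) at interior points you must first bound $|\bar{\partial}_{+}S|$ off the boundary, which the paper does by integrating the streamline invariance (\ref{7403}) to get (\ref{82900f}); you use the resulting bound implicitly.)

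The genuine gap is in the domain geometry. Your continuation argument tacitly assumes that from any point of $\Sigma(\delta_1)$ both backward characteristics can be traced to $\Gamma_{\delta}^{\varepsilon}$, where the margins (\ref{62201}) are given. That is true only in the upper region $\Sigma_1$ between the shock and the characteristic $y=y_u(x)$. The solution also contains a $C_{+}$ centered wave issued from the origin (region $\Sigma_2$) and a wall region $\Sigma_3$ below it. In $\Sigma_2$ the backward $C_{+}$ characteristics collapse into the origin, where (\ref{62201}) says nothing; the bound on $\mathcal{Z}_{-}$ there must instead be extracted from the explicit principal part $(\bar{u}_r,\bar{v}_r,\bar{\tau}_r,\bar{S}_r)$ of the fan, via $\tilde{\rho}_{\zeta}(0,\zeta)=\bar{\rho}_{r}'(\arctan\zeta)/(1+\zeta^2)>0$ and the $(x,\zeta)$-coordinate formula (\ref{61905}), which yields $\mathcal{Z}_{-}<-\mathcal{C}_1$ on a small triangle $\Delta$ at the center before the barrier argument can be run. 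In $\Sigma_3$ the backward $C_{+}$ characteristics terminate on the ramp, not on the shock; there the slip condition together with $\bar{\partial}_{+}S=0$ on the wall gives the reflection identity $\bar{\partial}_{-}\rho=\bar{\partial}_{+}\rho$ (equation (\ref{61204})), hence $\mathcal{Z}_{+}=\mathcal{Z}_{-}$ and $\mathcal{R}_{+}=\mathcal{R}_{-}$ on the wall, and it is this identity that converts the $C_{-}$-transported bounds (coming from $y=y_d(x)$) into the needed $C_{+}$ data at the wall. Without these two additional inputs your "first violation in $x$" scheme has no admissible data on the inflow boundary of $\Sigma_2$ and $\Sigma_3$, so the argument as written only proves the lemma on $\Sigma_1$.
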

\begin{proof}

%The solution is $C^1$ in
%In what follows, we are going to prove (\ref{61201}).
The proof proceeds in three steps.

%%=============
\begin{figure}[htbp]
\begin{center}
\includegraphics[scale=0.62]{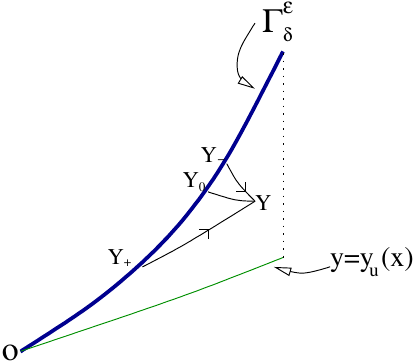} \qquad\quad \includegraphics[scale=0.68]{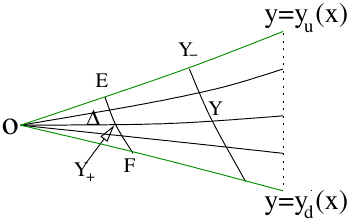} \qquad\quad \includegraphics[scale=0.68]{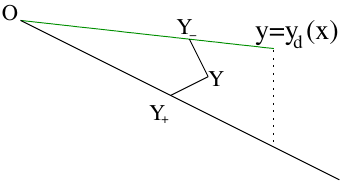}
\caption{\footnotesize Domains $\Sigma_1$, $\Sigma_2$, and $\Sigma_3$.}
\label{Fig9}
\end{center}
\end{figure}
%==========

\noindent
{\it Step 1.}
Let
$
\Sigma_1(\delta_1)=\{(x, y)\mid y_{u}(x)\leq y< \psi_{\varepsilon}(x), 0<x\leq \delta_1\}
$.
For any point $\mathrm{Y}\in \Sigma_1(\delta_1)$, the backward $C_{\pm}$ characteristic curves issued from $\mathrm{Y}$ stay in $\Sigma_{1}(\delta_1)$ until they intersect $\Gamma_{\delta}^{\varepsilon}$ at some points $\mathrm{Y}_{\pm}$.
Let $\Sigma_{\mathrm{Y}}$ be a closed domain bounded by $\wideparen{\mathrm{Y_{+}Y}}$,  $\wideparen{\mathrm{Y_{-}Y}}$, and $\wideparen{\mathrm{Y_{+}Y_{-}}}$, where $\wideparen{\mathrm{Y_{\pm}Y}}$ are the $C_{\pm}$ characteristic curves connecting the points $\mathrm{Y_{\pm}}$ and $\mathrm{Y}$, and $\wideparen{\mathrm{Y_{+}Y_{-}}}$ is a portion of $\Gamma_{\delta}^{\varepsilon}$; see Figure \ref{Fig9} (left).  We assert that if the inequalities in (\ref{61201}) hold for every point in $\Sigma_{\mathrm{Y}}\setminus \{\mathrm{Y}\}$ then they also hold at the point $\mathrm{Y}$.

For any point $\mathrm{Y'}\in\Sigma_{\mathrm{Y}}$,
the backward $C_0$ characteristic curve issued from $\mathrm{Y'}$ stays in $\Sigma_{\mathrm{Y}}$ until it intersects $\wideparen{\mathrm{Y_{+}Y_{-}}}$  at a point $\mathrm{Y}_0'$. Integrating (\ref{7403}) along the straight line $\wideparen{\mathrm{Y_0'Y'}}$ from $\mathrm{Y}_0'$ to $\mathrm{Y}'$ we have
\begin{equation}\label{82900f}
\big|(\bar{\partial}_{+}S)(\mathrm{Y}')\big|\leq \mathcal{D}e^{\mathcal{N}_4(\tau_w-\tau_d+2\nu)}, \quad \mathrm{Y}'\in \Sigma_{\mathrm{Y}}.
\end{equation}
Therefore, using the characteristic equations (\ref{1})--(\ref{81005}) and recalling (\ref{62201}) and the assumption of the assertion we have $(q, \tau, \sigma, S)(\mathrm{Y})\in \Xi_{2\nu/3}$ for $\delta$ is sufficiently small, where $\delta$ is independent of $\nu$ and $\mathrm{Y}$.

Suppose $\mathcal{Z}_{+}=-\frac{1}{2}\mathcal{C}_1$ and $\mathcal{Z}_{-}\leq-\frac{1}{2}\mathcal{C}_1$ at $\mathrm{Y}$.
Then by the assumption of the assertion we have $\bar{\partial}_{-}\mathcal{Z}_{+}\geq 0$.
While, by the second equation of (\ref{cd2}), (\ref{82900e}), and (\ref{82900f}) we have
\begin{equation}\label{8300a}
\begin{aligned}
&\tau^{n_1-1}\bar{\partial}_{-}\mathcal{Z}_{+}\\~=~&
   \displaystyle  \frac{\tau^3p_{\tau\tau}}{4c^2\cos^{2}A }\Big[\mathcal{Z}_{+}^2+(\varphi-1)\mathcal{Z}_{-}\mathcal{Z}_{+}\Big]-n_1\mathcal{Z}_{-}\mathcal{Z}_{+}
+\tau^{-1}\Big(\tilde{g}_4\mathcal{Z}_{-} +\tilde{g}_5\mathcal{Z}_{+}\Big)\bar{\partial}_{+} S+\tau^{-1}\tilde{g}_6(\bar{\partial}_{+}
S)^2\\~<~&\displaystyle  \frac{\tau^3\varphi p_{\tau\tau}}{4c^2\cos^{2}A }\mathcal{Z}_{-}\mathcal{Z}_{+}-n_1\mathcal{Z}_{-}\mathcal{Z}_{+}
+\tau^{-1}\Big(\tilde{g}_4\mathcal{Z}_{-} +\tilde{g}_5\mathcal{Z}_{+}\Big)\bar{\partial}_{+} S+\tau^{-1}\tilde{g}_6(\bar{\partial}_{+} S)^2
\\~<~&-\mathcal{Z}_{-}\mathcal{Z}_{+}-\mathcal{M}\Big(\mathcal{Z}_{-} +\mathcal{Z}_{+}\Big)|\bar{\partial}_{+} S|+
\mathcal{M}|\bar{\partial}_{+} S|^2\\~=~&-\frac{\mathcal{Z}_{-}}{3}\Big(\mathcal{Z}_{+}+\mathcal{M}|\bar{\partial}_{+} S|\Big)-\frac{\mathcal{Z}_{+}}{3}\Big(\mathcal{Z}_{-}+\mathcal{M}|\bar{\partial}_{+} S|\Big)+\Big(\mathcal{M}|\bar{\partial}_{+} S|^2-\frac{\mathcal{Z}_{+}\mathcal{Z}_{-}}{3}\Big)<0\quad \mbox{at}\quad \mathrm{Y}.
\end{aligned}
\end{equation}
 This leads to a contradiction. So, we get $\mathcal{Z}_{+}(\mathrm{Y})<-\frac{1}{2}\mathcal{C}_1$. Similarly, we have $\mathcal{Z}_{-}(\mathrm{Y})<-\frac{1}{2}\mathcal{C}_1$.

Suppose $\mathcal{R}_{+}(\mathrm{Y})=-2\mathcal{C}_2$. Then by the assumption of the assertion we have $\bar{\partial}_{-}\mathcal{R}_{+}\leq 0$ at $\mathrm{Y}$.
While, by the second equation of (\ref{cd3}) we have
\begin{equation}\label{8300b}
\begin{aligned}
&\tau^{n_2-1}\bar{\partial}_{-}\mathcal{R}_{+}\\~=~&
   \displaystyle  \frac{\tau^3p_{\tau\tau}}{4c^2\cos^{2}A }\Big[\mathcal{Z}_{+}^2+(\varphi-1)\mathcal{Z}_{-}\mathcal{Z}_{+}\Big]+n_2\mathcal{Z}_{-}\mathcal{Z}_{+}
+\tau^{-1}\Big(\breve{g}_4\mathcal{Z}_{-} +\breve{g}_5\mathcal{Z}_{+}\Big)\bar{\partial}_{+} S+\tau^{-1}\breve{g}_6(\bar{\partial}_{+}
S)^2\\~>~&-\displaystyle  \frac{\tau^3p_{\tau\tau}}{4c^2\cos^{2}A }\mathcal{Z}_{-}\mathcal{Z}_{+}+n_2\mathcal{Z}_{-}\mathcal{Z}_{+}
+\tau^{-1}\Big(\breve{g}_4\mathcal{Z}_{-} +\breve{g}_5\mathcal{Z}_{+}\Big)\bar{\partial}_{+} S+\tau^{-1}\breve{g}_6(\bar{\partial}_{+} S)^2
\\~>~&\mathcal{Z}_{-}\mathcal{Z}_{+}+\mathcal{M}\Big(\mathcal{Z}_{-} +\mathcal{Z}_{+}\Big)|\bar{\partial}_{+} S|-
\mathcal{M}|\bar{\partial}_{+} S|^2\\~=~&\frac{\mathcal{Z}_{-}}{3}\Big(\mathcal{Z}_{+}+\mathcal{M}|\bar{\partial}_{+} S|\Big)+\frac{\mathcal{Z}_{+}}{3}\Big(\mathcal{Z}_{-}+\mathcal{M}|\bar{\partial}_{+} S|\Big)-\Big(\mathcal{M}|\bar{\partial}_{+} S|^2-\frac{\mathcal{Z}_{+}\mathcal{Z}_{-}}{3}\Big)>0\quad \mbox{at}\quad \mathrm{Y}.
\end{aligned}
\end{equation}
This leads to a contraction. So, we have  $\mathcal{R}_{+}(\mathrm{Y})>-2\mathcal{C}_2$. This completes the proof of the assertion.
Therefore, by an argument of continuity we obtain that the inequalities in (\ref{61201}) hold for every point in $\Sigma_1(\delta_1)$.

\vskip 4pt
\noindent
{\it Step 2.}
Let $\mathrm{E}$ be a point on the $C_{+}$ characteristic curve $y=y_u(x)$. When $\mathrm{E}$ is close to the origin, the forward $C_{-}$ characteristic curve issued from $\mathrm{E}$ intersects the $C_{+}$ characteristic curve $y=y_d(x)$ at a point $\mathrm{F}$. We denote by $\Delta$ the triangle domain bounded by the characteristic curves $\wideparen{\mathrm{OE}}$, $\wideparen{\mathrm{OF}}$, and $\wideparen{\mathrm{EF}}$; see Figure \ref{Fig9} (mid). By (\ref{61905}) we know that when $\mathrm{E}$ is close to the origin
$(q, \tau, \sigma, S)\in \Xi_{2\nu/3}$ in $\Delta$. As in the last step we have
$|\bar{\partial}_{+}S|\leq \mathcal{D}e^{\mathcal{N}_4(\tau_w-\tau_d+2\nu)}$ in $\Delta$.

From (\ref{61906}) and (\ref{102304}) we have
\begin{equation}
\tilde{\rho}_{\zeta}(0, \zeta)=\frac{\bar{\rho}_{r}'(\arctan\zeta)}{1+\zeta^2}>0\quad \mbox{for}\quad \tan\alpha_w\leq \zeta\leq \tan\phi_d.
\end{equation}
Then by (\ref{82900b}) and (\ref{61905}) and the above estimates in $\Delta$ we have that when $\mathrm{E}$ is close to the origin, $\mathcal{Z}_{-}<-\mathcal{C}_1$ in $\Delta$.

In the last step we have obtained $\mathcal{Z}_{+}<-\frac{1}{2}\mathcal{C}_1$ and $\mathcal{R}_{+}>-2\mathcal{C}_2$ on $y=y_{u}(x)$ ($0<x\leq \delta_1$).
 As in (\ref{8300a}) and (\ref{8300b}) we have  $\mathcal{Z}_{+}<-\frac{1}{2}\mathcal{C}_1$ and $\mathcal{R}_{+}>-2\mathcal{C}_2$ in $\Delta$.

For any point $\mathrm{Y}\in \Sigma_2(\delta_1)\setminus\Delta$,
the backward $C_{+}$ and $C_{-}$ characteristic curves issued from $\mathrm{Y}$ intersect the $C_{+}$ characteristic curve $y=y_u(x)$ and the $C_{-}$ characteristic curve $\wideparen{\mathrm{EF}}$ at some points $\mathrm{Y}_{-}$ and $\mathrm{Y}_{+}$. Let $\Sigma_{\mathrm{Y}}$ be a closed domain bounded by $\wideparen{\mathrm{EY}_{-}}$, $\wideparen{\mathrm{EY}_{+}}$, $\wideparen{\mathrm{Y_{-}Y}}$, and $\wideparen{\mathrm{Y_{+}Y}}$. As in the last step, we know that if the inequalities in (\ref{61201}) hold for every point in $\Sigma_{\mathrm{Y}}\setminus \{\mathrm{Y}\}$ then they also hold at the point $\mathrm{Y}$.
Therefore, by an argument of continuity we know that (\ref{61201}) hold in $\Sigma_2(\delta_1)\setminus\Delta$.
%$(q, \tau, \sigma, S)(Y)\in \Xi_{5\nu/6}$, $|(\bar{\partial}_{+}S)(Y)|\leq \mathcal{D}e^{\mathcal{N}_4(\tau_w-\tau_d+2\nu)}$, $\mathcal{Z}_{\pm}(Y)<-\frac{1}{2}\mathcal{C}_1$, $\mathcal{R}_{+}(Y)>-2\mathcal{C}_2$.

%Moreover, at the centered point of the centered wave $\mathcal{Z}_{-}=-\infty$.
%Therefore, as in the last step we know that the inequalities in (\ref{61201}) hold in $\Sigma_2(\delta_1)$.

\vskip 4pt
\noindent
{\it Step 3.}
Let $
\Sigma_3(\delta_1)=\{(x, y)\mid x\tan\theta_w\leq x\leq y_{d}(x), 0\leq x\leq \delta_1\}
$.
%In the second step we obtain $(q, \tau, \sigma, S)\in \Xi_{5\nu/6}$ $\mathcal{Z}_{+}<-\frac{1}{2}\mathcal{C}_1$, $\mathcal{R}_{+}>-2\mathcal{C}_2$ on $y=y_{d}(x)$ ($0<x\leq \delta_1$).
We are going to prove that the solution satisfies
\begin{equation}\label{7402}
(q, \tau, \sigma, S)\in \Xi_{\nu},\quad
\mathcal{Z}_{\pm}<-\frac{1}{2}\mathcal{C}_1, \quad \mbox{and}\quad \mathcal{R}_{\pm}>-2\mathcal{C}_2
\end{equation}
in $\Sigma_3(\delta_1)$.

 For any point $\mathrm{Y}\in \Sigma_3(\delta_1)$, the backward $C_{+}$ characteristic curve issued from $\mathrm{Y}$ stays in $\Sigma_{3}(\delta_1)$ until
it intersects the ramp at a point $\mathrm{Y}_{+}$, and
 the backward $C_{-}$ characteristic curve issued from $\mathrm{Y}$ stays in $\Sigma_{3}(\delta_1)$ until
it intersects $y=y_{d}(x)$ ($0<x\leq \delta_1$)  at a point $Y_{-}$; see Figure \ref{Fig9} (right).
We denote by $\Sigma_{\mathrm{Y}}$ the closed domain bounded by $\wideparen{\mathrm{Y_{+}Y}}$, $\wideparen{\mathrm{Y_{-}Y}}$, $\wideparen{\mathrm{OY}_{-}}$, and $\overline{\mathrm{OY}_{+}}$.

(i) If $\mathrm{Y}$ does not lie on the ramp, then $\wideparen{\mathrm{Y_{\pm}Y}}$ exist. As in the first step, if the inequalities in (\ref{7402}) hold for every point in $\Sigma_{\mathrm{Y}}\setminus\{\mathrm{Y}\}$, then they also hold at $\mathrm{Y}$.

(ii) If $\mathrm{Y}$ does lies on the ramp, then  $\wideparen{\mathrm{Y_{-}Y}}$ exists. As in the first step,  if the inequalities in (\ref{7402}) hold for every point in $\Sigma_{\mathrm{Y}}\setminus\{\mathrm{Y}\}$, then $(q, \tau, \sigma, S)(Y)\in \Xi_{\nu}$, $
\mathcal{Z}_{+}(\mathrm{Y})<-\frac{1}{2}\mathcal{C}_1$, and $\mathcal{R}_{+}(\mathrm{Y})>-2\mathcal{C}_2$.

According to the slip boundary condition, we have
$$
\bar{\partial}_{0}\sigma=0
\quad \mbox{on}\quad y=x\tan\theta_w, \quad x\geq 0.
$$
Moreover, by (\ref{81301}) and  (\ref{82900g}) we have
$$
\bar{\partial}_{+}S=0
\quad \mbox{on}\quad y=x\tan\theta_w, \quad x\geq 0.
$$
Then by $\bar{\partial}_{0}\sigma=\frac{1}{4}(\bar{\partial}_{+}+\bar{\partial}_{-})(\alpha+\beta)$ and (\ref{1})--(\ref{2}) we have
\begin{equation}\label{61204}
\bar{\partial}_{-}\rho=\bar{\partial}_{+}\rho\quad \mbox{on}\quad y=x\tan\theta_w, \quad x>0.
\end{equation}
 Thus, $\mathcal{Z}_{-}(\mathrm{Y})=\mathcal{Z}_{+}(\mathrm{Y})<-\frac{1}{2}\mathcal{C}_1$ and $\mathcal{R}_{-}(\mathrm{Y})=\mathcal{R}_{+}(\mathrm{Y})>-2\mathcal{C}_2$.
Therefore, by an argument of continuity we know that the solution satisfies (\ref{7402}) in $\Sigma_3(\delta_1)$.

This completes the proof.
\end{proof}

We define the following constants:
$$
\mathcal{C}_4:=\max\limits_{y\in [\frac{1}{2}\delta_0\tan\theta_w, \psi_{\varepsilon}(\delta_0/2)]}\big|\mathcal{R}_{-}(\delta_0/2, y)\big|\quad \mbox{and}\quad   \mathcal{C}_5:=\max\big\{\mathcal{C}_2,~\mathcal{C}_3,~\mathcal{C}_4\big\}.
$$

\begin{lem}\label{61205}
Suppose that the boundary value problem (\ref{PSEU}), (\ref{PBD}) admits a classical solution on $\Sigma(\delta_1)$ for some $\delta_1\in (\delta_0/2, \delta)$. Then, in $\Sigma(\delta_1)\setminus \Sigma(\delta_0/2)$ the solution satisfies
\begin{equation}\label{52705}
 \mathcal{R}_{-}>-2\mathcal{C}_5.
\end{equation}
\end{lem}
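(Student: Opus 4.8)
The plan is to bound $\mathcal{R}_{-}$ from below by transporting it along the $C_{+}$ characteristics, whose governing equation is the first line of (\ref{cd3}), while exploiting that the bounds on $\mathcal{Z}_{\pm}$ and $\mathcal{R}_{+}$ furnished by Lemma \ref{7401} already hold \emph{unconditionally} on all of $\Sigma(\delta_1)$, so that no coupled bootstrap is needed for $\mathcal{R}_{-}$ itself. First I would record, from (\ref{61201}), that on $\Sigma(\delta_1)$ one has $(q,\tau,\sigma,S)\in\Xi_{\nu}$ and $\mathcal{Z}_{\pm}<-\tfrac12\mathcal{C}_1<0$; since $\mathcal{R}_{\pm}$ and $\mathcal{Z}_{\pm}$ differ only by a positive power of $\tau$ and share the sign of $\mathcal{W}_{\pm}$, this forces $\mathcal{R}_{-}<0$ and, in particular, $\mathcal{Z}_{-}\mathcal{Z}_{+}>0$ with $\mathcal{Z}_{-}\mathcal{Z}_{+}>\tfrac14\mathcal{C}_1^{2}$. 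I would also recall the uniform smallness $\mathcal{M}|\bar{\partial}_{+}S|<\tfrac14\mathcal{C}_1$ provided by (\ref{82900f})--(\ref{82900e}), so that $\mathcal{Z}_{\pm}+\mathcal{M}|\bar{\partial}_{+}S|<-\tfrac14\mathcal{C}_1<0$.

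The heart of the matter is to show $\bar{\partial}_{+}\mathcal{R}_{-}>0$ everywhere in $\Sigma(\delta_1)$. Dividing the first equation of (\ref{cd3}) by $\tau$ gives
\[
\tau^{n_2-1}\bar{\partial}_{+}\mathcal{R}_{-}
=\frac{\tau^{3}p_{\tau\tau}}{4c^{2}\cos^{2}A}\big[\mathcal{Z}_{-}^{2}+(\varphi-1)\mathcal{Z}_{-}\mathcal{Z}_{+}\big]
+n_{2}\mathcal{Z}_{-}\mathcal{Z}_{+}
+\tau^{-1}\big(\breve{g}_{1}\mathcal{Z}_{-}+\breve{g}_{2}\mathcal{Z}_{+}\big)\bar{\partial}_{+}S
+\tau^{-1}\breve{g}_{3}(\bar{\partial}_{+}S)^{2}.
\]
I would discard the nonnegative term $\tfrac{\tau^{3}p_{\tau\tau}}{4c^{2}\cos^{2}A}\mathcal{Z}_{-}^{2}$, use $\varphi>0$ (valid since $p_{\tau\tau}>0$ and $p_{\tau}<0$ in $\Xi_{\nu}$ by (\ref{82900c})) together with $\mathcal{Z}_{-}\mathcal{Z}_{+}>0$ to bound $\tfrac{\tau^{3}p_{\tau\tau}}{4c^{2}\cos^{2}A}(\varphi-1)\mathcal{Z}_{-}\mathcal{Z}_{+}>-\mathcal{N}_{2}\mathcal{Z}_{-}\mathcal{Z}_{+}$, and then invoke $n_{2}>\mathcal{N}_{2}+1$ to see that the quadratic part exceeds $\mathcal{Z}_{-}\mathcal{Z}_{+}$. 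The two $\bar{\partial}_{+}S$-terms, of absolute size at most $\mathcal{M}|\mathcal{Z}_{-}+\mathcal{Z}_{+}|\,|\bar{\partial}_{+}S|+\mathcal{M}|\bar{\partial}_{+}S|^{2}$, are absorbed by the same three-term regrouping used in (\ref{8300a})--(\ref{8300b}): pairing $\mathcal{Z}_{-}\mathcal{Z}_{+}$ with $\mathcal{M}\mathcal{Z}_{\pm}|\bar{\partial}_{+}S|$ and using $\mathcal{Z}_{\pm}+\mathcal{M}|\bar{\partial}_{+}S|<0$ renders each group positive, whence $\bar{\partial}_{+}\mathcal{R}_{-}>0$. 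I would emphasize that this computation uses only the upper bounds $\mathcal{Z}_{\pm}<-\tfrac12\mathcal{C}_1$ and needs \emph{no} lower bound on $\mathcal{Z}_{-}$, which is exactly why it can be carried out before $\mathcal{R}_{-}$ is controlled.

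With the monotonicity $\bar{\partial}_{+}\mathcal{R}_{-}>0$ in hand I would integrate along $C_{+}$. For $\mathrm{Y}\in\Sigma(\delta_1)\setminus\Sigma(\delta_0/2)$ the backward $C_{+}$ characteristic issued from $\mathrm{Y}$ remains in $\Sigma(\delta_1)$ and first meets one of three boundary portions: the segment $\{x=\delta_0/2\}$, the ramp $y=x\tan\theta_w$, or the regularized shock $\Gamma_{\delta}^{\varepsilon}$. On $\{x=\delta_0/2\}$ one has $|\mathcal{R}_{-}|\le\mathcal{C}_4$, hence $\mathcal{R}_{-}\ge-\mathcal{C}_4\ge-\mathcal{C}_5>-2\mathcal{C}_5$; on $\Gamma_{\delta}^{\varepsilon}$ the estimate $\mathcal{R}_{-}>-2\mathcal{C}_3\ge-2\mathcal{C}_5$ is part of (\ref{62201}); and on the ramp I would use $\bar{\partial}_{+}S=0$ (Step 3 of Lemma \ref{7401}) together with the slip relation (\ref{61204}) to identify $\mathcal{R}_{-}=\mathcal{R}_{+}>-2\mathcal{C}_2\ge-2\mathcal{C}_5$. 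Since $\mathcal{C}_5=\max\{\mathcal{C}_2,\mathcal{C}_3,\mathcal{C}_4\}$, in every case the starting value of $\mathcal{R}_{-}$ exceeds $-2\mathcal{C}_5$; because $\mathcal{R}_{-}$ increases along $C_{+}$, its value at $\mathrm{Y}$ also exceeds $-2\mathcal{C}_5$, which is precisely (\ref{52705}).

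The main obstacle is the sign computation $\bar{\partial}_{+}\mathcal{R}_{-}>0$: it is delicate because $|\mathcal{Z}_{-}|$ is a priori unbounded, so one must check that the quadratic gain $(n_2-\mathcal{N}_2)\mathcal{Z}_{-}\mathcal{Z}_{+}$ — which is favourable thanks to the $+n_2\tau$ sign in (\ref{cd3}) — dominates both the curvature term $(\varphi-1)\mathcal{Z}_{-}\mathcal{Z}_{+}$ and the entropy terms \emph{uniformly}; this hinges on the choice $n_2>\mathcal{N}_2+1$ and on the smallness of $\bar{\partial}_{+}S$ secured in (\ref{82900e})--(\ref{82900f}). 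A secondary point requiring care is the bookkeeping of which boundary portion a given backward $C_{+}$ characteristic first reaches, and the transfer of the $\mathcal{R}_{+}$ bound to $\mathcal{R}_{-}$ along the ramp via (\ref{61204}); the cutoff at $x=\delta_0/2$ is what lets the already-constructed solution on $\Sigma(\delta_0)$ supply finite initial data $\mathcal{C}_4$ for the characteristics that would otherwise run back into the singular corner.
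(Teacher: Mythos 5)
Your proof is correct and takes essentially the same route as the paper: both propagate the boundary bound ($\mathcal{R}_{-}>-2\mathcal{C}_5$ on $\Gamma_{\delta}^{\varepsilon}$ via $\mathcal{C}_3$, on the ramp via (\ref{61204}) and the $\mathcal{R}_{+}$ bound, and on the cross-section $x=\delta_0/2$ via $\mathcal{C}_4$) along backward $C_{+}$ characteristics, using the favourable sign of $\bar{\partial}_{+}\mathcal{R}_{-}$ coming from the $+n_2\tau\,\mathcal{Z}_{-}\mathcal{Z}_{+}$ term in the first equation of (\ref{cd3}) together with the smallness of $\bar{\partial}_{+}S$. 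The only difference is cosmetic — you establish $\bar{\partial}_{+}\mathcal{R}_{-}>0$ globally from the unconditional bounds of Lemma \ref{7401} and then integrate, whereas the paper packages the identical computation as a first-touching contradiction — and your remark that no lower bound on $\mathcal{Z}_{-}$ is needed is precisely the point that makes the argument close.
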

\begin{proof}
Let ${\it l}_1=\{(x, y)\mid y=y_u(x), \delta_1<x<\delta\}$,
${\it l}_2=\{(x, y)\mid x=\delta_1, \delta_1\tan\theta_w\leq y\leq  \psi_{\varepsilon}(\delta_1)\}$, and
${\it l}_3=\{(x, y)\mid y=x\tan\theta_w, \delta_1<x<\delta\}$.
By the definition of $\mathcal{C}_5$ and (\ref{61204}) we have $\mathcal{R}_{-}>-2\mathcal{C}_5$ on ${\it l}_1\cup{\it l}_2\cup{\it l}_3$.

For any point $\mathrm{Y}\in \Sigma(\delta_1)\setminus \Sigma(\delta_0/2)$, the backward $C_{+}$ characteristic curve issued from $\mathrm{Y}$ stays in $\Sigma(\delta_1)\setminus \Sigma(\delta_0/2)$ until it intersects ${\it l}_1\cup{\it l}_2\cup{\it l}_3$ at a point $Y_{+}$.
We assert that if $\mathcal{R}_{-}>-2\mathcal{C}_5$ for every point in $\wideparen{\mathrm{\mathrm{Y_{+}Y}}}\setminus\{\mathrm{Y}\}$, then  $\mathcal{R}_{-}(\mathrm{Y})>-2\mathcal{C}_5$.
Suppose $\mathcal{R}_{-}(\mathrm{Y})=-2\mathcal{C}_5$.
Then by the assumption of the assertion we have $\bar{\partial}_{-}\mathcal{R}_{+}\leq 0$ at $\mathrm{Y}$.
While,  as in (\ref{8300b}) we have $\bar{\partial}_{-}\mathcal{R}_{+}>0$ at $\mathrm{Y}$. This leads to a contradiction.
So, we have $\mathcal{R}_{-}(\mathrm{Y})>-2\mathcal{C}_5$. This completes the proof of the assertion.

Therefore,  by the argument of continuity we get $\mathcal{R}_{-}>-2\mathcal{C}_5$ for every point in $\Sigma(\delta_1)\setminus \Sigma(\delta_0/2)$. This completes the proof.
\end{proof}

By Lemmas \ref{7401} and \ref{61205} we obtain a uniform a priori $C^1$ norm estimate for the solution in $\Sigma(\delta_1)\setminus \Sigma(\delta_0/2)$ for $\delta_0/2<\delta_1<\delta$.  Then  the local solution can be extended to $\Sigma(\delta)$. So, there is a fixed $\delta>0$ such that for any small $\varepsilon>0$, the problem (\ref{PSEU}), (\ref{PBD}) admits a classical solution on $\Sigma(\delta)$ and the solution satisfies (\ref{61201}).

For small $\iota>0$, we
define
$$
\Sigma^{\iota}(\delta):=\big\{(x, y)\mid
x\tan\theta_w\leq x\leq\psi(x)-\iota, ~\varsigma\leq x\leq \delta\big\},
$$
where $\varsigma$ is defined by $\psi(\varsigma)-\iota=\varsigma\tan\theta_w$.
We also define $$\Gamma_{\delta, \iota}:=\{(x, y)\mid y=\psi(x)-\iota, ~\varsigma\leq x\leq \delta\}.$$

\begin{lem}\label{61210}
For any small $\iota>0$, there exists a sufficiently small $\varepsilon_0>0$ such that when $\varepsilon<\varepsilon_0$ the solution of the problem (\ref{PSEU}), (\ref{PBD}) satisfies
\begin{equation}
-\frac{2\mathcal{N}_3}{\iota(\tau_d-\nu)^{n_2}}<\mathcal{R}_{-}<0\quad \mbox{in}\quad \Sigma^{\iota}(\delta).
\end{equation}
\end{lem}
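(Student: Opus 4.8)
The plan is to establish the two inequalities separately. The upper bound $\mathcal{R}_{-}<0$ is immediate: by Lemma~\ref{7401} (see (\ref{61201})) we have $\mathcal{Z}_{-}<-\tfrac{1}{2}\mathcal{C}_{1}<0$ throughout the solution domain, and since $\mathcal{Z}_{-}=\tau^{n_{1}}\mathcal{W}_{-}$ and $\mathcal{R}_{-}=\rho^{n_{2}}\mathcal{W}_{-}$ are both \emph{positive} multiples of $\mathcal{W}_{-}$, the sign $\mathcal{Z}_{-}<0$ forces $\mathcal{W}_{-}<0$ and hence $\mathcal{R}_{-}<0$ in $\Sigma^{\iota}(\delta)$. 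All the substance lies in the lower bound, which is a boundary-layer smoothing estimate: on $\Gamma_{\delta}^{\varepsilon}$ the quantity $\mathcal{R}_{-}$ is controlled by (\ref{62201}) only through $\mathcal{C}_{3}\sim\varepsilon^{-1}$, so it may degenerate as $\varepsilon\to0$, whereas at vertical distance $\iota$ from the shock the singular behaviour should relax to an $\varepsilon$-independent size $\sim\iota^{-1}$. The mechanism is the Riccati structure of (\ref{cd4}) integrated along $C_{+}$ characteristics.

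Concretely, I would fix $\mathrm{Y}\in\Sigma^{\iota}(\delta)$ and follow the $C_{+}$ characteristic backward from $\mathrm{Y}$ until it first meets $\Gamma_{\delta}^{\varepsilon}$ at a point $P$; the alternative case, in which this characteristic instead exits through the centered-wave boundary $y=y_{u}(x)$ or the left edge, is easier because there $\mathcal{W}_{-}$ is already bounded and is disposed of by Lemmas~\ref{7401} and~\ref{61205}. On the arc $\wideparen{P\mathrm{Y}}$ I use $\mathcal{W}_{\pm}<0$ together with the smallness of $\bar{\partial}_{+}S$ — which tends to $0$ as $\delta\to0$ by (\ref{73002}), (\ref{82900g}) and the bound $\mathcal{D}\to0$ — to read off from (\ref{cd4}) a one-sided Riccati inequality
\[
\bar{\partial}_{+}\Big(\frac{1}{\mathcal{W}_{-}}\Big)\leq-\frac{\tau^{4}p_{\tau\tau}}{4c^{2}\cos^{2}A}\Big(1+(\varphi-1)\frac{\mathcal{W}_{+}}{\mathcal{W}_{-}}\Big)+o(1),
\]
in which the cross term carries the favourable sign $\mathcal{W}_{+}/\mathcal{W}_{-}>0$, so that the coefficient is bounded below by a positive quantity controlled by $\mathcal{N}_{3}^{-1}$ (the $(\varphi-1)$ term being precisely what upgrades the bare coefficient to the $\varphi$-weighted one matching $\mathcal{N}_{3}$). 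Integrating from $P$ to $\mathrm{Y}$ and discarding the harmless boundary contribution $1/\mathcal{W}_{-}(P)\leq0$ — here only its \emph{sign}, not its $\varepsilon$-dependent size, is used — yields $1/\mathcal{W}_{-}(\mathrm{Y})\leq-c\,\mathcal{N}_{3}^{-1}\,\ell$, where $\ell$ is the length of $\wideparen{P\mathrm{Y}}$.

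It then remains to bound $\ell$ from below by a multiple of $\iota$. Since $\phi_{\varepsilon}\approx\alpha_{b}$ on $\Gamma_{\delta}^{\varepsilon}$ and the gap between $\wideparen{P\mathrm{Y}}$ and the shock grows at the bounded rate $\tan\phi_{\varepsilon}-\tan\alpha$, while $\mathrm{Y}$ lies at vertical distance $\geq\iota$ below $\Gamma_{\delta}^{\varepsilon}$, integrating the gap gives $\ell\gtrsim\iota$ with an $\varepsilon$-independent constant. Hence $\mathcal{W}_{-}(\mathrm{Y})\geq-2\mathcal{N}_{3}/\iota$ after tracking constants, and finally, using $\mathcal{R}_{-}=\rho^{n_{2}}\mathcal{W}_{-}$ with $\mathcal{W}_{-}<0$ and $\rho^{n_{2}}=\tau^{-n_{2}}\leq(\tau_{d}-\nu)^{-n_{2}}$ on $\Xi_{\nu}$, I conclude $\mathcal{R}_{-}(\mathrm{Y})\geq-\tfrac{2\mathcal{N}_{3}}{\iota(\tau_{d}-\nu)^{n_{2}}}$, which is the assertion. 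Choosing $\varepsilon_{0}$ small guarantees that the solution exists on $\Sigma(\delta)$ and that the geometric and smallness requirements above hold uniformly in $\varepsilon$.

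The step I expect to be the main obstacle is making the Riccati coefficient genuinely $\varepsilon$-independent and matching it to $\mathcal{N}_{3}$: near $P$ the ratio $\mathcal{W}_{+}/\mathcal{W}_{-}\to0$ (so the $\varphi$-enhancement is inactive there) while far from $P$ it is $O(1)$, and one must verify that the \emph{sign} of the cross term, rather than its pointwise size, is all that the integration requires, and that the $\bar{\partial}_{+}S$-terms in (\ref{cd4}) are dominated uniformly on $\wideparen{P\mathrm{Y}}$. The companion difficulty is the quantitative length estimate $\ell\gtrsim\iota$, which must be uniform in $\varepsilon$ even though the gap between the characteristic and the shock opens at a rate that is itself only $O(\varepsilon)$ immediately at $P$.
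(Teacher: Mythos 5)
Your mechanism is the paper's: the lower bound is obtained by integrating the Riccati equation (\ref{cd4}) for $1/\mathcal{W}_{-}$ along backward $C_{+}$ characteristics, starting from $1/\mathcal{W}_{-}=O(\varepsilon)\le 0$ on $\Gamma_{\delta}^{\varepsilon}$ (or $=0$ at the origin), and converting an arc-length lower bound of order $\iota$ (your Lipschitz gap estimate, which is correct and elementary) into $1/\mathcal{W}_{-}\le -c\,\iota$; the upper bound from the sign of $\mathcal{Z}_{-}$ is fine. However, the obstacle you flag at the end is a genuine gap in your one-shot version. If you integrate (\ref{cd4}) over the \emph{entire} arc from $P\in\Gamma_{\delta}^{\varepsilon}$ to a point $\mathrm{Y}$ deep in the domain, you need $\bar{\partial}_{+}(1/\mathcal{W}_{-})$ to stay negative along the whole arc; its leading part is $-K\bigl(1+(\varphi-1)\mathcal{W}_{+}/\mathcal{W}_{-}\bigr)$ with $K=\tau^{4}p_{\tau\tau}/(4c^{2}\cos^{2}A)$, and positivity of $\mathcal{W}_{+}/\mathcal{W}_{-}$ alone does not control this when $\varphi<1$: the a priori bounds of (\ref{61201}) give an upper bound on $|\mathcal{W}_{+}|$ and a lower bound on $|\mathcal{W}_{-}|$ whose ratio is not $\le 1$, so $1+(\varphi-1)\mathcal{W}_{+}/\mathcal{W}_{-}$ may change sign once the characteristic leaves the neighbourhood of the shock and $|\mathcal{W}_{-}|$ is no longer large. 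In addition, for $\mathrm{Y}$ in the centered-wave region or in $\Sigma_{3}$ the backward $C_{+}$ characteristic never reaches $\Gamma_{\delta}^{\varepsilon}$ (it runs into the origin or the ramp), and Lemma \ref{61205} is not an admissible substitute there, since $\mathcal{C}_{5}$ involves $\mathcal{C}_{3}\sim\varepsilon^{-1}$ and is not uniform in $\varepsilon$.

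The paper's proof closes exactly these two points by splitting the argument. It first runs the Riccati estimate only across the layer between $\Gamma_{\delta}^{\varepsilon}$ and $\Gamma_{\delta,\iota}$, where $1/\mathcal{W}_{-}$ stays of size $O(\varepsilon)+O(\iota)$ so the cross and quadratic terms of (\ref{cd4}) are harmless; this yields $1/\mathcal{W}_{-}<-\iota/(2\mathcal{N}_{3})$ on $\Gamma_{\delta,\iota}$, i.e.\ (\ref{8300c}). It then propagates the bound into $\Sigma^{\iota}(\delta)$ not by the Riccati equation but by the monotonicity $\bar{\partial}_{+}\mathcal{R}_{-}>0$, a pure sign statement coming from the first equation of (\ref{cd3}) together with (\ref{61201}) (the same computation as (\ref{8300b})), which needs no quantitative rate; backward $C_{+}$ characteristics from $\Sigma^{\iota}(\delta)$ terminate on $\Gamma_{\delta,\iota}$, where the bound is already known, and $\Sigma_{3}$ is covered by the $\varepsilon$-uniform estimate $\mathcal{R}_{-}>-2\mathcal{C}_{2}$ from Step 3 of Lemma \ref{7401}. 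Reorganized this way, your length estimate is only needed inside the thin layer and your argument becomes the paper's.
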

\begin{proof}
By the result in the third step of the proof for Lemma \ref{7401} we know that when $\iota>0$ is sufficiently small,
$$
\mathcal{R}_{-}>-2\mathcal{C}_2>-\frac{2\mathcal{N}_3}{\iota(\tau_d-\nu)^{n_2}}\quad \mbox{in}\quad \Sigma_3(\delta).
$$
So, it only remains to prove the estimate in $\Sigma^{\iota}(\delta)\setminus\Sigma_3(\delta)$.

For any point in $\Gamma_{\delta, \iota}\setminus\Sigma_3(\delta)$ the backward $C_{+}$ characteristic curve issued from this point intersects $\Gamma_{\delta}^{\varepsilon}$ or goes back to the origin. By
(\ref{cd4}), (\ref{51504}), and $(\frac{1}{\mathcal{W}_{-}})(O)=0$ we have that when $\varepsilon$ is sufficiently small,
$$
\frac{1}{\mathcal{W}_{-}}<-\frac{\iota}{2\mathcal{N}_3}\quad \mbox{on}\quad \Gamma_{\delta, \iota}.
$$
Furthermore, by $\tau>\tau_d-\nu$ we have
\begin{equation}\label{8300c}
\mathcal{R}_{-}>-\frac{2\mathcal{N}_3}{\iota(\tau_d-\nu)^{n_2}}\quad \mbox{on}\quad \Gamma_{\delta, \iota}.
\end{equation}

Using (\ref{61201}) and (\ref{8300b}) we have
\begin{equation}\label{8300d}
\bar{\partial}_{+}\mathcal{R}_{-}>0\quad \mbox{in} \quad \Sigma(\delta).
\end{equation}
For any point in $\Sigma^{\iota}(\delta)$ the backward $C_{+}$ characteristic issued from this point stays in
$\Sigma^{\iota}(\delta)$ until it intersects $\Gamma_{\delta, \iota}$.
Then by (\ref{8300c}) and (\ref{8300d}) we have
$$
\mathcal{R}_{-}>-\frac{2\mathcal{N}_3}{\iota(\tau_d-\nu)^{n_2}}\quad \mbox{in}\quad \Sigma^{\iota}(\delta).
$$
This completes the proof.
\end{proof}

Lemmas \ref{7401} and \ref{61210} provide a uniform interior $C^1$ norm estimate to the solutions of the regularized problem (\ref{PSEU}), (\ref{PBD}). So by the classical Arzela-Ascoli theorem and a standard diagonal procedure, we construct a
Lipschitz continuous solution to the boundary value problem (\ref{PSEU}), (\ref{BDD}).
For convenience, we denote this solution by $(u, v, \tau, S)=(u_2, v_2, \tau_2, S_2)(x, y)$.

Let
\begin{equation}\label{81406}
(u, v, \tau, S)=\left\{
                  \begin{array}{ll}
                    (u_1, v_1, \tau_1, S_1)(x, y), & \hbox{$\psi(x)<y<\delta$, $0<x<\delta$;} \\[2pt]
                    (u_2, v_2, \tau_2, S_2)(x, y), & \hbox{$x\tan\theta_w<y<\psi(x)$, $0<x<\delta$.}
                  \end{array}
                \right.
\end{equation}
Then the function $(u, v, \tau, S)$ defined in (\ref{81406}) is a solution of the problem (\ref{PSEU}), (\ref{RBD1}).
This completes the proof of Theorem \ref{main}.

\section{Supersonic flows past a compressive ramp}
%In this section we study supersonic flows over a wedge for the potential flow equation.
\subsection{Systems of 2D steady potential flow}
\subsubsection{\bf Characteristic equations for potential flow}
The 2D steady Euler equations for potential flow can be written as
\begin{equation}\label{42501}
\left\{
  \begin{array}{ll}
   (\rho u)_x+(\rho v)_y=0,\\[4pt]
  \displaystyle  u_y-v_x=0
  \end{array}
\right.
\end{equation}
supplemented by the Bernoulli law
\begin{equation}\label{5802}
\frac{1}{2}(u^2+v^2)+h=0,
\end{equation}
where $h$ is the specific enthalpy which satisfies
\begin{equation}\label{5801}
h'(\tau)=\tau p'(\tau),
\end{equation}
$(u, v)$ the flow velocity, $\rho$ the density, $\tau$ the specific volume and $p=p(\tau)$ the pressure.
%When $p'(\tau)<0$,
%then

We take the van der Waals gas equation of state
\begin{equation}\label{8310a}
p=\frac{S}{(\tau-1)^{\gamma}}-\frac{1}{\tau^2},
\end{equation}
where $\mathcal{S}$ is a positive constant in corresponding with the entropy, and $\gamma$ is an
adiabatic constant between $1$ and $2$.
We assume that there exist $\tau_1^{i}$ and $\tau_2^{i}$ ($1<\tau_1^{i}<\tau_2^{i}$) such that
\begin{equation}\label{81601}
p'(\tau)<0~\mbox{for}~\tau>1;\quad
 p''(\tau)>0~ \mbox{for}~ \tau\in(1, \tau_1^i)\cup(\tau_2^i, +\infty);
 \quad p''(\tau)<0~ \mbox{for}~ \tau_1^i<\tau<\tau_2^i;
\end{equation}
Under the assumption of $p'(\tau)<0$ for $\tau>1$, the specific volume $\tau$  can by (\ref{5802}) be seen as a function of the flow speed $q=\sqrt{u^2+v^2}$. We denote this function by $\tau=\ddot{\tau}(q)$. %\quad \mbox{and} \quad c=\ddot{c}(q).$$

For smooth flow, system (\ref{42501}) can be written in the following matrix form
\begin{equation}
\left(
 \begin{array}{cc}
c^{2}-u^{2} & -uv\\
  0 & -1 \\
  \end{array}
  \right)\left(
           \begin{array}{c}
             u \\
             v \\
           \end{array}
         \right)_{x}+\left(
                         \begin{array}{cc}
                          -uv &  c^{2}-v^{2}\\
                           1 & 0 \\
                         \end{array}
                       \right)\left(
                                \begin{array}{c}
                                  u \\
                                  v \\
                                \end{array}
                              \right)_{y}~=~\left(
                                               \begin{array}{c}
                                                 0 \\
                                                 0 \\
                                               \end{array}
                                             \right),
                                             \label{matrix}
\end{equation}
where $c=\sqrt{-\tau^2p'(\tau)}$ is the speed of sound. We set $$\hat{c}(\tau)=\sqrt{-\tau ^2p'(\tau)}
\quad \mbox{and} \quad \ddot{c}(q)=\sqrt{-\ddot{\tau}^2(q)p'(\ddot{\tau}(q))}.$$

The eigenvalues of (\ref{42501}) are determined
by
\begin{equation}
(v-\lambda u)^{2}-c^{2}(1+\lambda^{2})=0,\label{characteristice}
\end{equation}
which yields
\begin{equation}
\lambda=\lambda_{\pm}=\frac{uv\pm
c\sqrt{u^{2}+v^{2}-c^{2}}}{u^{2}-c^{2}}.
\end{equation}
So, if and only if $q>c$, system (\ref{42501}) is hyperbolic and has two families of wave characteristics $C_{\pm}$
defined as the integral curves of
$\frac{{\rm d}y}{{\rm d}x}=\lambda_{\pm}$.

%We will use the method of characteristics,
%so we need the concept of the direction of the wave characteristics.
As in the full Euler equations, the direction of the wave
characteristics is defined as the tangent direction that forms an
acute angle $A$ with the direction of the flow velocity
$(u, v)$. The $C_+$
characteristic direction forms with the direction of the flow velocity
 the angle $A$ from $(u, v)$ to $C_{+}$ in
the counterclockwise direction, and the $C_-$ characteristic direction forms with the
 direction of the flow velocity the angle $A$ from $(u, v)$ to $C_{-}$
in the clockwise direction.
The $C_{+}$ ($C_{-}$) characteristic angle is defined as the counterclockwise angle from the positive $x$-axis to the $C_{+}$ ($C_{-}$) characteristic direction.
We denote by $\alpha$ and
 $\beta$ the $C_{+}$ and $C_{-}$ characteristic angles,  respectively,
 where $0<\alpha-\beta<\pi$. Let $\sigma$ be the counterclockwise angle from the positive $x$-axis to the direction of the flow velocity.
 For the potential flow equations (\ref{42501}), we still have the relations (\ref{210cqo})--(\ref{4402}).

Multiplying (\ref{matrix}) on the left
by
$(1,\mp c\sqrt{u^{2}+v^{2}-c^{2}})$ and using (\ref{4402}), we get
\begin{equation}
\left\{
  \begin{array}{ll}
  \displaystyle \bar{\partial}_{+}u+\lambda_{-}\bar{\partial}_{+}v
 =0,  \\[10pt]
    \displaystyle  \bar{\partial}_{-}u+\lambda_{+}\bar{\partial}_{-}v=0.
  \end{array}
\right.\label{forma}
\end{equation}
where
$\bar{\partial}_{+}:=\cos\alpha\partial_x+\sin\alpha\partial_y$ and $\bar{\partial}_{-}:=\cos\beta\partial_x+\cos\beta\partial_y$.

From (\ref{U}) we have
\begin{equation}
\bar{\partial}_{\pm}u=\frac{\cos\sigma}{\sin A}\bar{\partial}_{\pm}c+\frac{c\cos\alpha\bar{\partial}_{\pm}\beta
-c\cos\beta\bar{\partial}_{\pm}\alpha}{2\sin^{2}A},\label{1a}
\end{equation}
\begin{equation}
\bar{\partial}_{\pm}v=\frac{\sin\sigma}{\sin A}\bar{\partial}_{\pm}c
+\frac{c\sin\alpha\bar{\partial}_{\pm}\beta
-c\sin\beta\bar{\partial}_{\pm}\alpha}{2\sin^{2}A}.\label{2a}
\end{equation}
Inserting (\ref{1a}) and (\ref{2a}) into (\ref{forma}), we obtain
\begin{equation}\label{3a}
\bar{\partial}_{+}c=\frac{c}{\sin2A}
(\bar{\partial}_{+}\alpha-\cos2A\bar{\partial}_{+}\beta)
\quad\mbox{and}\quad
\bar{\partial}_{-}c=\frac{c}{\sin2A}
(\cos2A\bar{\partial}_{-}\alpha-\bar{\partial}_{-}\beta).
\end{equation}

Differentiating the Bernoulli law (\ref{5802}) and
using (\ref{1a}) and (\ref{2a}), we get
\begin{equation}
\left(\frac{1}{\sin^{2}A}+\kappa\right)\bar{\partial}_{\pm}c=
\frac{c\cos A}{2\sin^{3}A}(\bar{\partial}_{\pm}\alpha-\bar{\partial}_{\pm}\beta),\label{bB1a}
\end{equation}
where $$\kappa=\frac{-2p'(\tau)}{\tau p''(\tau)+2p'(\tau)}.$$

Inserting (\ref{bB1a}) into (\ref{3a}), we obtain
\begin{equation}\label{6a}
\bar{\partial}_{+}\alpha=\Omega\cos^{2}A\bar{\partial}_{+}\beta\quad \mbox{and}\quad \bar{\partial}_{-}\beta=\Omega\cos^{2}A\bar{\partial}_{-}\alpha,
\end{equation}
respectively,
where
$$
\Omega=~\varpi(\tau)-\tan^2A\quad \mbox{and}\quad \varpi(\tau)=-\frac{4p'(\tau)+\tau p''(\tau)}{\tau p''(\tau)}.
$$

Combining (\ref{3a}) and (\ref{6a}), we have
\begin{equation}
c\bar{\partial}_{+}\beta=-(1+\kappa)\tan A\bar{\partial}_{+}c=-\frac{p''(\tau)}{2c\rho^4}\tan A\bar{\partial}_{+}\rho,\label{7a}
\end{equation}
\begin{equation}
c\bar{\partial}_{+}\alpha=-\left(\frac{1+\kappa}{2}\right)\Omega\sin2 A\bar{\partial}_{+}c=-\frac{p''(\tau)}{4c\rho^4}\Omega\sin2 A\bar{\partial}_{+}\rho,\label{10a}
\end{equation}
\begin{equation}\label{82301a}
c\bar{\partial}_{-}\alpha=(1+\kappa)\tan A\bar{\partial}_{-}c=\frac{p''(\tau)}{2c\rho^4}\tan A\bar{\partial}_{-}\rho,
\end{equation}
\begin{equation}
c\bar{\partial}_{-}\beta=\left(\frac{1+\kappa}{2}\right) \Omega\sin2 A\bar{\partial}_{-}c=\frac{p''(\tau)}{4c\rho^4}\Omega\sin2 A\bar{\partial}_{-}\rho.\label{8a}
\end{equation}

From (\ref{1a}), (\ref{2a}), and (\ref{7a})--(\ref{8a}) we have
\begin{equation}\label{11a}
\bar{\partial}_{+}u=\kappa\sin\beta\bar{\partial}_{+}c=c\tau\sin\beta\bar{\partial}_{+}\rho,\quad\bar{\partial}_{-}u=-\kappa\sin\alpha\bar{\partial}_{-}c=-c\tau\sin\alpha\bar{\partial}_{-}\rho,
\end{equation}
\begin{equation}\label{72804a}
\bar{\partial}_{+}v=-\kappa\cos\beta\bar{\partial}_{+}c=-c\tau\cos\beta\bar{\partial}_{+}\rho,
\quad\bar{\partial}_{-}v=\kappa\cos\alpha\bar{\partial}_{-}c=c\tau\cos\alpha\bar{\partial}_{-}\rho.
\end{equation}

\subsubsection{\bf Riemann invariants}
The Riemann invariants of (\ref{42501}) are defined as
\begin{equation}\label{Riemanni}
r_{\pm}(\sigma,q)=\sigma\pm\int^{q}\frac{\sqrt{q^{2}-c^{2}}}{qc}dq,
\end{equation}
in which $c=\ddot{c}(q)$.
In view of the Riemann invariants, we have
\begin{equation}\label{52801}
\left\{
  \begin{array}{ll}
    \bar{\partial}_{+}r_{-}=0,  \\[2pt]
     \bar{\partial}_{-}r_{+}=0.
  \end{array}
\right.
\end{equation}

It follows from
\begin{equation}
\left(
  \begin{array}{cc}
    \displaystyle\frac{\partial r_{+}}{\partial \sigma} & \displaystyle\frac{\partial r_{+}}{\partial q}   \\[8pt]
    \displaystyle\frac{\partial r_{-}}{\partial \sigma}  & \displaystyle\frac{\partial r_{-}}{\partial q}
  \end{array}
\right)\left(
         \begin{array}{cc}
          \displaystyle \frac{\partial \sigma}{\partial r_{+}}  & \displaystyle\frac{\partial \sigma}{\partial r_{-}} \\[8pt]
           \displaystyle\frac{\partial q}{\partial r_{+}} & \displaystyle \frac{\partial q}{\partial r_{-}}
         \end{array}
       \right)=\left(
                 \begin{array}{cc}
                   1 & 0 \\[12pt]
                   0 & 1 \\
                 \end{array}
               \right)
\end{equation}
that
\begin{equation}
\frac{\partial\sigma}{\partial r_{\pm}}=\frac{1}{2}\quad \mbox{and} \quad
\frac{\partial q}{\partial
r_{\pm}}=\pm \frac{qc}{2\sqrt{q^{2}-c^{2}}}=\pm \frac{q\sin A}{2\cos A}.\label{1201}
\end{equation}

Thus, by (\ref{U}) we have
\begin{equation}\label{6603}
\frac{\partial u}{\partial r_{+}}=-\frac{q\sin\beta}{2\cos A}, \quad \frac{\partial u}{\partial r_{-}}=-\frac{q\sin\alpha}{2\cos A}, \quad \frac{\partial v}{\partial r_{+}}=\frac{q\cos\beta}{2\cos A}, \quad \mbox{and} \quad
\frac{\partial v}{\partial r_{-}}=\frac{q\cos\alpha}{2\cos A}.
\end{equation}
Combining these with the Bernoulli law (\ref{5802}) we also have
\begin{equation}\label{6606}
\frac{\partial \rho}{\partial r_{-}}=\frac{\rho }{\sin (2A)}\quad \mbox{and}\quad \frac{\partial \rho}{\partial r_{+}}=-\frac{\rho }{\sin (2A)}.
\end{equation}

From (\ref{6603}) we also have
\begin{equation}\label{62401}
\frac{\partial r_{+}}{\partial u}=\frac{\cos\alpha}{c}, \quad
\frac{\partial r_{+}}{\partial v}=\frac{\sin\alpha}{c}, \quad \frac{\partial r_{-}}{\partial u}=-\frac{\cos\beta}{c},\quad  \frac{\partial r_{-}}{\partial v}=-\frac{\sin\beta}{c}.
\end{equation}

By the Bernoulli law (\ref{5802}), we have
\begin{equation}
\frac{\partial c}{\partial q}=-\frac{q}{c\kappa(\tau)}.\label{dc}
\end{equation}
Combining this with $c=q\sin A$, we get
\begin{equation}
\frac{\partial  A}{\partial q}=-\frac{q}{\sqrt{q^{2}-c^{2}}}\left(\frac{c}{q^{2}}+\frac{1}{c\kappa}\right).
\end{equation}
Thus, we have
\begin{equation}\label{omegar}
\frac{\partial A}{\partial r_{\pm}}=\frac{\partial  A}{\partial q}\cdot\frac{\partial q}{\partial
r_{\pm}}=\mp\frac{q^{2}c}{2(q^{2}-c^{2})}\left(\frac{c}{q^{2}}+\frac{1}{c\kappa}\right),
\end{equation}
\begin{equation}\label{alphar}
\frac{\partial(\sigma\pm A)}{\partial r_{\mp}}=\frac{1}{2}+\frac{q^{2}c}{2(q^{2}-c^{2})}\left(\frac{c}{q^{2}}+\frac{1}{c\kappa}\right)
=-\frac{q^2\tau p''(\tau)}{4(q^2-c^2)p'(\tau)},
 \end{equation}
and
 \begin{equation}\label{alphar1}
\frac{\partial(\sigma\pm A)}{\partial r_{\pm}}=\frac{1}{2}-\frac{q^{2}c}{2(q^{2}-c^{2})}\left(\frac{c}{q^{2}}+\frac{1}{c\kappa}\right)=1+\frac{q^2\tau p''(\tau)}{4(q^2-c^2)p'(\tau)}.
 \end{equation}

In terms of the Riemann invariants, we have the characteristic decompositions (see Lai \cite{Lai1})
\begin{equation}
\left\{
  \begin{array}{ll}
  \bar{\partial}_{+}\bar{\partial}_{-}r_{-}=\mathcal{F}\big(\bar{\partial}_{-}r_{-}
  -\cos2A\bar{\partial}_{+}r_{+}\big)\bar{\partial}_{-}r_{-},  \\[4pt]
   \bar{\partial}_{-}\bar{\partial}_{+}r_{+}=-\mathcal{F}\big(\bar{\partial}_{+}r_{+}
   -\cos2A\bar{\partial}_{-}r_{-}
\big)\bar{\partial}_{+}r_{+}.
  \end{array}
\right.\label{cdr}
\end{equation}
where
$$\mathcal{F}=-\frac{q^2\tau p''(\tau)}{4(q^{2}-c^{2})p'(\tau)\sin 2A}.$$

%For convenience, the relations between $(u, v)$ and $(r_{+}, r_{-})$ are written by
%$$
%(u, v)=(U(r_{+}, r_{-}), V(r_{+}, r_{-}))\quad \mbox{and}\quad (r_{+}, r_{-})=(R_{+}(u, v), R_{-}(u, v)).
%$$

\subsubsection{\bf Hodograph transformation}
Let the hodograph transformation be
$$
T:  (x, y)\rightarrow (r_{+}, r_{-})
$$
for (\ref{52801}).
If the Jacobian
$$
j(r_{+}, r_{-}; x, y)=\frac{\partial(r_{+}, r_{-})}{\partial(x, y)}=\frac{\partial r_{+}}{\partial{x}}\frac{\partial r_{-}}{\partial{y}}-\frac{\partial r_{-}}{\partial{x}}\frac{\partial r_{+}}{\partial{y}}\neq 0
$$
for a solution $(r_{+}, r_{-})(x, y)$ of (\ref{52801}), we may consider $x$ and $y$ as functions of $r_{+}$ and $r_{-}$. From
\begin{equation}
\frac{\partial r_{+}}{\partial{x}}=j\frac{\partial y}{\partial r_{-}}, \quad \frac{\partial r_{+}}{\partial{y}}=-j\frac{\partial x}{\partial r_{-}}, \quad \frac{\partial r_{-}}{\partial{x}}=-j\frac{\partial y}{\partial r_{+}}, \quad \frac{\partial r_{-}}{\partial{y}}=j\frac{\partial x}{\partial r_{+}},
\end{equation}
we then see that $x(r_{+}, r_{-})$ and $y(r_{+}, r_{-})$ satisfy the equations
\begin{equation}\label{HT}
\left\{
  \begin{array}{ll}
\hat{\partial}_{+}y=\lambda_{+}\hat{\partial}_{+}x, \\[2pt]
 \hat{\partial}_{-}y=\lambda_{-}\hat{\partial}_{-}x,
  \end{array}
\right.
\end{equation}
where
$$
\hat{\partial}_{+}:=-\frac{\partial}{\partial r_{+}}\quad \mbox{and}\quad \hat{\partial}_{-}:=\frac{\partial}{\partial r_{-}}.
$$

From (\ref{HT}) we have the following characteristic decompositions:
\begin{equation}\label{cdh}
\left\{
  \begin{array}{ll}
    (\lambda_{+}-\lambda_{-})\hat{\partial}_{-}\hat{\partial}_{+}x=\hat{\partial}_{+}\lambda_{-}\hat{\partial}_{-}x-
\hat{\partial}_{-}\lambda_{+}\hat{\partial}_{+}x,  \\[4pt]
     (\lambda_{+}-\lambda_{-})\hat{\partial}_{+}\hat{\partial}_{-}x=\hat{\partial}_{+}\lambda_{-}\hat{\partial}_{-}x-
\hat{\partial}_{-}\lambda_{+}\hat{\partial}_{+}x.
  \end{array}
\right.
\end{equation}

We compute
$\hat{\partial}_{\pm}\rho=\rho_x\hat{\partial}_{\pm}x+\rho_y\hat{\partial}_{\pm}y$. Thus, by (\ref{6606}) and (\ref{HT}) we have
\begin{equation}\label{53002}
\bar{\partial}_{+}\rho=\frac{\cos\alpha\hat{\partial}_{+}\rho}{\hat{\partial}_{+}x}=\frac{\rho\cos\alpha}{\sin (2A)\hat{\partial}_{+}x}\quad \mbox{and}\quad
\bar{\partial}_{-}\rho=\frac{\rho\cos\beta\hat{\partial}_{-}}{\hat{\partial}_{-}x}=\frac{\rho\cos\beta}{\sin (2A)\hat{\partial}_{-}x}.
\end{equation}

\subsection{Oblique waves in 2D steady supersonic flows past a compressible ramp}
In this part we discuss oblique waves of the 2D steady potential flow equations (\ref{42501}) for the van der Waals gas (\ref{8310a}).
Under assumption (\ref{81601}),
there exist $\tau_{1}^a$ and $\tau_{2}^a$, where $\tau_1^a<\tau_{1}^{i}<\tau_{2}^{i}<\tau_{2}^a$, such that
\begin{equation}\label{5901}
p'(\tau_{1}^a)=p'(\tau_{2}^{i})\quad \mbox{and}\quad p'(\tau_{1}^{i})=p'(\tau_{2}^a);
 \end{equation}
see Figure \ref{Figure9}.
%=============
\begin{figure}[htbp]
\begin{center}
\includegraphics[scale=0.468]{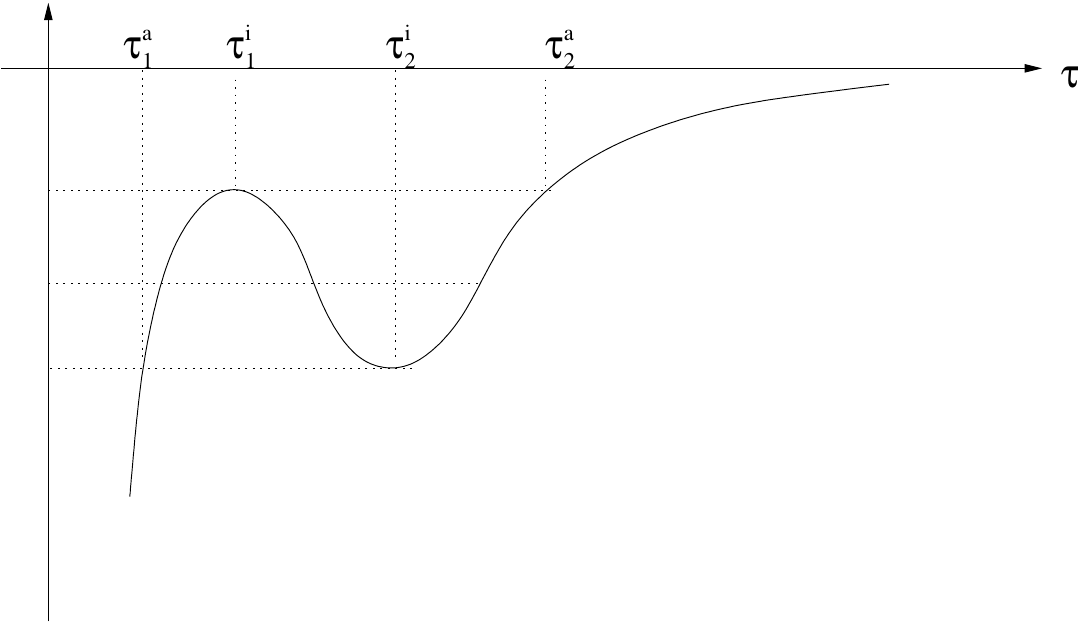}
\caption{\footnotesize The graph of $p'(\tau)$.}
\label{Figure9}
\end{center}
\end{figure}
%==========

\subsubsection{\bf Oblique shocks in 2D steady potential flows}
Let $\phi$ be the inclination angle of the shock front. We still denote by $N$ and $L$
the components of $(u, v)$ normal and tangential to the shock direction, respectively; see (\ref{62604}).

We use subscripts `$f$' and `$b$' to denote the states on the front side and the back side of the
shock front. Then
the Rankine-Hugoniot conditions can be written in the following form:
\begin{equation}\label{RH}
\left\{
  \begin{array}{ll}
    m:=\rho_f N_f=\rho_b N_b, \\[4pt]
    L_{f}=L_b,\\[4pt]
   N_f^2+2h(\tau_f)= N_b^2+2h(\tau_b).
  \end{array}
\right.
\end{equation}

From the first and the third relations of (\ref{RH}), we have
\begin{equation}\label{m}
m^2=-\frac{2h(\tau_f)-2h(\tau_b)}{\tau_f^2-\tau_b^2}.
\end{equation}

We confine ourselves to compressive shocks, i.e., $\tau_f>\tau_b$.
The oblique shocks are also required to satisfy the
Liu's extended entropy condition (\cite{Liu1}):
\begin{equation}\label{Liu}
\frac{2h(\tau_f)-2h(\tau_b)}{\tau_f^2-\tau_b^2}<\frac{2h(\tau_f)-2h(\tau)}{\tau_f^2-\tau^2}\quad \mbox{for~~all}\quad \tau\in(\tau_b, \tau_f).
\end{equation}
%for all $\tau\in(\tau_f, \tau_b)$.

\begin{prop}\label{prop1}
There exists a unique $\tau_{c}\in (\tau_1^i, +\infty)$ such that
\begin{equation}
p'(\tau_1^i)=\frac{2h(\tau_c)-2h(\tau_1^i)}{\tau_c^2-(\tau_1^i)^2}<p'(\tau_c).
\end{equation}
Moreover, $\tau_c>\tau_2^a$ and
\begin{equation}\label{72309}
p'(\tau_1^i)>\frac{2h(\tau)-2h(\tau_1^i)}{\tau^2-(\tau_1^i)^2}\quad \mbox{for~any}\quad \tau\in (\tau_1^i, \tau_c).
\end{equation}
\end{prop}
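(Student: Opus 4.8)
The plan is to recast the identity defining $\tau_c$ as the vanishing of a single auxiliary integral, and then read off existence, uniqueness and the two accompanying inequalities from the sign of that integral. For $\tau>\tau_1^i$ introduce the mean-value function
\begin{equation*}
G(\tau):=\frac{2h(\tau)-2h(\tau_1^i)}{\tau^2-(\tau_1^i)^2}.
\end{equation*}
Using $h'(\tau)=\tau p'(\tau)$ from (\ref{5801}) and writing $\tau^2-(\tau_1^i)^2=\int_{\tau_1^i}^{\tau}2s\,{\rm d}s$, one sees that $G(\tau)$ is precisely the average of $p'$ on $[\tau_1^i,\tau]$ weighted by $2s$. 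Consequently, setting
\begin{equation*}
H(\tau):=\int_{\tau_1^i}^{\tau}s\big(p'(s)-p'(\tau_1^i)\big)\,{\rm d}s,
\end{equation*}
we have the equivalences $G(\tau)=p'(\tau_1^i)\Longleftrightarrow H(\tau)=0$ and $G(\tau)<p'(\tau_1^i)\Longleftrightarrow H(\tau)<0$. This reduces the entire proposition to locating the first positive zero of $H$.

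Next I would analyse $H$ via $H(\tau_1^i)=0$ and $H'(\tau)=\tau\big(p'(\tau)-p'(\tau_1^i)\big)$. By the sign conditions (\ref{81601}), $p'$ increases on $(1,\tau_1^i)$, decreases on $(\tau_1^i,\tau_2^i)$ and increases on $(\tau_2^i,+\infty)$, so $p'(\tau_1^i)$ is a local maximum value; combined with $p'(\tau_2^a)=p'(\tau_1^i)$ from (\ref{5901}), this gives $p'(\tau)<p'(\tau_1^i)$ throughout $(\tau_1^i,\tau_2^a)$ and $p'(\tau)>p'(\tau_1^i)$ for $\tau>\tau_2^a$. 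Hence $H'<0$ on $(\tau_1^i,\tau_2^a)$ and $H'>0$ on $(\tau_2^a,+\infty)$, so $H$ decreases strictly from $0$ to the value $H(\tau_2^a)<0$ and then increases strictly. To force a zero I would show $H(\tau)\to+\infty$: from the explicit state law (\ref{8310a}) one has $p'(\tau)=-\gamma S(\tau-1)^{-\gamma-1}+2\tau^{-3}\to 0^{-}$, whence $p'(\tau)-p'(\tau_1^i)\to-p'(\tau_1^i)>0$ and $H'(\tau)$ grows linearly in $\tau$, so $H\to+\infty$. The intermediate value theorem then produces a unique $\tau_c\in(\tau_2^a,+\infty)$ with $H(\tau_c)=0$, i.e. $G(\tau_c)=p'(\tau_1^i)$.

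It then remains to collect the conclusions. Uniqueness and $\tau_c>\tau_2^a$ are immediate from the strict monotonicity above together with $H<0$ on $(\tau_1^i,\tau_2^a]$; the estimate (\ref{72309}) is exactly the statement $H<0$ on $(\tau_1^i,\tau_c)$, which holds because $H$ is negative up to $\tau_2^a$ and thereafter increases to its first zero at $\tau_c$; and the strict inequality $p'(\tau_1^i)<p'(\tau_c)$ follows from $\tau_c>\tau_2^a$ with the strict monotonicity of $p'$ on $(\tau_2^i,+\infty)$ and $p'(\tau_2^a)=p'(\tau_1^i)$. The one step demanding genuine care is the claim $p'(\tau)<p'(\tau_1^i)$ on all of $(\tau_1^i,\tau_2^a)$: it must be pieced together from the two opposite monotonicity branches of $p'$ across $\tau_2^i$ using the matching value at $\tau_2^a$. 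The accompanying verification that $p'(\tau)\to 0^{-}$, so that $H$ actually escapes to $+\infty$, is where the explicit van der Waals form is invoked rather than the qualitative hypotheses alone.
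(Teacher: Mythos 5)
Your proof is correct and is essentially the paper's argument: your auxiliary function $H$ is exactly one half of the paper's $g_1(\tau)=2h(\tau)-2h(\tau_1^i)-(\tau^2-(\tau_1^i)^2)p'(\tau_1^i)$, and the existence/uniqueness of $\tau_c$ together with (\ref{72309}) are obtained in both cases from the sign of $H'(\tau)=\tau\big(p'(\tau)-p'(\tau_1^i)\big)$ across $\tau_2^a$ plus the divergence of the chord function at infinity. The only cosmetic difference is that the paper invokes the Cauchy mean value theorem where you read the same facts off the explicit monotonicity of $p'-p'(\tau_1^i)$, and it splits the work between two auxiliary functions $f_1$ and $g_1$ where you use one.
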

\begin{proof}
Firstly, by the Cauchy mean value theorem one knows that if there exists a $\tau_c>\tau_1^i$ such that $p'(\tau_1^i)=\frac{2h(\tau_c)-2h(\tau_1^i)}{\tau_c^2-(\tau_1^i)^2}$, then $\tau_c>\tau_2^a$ and $\frac{2h(\tau_c)-2h(\tau_1^i)}{\tau_c^2-(\tau_1^i)^2}<p'(\tau_c)$.

Set $f_1(\tau)=\frac{2h(\tau)-2h(\tau_1^i)}{\tau^2-(\tau_1^i)^2}-p'(\tau_1^i)$.
Then we have
$f_1'(\tau)=\frac{2\tau}{\tau^2-(\tau_1^i)^2}\big(p'(\tau)-\frac{2h(\tau)-2h(\tau_1^i)}{\tau^2-(\tau_1^i)^2}\big)$.
Moreover, by the Cauchy mean value theorem we have $p'(\tau)-\frac{2h(\tau)-2h(\tau_1^i)}{\tau^2-(\tau_1^i)^2}>0$ for $\tau>\tau_2^a$. Thus, we have $f_1'(\tau)>0$ for $\tau>\tau_2^a$.
Combining this with $\lim\limits_{\tau\rightarrow +\infty}f_1(\tau)=-p'(\tau_1^i)>0$ and $f_1(\tau_2^a)<0$, we know that there exits a unique $\tau_c>\tau_1^i$ such that $p'(\tau_1^i)=\frac{2h(\tau_c)-2h(\tau_1^i)}{\tau_c^2-(\tau_1^i)^2}$.

Set $g_1(\tau)=2h(\tau)-2h(\tau_1^i)-(\tau^2-(\tau_1^i)^2)p'(\tau_{1}^i)$. Then we have $g_1(\tau_c)=g_1(\tau_{1}^i)=0$ and $g_1'(\tau)=2\tau (p'(\tau)- p'(\tau_1^i))$.
Then by $\tau_{c}>\tau_2^i$, we have $g_1(\tau)<0$ for $\tau\in (\tau_{1}^i, \tau_c)$. This yields (\ref{72309}).
This completes the proof.
\end{proof}

\begin{prop}\label{prop2}
Suppose $\tau_f\in (\tau_c, +\infty)$. Then,
\begin{equation}\label{43012}
p'(\tau)<\frac{2h(\tau)-2h(\tau_f)}{\tau^2-\tau_f^2}<p'(\tau_f)\quad \mbox{for}\quad \tau\in (1, \tau_f).
\end{equation}
Moreover, for any  $\tau_b\in (1, \tau_f)$ there holds the Liu's extended entropy condition (\ref{Liu}).
%\begin{equation}\label{43013}
%\frac{2h(\tau_b)-2h(\tau_f)}{\tau_b^2-\tau_f^2}<\frac{2h(\tau)-2h(\tau_f)}{\tau^2-\tau_f^2}\quad \mbox{for~~all}\quad \tau\in (\tau_b, \tau_f).
%\end{equation}
%for all $$.
\end{prop}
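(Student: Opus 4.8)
The plan is to reduce both inequalities, as well as the verification of Liu's condition, to the convexity structure of a single scalar function. Introduce the variable $w=\tau^2$ and set $H(w):=2h(\sqrt{w})$. Since $h'(\tau)=\tau p'(\tau)$, a direct computation gives $H'(w)=p'(\sqrt{w})=p'(\tau)$ and $H''(w)=p''(\tau)/(2\tau)$, so that $\sgn H''(w)=\sgn p''(\tau)$. Hence, by (\ref{81601}), $H$ is convex on $(1,w_1)\cup(w_2,+\infty)$ and concave on $(w_1,w_2)$, where $w_1:=(\tau_1^i)^2$ and $w_2:=(\tau_2^i)^2$. Writing $w_f=\tau_f^2$ and $w=\tau^2$, the secant quotient in the statement becomes $\frac{2h(\tau)-2h(\tau_f)}{\tau^2-\tau_f^2}=\frac{H(w)-H(w_f)}{w-w_f}$, and the two inequalities to be proved read $H'(w)<\frac{H(w)-H(w_f)}{w-w_f}<H'(w_f)$ for $w\in(1,w_f)$.

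For the right inequality, I would first show $H'(w)<H'(w_f)$ for every $w\in(1,w_f)$. Since $p'$ increases on $(1,\tau_1^i)$, decreases on $(\tau_1^i,\tau_2^i)$ and increases on $(\tau_2^i,+\infty)$, its only interior local maximum on $(1,\tau_f)$ is at $\tau_1^i$, with value $p'(\tau_1^i)$; and because $\tau_f>\tau_c>\tau_2^a$ (Proposition \ref{prop1}) together with $p'(\tau_1^i)=p'(\tau_2^a)$ (see (\ref{5901})) and the monotonicity of $p'$ on $(\tau_2^i,+\infty)$ give $p'(\tau_f)>p'(\tau_1^i)$, one gets $p'(\tau)<p'(\tau_f)$ throughout $(1,\tau_f)$. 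Setting $\Phi(w)=H(w)-H(w_f)-H'(w_f)(w-w_f)$ we then have $\Phi(w_f)=0$ and $\Phi'(w)=H'(w)-H'(w_f)<0$ on $(1,w_f)$, so $\Phi(w)>0$ there; dividing by $w-w_f<0$ yields the right inequality.

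The left inequality is the heart of the matter, since this is where the nonconvexity of $H$ genuinely intervenes. Introduce $\Psi(w):=H(w_f)-H(w)-H'(w)(w_f-w)$, so that the claim is $\Psi(w)>0$ on $(1,w_f)$; a short computation gives $\Psi(w_f)=0$ and $\Psi'(w)=-H''(w)(w_f-w)$. Thus $\Psi$ decreases on $(1,w_1)$, increases on $(w_1,w_2)$, and decreases on $(w_2,w_f)$, and this three-interval analysis reduces the whole inequality to the single claim $\Psi(w_1)>0$, i.e. that the point $(w_f,H(w_f))$ lies strictly above the tangent line to $H$ at the inflection point $w_1$. To prove this, consider $\ell(s):=H(s)-H(w_1)-H'(w_1)(s-w_1)$, the vertical gap between $H$ and that tangent. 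The defining relation of $\tau_c$ in Proposition \ref{prop1} says exactly $\ell(w_c)=0$, while its strict inequality $H'(w_1)<H'(w_c)$ says $\ell'(w_c)>0$; since $w_c>w_2$ and $H''>0$ on $(w_2,+\infty)$, the derivative $\ell'$ is increasing there, so $\ell'>0$ and hence $\ell$ is increasing on $[w_c,+\infty)$. Therefore $\Psi(w_1)=\ell(w_f)>\ell(w_c)=0$ for every $w_f>w_c$, completing the proof of the left inequality.

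Finally, Liu's condition (\ref{Liu}) follows at once from the left inequality. Writing $R(w):=\frac{H(w)-H(w_f)}{w-w_f}$ for the slope of the chord joining $(w,H(w))$ to $(w_f,H(w_f))$, one computes $R'(w)=\big[H'(w)(w-w_f)-(H(w)-H(w_f))\big]/(w-w_f)^2$, whose numerator is positive precisely because the left inequality gives $H'(w)(w-w_f)>H(w)-H(w_f)$ (recall $w-w_f<0$). Hence $R$ is strictly increasing on $(1,w_f)$, which means the chord from $w_f$ to a nearer point is steeper than the chord to a farther point; in the original variables this is exactly (\ref{Liu}) for every $\tau_b\in(1,\tau_f)$. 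The only delicate point in the whole argument is the reduction in the third paragraph: the alternating monotonicity intervals of $\Psi$ force one to extract a single decisive inequality at the inflection point $w_1$, and it is there that the precise definition of $\tau_c$ from Proposition \ref{prop1} is indispensable.
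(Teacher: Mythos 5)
Your proof is correct, and it is essentially the paper's proof recast in the variable $w=\tau^2$: your $\Psi(w)$ and $\ell(s)$ are, up to the positive factors $w_f-w$ and $s-w_1$, exactly the paper's auxiliary functions $f_2$ in (\ref{52106}) and $f_1$ from the proof of Proposition \ref{prop1}, and your decisive anchor $\Psi(w_1)>0$ is literally the paper's step ``$f_2(\tau_1^i)>0$,'' obtained in both cases from the defining relation of $\tau_c$. The substitution does buy something: the paper's sign analysis runs through the differential identity $f_2'(\tau)=\frac{2\tau f_2(\tau)}{\tau_f^2-\tau^2}-p''(\tau)$ and a first-zero argument on each interval, whereas your $\Psi'(w)=-H''(w)(w_f-w)$ makes the three-interval monotonicity immediate from (\ref{81601}); likewise your right inequality spells out the step $p'(\xi)<p'(\tau_f)$ (via (\ref{5901}) and $\tau_f>\tau_c>\tau_2^a$) that the paper compresses into an appeal to the Cauchy mean value theorem. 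The one place where your route is genuinely different in mechanism is Liu's condition: you deduce it from strict monotonicity of the chord slope $R(w)$, whose derivative is controlled directly by the left inequality of (\ref{43012}), while the paper introduces the function $g_2$ in (\ref{72310}) and runs another identity-plus-sign argument (\ref{72311}); the two are equivalent (their $g_2$ is a positive multiple of $R(w)-R(w_b)$), but your version is shorter and makes the logical dependence on (\ref{43012}) transparent. All steps check out, including the reduction to $\Psi(w_1)>0$ and the convexity argument for $\ell$ on $(w_2,+\infty)$.
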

\begin{proof}
Firstly, by the Cauchy mean value theorem we have $\frac{2h(\tau)-2h(\tau_f)}{\tau^2-\tau_f^2}<p'(\tau_f)$ for $\tau\in (1, \tau_f)$.

Set
\begin{equation}\label{52106}
f_2(\tau)=\frac{2h(\tau_f)-2h(\tau)}{\tau_f^2-\tau^2}-p'(\tau).
\end{equation}
 Then we have
\begin{equation}\label{52102}
f_2'(\tau)=\frac{2\tau f(\tau)}{\tau_f^2-\tau^2}-p''(\tau).
\end{equation}
From the proof of Proposition \ref{prop1} we have $f_2(\tau_1^i)>0$.
Combining this with (\ref{52102}) one has $f_2(\tau)>0$ for $\tau<\tau_2^i$.
By the Cauchy mean value theorem we have $f_2(\tau_2^i)>0$. Combining this with
(\ref{52102}) one has $f_2(\tau)>0$ for $\tau_2^i<\tau<\tau_f$. This completes the proof for (\ref{43012}).

For any fixed $\tau_b\in (1, \tau_f)$,
we set
\begin{equation}\label{72310}
g_2(\tau)=(2h(\tau_f)-2h(\tau))(\tau_f^2-\tau_b^2)-(2h(\tau_f)-2h(\tau_b))(\tau_f^2-\tau^2).
\end{equation}
Then we have
$
g_2(\tau_f)=g_2(\tau_b)=0
$
and
\begin{equation}\label{72311}
\begin{aligned}
g_2'(\tau)&=2\tau(\tau_f^2-\tau_b^2)\left(-p'(\tau)+\frac{2h(\tau_f)-2h(\tau_b)}{\tau_f^2-\tau_b^2}\right)
\\
&=-2\tau(\tau_f^2-\tau_b^2)\left(p'(\tau)-\frac{2h(\tau)-2h(\tau_f)}{\tau^2-\tau_f^2}\right)-\frac{2\tau g_2(\tau)}{\tau_f^2-\tau^2}.
\end{aligned}
\end{equation}
Combining these with (\ref{43012}), we have $g_2(\tau)>0$ for $\tau\in (\tau_b, \tau_f)$. This immediately imply (\ref{Liu}).
This completes the proof.
\end{proof}

\begin{prop}\label{prop3}
(Post-sonic shock)
For any $\tau_f\in (\tau_2^i, \tau_c)$, there exists a unique $\tau_{po}\in(\tau_1^i, \tau_2^i)$ such that when $\tau_b=\tau_{po}(\tau_f)$ there hold
\begin{equation}\label{430121}
p'(\tau_{b})=\frac{2h(\tau_{b})-2h(\tau_f)}{\tau_{b}^2-\tau_f^2}<p'(\tau_f)
\end{equation}
and
the Liu's extended entropy condition (\ref{Liu}).
Moreover, we have
\begin{equation}\label{72501}
\frac{{\rm d} \tau_{po}}{{\rm d}\tau_f}<0\quad \mbox{for}\quad \tau_f\in (\tau_2^i, \tau_c).
 \end{equation}
\end{prop}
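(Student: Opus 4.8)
The plan is to read the defining relation (\ref{430121}) as the post-sonic condition that the back state be sonic. Writing $m^2=-p'(\tau_b)$ for a sonic back state and combining it with (\ref{m}) shows that $\tau_b=\tau_{po}(\tau_f)$ must solve $p'(\tau_b)=\frac{2h(\tau_b)-2h(\tau_f)}{\tau_b^2-\tau_f^2}$. It is convenient to pass to the variables $\xi=\tau^2$ and $H=2h$, for which $\frac{dH}{d\xi}=\frac{h'(\tau)}{\tau}=p'(\tau)$ and $\frac{d^2H}{d\xi^2}=\frac{p''(\tau)}{2\tau}$; thus (\ref{430121}) says precisely that the chord joining $(\xi_f,H_f)$ to $(\xi_b,H_b)$ is tangent to the graph $H=H(\xi)$ at $\xi_b$, and by (\ref{81601}) the function $H$ is strictly concave in $\xi$ on the $\tau$-interval $(\tau_1^i,\tau_2^i)$ and strictly convex on $(\tau_2^i,+\infty)$.

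For existence and uniqueness, fix $\tau_f\in(\tau_2^i,\tau_c)$ and set $f_3(\tau)=\frac{2h(\tau_f)-2h(\tau)}{\tau_f^2-\tau^2}-p'(\tau)$ on $[\tau_1^i,\tau_2^i]$. As in the computation leading to (\ref{52102}) one finds $f_3'(\tau)=\frac{2\tau f_3(\tau)}{\tau_f^2-\tau^2}-p''(\tau)$, so at any interior zero of $f_3$ one has $f_3'=-p''(\tau)>0$ because $p''<0$ on $(\tau_1^i,\tau_2^i)$; hence $f_3$ can cross zero only upward and therefore vanishes at most once in $(\tau_1^i,\tau_2^i)$. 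To produce a zero I would evaluate the endpoints: estimate (\ref{72309}) of Proposition \ref{prop1}, applied with $\tau=\tau_f\in(\tau_1^i,\tau_c)$, gives $f_3(\tau_1^i)<0$, while strict convexity of $H$ in $\xi$ on $[\tau_2^i,\tau_f]$ forces the chord slope to exceed $p'(\tau_2^i)$, so $f_3(\tau_2^i)>0$. The intermediate value theorem then yields a unique $\tau_{po}(\tau_f)\in(\tau_1^i,\tau_2^i)$.

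The two remaining assertions, $p'(\tau_b)<p'(\tau_f)$ and Liu's condition (\ref{Liu}), I would obtain together from the single geometric fact that the graph $H(\xi)$ lies strictly below its tangent line at $\xi_b$ for all $\xi\in(\xi_b,\xi_f)$, where $\xi_b=\tau_b^2$ and $\xi_f=\tau_f^2$. Since $\tau_b<\tau_2^i<\tau_f$, the inflection $\xi_2=(\tau_2^i)^2$ is interior to $(\xi_b,\xi_f)$: on $(\xi_b,\xi_2)$ strict concavity puts $H$ below the tangent, and on $(\xi_2,\xi_f)$ the difference $\psi(\xi)=H(\xi)-(\text{tangent line})$ is convex, is negative at $\xi_2$ and vanishes at $\xi_f$ (the tangent passes through $(\xi_f,H_f)$ by tangency), hence $\psi<0$ throughout. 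Because this tangent line coincides with the chord $\overline{bf}$, the statement ``$H$ below the chord'' is exactly Liu's condition (\ref{Liu}); and because $H$ meets the line from below at $\xi_f$, strict convexity gives $\psi'(\xi_f)>0$, i.e. $p'(\tau_f)>p'(\tau_b)$.

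Finally, for the monotonicity (\ref{72501}) I would apply the implicit function theorem to $\Psi(\tau_b,\tau_f):=p'(\tau_b)(\tau_b^2-\tau_f^2)-2h(\tau_b)+2h(\tau_f)$, whose zero set defines $\tau_{po}$. A direct computation gives $\partial_{\tau_b}\Psi=p''(\tau_b)(\tau_b^2-\tau_f^2)$, which is positive since both factors are negative, and $\partial_{\tau_f}\Psi=2\tau_f\big(p'(\tau_f)-p'(\tau_b)\big)>0$ by the inequality just proved; hence $\frac{d\tau_{po}}{d\tau_f}=-\partial_{\tau_f}\Psi/\partial_{\tau_b}\Psi<0$. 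I expect the main obstacle to be the combined below-the-tangent argument of the third paragraph: it is where the nonconvexity of the isentrope genuinely enters, since the chord $\overline{bf}$ must be controlled across the inflection $\tau_2^i$ that lies strictly between $\tau_b$ and $\tau_f$, and it is this single argument that simultaneously delivers both the admissibility inequality $p'(\tau_b)<p'(\tau_f)$ and Liu's entropy condition.
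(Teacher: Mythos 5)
Your proposal is correct and follows essentially the same route as the paper: the same endpoint signs $f(\tau_1^i)<0$ (via (\ref{72309})) and $f(\tau_2^i)>0$ combined with the fact that $f'=-p''>0$ at any zero give existence and uniqueness, and the implicit differentiation for (\ref{72501}) is identical. Your unified ``graph of $H(\xi)$ below the chord'' argument for Liu's condition and for $p'(\tau_b)<p'(\tau_f)$ is just a geometric repackaging of the paper's auxiliary function $g_3$ (your $\psi$ is exactly $-g_3$ in the variable $\xi=\tau^2$), so no genuinely new ingredient is involved.
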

\begin{proof}
Let $f_2(\tau)$ be defined in (\ref{52106}). By the Cauchy mean value theorem we have $f_2(\tau_2^i)>0$. By (\ref{72309}) we have $f_2(\tau_1^i)<0$.
 Therefore, by (\ref{52102}) we know that there exists a unique $\tau_{po}\in (\tau_1^i, \tau_2^i)$  such that $p'(\tau_{po})=\frac{2h(\tau_{po})-2h(\tau_f)}{\tau_{po}^2-\tau_f^2}$.
Since $\tau_{po}\in (\tau_1^i, \tau_2^i)$,  by the Cauchy mean value theorem we have $\frac{2h(\tau_{po})-2h(\tau_f)}{\tau_{po}^2-\tau_f^2}<p'(\tau_f)$.

Next, we prove that if $\tau_b=\tau_{po}$ then (\ref{Liu}) holds. Set $g_3(\tau)=2h(\tau_f)-2h(\tau)-(\tau_f^2-\tau^2)p'(\tau_{b})$. Then we have $g_3(\tau_f)=g_3(\tau_{b})=0$ and $g_3'(\tau)=2\tau (p'(\tau_{b})- p'(\tau))$.
Then by $\tau_{b}\in (\tau_1^i, \tau_2^i)$, we have $g_3(\tau)>0$ for $\tau\in (\tau_{b}, \tau_f)$. This yields (\ref{Liu}).

By a direct computation we have
$
p''(\tau_{po})(\tau_f^2-\tau_{po}^2)\frac{{\rm d} \tau_{po}}{{\rm d}\tau_f}=2\tau_f\big(p'(\tau_f)-p'(\tau_{po})\big)
$.
Then by (\ref{430121}) we have $\frac{{\rm d} \tau_{po}}{{\rm d}\tau_f}<0$.
This completes the proof.
\end{proof}

%From Proposition \ref{prop3} we see that $\tau_{po}$ can be seen as a function of $\tau_f$, i.e., $\tau_{po}=\tau_{po}(\tau_f)$, $\tau_f\in (\tau_2^i, \tau_c)$.

\begin{prop}
Suppose  $\tau_f\in (\tau_2^i, \tau_c)$. Then for any $\tau_b\in (\tau_{po}(\tau_f), \tau_f)$ there hold
$$
p'(\tau_{b})<\frac{2h(\tau_{b})-2h(\tau_f)}{\tau_{b}^2-\tau_f^2}<p'(\tau_f)
$$
and the Liu's extended entropy condition (\ref{Liu}).
\end{prop}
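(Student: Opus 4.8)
The plan is to prove the three assertions—the lower bound $p'(\tau_b)<M$, the upper bound $M<p'(\tau_f)$ (writing $M:=\frac{2h(\tau_f)-2h(\tau_b)}{\tau_f^2-\tau_b^2}$ for the chord slope), and Liu's condition (\ref{Liu})—one at a time. For each I would construct an auxiliary function that vanishes at the relevant endpoints and whose derivative factors cleanly in terms of $p'$ and $p''$, and then read off the sign from the convexity structure (\ref{81601}) of $p$ together with the information on $\tau_{po}$ supplied by Proposition \ref{prop3}. For the lower bound I would introduce
\[
F(\tau):=2h(\tau_f)-2h(\tau)-(\tau_f^2-\tau^2)p'(\tau),
\]
so that, since $\tau_f^2-\tau^2>0$ on $(\tau_{po},\tau_f)$, the inequality $p'(\tau)<\frac{2h(\tau_f)-2h(\tau)}{\tau_f^2-\tau^2}$ is equivalent to $F(\tau)>0$. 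Differentiating and using $h'(\tau)=\tau p'(\tau)$ yields the clean expression $F'(\tau)=-(\tau_f^2-\tau^2)p''(\tau)$, whose sign is opposite to that of $p''$. By Proposition \ref{prop3} one has $\tau_{po}\in(\tau_1^i,\tau_2^i)$ and $F(\tau_{po})=0$, while trivially $F(\tau_f)=0$; since $p''<0$ on $(\tau_1^i,\tau_2^i)$ and $p''>0$ on $(\tau_2^i,+\infty)$, the function $F$ increases on $(\tau_{po},\tau_2^i)$ and decreases on $(\tau_2^i,\tau_f)$. Vanishing at both ends then forces $F>0$ throughout $(\tau_{po},\tau_f)$, which is exactly the lower inequality.

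For the upper bound I would instead freeze the tangent slope at $\tau_f$ and set
\[
G(\tau):=2h(\tau_f)-2h(\tau)-(\tau_f^2-\tau^2)p'(\tau_f),
\]
so that $G(\tau_f)=0$, $\frac{2h(\tau_f)-2h(\tau)}{\tau_f^2-\tau^2}-p'(\tau_f)=\frac{G(\tau)}{\tau_f^2-\tau^2}$, and $G'(\tau)=2\tau\bigl(p'(\tau_f)-p'(\tau)\bigr)$. The decisive input here is $p'(\tau_{po})<p'(\tau_f)$ from Proposition \ref{prop3}: on $(\tau_{po},\tau_2^i)$ the slope $p'$ is (strictly) decreasing, so $p'(\tau)<p'(\tau_{po})<p'(\tau_f)$, while on $(\tau_2^i,\tau_f)$ it is increasing but still strictly below $p'(\tau_f)$. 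Hence $G'>0$ on all of $(\tau_{po},\tau_f)$, so $G$ is strictly increasing and $G(\tau_b)<G(\tau_f)=0$, giving $M<p'(\tau_f)$.

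Finally, for Liu's condition I would follow the pattern of Proposition \ref{prop2} and set
\[
g_2(\tau)=(2h(\tau_f)-2h(\tau))(\tau_f^2-\tau_b^2)-(2h(\tau_f)-2h(\tau_b))(\tau_f^2-\tau^2),
\]
which satisfies $g_2(\tau_b)=g_2(\tau_f)=0$ and whose positivity on $(\tau_b,\tau_f)$ is equivalent to (\ref{Liu}). A differentiation gives $g_2'(\tau)=2\tau(\tau_f^2-\tau_b^2)\bigl(M-p'(\tau)\bigr)$. The double inequality just established provides $p'(\tau_b)<M<p'(\tau_f)$, and the monotonicity profile of $p'$ on $(\tau_b,\tau_f)$ (non-increasing up to $\tau_2^i$, increasing thereafter) shows that $M-p'(\tau)$ is positive near $\tau_b$, negative near $\tau_f$, and changes sign exactly once. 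Consequently $g_2$ first increases and then decreases between its two zeros, so it is positive in the interior, which is Liu's condition.

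The step I expect to be the main obstacle is the lower bound on the subinterval $(\tau_2^i,\tau_f)$, where $p''>0$ and the naive maximum-principle argument applied directly to $f_2$ (the chord-minus-tangent function of (\ref{52106})) breaks down, since a zero there would be a downward crossing and cannot be excluded by the differential inequality alone. The device of passing to $F$, whose derivative factors as $-(\tau_f^2-\tau^2)p''$ and therefore has a single sign change at $\tau_2^i$, is what renders the monotonicity—and hence the positivity sandwiched between the two zeros $\tau_{po}$ and $\tau_f$—transparent. Verifying that the sign change of $M-p'(\tau)$ in the Liu step is genuinely single, uniformly in whether $\tau_b$ lies below or above $\tau_2^i$, is the only other point requiring care.
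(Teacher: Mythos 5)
Your proposal is correct. Note that the paper does not actually write out a proof of this proposition --- it only remarks that the argument is ``similar to that of Proposition \ref{prop2}'' --- so your write-up is a genuine filling-in of omitted details rather than a parallel to an existing argument. Your three auxiliary functions are exactly in the spirit of the paper's surrounding proofs ($f_2$, $g_2$, $g_3$, \dots), and the Liu step is verbatim the $g_2$ device of Proposition \ref{prop2}. The one place where you genuinely improve on a literal transcription is the lower bound: the paper's template works with the ratio $f_2(\tau)=\frac{2h(\tau_f)-2h(\tau)}{\tau_f^2-\tau^2}-p'(\tau)$ and the identity $f_2'=\frac{2\tau f_2}{\tau_f^2-\tau^2}-p''$, which propagates positivity forward only where $p''\le 0$ and is not by itself conclusive on $(\tau_2^i,\tau_f)$ where $p''>0$; your substitution $F=(\tau_f^2-\tau^2)f_2$, with $F'=-(\tau_f^2-\tau^2)p''$ changing sign exactly once at $\tau_2^i$ and $F(\tau_{po})=F(\tau_f)=0$ (the first zero coming from Proposition \ref{prop3}), closes that gap cleanly by monotonicity rather than by a differential inequality. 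I checked the remaining details --- $G'>0$ on all of $(\tau_{po},\tau_f)$ using $p'(\tau_{po})<p'(\tau_f)$ and the monotonicity profile of $p'$, and the single sign change of $M-p'(\tau)$ in both the case $\tau_b<\tau_2^i$ and the case $\tau_b\ge\tau_2^i$ --- and they all hold; the proof is complete.
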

\begin{proof}
The proof is similar to that of Proposition \ref{prop2}, we omit the details.
\end{proof}

\begin{prop}\label{prop4}
(Pre-sonic shock) For any $\tau_f\in (\tau_1^i, \tau_2^i)$, there exists a unique $\tau_{pr}\in (1, \tau_f)$ such that
when $\tau_b=\tau_{pr}(\tau_f)$ there hold
\begin{equation}\label{430123}
p'(\tau_{b})<\frac{2h(\tau_{b})-2h(\tau_f)}{\tau_{b}^2-\tau_f^2}=p'(\tau_f)
\end{equation}
and the Liu's extended entropy condition (\ref{Liu}). Moreover, $\frac{{\rm d}\tau_{pr}}{{\rm d}\tau_f}<0$ for $\tau_f\in (\tau_1^i, \tau_2^i)$.
%for all $$.
\end{prop}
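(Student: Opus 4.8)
The plan is to recast the equality in \eqref{430123} as a tangency condition and analyse it through a single auxiliary function. For fixed $\tau_f\in(\tau_1^i,\tau_2^i)$, write the chord slope
\[
\Phi(\tau)=\frac{2h(\tau)-2h(\tau_f)}{\tau^2-\tau_f^2},\qquad 1<\tau<\tau_f,
\]
so that the required identity $\Phi(\tau_b)=p'(\tau_f)$ says precisely that the secant joining $\tau_b$ and $\tau_f$ in the $(\tau^2,2h)$-plane has slope $p'(\tau_f)$, i.e.\ it coincides with the tangent line at $\tau_f$ (recall $h'=\tau p'$). Accordingly I introduce
\[
G(\tau)=2h(\tau_f)-2h(\tau)-(\tau_f^2-\tau^2)\,p'(\tau_f),
\]
which measures the deviation of the graph from that tangent line. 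Since $\tau_f^2-\tau^2>0$ on $(1,\tau_f)$ we have $G(\tau)=(\tau_f^2-\tau^2)\bigl(\Phi(\tau)-p'(\tau_f)\bigr)$, so a root $\tau_b\in(1,\tau_f)$ of $\Phi=p'(\tau_f)$ is exactly a zero of $G$. Differentiating and using $h'=\tau p'$ gives the clean expression $G'(\tau)=2\tau\bigl(p'(\tau_f)-p'(\tau)\bigr)$; note also that $G(\tau_f)=0$ automatically, so $\tau_f$ is a spurious zero that must be discarded.

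For the sign of $G'$ I would use the profile of $p'$ from \eqref{81601} (Figure \ref{Figure9}): $p'$ is strictly increasing on $(1,\tau_1^i)$, attains a local maximum at $\tau_1^i$, and is strictly decreasing on $(\tau_1^i,\tau_2^i)$. Because $\tau_f\in(\tau_1^i,\tau_2^i)$ we have $p'(\tau_f)<p'(\tau_1^i)$, so there is a unique $\tau_m\in(1,\tau_1^i)$ with $p'(\tau_m)=p'(\tau_f)$. Tracing $G'(\tau)=2\tau(p'(\tau_f)-p'(\tau))$ then shows $G'>0$ on $(1,\tau_m)$ and $G'<0$ on $(\tau_m,\tau_f)$, so $G$ rises from $G(1^+)=-\infty$ (which follows from $2h(\tau)\to+\infty$ as $\tau\to1^+$, since $h'=\tau p'$ is non-integrable at the covolume $\tau=1$) up to a single maximum $G(\tau_m)>0$, then decreases back to $G(\tau_f)=0$. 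Hence $G$ has exactly one zero $\tau_{pr}$ in $(1,\tau_f)$, it lies in $(1,\tau_m)\subset(1,\tau_1^i)$, and there $p'(\tau_{pr})<p'(\tau_m)=p'(\tau_f)$, which is the strict inequality in \eqref{430123}.

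The Liu condition \eqref{Liu} then comes for free from the same sign picture: on $(\tau_{pr},\tau_f)$ we have $G>0$ (it increases to $G(\tau_m)$ and then decreases to $0$), whence $\Phi(\tau)>p'(\tau_f)=\Phi(\tau_{pr})$ for every $\tau\in(\tau_{pr},\tau_f)$, which is exactly \eqref{Liu} with $\tau_b=\tau_{pr}$. For the monotonicity I would differentiate the defining relation implicitly: writing $\mathcal{G}(\tau_b,\tau_f)=2h(\tau_f)-2h(\tau_b)-(\tau_f^2-\tau_b^2)p'(\tau_f)$, one finds $\partial_{\tau_b}\mathcal{G}=2\tau_b(p'(\tau_f)-p'(\tau_b))>0$ and $\partial_{\tau_f}\mathcal{G}=-(\tau_f^2-\tau_b^2)p''(\tau_f)>0$, the latter because $p''(\tau_f)<0$ on $(\tau_1^i,\tau_2^i)$; therefore $\tfrac{{\rm d}\tau_{pr}}{{\rm d}\tau_f}=-\,\partial_{\tau_f}\mathcal{G}/\partial_{\tau_b}\mathcal{G}<0$.

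The main obstacle is the sign bookkeeping in the second step: one must locate the auxiliary point $\tau_m$ on the rising branch of $p'$ and verify that the genuine root migrates into the convex region $(1,\tau_1^i)$ rather than the concave region, while correctly excluding the automatic zero at $\tau=\tau_f$. The boundary behaviour $G(1^+)=-\infty$ (equivalently $h(\tau)\to+\infty$ near $\tau=1$) is what forces the single left crossing and should be justified directly from the van der Waals law \eqref{8310a}.
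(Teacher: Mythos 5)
Your proof is correct and follows essentially the same route as the paper: your $G$ is exactly the paper's $g_4=g_5$, with the same derivative formula $G'(\tau)=2\tau\bigl(p'(\tau_f)-p'(\tau)\bigr)$, the same sign analysis yielding existence, uniqueness, $\tau_{pr}<\tau_1^i$ and the Liu condition, and the same implicit differentiation for $\frac{{\rm d}\tau_{pr}}{{\rm d}\tau_f}<0$. Your explicit justification of $G(1^+)=-\infty$ and the direct monotonicity argument for $p'(\tau_{pr})<p'(\tau_f)$ (where the paper instead invokes the Cauchy mean value theorem) are minor presentational refinements rather than a different method.
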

\begin{proof}
Set $g_4(\tau)=2h(\tau_f)-2h(\tau)-p'(\tau_f)(\tau_f^2-\tau^2)$. Then we have $g_4(\tau_f)=0$ and $g_4'(\tau_f)=2\tau(p'(\tau_f)-p'(\tau))$.
Then by $\tau_f\in (\tau_1^i, \tau_2^i)$ we know that there exists a unique $\tau_{pr}\in (1, \tau_f)$ such that $g_4(\tau_{pr})=0$, and $\tau_{pr}<\tau_1^i$. This implies $\frac{2h(\tau_{pr})-2h(\tau_f)}{\tau_{pr}^2-\tau_f^2}=p'(\tau_f)$. Moreover, by the Cauchy mean value theorem we have
$\frac{2h(\tau_{pr})-2h(\tau_f)}{\tau_{pr}^2-\tau_f^2}>p'(\tau_{pr})$.

By a direct computation we have $2\tau(p'(\tau_f)-p'(\tau_{pr}))\frac{{\rm d}\tau_{pr}}{{\rm d}\tau_{f}}=p''(\tau_f)(\tau_f^2-\tau_{pr}^2)$. Then by (\ref{430123}) we have  $\frac{{\rm d}\tau_{pr}}{{\rm d}\tau_f}<0$ for $\tau_f\in (\tau_1^i, \tau_2^i)$.

Set $g_5(\tau)=2h(\tau_f)-2h(\tau)-(\tau_f^2-\tau^2)p'(\tau_{f})$. Then we have $g_5(\tau_f)=g_5(\tau_{b})=0$ and $g_5'(\tau)=2\tau (p'(\tau_{f})- p'(\tau))$.
Then by $\tau_{f}\in (\tau_1^i, \tau_2^i)$, we have $g_5(\tau)>0$ for $\tau\in (\tau_{b}, \tau_f)$. This yields  (\ref{Liu}).
This completes the proof.
\end{proof}

\begin{prop}\label{prop5}
Suppose $\tau_f\in (\tau_2^i, \tau_c)$. Then,
\begin{equation}\label{430125}
p'(\tau)<\frac{2h(\tau)-2h(\tau_f)}{\tau^2-\tau_f^2}<p'(\tau_f)\quad \mbox{for}\quad \tau\in\big(1, \tau_{pr}\big(\tau_{po}(\tau_f)\big)\big).
\end{equation}
Moreover, for any $\tau_b\in (1, \tau_{pr}(\tau_{po}(\tau_f)))$
there holds the Liu's extended condition (\ref{Liu}).
\end{prop}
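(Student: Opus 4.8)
The plan is to pass to the squared variable $s=\tau^2$ and the function $G(s):=2h(\sqrt s)$. Since $h'(\tau)=\tau p'(\tau)$ we get $G'(s)=p'(\sqrt s)$, so $G''(s)$ has the same sign as $p''(\sqrt s)$ and, by (\ref{81601}), $G$ is strictly convex on $(1,(\tau_1^i)^2)$, strictly concave on $((\tau_1^i)^2,(\tau_2^i)^2)$, and strictly convex on $((\tau_2^i)^2,+\infty)$. Writing $s_f=\tau_f^2$, $s_{po}=\tau_{po}^2$ and $\tau_{pp}:=\tau_{pr}(\tau_{po}(\tau_f))$, $s_{pp}=\tau_{pp}^2$, the two inequalities in (\ref{430125}) read $G'(s)<m(s)<G'(s_f)$ on $(1,s_{pp})$, where $m(s):=\frac{G(s)-G(s_f)}{s-s_f}$ is the secant slope. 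The first step is a collinearity fact: by Proposition \ref{prop3} the tangent line $\ell$ of $G$ at $s_{po}$ passes through $(s_f,G(s_f))$, while applying Proposition \ref{prop4} with $\tau_{po}(\tau_f)\in(\tau_1^i,\tau_2^i)$ in the role of its front state shows the same tangent $\ell$ passes through $(s_{pp},G(s_{pp}))$. Hence these three points lie on one line of slope $p'(\tau_{po})$, and $\tau_{pp}<\tau_1^i<\tau_{po}<\tau_2^i<\tau_f$.

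For the upper bound in (\ref{430125}) I would study $\tilde G(\tau):=2h(\tau)-2h(\tau_f)-p'(\tau_f)(\tau^2-\tau_f^2)=(\tau^2-\tau_f^2)\big(m(\tau^2)-p'(\tau_f)\big)$, so that $\tilde G>0$ is equivalent to $m<p'(\tau_f)$. By collinearity the secant slope from $\tau_{pp}$ to $\tau_f$ equals $p'(\tau_{po})<p'(\tau_f)$ (Proposition \ref{prop3}), giving $\tilde G(\tau_{pp})>0$; and $\tilde G'(\tau)=2\tau(p'(\tau)-p'(\tau_f))<0$ on $(1,\tau_{pp})$ because $p'$ is increasing there and $p'(\tau_{pp})<p'(\tau_{po})<p'(\tau_f)$ by Propositions \ref{prop4} and \ref{prop3}. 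Thus $\tilde G>\tilde G(\tau_{pp})>0$ on $(1,\tau_{pp})$. For the lower bound I would use $f_2(\tau):=m(\tau^2)-p'(\tau)$, which satisfies the identity (\ref{52102}) coming from (\ref{52106}), namely $f_2'(\tau)=\frac{2\tau f_2(\tau)}{\tau_f^2-\tau^2}-p''(\tau)$. Collinearity gives $f_2(\tau_{pp})=p'(\tau_{po})-p'(\tau_{pp})>0$, and a maximum-principle argument then closes it: at a hypothetical largest zero $\tau_0\in(1,\tau_{pp})$ one would have $f_2'(\tau_0)=-p''(\tau_0)<0$ since $p''>0$ on $(1,\tau_1^i)$, contradicting that $f_2$ rises from $0$ to a positive value to the right of $\tau_0$. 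Hence $f_2>0$, i.e.\ the left inequality of (\ref{430125}).

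For Liu's condition I would first recast it: from (\ref{72310}) one checks $g_2(\tau)=(\tau_f^2-\tau_b^2)\big(\mathrm{ch}(\tau^2)-G(\tau^2)\big)$, where $\mathrm{ch}$ is the chord of $G$ joining $(\tau_b^2,G(\tau_b^2))$ and $(s_f,G(s_f))$, so (\ref{Liu}) is equivalent to $G$ lying strictly below this chord on $(\tau_b^2,s_f)$. I would prove this via $D:=G-\ell$. One shows $D<0$ on $(s_{pp},s_f)\setminus\{s_{po}\}$ (tangency of $\ell$ at the concave point $s_{po}$ forces $D\le 0$ there, and on the two flanking convex pieces $D$ lies below a nonpositive chord), and $D>0$ on $(1,s_{pp})$ (using $D'(s_{pp})\le 0$, which follows from $D<0$ just to the right, together with convexity of $G$ there). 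Since $G(s_f)=\ell(s_f)$ and $G(\tau_b^2)>\ell(\tau_b^2)$, the chord $\mathrm{ch}$ has slope below that of $\ell$ and therefore lies above $\ell$ for all $s<s_f$; this yields $G<\ell<\mathrm{ch}$ on $(s_{pp},s_f)$, while on $(\tau_b^2,s_{pp})$ strict convexity of $G$ together with $G(s_{pp})=\ell(s_{pp})<\mathrm{ch}(s_{pp})$ forces $G<\mathrm{ch}$ as well.

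The main obstacle is precisely the Liu condition. Unlike in Proposition \ref{prop2}, the quantity $p'-m=-f_2$ is not single-signed on the whole of $(\tau_b,\tau_f)$ here — it vanishes at both $\tau_{po}$ and $\tau_f$ — so the direct differential-inequality argument for $g_2$ used there does not close, and one cannot simply localize the estimate to $(1,\tau_{pp})$. Overcoming this requires the global convex–concave–convex analysis of $D=G-\ell$ anchored on the collinear triple $\tau_{pp},\tau_{po},\tau_f$: pinning down the sign of $D$ on each side of $s_{pp}$ and propagating the relation ``chord above $\ell$'' from the endpoint equality at $s_f$ is the delicate part, and it is what upgrades Liu-admissibility from the pre-sonic state $\tau_{pp}$ down to every $\tau_b\in(1,\tau_{pp})$.
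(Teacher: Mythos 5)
Your proof is correct, and for the inequality (\ref{430125}) it is essentially the paper's argument in different clothing: the collinearity of $(\tau_{pp}^2,2h(\tau_{pp}))$, $(\tau_{po}^2,2h(\tau_{po}))$, $(\tau_f^2,2h(\tau_f))$ is exactly the chain of equalities $p'(\tau_{po})=\frac{2h(\tau_{po})-2h(\tau_{pp})}{\tau_{po}^2-\tau_{pp}^2}=\frac{2h(\tau_{po})-2h(\tau_f)}{\tau_{po}^2-\tau_f^2}$ with which the paper's proof opens, your maximum-principle step for $f_2$ is identical to the paper's use of (\ref{52102}), and your monotone function $\tilde G$ is just $-(\tau_f^2-\tau^2)f_3(\tau)$, so that part differs only cosmetically. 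The genuine divergence is in the Liu condition. The paper keeps the function $g_2$ of (\ref{72310}), notes $g_2(\tau_b)=g_2(\tau_f)=0$ with $g_2'(\tau_b)>0$, $g_2'(\tau_1)<0$, $g_2'(\tau_f)<0$ and $g_2>0$ on $[\tau_{po},\tau_f]$, and rules out an interior zero by a Rolle-type count: an extra zero would force $g_2'$, which by (\ref{72311}) vanishes exactly where $p'(\tau)$ equals the fixed chord slope, to vanish at three points of $(\tau_b,\tau_{po}(\tau_f))\subset(1,\tau_2^i)$, impossible since $p'$ is unimodal there. You instead work in the convexified variable $s=\tau^2$ and sandwich $G$ between the tangent line $\ell$ at $s_{po}$ and the chord, i.e. $G<\ell<\mathrm{ch}$ on $(s_{pp},s_f)$ plus a separate convexity argument on $(s_b,s_{pp})$. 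Both close the argument; the paper's count is shorter once (\ref{430125}) is in hand, while your tangent-line picture is more transparent about \emph{why} admissibility propagates from the double tangency at $\tau_{po}$ down to every $\tau_b<\tau_{pr}(\tau_{po}(\tau_f))$, and it isolates the single sign fact $D>0$ on $(1,s_{pp})$ that makes the chord steeper than $\ell$. One small point of care in your write-up: the strict negativity of $D$ at the inflection abscissae $(\tau_1^i)^2$ and $(\tau_2^i)^2$ (needed so the flanking chords are nonpositive and nontrivial) should be stated as coming from strict concavity on the middle piece together with $s_{po}$ lying strictly between them, which Propositions \ref{prop3} and \ref{prop4} guarantee.
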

\begin{proof}
We denote $\tau_{1}=\tau_{po}(\tau_f)$ and $\tau_{2}= \tau_{pr}(\tau_1)$. We have
$$
p'(\tau_2)<\frac{2h(\tau_{1})-2h(\tau_2)}{\tau_{1}^2-\tau_2^2}=
p'(\tau_{1})=\frac{2h(\tau_{1})-2h(\tau_f)}{\tau_{1}^2-\tau_f^2}<p'(\tau_f).
$$
Hence, we get
$$
p'(\tau_2)<\frac{2h(\tau_{f})-2h(\tau_2)}{\tau_{f}^2-\tau_s^2}
<p'(\tau_f).
$$

Set
\begin{equation}
f_3(\tau)=\frac{2h(\tau_f)-2h(\tau)}{\tau_f^2-\tau^2}-p'(\tau_f)\quad \mbox{and}\quad f_2(\tau)=\frac{2h(\tau_f)-2h(\tau)}{\tau_f^2-\tau^2}-p'(\tau).
\end{equation}
Then we have
$$
f_3(\tau_2)<0, \quad f_2(\tau_2)>0, \quad
f_3'(\tau)=\frac{2\tau f_2(\tau)}{\tau_f^2-\tau^2},\quad \mbox{and} \quad f_2'(\tau)=\frac{2\tau f_2(\tau)}{\tau_f^2-\tau^2}-p''(\tau).
$$
Combining these with $\tau_2<\tau_1^i$, we get
$$
f_3(\tau)<0\quad \mbox{and}\quad f_2(\tau)>0 \quad \mbox{for}\quad 1<\tau<\tau_2.
$$
This completes the proof for (\ref{430125}).

For any $\tau_b\in (1, \tau_2)$, let the function $g_{2}(\tau)$ be defined in (\ref{72310}).
Then we have
\begin{equation}\label{72401}
g_2(\tau_f)=g_2(\tau_b)=0, \quad g_2'(\tau_b)>0, \quad \mbox{and}\quad g_2'(\tau_f)<0.
\end{equation}

By Proposition \ref{prop3} we have $p'(\tau)-\frac{2h(\tau)-2h(\tau_f)}{\tau^2-\tau_f^2}<0$ for $\tau\in (\tau_1, \tau_f)$.
Then by (\ref{72311}) and $g_2(\tau_b)=0$ we have $g_{2}(\tau)>0$ for $\tau\in [\tau_1, \tau_f]$ and $g_2'(\tau_1)<0$.

Suppose that there exists a $\tau\in (\tau_b, \tau_f)$ such that $g_2(\tau)=0$. Then by (\ref{72311}), (\ref{430125}), and  (\ref{72401}), there exist at least three points in $(\tau_b, \tau_1)$ such that $g_2'=0$ at these points. This is impossible, since $\tau_1<\tau_2^i$.
Then we have $g_2(\tau)>0$ for $\tau\in (\tau_b, \tau_f)$. This yields  (\ref{Liu}).
This completes the proof of the proposition.
\end{proof}

\subsubsection{\bf Oblique wave curve}
We consider (\ref{42501}) with the following initial boundary conditions:
\begin{equation}\label{RBDa}
\left\{
  \begin{array}{ll}
    (u, v)(0, y)=(u_0, 0), & \hbox{$y>0$;} \\[4pt]
 v=u\tan\theta_{w}, & \hbox{$y=x\tan\theta_w$, $x>0$,}
  \end{array}
\right.
\end{equation}
where $\theta_w\in(0, \frac{\pi}{2})$ is the ramp angle.
Let $c_0=\ddot{c}(u_0)$ and $\tau_0=\ddot{\tau}(u_0)$.
We look for a self-similar solution to the problem (\ref{42501}), (\ref{RBDa}) for  $u_0>c_0$.
We divide the discussion into the following cases: (1) $\tau_0>\tau_c$; (2) $\tau_2^i<\tau_0\leq \tau_c$; (3) $\tau_1^i<\tau_0<\tau_2^i$; (4) $1<\tau_0<\tau_1^i$.
%The discussions for cases (1) and (4) are similar to that for polytropic ideal gases.
We only discuss the case of $\tau_2^i<\tau_0\leq \tau_c$, since the other cases can be discussed similarly.

%=============
\begin{figure}[htbp]
\begin{center}
\includegraphics[scale=0.48]{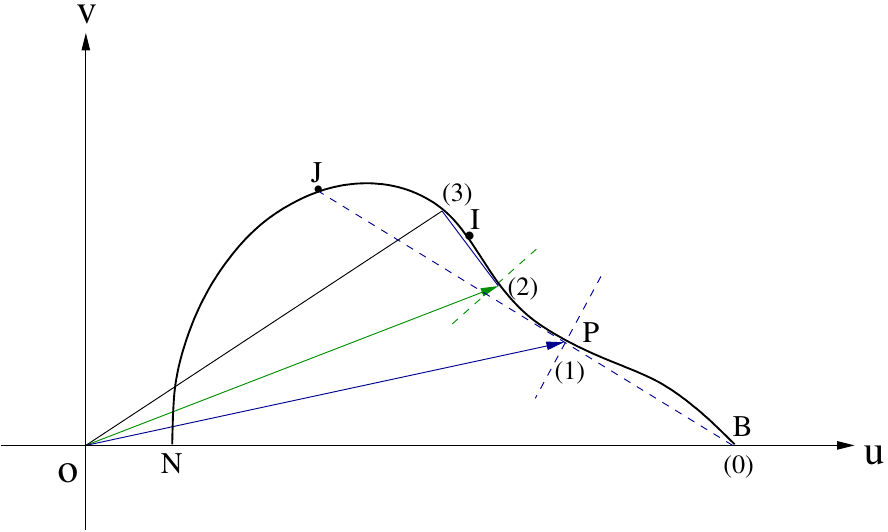}\qquad\quad  \includegraphics[scale=0.52]{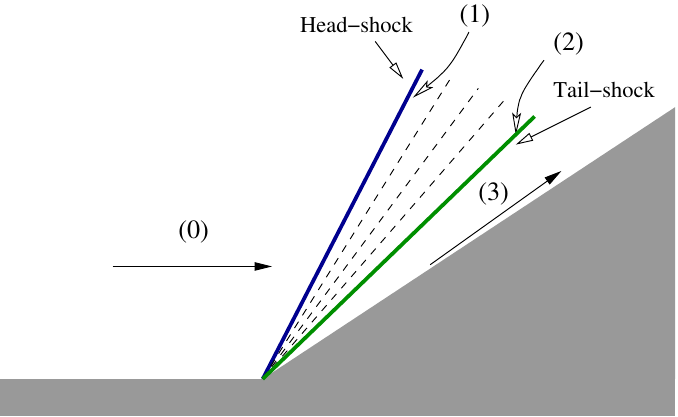}
\caption{\footnotesize Left: an oblique wave curve through $(u_0, 0)$ in the $(u, v)$-plane; right: an oblique shock-fan-shock composite wave.}
\label{Figure10}
\end{center}
\end{figure}
%==========

\noindent{\bf (i). Oblique shock wave curve (I).}
We first look for a self-similar solution with a single oblique shock. We denote by $(u_b^h, v_b^h)$ the velocity of the flow downstream of the shock and by $\phi$ the inclination angle of the shock.
We denote by $\tau_b^h$ the specific volume of the flow downstream of the shock. We shall show that $\phi$ and $(u_b^h, v_b^h)$ are functions of $\tau_b^h \in [\tau_{po}(\tau_0), \tau_0)$, where the function $\tau_{po}$ is defined in Proposition \ref{prop3}.

Let
$$
N_f^h=\tau_0\sqrt{-\frac{2h(\tau_b^h)-2h(\tau_0)}{(\tau_b^h)^2-\tau_0^2}}.
$$
We also assume
\begin{equation}\label{8801}
u_0>N_f^h\quad \mbox{for}\quad  \tau_b^h \in [\tau_{po}(\tau_0), \tau_0).
\end{equation}
Then we have
$$
\phi=\arcsin\Big(\frac{N_f^h}{u_0}\Big).
$$

In view of (\ref{m}), we have
\begin{equation}\label{8802}
u_b^h=N_b^h\sin\phi+L_b^h\cos\phi \quad \mbox{and}\quad v_b^h=-N_b^h\cos\phi+L_b^h\sin\phi,
\end{equation}
where
$$
N_b^h= \frac{\tau_b^hN_f^h}{\tau_0}\quad \mbox{and} \quad L_f^h=L_b^h=u_0\cos\phi.
$$
So,
under the condition (\ref{8801}), we can define the one-parametric curve $(u, v)=(u_b^h, v_b^h)(\tau_b^h)$, $\tau_b^h\in [\tau_{po}(\tau_0), \tau_0)$ in the $(u, v)$-plane.
This curve is a portion of the shock polar of the state $(u_0, 0)$; see the arc $\wideparen{\mathrm{BP}}$ in Figure \ref{Figure10} (left). That is to say, for any  $(u_r, v_r)\in \wideparen{\mathrm{BP}}$, the state $(u_r, v_r)$ can be connected to the state $(u_0, 0)$ on the right by an oblique shock.

When $u_0$ is sufficiently large we also have
\begin{equation}\label{8310b}
q_{po}>\ddot{c}(q_{po}),
\end{equation}
where
\begin{equation}\label{8803}
q_{po}=\sqrt{(u_b^{h})^2(\tau_{po}(\tau_0))+(v_b^h)^2(\tau_{po}(\tau_0))}.
\end{equation}

%So, when $0<\theta_w\leq \sigma_{po}$, there exists a $\tau_b\in [\tau_{po}(\tau_0), \tau_0)$ such that $v_b=u_b\tan\theta_w$, and hence the problem (\ref{42501}), (\ref{RBDa}) admits a single shock solution with the form
%\begin{equation}
%(u, v)=\left\{
 %        \begin{array}{ll}
  %         (u_0, 0), & \hbox{$\phi<\theta<\frac{\pi}{2}$;} \\[3pt]
   %        (u_b, v_b), & \hbox{$\theta_w<\theta<\phi$.}
    %     \end{array}
    %   \right.
%\end{equation}

\vskip 4pt

\noindent{\bf (ii). Oblique shock-fan composite wave curve.}
 %When $\tau_0-\tau_2^i$ is sufficiently small, there holds $q_{po}>\hat{c}(\tau_{po}(\tau_0))$ where
%$$q_{po}=\sqrt{(u_b^{h})^2(\tau_{po}(\tau_0))+(v_b^h)^2(\tau_{po}(\tau_0))}\quad \mbox{and}\quad \hat{c}(\tau)=\tau\sqrt{-p'(\tau)}.$$
Assume that (\ref{8310b}) holds.
Then the incoming flow $(u_0, 0)$ can possibly be turned by an oblique shock-fan composite wave consisting of a post-sonic shock and a centered simple wave.
We now construct the oblique shock-fan composite wave curve of the state $(u_0, 0)$.

Set
\begin{equation}\label{6602}
(u_{po}, v_{po})=(u_b^h(\tau_{po}(\tau_0)),v_b^h(\tau_{po}(\tau_0)))\quad \mbox{and}\quad
\sigma_{po}=\arctan\Big(\frac{v_{po}}{u_{po}}\Big).
\end{equation}
We then define the functions
$$
\hat{\sigma}(\tau):=\sigma_{po}-\int^{\hat{q}(\tau)}_{q_{po}}\frac{\sqrt{q^{2}-c^{2}}}{qc}{\rm d}q \quad \mbox{and}\quad \hat{\alpha}(\tau)
:=\hat{\sigma}(\tau)+\arcsin\Big(\frac{\hat{c}(\tau)}{\hat{q}(\tau)}\Big), \quad \tau_1^i\leq \tau<\tau_{po}(\tau_0),
$$
where $\hat{q}(\tau)$ is determined by the Bernoulli law
$$
\frac{1}{2}\hat{q}^2(\tau)+h(\tau)=\frac{1}{2}q_{po}^2+h(\tau_{po}(\tau_0)).
$$

A direct computation yields
\begin{equation}\label{9501a}
\hat{\sigma}'(\tau)<0\quad \mbox{and}\quad \hat{\alpha}'(\tau)>0\quad \mbox{for} \quad \tau_1^i\leq \tau<\tau_{po}(\tau_0).
\end{equation}
This implies
$$
\sigma_{po}<\hat{\sigma}(\tau)<\hat{\alpha}(\tau)<\phi_{po} \quad \mbox{for} \quad \tau_1^i\leq \tau<\tau_{po}(\tau_0),
$$
where $$\phi_{po}=\sigma_{po}+\arcsin \left(\frac{\ddot{c}(q_{po})}{q_{po}}\right).$$
By the definition of $\tau_{po}(\tau_0)$ we actually have $\phi_{po}=\arcsin\big(\frac{N_f^h(\tau_{po}(\tau_0))}{u_0}\big)$.

Let $\tau=\hat{\tau}(\alpha)$, $\hat{\alpha}(\tau_1^i)<\alpha<\phi_{po}$ be the inverse function of $\alpha=\hat{\alpha}(\tau)$, $\tau_1^i\leq \tau<\tau_{po}(\tau_0)$.
We then define
 \begin{equation}\label{72403}
 (\tilde{u}, \tilde{v})(\theta):= (\hat{u}, \hat{v})(\hat{\tau}(\theta)), \quad \hat{\alpha}(\tau_1^i)<\theta<\phi_{po}.
 \end{equation}
Then
$$
(u, v)=(\tilde{u}, \tilde{v})(\arctan(y/x)), \quad\hat{\alpha}(\tau_1^i)<\arctan(y/x)<\phi_{po}
$$
is actually a centered simple wave solution of  (\ref{42501}) on the triangle region $\{(x, y)\mid (x,y)=(r\cos\theta, r\sin\theta), r>0, \hat{\alpha}(\tau_1^i)<\theta<\phi_{po}\}$.

We define the functions
$$
\hat{u}(\tau):=(\hat{q}\cos\hat{\sigma})(\tau)\quad \mbox{and}\quad \hat{v}(\tau):=(\hat{q}\sin\hat{\sigma})(\tau), \quad \tau_1^i\leq \tau<\tau_{po}(\tau_0).
$$
We call the one-parametric curve
$(u, v)=(\hat{u}, \hat{v})(\tau)$,  $\tau_1^i\leq \tau<\tau_{po}(\tau_0)$
the shock-fan composite wave curve of the state $(u_0, 0)$; see the arc $\wideparen{\mathrm{PI}}$ in Figure \ref{Figure10} (left).
This curve
is also a $\Gamma_{-}$ characteristic issued from the point $\mathrm{P}=(u_{po}, v_{po})$
in the hodograph plane, i.e.,
\begin{equation}\label{9010a}
r_{+}(u, v)=\mbox{Const.}, \quad (u, v)\in \wideparen{\mathrm{PI}};
\end{equation}
see Courant $\&$ Friedrichs \cite{CaF}.
For any  $(u_r, v_r)\in \wideparen{\mathrm{PI}}$, the state $(u_r, v_r)$ can be connected to the state $(u_0, 0)$ on the right by an oblique shock-fan composite wave.

By a direct computation we also have
\begin{equation}\label{90901b}
\hat{u}'(\tau)\cos(\hat{\alpha}(\tau))+\hat{v}'(\tau)\sin(\hat{\alpha}(\tau))=0\quad \mbox{and}\quad
\hat{u}(\tau)\sin(\hat{\alpha}(\tau))-\hat{v}(\tau)\cos(\hat{\alpha}(\tau))=\hat{c}(\tau)
\end{equation}
for $\tau_1^i\leq \tau<\tau_{po}(\tau_0)$.

%We define $\sigma_{sf}=\hat{\sigma}(\tau_1^i)$, then when $\sigma_{po}<\theta_w<\sigma_{sf}$ the problem (\ref{42501}), (\ref{RBD}) admits a shock-fan composite wave solution with the form
%\begin{equation}
%(u, v)=\left\{
 %        \begin{array}{ll}
  %         (u_0, 0), & \hbox{$\phi_{po}<\theta<\frac{\pi}{2}$;} \\[3pt]
   %     (\hat{u}, \hat{v})(\theta), & \hbox{$\phi_{w}<\theta<\hat{\alpha}_{po}$;}\\[3pt]
 %(u_w, v_w), & \hbox{$\phi_{po}<\theta<\hat{\alpha}_w$,}
  %       \end{array}
  %     \right.
%\end{equation}
%where $(\hat{u}, \hat{v})(\theta)= (\hat{q}\cos\hat{\sigma}, \hat{q}\sin\hat{\sigma})(\hat{\tau}^{-1}(\theta))$.

\vskip 4pt

\noindent{\bf (iii). Oblique shock-fan-shock composite wave curve.}
Assume that (\ref{8310b}) holds.
Then the incoming flow $(u_0, 0)$ can also possibly be turned by an oblique shock-fan-fan composite wave consisting of a post-sonic head shock, a centered simple wave, and a pre-sonic tail shock.
The inclination angle and back side state of the head shock are $\phi_{po}$ and $(u_{po}, v_{po})$, respectively.

Assume that the specific volume on the front side of the tail shock is $\tau_{f}^t$, where $\tau_{f}^t\in(\tau_1^i, \tau_{po}(\tau_0))$. Then the inclination angle
of the tail shock is $\hat{\alpha}(\tau_f^t)$,
 the state on the front side of the tail shock is $(\hat{u}, \hat{v})(\tau_f^t)$, and the specific volume on the back side of the tail shock is $\tau_{pr}(\tau_f^t)$, where the function $\tau_{pr}$ is defined in Proposition \ref{prop4}.

We denote by $({u}_b^t, {v}_b^t)$ the state on the back side of the tail shock.
Then we have
\begin{equation}\label{72405}
\left\{
  \begin{array}{ll}
    {u}_b^t(\tau_f^t)={N}_b^t(\tau_f^t)\sin\hat{\alpha}(\tau_f^t)+{L}_b^t(\tau_f^t)\cos\hat{\alpha}(\tau_f^t), \\[4pt]
{v}_b^t(\tau_f^t)={L}_b^t(\tau_f^t)\sin\hat{\alpha}(\tau_f^t)-{N}_b^t(\tau_f^t)\cos\hat{\alpha}(\tau_f^t),
  \end{array}
\right.
\end{equation}
where $N_b^t(\tau_f^t)=\frac{\tau_{pr}(\tau_f^t)N_f^t(\tau_f^t)}{\tau_f^t}$, $L_b^t(\tau_f^t)=L_f^t(\tau_f^t)$,
$N_f^t(\tau_f^t)=\hat{u}(\tau_f^t)\sin\hat{\alpha}(\tau_f^t)-\hat{v}(\tau_f^t)\cos\hat{\alpha}(\tau_f^t)$, and
$L_f^t(\tau_f^t)=\hat{u}(\tau_f^t)\cos\hat{\alpha}(\tau_f^t)+\hat{v}(\tau_f^t)\sin\hat{\alpha}(\tau_f^t)$

We call the one-parametric curve $u={u}_b^t(\tau_f^t)$,
$v={v}_b^t(\tau_f^t)$, $\tau_{f}\in(\tau_1^i, \tau_{po}(\tau_0))$ the shock-fan-shock composite wave curve of the state $(u_0, 0)$; see the arc $\wideparen{\mathrm{IJ}}$ in Figure \ref{Figure10} (left).
That is to say, for any  $(u_r, v_r)\in \wideparen{\mathrm{IJ}}$, the state $(u_r, v_r)$ can be connected to the state $(u_0, 0)$ on the right by an oblique shock-fan-shock composite wave.

\begin{rem}\label{9010b}
For the needs of the following discussion, we represent the curve $\wideparen{\mathrm{IJ}}$ by
$$
H(u, v)=0.
$$
This implies that if the front side state of an oblique pre-sonic shock lies in the arc $\wideparen{\mathrm{PI}}$ then the back side state satisfies $H(u, v)=0$.
\end{rem}

\vskip 4pt

\noindent{\bf (iv). Oblique shock wave curve (II).}
We now discuss the equations $(u, v)=(u_b^h, v_b^h)(\tau_b^h)$ for $\tau_b^h<\tau_{pr}(\tau_{po}(\tau_0))$.
There exists a $1<\tau_{n}<\tau_{pr}(\tau_{po}(\tau_0))$ such that $v_b^{h}(\tau_n)=0$ and $v_b^h(\tau_b^h)>0$ for $\tau_n<\tau_b^h<\tau_{pr}(\tau_{po}(\tau_0))$. The one-parametric curve $(u, v)=(u_b^h, v_b^h)(\tau_b^h)$, $\tau_n\leq \tau_b^h<\tau_{pr}(\tau_{po}(\tau_0))$ is the other portion of the shock polar; see the arc $\wideparen{\mathrm{JN}}$  in Figure \ref{Figure10} (left).

\subsubsection{\bf Self-similar shock-fan-shock composite wave solution}
We define
$$
{\sigma}_b^t(\tau_f^t):=\arctan\left(\frac{{v}_b^t(\tau_f^t)}{{u}_b^t(\tau_f^t)}\right), \quad \tau_f^t\in [\tau_1^i, \tau_{po}(\tau_0)].
$$
Let
\begin{equation}\label{8310c}
{\sigma}_m=\min\limits_{\tau_f^t\in [\tau_1^i, \tau_{po}(\tau_0)]}{\sigma}_b^t(\tau_f^t)\quad \mbox{and}\quad
{\sigma}_{_M}=\max\limits_{\tau_f^t\in [\tau_1^i, \tau_{po}(\tau_0)]}{\sigma}_b^t(\tau_f^t).
\end{equation}
Then when ${\sigma}_m< \theta_w< {\sigma}_{_M}$, there exists a $\tau_w\in (\tau_1^i, \tau_{po}(\tau_0))$
 such that ${v}_b^t(\tau_w)={u}_b^t(\tau_w)\tan\theta_w$, and the problem (\ref{42501}), (\ref{RBDa}) admits a self-similar shock-fan-shock composite wave solution with the form
\begin{equation}\label{71901}
(u, v)=\left\{
         \begin{array}{ll}
           (u_0, 0), & \hbox{$\phi_{po}<\theta<\frac{\pi}{2}$;} \\[3pt]
        (\tilde{u}, \tilde{v})(\theta), & \hbox{$\hat{\alpha}(\tau_w)<\theta<\phi_{po}$;}\\[3pt]
 ({u}_b^t(\tau_w), {v}_b^t(\tau_w)), & \hbox{$\theta_{w}<\theta<\hat{\alpha}(\tau_w)$,}
         \end{array}
       \right.
\end{equation}
where $\theta=\arctan(y/x)$; see Figure \ref{Figure10} (right).
In Figure \ref{Figure10},  the symbols $(0)$, $(1)$, $(2)$ and $(3)$ represent the states $(u_0, 0)$, $(u_{po}, v_{po})$, $(\hat{u}(\tau_w), \hat{u}(\tau_w))$ and $({u}_b^t(\tau_w), {v}_b^t(\tau_w))$, respectively.

\vskip 4pt

\subsection{Stability of the shock-fan-shock composite wave I}

\subsubsection{\bf Initial-boundary value problem (IBVP)}
We intend to discuss the stability of the shock-fan-shock composite wave solution (\ref{71901}).
%In order to study the stability of the shock-fan-shock composite wave,
We assume that the incoming flow is a uniform supersonic flow and the ramp of the wedge is curved.
So, we consider (\ref{42501}) with the following initial-boundary conditions:
\begin{equation}\label{RBDD}
\left\{
  \begin{array}{ll}
    (u, v)(0, y)=(u_0, 0), & \hbox{$y>0$;} \\[4pt]
 \displaystyle \left(\frac{v}{u}\right)(x,y)=w'(x), & \hbox{$y=w(x)$, $x>0$,}
  \end{array}
\right.
\end{equation}
where $w(x)\in C^2[0,+\infty)$ and $w(0)=0$. Set $\theta_w=\arctan(w'(0))$. We assume that $u_0>c_0$, $\tau_2^i<\tau_0< \tau_c$, and ${\sigma}_m<\theta_w<{\sigma}_{_M}$, where the constants $\sigma_m$ and $\sigma_{_M}$ are defined in (\ref{8310c}).

If $w''(x)\equiv 0$ for $x>0$, then the IBVP (\ref{42501}), (\ref{RBDD}) has a self-similar shock-fan-shock composite wave solution with the form
(\ref{71901}). %We shall show that when $w''(0)\neq 0$ the problem  (\ref{42501}), (\ref{RBDD}) has a non-selfsimilar solution with split shocks.
We next discuss the case of $w''(0)\neq0$.
We make the following assumptions:
\begin{description}
  \item[(H1)] the flow downstream of the pre-sonic shock is also supersonic, i.e.,
\begin{equation}\label{71902}
({u}_b^t)^2(\tau_w)+({v}_b^t)^2(\tau_w)>\hat{c}^2(\tau_{pr}(\tau_w));
\end{equation}
  \item[(H2)] the angle between the tangent vector $-(({u}_b^t)'(\tau_w), ({v}_b^t)'(\tau_w))$ of the fan-shock-fan wave curve $\wideparen{\mathrm{IJ}}$ at the point $({u}_b^t(\tau_w), {v}_b^t(\tau_w))$ and the vector $({u}_b^t(\tau_w), {v}_b^t(\tau_w))$ is an acute angle; see Figure \ref{Superbzt}.
\end{description}

%=============
\begin{figure}[htbp]
\begin{center}
\includegraphics[scale=0.48]{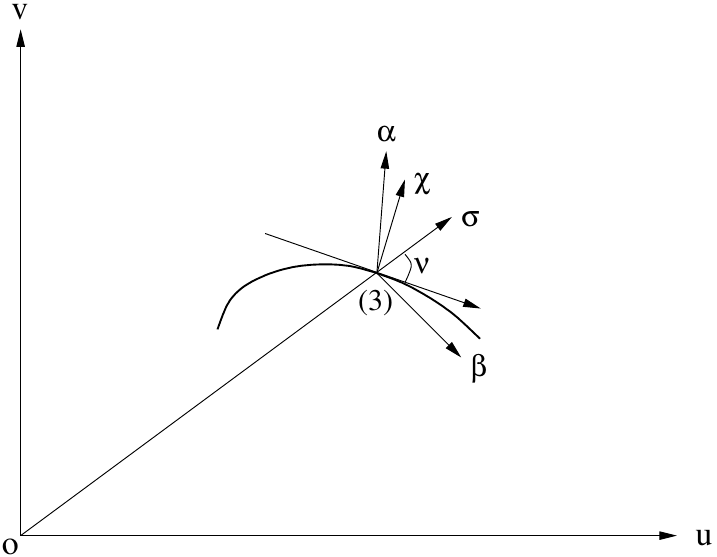}
\caption{\footnotesize The normal and tangent vectors of the arc $\wideparen{\mathrm{IJ}}$ at the point $(3)=({u}_b^t(\tau_w), {v}_b^t(\tau_w))$.}
\label{Superbzt}
\end{center}
\end{figure}
%==========

\begin{rem}\label{72407}
It is easy to see $({u}_b^t(\tau_1^i), {v}_b^t(\tau_1^i))=(\hat{u}(\tau_1^i), \hat{v}(\tau_1^i))$.
By (\ref{9501a}) we have $\hat{q}(\tau_1^i)>\ddot{c}(\hat{q}(\tau_1^i))$, and hence $({u}_b^t)^2(\tau_1^i)+({v}_b^t)^2(\tau_1^i)>\hat{c}^2(\tau_{pr}(\tau_1^i))$.
 Using the Rankine-Hugoniot conditions, we can see that the wave curve $\wideparen{\mathrm{IJ}}\cup \wideparen{\mathrm{PI}}$ is at least $C^1$ smooth at the point $\mathrm{I}=(\hat{u}(\tau_1^i), \hat{v}(\tau_1^i))$. So, the angle between the tangent vector $-(({u}_b^t)'(\tau_1^i), ({v}_b^t)'(\tau_1^i))$ of the fan-shock-fan wave curve $\wideparen{\mathrm{IJ}}$ at the point $\mathrm{I}$ and the vector $({u}_b^t(\tau_i), {v}_b^t(\tau_i))$ is equal to $\frac{\pi}{2}-(\hat{\alpha}-\hat{\sigma})(\tau_1^i)<\frac{\pi}{2}$.
So, when $\theta_w$ is close to ${\sigma}_b^t(\tau_1^i)$,
assumptions {\bf (H1)--(H2)} hold.
\end{rem}

The main result is stated as follows.
\begin{thm}\label{Thm39}
Assume $u_0>c_0$, $\tau_2^i<\tau_0< \tau_c$, ${\sigma}_m<\theta_w<{\sigma}_{_M}$, and $w''(0)\neq0$.
Assume as well that assumptions {\bf (H1)--(H2)} hold.
Then there is a small $\delta>0$ such that the IBVP (\ref{42501}), (\ref{RBDD}) admits a piecewise smooth solution on the domain
$\{(x, y)\mid w(x)<y<\delta\tan\phi_{po}, 0<x<\delta\}$. The solution contains a pre-sonic head shock, a centered wave, and a tail shock. Moreover, if $w''(0)>0$ then the tail shock is transonic; if  $w''(0)<0$ then the tail shock is pre-sonic.
\end{thm}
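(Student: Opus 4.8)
The plan is to exploit the rigidity forced by the uniform incoming flow: I first pin down the head shock and the fan, which are insensitive to the ramp curvature, then read off the type of the tail shock from a single computation at the corner, and finally solve whichever of two free boundary problems that computation selects.

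\textbf{Reduction to a single free boundary.} First I would show that the head shock and the fan are insensitive to the ramp curvature. Since the incoming flow $(u_0,0)$ is a uniform supersonic state with $u_0>c_0$ and $\tau_0\in(\tau_2^i,\tau_c)$, its front side can never be sonic, so the head shock cannot be pre-sonic; by Proposition \ref{prop3} the post-sonic shock off the fixed front state $\tau_f=\tau_0$ has a uniquely determined back state $\tau_b=\tau_{po}(\tau_0)$ and inclination, so the Rankine--Hugoniot relations \eqref{RH} together with the Bernoulli law \eqref{5802} are solved by constants and the head shock is the straight ray through the origin at inclination $\phi_{po}$ with constant back state $(u_{po},v_{po})$. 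The region behind it is a $C_{+}$-type centered wave in the sense of Definition \ref{defn1}; being adjacent to the constant state $(u_{po},v_{po})$ it is a simple wave whose principal part is $(\tilde u,\tilde v)(\theta)$ of \eqref{71901}, with $C_{+}$ characteristics issuing as rays from the origin. Thus the only unknowns are the tail shock $\Gamma_t:\,y=\psi_t(x)$ with $\psi_t(0)=0,\ \psi_t'(0)=\tan\hat\alpha(\tau_w)$, and the flow in the angular region between $\Gamma_t$ and the ramp.

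\textbf{Curvature of the tail shock and its type.} At the corner the front state of $\Gamma_t$ is the fan value at angle $\theta(x)=\arctan(\psi_t(x)/x)$, so the front $C_{+}$ characteristic angle equals $\theta(x)$ and, by the sonic identity \eqref{90901b}, the shock is pre-sonic exactly when $\phi(x)=\theta(x)$, i.e. when $\Gamma_t$ is tangent to the fan ray. I would therefore monitor the admissibility quantity $\Phi(x):=N_f-\hat c(\tau_f)\ge 0$, which vanishes at the corner. Differentiating \eqref{RH} along $\Gamma_t$, using the characteristic relations \eqref{7a}--\eqref{8a} for the self-similar fan and the slip condition $v=uw'(x)$ to feed the ramp curvature $w''(0)$ into the rate of change of the back flow angle $\sigma_b$, produces a linear compatibility relation at the corner that determines $\psi_t''(0)$ and hence $\Phi'(0)$ up to a strictly positive factor times $w''(0)$. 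Consequently $w''(0)>0$ gives $\Phi>0$ for small $x>0$ (a transonic tail shock, front strictly supersonic relative to the shock and both flows smooth up to $\Gamma_t$), while for $w''(0)<0$ the transonic ansatz is incompatible with $\Phi\ge 0$ and the admissible configuration keeps $\Phi\equiv 0$, so $\Gamma_t$ remains sonic on its upstream side and becomes an envelope of one family of the upstream characteristics (a pre-sonic tail shock), exactly the mechanism announced in the Introduction.

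\textbf{Solving the two free boundary problems.} In the transonic case $\Gamma_t$ is non-characteristic and the flow is $C^1$ up to it from both sides, so I would solve the free boundary problem by a characteristic iteration: prescribe the back state through the shock-polar relation $H(u,v)=0$ of Remark \ref{9010b}, propagate the slip condition at the ramp, close uniform $C^1$ bounds on the Riemann-type quantities of the potential system via the decompositions \eqref{cdr}, and recover $\psi_t$ from $\psi_t'=\tan\phi$. In the pre-sonic case the upstream flow is not $C^1$ up to $\Gamma_t$ because the characteristics pile up into the envelope, so the characteristic formulation degenerates on the boundary; here I would pass to the hodograph plane through \eqref{HT}, in which $\Gamma_t$ straightens and the decompositions \eqref{cdh} become regular, solve the transformed boundary value problem for $x(r_+,r_-),\,y(r_+,r_-)$, and invert using \eqref{53002}, with assumption \textbf{(H1)} guaranteeing $q>\hat c$ so that the hodograph map is nondegenerate and \textbf{(H2)} providing the transversality of the wave curve at the point $(3)$ needed to solve for the back state. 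The main obstacle is precisely this pre-sonic regime: controlling the envelope singularity up to the free boundary and proving invertibility of the hodograph map there, while confirming that the corner computation selects exactly one admissible regime for each sign of $w''(0)$.
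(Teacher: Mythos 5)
Your skeleton --- freeze the head shock and the fan, decide the type of the tail shock by a single curvature computation at the corner driven by $w''(0)$, then solve one of two free boundary problems --- is exactly the paper's strategy, and the reduction and the sign dichotomy are correctly identified. However, in the transonic case ($w''(0)>0$) you prescribe the back state through $H(u,v)=0$ of Remark \ref{9010b}. That is the wrong boundary condition there: $H(u,v)=0$ describes the composite wave curve $\wideparen{\mathrm{IJ}}$, i.e.\ it encodes precisely that the tail shock is \emph{pre-sonic} with front state on the fan characteristic $\wideparen{\mathrm{PI}}$; imposing it would pin the shock to stay sonic on its upstream side, which contradicts the transonic regime you are trying to construct. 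In the transonic case the shock enters the fan region, its front state $(u_f,v_f)=(\tilde u,\tilde v)(\arctan(y/x))$ is known, and the correct boundary condition is the shock polar relation \eqref{Polar}, i.e.\ \eqref{6621}, together with the slope condition \eqref{6622}; the condition $H(u,v)=0$ is reserved for the pre-sonic case \eqref{6641}--\eqref{6643}, where the front state is unknown but is constrained to lie on $\wideparen{\mathrm{PI}}$ because the upstream region is a simple wave with $r_+$ constant. A related gap: assumptions \textbf{(H1)}--\textbf{(H2)} are needed already in the corner computation, since they are what make $\mathcal{B}>0$ in \eqref{6610} and $\mathcal{A}>0$ in \eqref{6611}, so that the sign of $w''(0)$, fed in through the slip condition \eqref{52502}, actually dictates the sign of $\bar{\partial}_{_\Gamma}(\phi-\theta)$ in \eqref{73001a}; attributing \textbf{(H2)} only to the solvability of the free boundary problem leaves the dichotomy step unjustified.

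For the pre-sonic case you propose a hodograph transformation to tame the envelope singularity. The paper does something different and lighter for this theorem: it solves the \emph{downstream} free boundary problem directly in physical space with the conditions $H(u,v)=0$ and $\psi'=-(u-\hat u_f)/(v-\hat v_f)$ (a typical free boundary problem in the sense of Li--Yu, solvable under \textbf{(H2)}), and only afterwards reconstructs the unknown upstream simple wave between the fan and the shock, using $\psi''<0$ so that each point of that region lies on a unique tangent line to the shock. This sidesteps the envelope entirely, because the singular side of the shock never enters the free boundary problem. Your hodograph route is not wrong in principle --- the paper uses it, but for Theorem \ref{thm25}, where the shock curve is \emph{prescribed} in the $(r_+,r_-)$-plane rather than obtained as a free boundary --- yet as sketched it leaves the real difficulty open: both the upstream simple wave and the shock are unknown, and you would still need a closure condition equivalent to $H(u,v)=0$ to formulate the problem at all, at which point the physical-space formulation is already available.
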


%=============
\begin{figure}[htbp]
\begin{center}
\includegraphics[scale=0.62]{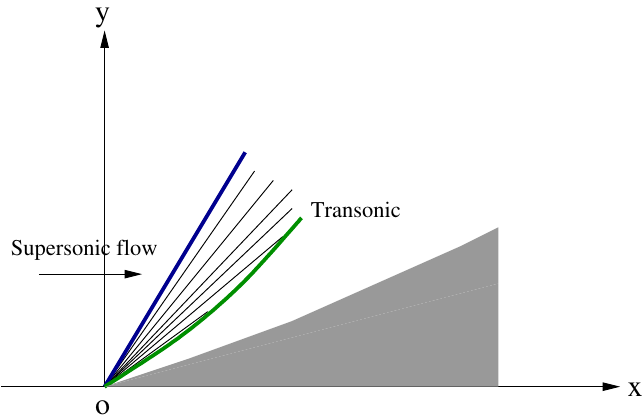}\qquad \quad\includegraphics[scale=0.6]{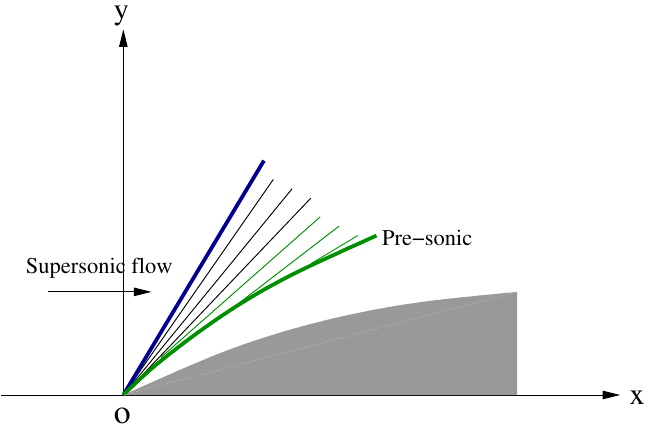} %\includegraphics[scale=0.468]{JJ3.pdf} \\ \vskip 4pt
\caption{\footnotesize Wave structure of the solution to the IBVP (\ref{42501}), (\ref{RBDD}).}
\label{Figure11}
\end{center}
\end{figure}
%==========

%Obviously, if $w''(0)\neq0$ then the  shock-fan-shock composite wave will be influenced by the curved wedge.

\subsubsection{\bf Curvature of the tail shock}
Actually, the head shock and the fan wave of the composite wave will not be perturbed by the curved wedge.
While, the tail shock will be perturbed. When $x>0$, there are the following two cases about the tail shock:
\begin{enumerate}
  \item The tail shock enters into the fan wave region $$\Xi_1=\big\{(x, y)\mid x\tan(\hat{\alpha}(\tau_w))\leq y\leq x\tan\phi_{po}, x>0\big\}$$ and becomes a transonic shock, and the flow upstream of the shock is the fan wave $(u,v)=(\tilde{u}, \tilde{v})(\arctan(y/x))$, where the function $(\tilde{u}, \tilde{v})$ is defined in (\ref{72403}); see Figure \ref{Figure11} (left).
  \item The tail shock does not enter into the fan wave region $\Xi_1$ and the flow upstream of the tail shock is also a priori unknown; see Figure \ref{Figure11} (right).
\end{enumerate}
The formulation of the boundary conditions on the tail shock for the above two cases are different. In order to determine which case will happen near the origin, we shall compute the curvature of the tail shock at the origin.

We use $\phi$ and the subscript ``$f$" to denote the inclination angle and the front side state of the tail shock, respectively.
The Rankine-Hugoniot conditions along the tail shock are
\begin{equation}\label{52401a}
\left\{
  \begin{array}{ll}
    (v -v_f)\sin\phi+(u -u_f)\cos\phi=0, \\[4pt]
    (\rho  u -\rho_fu_f)\sin\phi-  (\rho  v -\rho_fv_f)\cos\phi=0,
  \end{array}
\right.
\end{equation}
where we still use $(u, v, \rho)$ to denote the flow on the back side of the tail shock.
Eliminating $\phi$ from (\ref{52401a}), we get
\begin{equation}\label{Polar}
G(u,v, u_f, v_f)= (\rho  u -\rho_fu_f)(u -u_f)+  (\rho  v -\rho_fv_f)(v -v_f)=0.
\end{equation}

We define the directional derivative along the tail shock:
$$
\bar{\partial}_{_\Gamma}:=\cos\phi\partial_x+\sin\phi\partial_{y}.
$$
Then
by the first relation of (\ref{52401a}) we have that along the tail shock,
\begin{equation}\label{8804}
(\bar{\partial}_{_\Gamma}v -\bar{\partial}_{_\Gamma}v_f)\sin\phi+(v -v_f)\cos\phi\bar{\partial}_{_\Gamma}\phi+
(\bar{\partial}_{_\Gamma}u -\bar{\partial}_{_\Gamma}u_f)\cos\phi-(u -u_f)\sin\phi\bar{\partial}_{_\Gamma}\phi=0.
\end{equation}

Since the supersonic flow in a region adjacent to a straight characteristic must be a simple wave,
for both cases the flow upstream of the tail shock is a simple wave, i.e.,
\begin{equation}\label{6601}
r_{+}(u_f, v_f):=\arctan\Big(\frac{v_f}{u_f}\Big)+\int_{q_{po}}^{q_f}\frac{\sqrt{q^{2}-c^{2}}}{qc}{\rm d}q\equiv \sigma_{po},
\end{equation}
where $q_f=\sqrt{u_f^2+v_f^2}$, and the constants $q_{po}$ and $\sigma_{po}$ are defined in (\ref{8803}) and (\ref{6602}), respectively.
 Therefore, by (\ref{6603}) we have
$$
\bar{\partial}_{_\Gamma}u_f=-\frac{q_f\sin\alpha_f}{2\cos A_f}\bar{\partial}_{_\Gamma}r_{-}^{f}\quad \mbox{and} \quad
\bar{\partial}_{_\Gamma}v_f=\frac{q_f\cos\alpha_f}{2\cos A_f}\bar{\partial}_{_\Gamma}r_{-}^{f},
$$
where $r_{-}^f:=\arctan(\frac{v_f}{u_f})-\int_{q_{po}}^{q_f}\frac{\sqrt{q^{2}-c^{2}}}{qc}{\rm d}q$.
Inserting these into (\ref{8804}), we get that along the tail shock,
\begin{equation}\label{52402}
\sin\phi\bar{\partial}_{_\Gamma}v +\cos\phi\bar{\partial}_{_\Gamma}u
+\big[(v -v_f)\cos\phi-(u -u_f)\sin\phi\big]\bar{\partial}_{_\Gamma}\phi
+\frac{q_f\sin(\alpha_f-\phi)}{2\cos A_f}\bar{\partial}_{_\Gamma}r_{-}^{f}=0.
\end{equation}
%where $r_{-}^{f}=r_{-}(u_f(x,y), v_f(x,y))$.
%Since $\alpha_f=\phi$ at the origin, we have
%\begin{equation}\label{6605}
%\sin\phi\bar{\partial}_{_\Gamma}v +\cos\phi\bar{\partial}_{_\Gamma}u
%+(v -v_f)\cos\phi\bar{\partial}_{_\Gamma}\phi-(u -u_f)\sin\phi\bar{\partial}_{_\Gamma}\phi
%=0\quad \mbox{at}\quad \mathrm{O}.
%\end{equation}

We decompose $\bar{\partial}_{_\Gamma}$ in two characteristic directions
\begin{equation}\label{72103}
\bar{\partial}_{_\Gamma}=t_{+}\bar{\partial}_{+}+t_{-}\bar{\partial}_{-},
\end{equation}
where
$$
t_{+}=\frac{\sin(\phi-\beta )}{\sin(2A )}\quad \mbox{and}\quad t_{-}=\frac{\sin(\alpha -\phi)}{\sin(2A )}.
$$
Then by (\ref{11a}) and (\ref{72804a}) we have
\begin{equation}
\bar{\partial}_{_\Gamma}u =\kappa (t_{+}\sin\beta \bar{\partial}_{+}c- t_{-}\sin\alpha \bar{\partial}_{-}c)
\quad \mbox{and}\quad  \bar{\partial}_{_\Gamma}v =-\kappa (t_{+}\cos\beta \bar{\partial}_{+}c- t_{-}\cos\alpha \bar{\partial}_{-}c).
\end{equation}
Hence, (\ref{52402}) can be written as
\begin{equation}\label{52501}
\kappa t_{+}\sin(\beta-\phi)\bar{\partial}_{+}c+
\kappa t_{-}\sin(\phi-\alpha)\bar{\partial}_{-}c+(N_f-N)\bar{\partial}_{_\Gamma}\phi=\mathcal{L}_1\bar{\partial}_{_\Gamma}r_{-}^{f},
\end{equation}
where $\mathcal{L}_1=-\frac{q_f\sin(\alpha_f-\phi)}{2\cos A_f}$.

From (\ref{Polar}) we have that along the tail shock,
\begin{equation}\label{6609}
\begin{aligned}
&G_{u}\bar{\partial}_{_\Gamma}u+G_v\bar{\partial}_{_\Gamma}v-(\rho u-\rho_fu_f)\bar{\partial}_{_\Gamma}u_f-
(\rho v-\rho_fv_f)\bar{\partial}_{_\Gamma}v_f\\&\qquad\qquad\qquad -(u-u_f)\bar{\partial}_{_\Gamma}(\rho_fu_f)-(v-v_f)\bar{\partial}_{_\Gamma}(\rho_fv_f)=0,
\end{aligned}
\end{equation}
where
$$
G_u=\Big(\rho-\frac{\rho u^2}{c^2}\Big)(u-u_f)+(\rho u-\rho_fu_f)-v(v-v_f)\frac{\rho u}{c^2}
$$
and
$$
G_v=\Big(\rho-\frac{\rho v^2}{c^2}\Big)(v-v_f)+(\rho v-\rho_fv_f)-u(u-u_f)\frac{\rho v}{c^2}.
$$

\begin{rem}
The vector $(G_u, G_v)$ is a normal vector of the shock polar of the state $(u_f, v_f)$ at the point $(u, v)$.
\end{rem}

Using (\ref{6603}), (\ref{52304}), (\ref{RH}) we have
\begin{equation}\label{6608}
\begin{aligned}
&(\rho u-\rho_fu_f)\bar{\partial}_{_\Gamma}u_f+
(\rho v-\rho_fv_f)\bar{\partial}_{_\Gamma}v_f\\=&
\big[(\rho v-\rho_fv_f)\cos\alpha_f-(\rho u-\rho_fu_f)\sin\alpha_f\big]\frac{q_f\bar{\partial}_{_\Gamma}r_{-}^{f}}{2\cos A_f}
\\=&L_f(\rho-\rho_f)\sin(\phi-\alpha_f)\frac{q_f\bar{\partial}_{_\Gamma}r_{-}^{f}}{2\cos A_f}.
\end{aligned}
\end{equation}

From (\ref{6603}) and (\ref{6606}) we have
\begin{equation}
\bar{\partial}_{_\Gamma}(\rho_fu_f)=\left(\frac{\rho_fu_f}{\sin(2A_f)}-\frac{\rho_fq_f\sin\alpha_f}{2\cos A_f}\right)\bar{\partial}_{_\Gamma}r_{-}^{f}
\end{equation}
and
\begin{equation}
\bar{\partial}_{_\Gamma}(\rho_fv_f)=\left(\frac{\rho_fv_f}{\sin(2A_f)}+\frac{\rho_fq_f\cos\alpha_f}{2\cos A_f}\right)\bar{\partial}_{_\Gamma}r_{-}^{f}.
\end{equation}
Thus, we get
\begin{equation}\label{6607}
\begin{aligned}
&(u-u_f)\bar{\partial}_{_\Gamma}(\rho_fu_f)+(v-v_f)\bar{\partial}_{_\Gamma}(\rho_fv_f)\\
~=~&(N-N_f)\Big(\sin\phi\bar{\partial}_{_\Gamma}(\rho_fu_f)-\cos\phi\bar{\partial}_{_\Gamma}(\rho_fv_f)\Big)\\~=~&
(N-N_f)\left(\frac{\rho_fu_f\sin\phi}{\sin(2A_f)}-\frac{\rho_fq_f\sin\alpha_f\sin\phi}{2\cos A_f}
-\frac{\rho_fv_f\cos\phi}{\sin(2A_f)}-\frac{\rho_fq_f\cos\alpha_f\cos\phi}{2\cos A_f}\right)\bar{\partial}_{_\Gamma}r_{-}^{f}
\\~=~&(N-N_f)\left(\frac{\rho_fq_f\sin(\phi-\sigma_f)}{\sin(2A_f)}-\frac{\rho_fq_f\cos(\alpha_f-\phi)}{2\cos A_f}\right)\bar{\partial}_{_\Gamma}r_{-}^{f}.
\end{aligned}
\end{equation}
%Thus by the first equation of (\ref{52401a}) we get
%\begin{equation}
%(u-u_f)\bar{\partial}_{_\Gamma}(\rho_fu_f)+
%(v-v_f)\bar{\partial}_{_\Gamma}(\rho_fv_f)=0\quad \mbox{at}\quad \mathrm{O}.
%\end{equation}

Combining (\ref{6609}), (\ref{6608}), and (\ref{6607}) we get
$$
\begin{aligned}
\frac{G_u}{\sqrt{G_u^2+G_v^2}}\bar{\partial}_{_\Gamma}u+\frac{G_v}{\sqrt{G_u^2+G_v^2}}\bar{\partial}_{_\Gamma}v
=\mathcal{L}_2\bar{\partial}_{_\Gamma}r_{-}^{f}.
\end{aligned}
$$
where
$$
\mathcal{L}_2=\frac{1}{\sqrt{G_u^2+G_v^2}}\left[\frac{q_f(\rho-\rho_f)L_f\sin(\phi-\alpha_f)}{2\cos A_f}+\rho_fq_f(N-N_f)\left(\frac{\sin(\phi-\sigma_f)}{\sin(2A_f)}-\frac{\cos(\alpha_f-\phi)}{2\cos A_f}\right)\right].
$$

Set $(\cos\chi, \sin\chi)=\Big(\frac{G_u}{\sqrt{G_u^2+G_v^2}}, \frac{G_v}{\sqrt{G_u^2+G_v^2}}\Big)$. Then, $\chi$ can be seen as a function of $(u, v, u_f, v_f)$.  As in (\ref{52501}) we have
\begin{equation}\label{52505}
\kappa t_{+}\sin(\beta-\chi)\bar{\partial}_{+}c+
\kappa t_{-}\sin(\chi-\alpha)\bar{\partial}_{-}c=\mathcal{L}_2\bar{\partial}_{_\Gamma}r_{-}^{f}.
\end{equation}

By assumption {\bf (H1)}, the flow downstream of the tail shock is supersonic, i.e., $N<c$.
Since the tail shock is compressive, we have $N<N_f$.
Then we compute
\begin{equation}\label{52506}
\begin{aligned}
&G_u\sin\phi-G_v\cos\phi\\=~&
\Big(\rho-\frac{\rho u^2}{c^2}\Big)(u-u_f)\sin\phi-v(v-v_f)\frac{\rho u}{c^2}\sin\phi
-\Big(\rho-\frac{\rho v^2}{c^2}\Big)(v-v_f)\cos\phi+u(u-u_f)\frac{\rho v}{c^2}\cos\phi\\=~&
\rho (N-N_f)-\frac{\rho u^2}{c^2}(u-u_f)\sin\phi-(v-v_f)\frac{\rho uv}{c^2}\sin\phi
+\frac{\rho v^2}{c^2}(v-v_f)\cos\phi+(u-u_f)\frac{\rho uv}{c^2}\cos\phi
\\=~&
\rho (N-N_f)-\frac{\rho u^2}{c^2}(N-N_f)\sin^2\phi+\frac{2\rho uv}{c^2}(N-N_f)\sin\phi\cos\phi
+\frac{\rho v^2}{c^2}(N-N_f)\cos^2\phi
\\=~&
\rho (N-N_f)-\frac{\rho}{c^2}(N-N_f)(u\sin\phi-v\cos\phi)^2
\\=~& \rho(N-N_f)\Big(1-\frac{N^2}{c^2}\Big)~<~0.
\end{aligned}
\end{equation}
%since the flow downstream of the tail shock satisfies $N<N_f$ and .
%From (\ref{52506}) and the definition of $\chi$ we have
%\begin{equation}\label{52601}
%\chi>\phi.
%\end{equation}

Combining (\ref{52501}) and (\ref{52505}), we have
\begin{equation}\label{6612}
\mathcal{A}\bar{\partial}_{_\Gamma}\phi=\mathcal{B}(\bar{\partial}_{-}c-\bar{\partial}_{+}c)+\mathcal{C}\bar{\partial}_{_\Gamma}r_{-}^{f},
\end{equation}
where
\begin{equation}\label{902a}
\begin{aligned}
\mathcal{A}&=(N-N_f)[t_{+}\sin(\beta-\chi)+t_{-}\sin(\chi-\alpha)]\\&=\frac{N-N_f}{\sin 2A}
[\sin(\phi-\beta)\sin(\beta-\chi)+\sin(\alpha-\phi)\sin(\chi-\alpha)],
\end{aligned}
\end{equation}
\begin{equation}\label{902b}
\mathcal{B}=t_{-}t_{+}[\sin(\phi-\alpha)\sin(\beta-\chi)-\sin(\chi-\alpha)\sin(\beta-\phi)]=t_{+}t_{-}
\sin(\chi-\phi)\sin(2A),
\end{equation}
and
$$
\mathcal{C}=\mathcal{L}_1[t_{+}\sin(\beta-\chi)+t_{-}\sin(\chi-\alpha)]-\mathcal{L}_2[t_{+}\sin(\beta-\phi)+t_{-}\sin(\phi-\alpha)].
$$

By $N<c$ we have $\sigma<\phi<\alpha$ and accordingly
\begin{equation}\label{52602}
0<\sin(\alpha-\phi)<\sin(\phi-\beta).
\end{equation}
So, if $N<N_f$ and $N<c<q$ at some point on the tail shock then
\begin{equation}\label{6610}
\mathcal{B}>0
\end{equation}
at this point.

By the result on oblique shock-fan-shock composite wave curve in Section 3.2.2, we have
\begin{equation}\label{90901a}
G\big({u}_b^t(\tau_t^f), {v}_b^t(\tau_t^f), \hat{u}(\tau_t^f), \hat{v}(\tau_t^f)\big)=0\quad \mbox{for}\quad \tau_t^f\in (\tau_1^i, \tau_{po}(\tau_0)).
\end{equation}
As in (\ref{52506}), we can use (\ref{90901b}) to deduce
$$
G_{u_f}\big({u}_b^t(\tau_w), {v}_b^t(\tau_w), \hat{u}(\tau_w), \hat{v}(\tau_w)\big)\hat{u}'(\tau_w)+
G_{v_f}\big({u}_b^t(\tau_w), {v}_b^t(\tau_w), \hat{u}(\tau_w), \hat{v}(\tau_w)\big)\hat{v}'(\tau_w)=0.
$$

Combining this with (\ref{90901a}), we have
$$
G_{u}\big({u}_b^t(\tau_w), {v}_b^t(\tau_w), \hat{u}(\tau_w), \hat{v}(\tau_w)\big)(u_b^t)'(\tau_w)+
G_{v}\big({u}_b^t(\tau_w), {v}_b^t(\tau_w), \hat{u}(\tau_w), \hat{v}(\tau_w)\big)(v_b^t)'(\tau_w)=0.
$$
This also indicates that  shock polar and the oblique fan-shock wave curve of the state  $(\hat{u}(\tau_w), \hat{v}(\tau_w))$ is tangent at the point $({u}_b^t(\tau_w), {v}_b^t(\tau_w))$.

Let $\nu$ be
the angle between the tangent vector $-(({u}_b^t)'(\tau_w), ({v}_b^t)'(\tau_w))$ of the fan-shock-fan wave curve $\wideparen{\mathrm{IJ}}$ at the point $({u}_b^t(\tau_w), {v}_b^t(\tau_w))$ and the vector $({u}_b^t(\tau_w), {v}_b^t(\tau_w))$; see Figure \ref{Superbzt}.
Then we have $\chi=\sigma-\nu+\frac{\pi}{2}$.
%We also make the assumption
%\begin{description}
%  \item[(H3)] $0<\chi({u}_b^t(\tau_w), {v}_b^t(\tau_w), \hat{u}(\tau_w), \hat{v}(\tau_w))-\theta_w<\frac{\pi}{2}$.
%\end{description}
%Assumption {\bf (H3)} indicates that the angle between the vector $({u}_b^t(\tau_w), {v}_b^t(\tau_w))$ and the tangent vector of the shock polar of the state  $(\hat{u}(\tau_w), \hat{v}(\tau_w))$ at the point $({u}_b^t(\tau_w), {v}_b^t(\tau_w))$  is an acute angle.
 By assumption {\bf (H2)} we have $0<\nu<\frac{\pi}{2}$. Thus
\begin{equation}\label{52603}
 |\sin(\chi-\alpha)|=|\cos (A+\nu)|<\cos(\nu-A)=-\sin(\beta-\chi)\quad \mbox{at}\quad \mathrm{O};
\end{equation}
see Figure \ref{Superbzt}.
Combining (\ref{52602}) and (\ref{52603}) and recalling $N<N_f$, we have
\begin{equation}\label{6611}
\mathcal{A}>0\quad \mbox{at}\quad \mathrm{O}.
\end{equation}

%Therefore, by (\ref{6612}), (\ref{6610}), and (\ref{6611}) we know that if $w''(0)>0$ then $\bar{\partial}_{_\Gamma}\phi>0$ at the origin, and if $w''(0)<0$ then $\bar{\partial}_{_\Gamma}\phi<0$ at the origin. So, when $w''(0)>0$ the first case will happen near the origin; when $w''(0)<0$ the second case will happen near the origin.

%\begin{rem}
%When $\tau_w=\tau_1^i$, the tail shock of the composite wave degenerate to a $C_{+}$ characteristic curve and $\chi-\theta_w=\alpha-\sigma=A$. Then when $\theta_w$ is close to ${\sigma}_b^t(\tau_1^i)$,
%assumption {\bf (H3)} holds.
%\end{rem}

\subsubsection{\bf Formulation and resolution of the IBVP for $w''(0)>0$}
In order to solve the IBVP (\ref{42501}), (\ref{RBDD}) for $w''(0)>0$, we consider (\ref{42501}) on an angular domain $R(\delta)=\{(x, y)\mid  0\leq x\leq \delta, w(x)\leq y\leq \psi(x)\}$,
where $y=\psi(x)$ ($\psi(0)=0$) is an unknown curve representing the tail shock. We prescribe the following  boundary conditions:

\noindent
on $y=\psi(x)$,
\begin{equation}\label{6621}
(\rho  u -\rho_fu_f)(u -u_f)+  (\rho  v -\rho_fv_f)(v -v_f)=0
\end{equation}
and
\begin{equation}\label{6622}
\frac{{\rm d}\psi(x)}{{\rm d}x}=-\frac{u-u_f}{v-v_f}, \quad \psi(0)=0,
\end{equation}
where $(u_f, v_f)(x, y)=(\tilde{u}, \tilde{v})(\arctan(y/x))$;

\noindent
on $y=w(x)$,
\begin{equation}\label{6623}
v(x,y)=u(x,y)w'(x).
\end{equation}

We are going to
 give an a priori estimate that the tail shock will go into the fan wave region $\Xi_1$ from the origin $\mathrm{O}$.
%Then the front side state of the tail shock is
In order to prove this, we shall use (\ref{6612}) to prove $\bar{\partial}_{_{\Gamma}}\phi>0$ at $\mathrm{O}$.

Since
$r_{-}^{f}$ has a multi-valued singularity at $\mathrm{O}$, we take the coordinate transformation
$$
x=x, \quad y=x\tan\theta.
$$
Then the fan wave region $\Xi_1$ is mapped into the region $\Upsilon_1=\{(x, \theta)\mid x>0,~\hat{\alpha}(\tau_w)\leq\theta\leq\phi_{po}\}$ in the $(x, \theta)$-plane.

In terms of the $x$ and $\theta$ coordinates,  we have
$$
\bar{\partial}_{_\Gamma}=\cos\phi\partial_x+\frac{\sin(\phi-\theta)}{x\sec\theta}\partial_{\theta}.
$$
This implies that the tail shock in the $(x, \theta)$-plane is the integral curve of $$\frac{{\rm d}\theta}{{\rm d}x}=\frac{\sin(\phi-\theta)}{x\sec\theta\cos\phi}, \quad \theta\mid_{x=0}~=~\hat{\alpha}(\tau_w).$$

By (\ref{6612}) we have that along the tail shock,
\begin{equation}\label{73001a}
\mathcal{A}\bar{\partial}_{_\Gamma}(\phi-\theta)=\mathcal{B}(\bar{\partial}_{-}c-\bar{\partial}_{+}c)+
\frac{\mathcal{C}\sin(\phi-\theta)}{x\sec\theta}\tilde{r}_{-}'(\theta)-\mathcal{A}\frac{\sin(\phi-\theta)}{x\sec\theta},
\end{equation}
where $\tilde{r}_{-}(\theta)=\arctan\big(\frac{\tilde{v}(\theta)}{\tilde{u}(\theta)}\big)-\int_{q_{po}}^{\tilde{q}(\theta)}\frac{\sqrt{q^{2}-c^{2}}}{qc}{\rm d}q$ and $\tilde{q}(\theta)=\sqrt{\tilde{u}^2(\theta)+\tilde{v}^2(\theta)}$.

In view of (\ref{7a})--(\ref{8a}), and (\ref{32403}), we have
$$
\bar{\partial}_{0}\sigma=\Big(\frac{\bar{\partial}_{+}+\bar{\partial}_{-}}{2\cos A}\Big)\Big(\frac{\alpha+\beta}{2}\Big)=\frac{\bar{\partial}_{-}c-\bar{\partial}_{+}c}{2\kappa q}.
$$
Thus, by the slip boundary condition (\ref{6623}) we have that on the ramp,
\begin{equation}\label{52502}
\bar{\partial}_{-}c-\bar{\partial}_{+}c=
2\kappa q\bar{\partial}_{0}\sigma=\frac{2\kappa qw''(x)}{(1+[w'(x)]^2)^{\frac{3}{2}}}.
\end{equation}

At the point $(x, \theta)=(0, \hat{\alpha}(\tau_w))$, we have $\tau_f=\tau_w$, $(u_f, v_f)=(\hat{u}(\tau_w), \hat{v}(\tau_w))$, and $\phi=\alpha_f=\hat{\alpha}(\tau_w)$. So, we have $\mathcal{L}_1(0, \hat{\alpha}(\tau_w))=\mathcal{L}_2(0, \hat{\alpha}(\tau_w))=0$.
Therefore, by (\ref{6610}), (\ref{6611}), (\ref{73001a}), (\ref{52502}), and $w''(0)>0$ we can see that if $\bar{\partial}_{-}c-\bar{\partial}_{+}c$ is continuous along the tail shock, then
$$
\bar{\partial}_{_\Gamma}(\phi-\theta)>0\quad \mbox{for}\quad (x, \theta)=(0, \hat{\alpha}(\tau_w)).
$$
This implies an a priori estimate that if the problem (\ref{42501}), (\ref{6621})--(\ref{6623}) admits a local classical solution then when $\delta$ is sufficiently small $\psi(x)>x\tan(\hat{\alpha}(\tau_w))$ for $0<x<\delta$.

\begin{lem}
Assume $w''(0)>0$. Then,
under assumptions {\bf (H1)} and {\bf (H2)},
there is a small $\delta>0$ such that the free boundary problem  (\ref{42501}), (\ref{6621})--(\ref{6623}) admits a classical solution on $R(\delta)$. Moreover, the shock front satisfies $\psi''>0$.
\end{lem}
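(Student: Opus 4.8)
The plan is to treat the statement as a free boundary problem for the hyperbolic system (\ref{42501}) in the downstream region $R(\delta)$, with the tail shock $y=\psi(x)$ as the free boundary and the ramp $y=w(x)$ as a fixed slip boundary, and then to read off the curvature sign from the shock evolution relation (\ref{6612}). First I would exploit that the upstream flow is the explicitly known centered wave $(u_f,v_f)=(\tilde u,\tilde v)(\arctan(y/x))$: together with the compressive and transonic branch selection, the Rankine--Hugoniot relation $G(u,v,u_f,v_f)=0$ of (\ref{6621}) determines the downstream trace $(u,v)$ on the shock as a single-valued function of the shock position. Hence, once a candidate shock is fixed, (\ref{6621}) supplies Cauchy data for the downstream flow on $y=\psi(x)$, the slip condition (\ref{6623}) provides data on $y=w(x)$, and the shock itself is advanced by integrating the ordinary differential equation (\ref{6622}). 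To remove the multivalued singularity of the centered wave at $\mathrm{O}$, I would work near the corner in the coordinates $x=x$, $y=x\tan\theta$, in which the shock becomes a curve $\theta=\theta_s(x)$ with $\theta_s(0)=\hat\alpha(\tau_w)$ and the upstream state depends on $\theta$ alone.

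Local solvability would be obtained by an iteration on the pair (shock curve, downstream field): given $\psi^{(n)}$, solve the characteristic boundary value problem (\ref{42501}), (\ref{6621}), (\ref{6623}) by the classical theory for quasilinear hyperbolic systems with characteristic and slip boundaries (Li--Yu), and then define $\psi^{(n+1)}$ by (\ref{6622}). Assumption {\bf (H1)} guarantees $N<c<q$ downstream, so the system stays strictly hyperbolic up to $\Gamma$, the families $C_\pm$ are transversal to the shock, and the problem is well posed. The essential a priori input is a uniform $C^1$ bound showing the characteristics do not focus in $R(\delta)$: for this I would use the Riemann-invariant decompositions (\ref{cdr}) to control $\bar{\partial}_{+}r_{+}$ and $\bar{\partial}_{-}r_{-}$ along $C_\mp$, together with the boundary relation (\ref{6612}) and the ramp identity (\ref{52502}) to control the traces of the invariants on $\Gamma$ and on the ramp. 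For $\delta$ small the coefficients in (\ref{cdr}) stay bounded, the monotonicity of the invariants precludes caustics, and the iteration contracts to a classical solution on $R(\delta)$.

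It then remains to verify $\psi''>0$. Since $\psi'(x)=\tan\phi$ along the shock and $\tfrac{{\rm d}}{{\rm d}x}=\sec\phi\,\bar{\partial}_{_\Gamma}$ there, one has $\psi''=\sec^{3}\phi\,\bar{\partial}_{_\Gamma}\phi$, so it suffices to show $\bar{\partial}_{_\Gamma}\phi>0$ on $\Gamma$. I would read this from (\ref{6612}): by (\ref{6611}) and continuity $\mathcal{A}>0$ near $\mathrm{O}$, by (\ref{6610}) $\mathcal{B}>0$ wherever $N<N_f$ and $N<c<q$, and at $\mathrm{O}$ one has $\mathcal{L}_1=\mathcal{L}_2=0$, hence $\mathcal{C}=0$, so at the corner the sign of $\bar{\partial}_{_\Gamma}\phi$ is governed by $\mathcal{B}\big(\bar{\partial}_{-}c-\bar{\partial}_{+}c\big)$. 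The sign of $\bar{\partial}_{-}c-\bar{\partial}_{+}c$ is fixed by the interior estimate, since it equals $2\kappa q\,\bar{\partial}_{0}\sigma$ and the slip condition gives (\ref{52502}), which is positive when $w''(0)>0$; propagating this sign from the ramp along $C_\pm$ to $\Gamma$ via (\ref{7a})--(\ref{8a}) keeps $\bar{\partial}_{-}c-\bar{\partial}_{+}c>0$ on $\Gamma$ for small $\delta$. Because $\mathcal{C}$ vanishes at $\mathrm{O}$, the term $\mathcal{C}\,\bar{\partial}_{_\Gamma}r_{-}^{f}$ in (\ref{6612}) is $O(x)$ near the corner and cannot overturn the strictly positive value at $\mathrm{O}$, so $\bar{\partial}_{_\Gamma}\phi>0$ on all of $\Gamma$ once $\delta$ is small, giving $\psi''>0$. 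This is consistent with the a priori estimate already derived from (\ref{73001a}).

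The hard part will be closing the a priori $C^1$ estimate in a way that is compatible with the free boundary. The trace of the Riemann invariant $r_{-}$ on $\Gamma$ is coupled to the still-unknown shock through the term $\mathcal{C}\,\bar{\partial}_{_\Gamma}r_{-}^{f}$, so the interior non-focusing bound and the shock evolution must be estimated simultaneously, without presupposing the shock geometry. Equivalently, maintaining the sign of $\bar{\partial}_{-}c-\bar{\partial}_{+}c$ all the way up to the moving transonic boundary $\Gamma$, rather than merely at the corner $\mathrm{O}$, is the delicate point; it is precisely what forces the smallness of $\delta$ and the systematic use of the structural signs recorded in (\ref{6610})--(\ref{6611}).
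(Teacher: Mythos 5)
Your proposal follows essentially the same route as the paper: the existence part is handled as the ``typical free boundary problem'' of Li--Yu (Chapter 3.2), with assumption {\bf (H2)} supplying the solvability condition, and the curvature sign $\psi''>0$ is read off from (\ref{6612}) exactly as in the paper's preceding a priori analysis --- $\mathcal{A}>0$, $\mathcal{B}>0$, $\mathcal{C}$ vanishing at the corner, and $\bar{\partial}_{-}c-\bar{\partial}_{+}c>0$ there by the slip condition (\ref{52502}) and $w''(0)>0$. The only cosmetic difference is that you describe the sign of $\bar{\partial}_{-}c-\bar{\partial}_{+}c$ on $\Gamma$ as being propagated from the ramp along characteristics, whereas the paper simply evaluates it at the corner $\mathrm{O}$ (where the shock and the ramp meet) and invokes continuity for small $\delta$; both versions lead to the same conclusion.
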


\begin{proof}
Problem (\ref{42501}), (\ref{6621})--(\ref{6623}) is a typical free boundary problem introduced in Li and Yu \cite{Li-Yu} (Chapter 3.2). Assumption {\bf (H2)} can deduce the solvability condition (see (2.28) in Chapter 3 in \cite{Li-Yu}) of this typical free boundary problem.
\end{proof}

For convenience we denote the solution of the typical free boundary problem  (\ref{42501}), (\ref{6621})--(\ref{6623}) by $(u, v)=(u_d, v_d)(x,y)$.
Let
\begin{equation}\label{72408}
(u, v)=\left\{
         \begin{array}{ll}
           (u_0, 0), & \hbox{$x\tan\phi_{po}<y<\delta\tan\phi_{po}$, $0<x<\delta$;} \\[4pt]
           (\tilde{u}, \tilde{v})(\arctan(y/x)), & \hbox{$\psi(x)<y<x\tan\phi_{po}$, $0<x<\delta$;} \\[4pt]
           (u_d, v_d)(x, y), & \hbox{$w(x)<y<\psi(x)$, $0<x<\delta$.}
         \end{array}
       \right.
\end{equation}
Then, when $w''(0)>0$ the $(u, v)$ defined in (\ref{72408}) is a piecewise smooth solution to the problem (\ref{42501}), (\ref{RBDD}).

\subsubsection{\bf Formulation and resolution of the IBVP for $w''(0)<0$}
From (\ref{6612}) and (\ref{52502}) we can see that
when $w''(0)<0$, the tail shock does not enter into the fan wave region $\Xi_1$. The tail shock must be pre-sonic, since  there are only one type of characteristics of the upstream flow can reach the tail shock. In this case the front side states of the tail shock is also a priori unknown.  However, by (\ref{9010a}), (\ref{6601}), and Remark \ref{9010b} we know that the back side states of the tail shock must satisfy $H(u, v)=0$.

%There exist functions $\hat{u}_{f}(u, v)$ and $\hat{v}_{f}(u, v)$ such that for a given back side state $(u, v)\in\wideparen{\mathrm{IJ}}$,

For $\varrho>0$, we let $B_{\varrho}:=\{(u, v)\mid (u-{u}_b^t(\tau_w))^2+ (v-{v}_b^t(\tau_w))^2<\varrho^2\}$.
By the convexity of $\wideparen{\mathrm{PI}}$ we know that when $\varrho$ is sufficiently small,
for any $(u, v)\in \wideparen{\mathrm{IJ}}\cap B_{\varrho}$ there corresponds a unique $(\hat{u}_f, \hat{v}_f)\in\wideparen{\mathrm{PI}}$, such that
$$
u={u}_b^{t}(\ddot{\tau}(\hat{q}_{f}))\quad \mbox{and} \quad v={v}_b^t(\ddot{\tau}(\hat{q}_{f})),
$$
where $\hat{q}_{f}=\sqrt{\hat{u}_{f}^2+\hat{v}_{f}^2}$.
Moreover,  $\hat{u}_{f}({u}_b^t(\tau_w), {v}_b^t(\tau_w))=\hat{u}(\tau_w)$
and  $\hat{v}_{f}({u}_b^t(\tau_w), {v}_b^t(\tau_w))=\hat{v}(\tau_w)$.

In order to find a pre-sonic shock wave solution, we consider (\ref{42501}) on an angular domain $R(\delta)=\{(x, y)\mid  0\leq x\leq \delta, w(x)\leq y\leq \psi(x)\}$,
where $y=\psi(x)$ ($\psi(0)=0$) is an unknown curve representing the pre-sonic shock. We prescribe the following  boundary conditions:

\noindent
on $y=\psi(x)$,
\begin{equation}\label{6641}
H(u, v)=0
\end{equation}
and
\begin{equation}\label{6642}
\frac{{\rm d}\psi(x)}{{\rm d}x}=-\frac{u-\hat{u}_f(u, v)}{v-\hat{v}_f(u, v)}, \quad \psi(0)=0;
\end{equation}
\noindent
on $y=w(x)$,
\begin{equation}\label{6643}
v(x,y)=u(x,y)w'(x).
\end{equation}

\begin{lem}
Assume $w''(0)<0$. Then,
under assumptions {\bf (H1)} and {\bf (H2)}, there is a small $\delta>0$ such that the free boundary problem  (\ref{42501}), (\ref{6641})--(\ref{6643}) admits a classical solution on $R(\delta)$. Moreover, the shock front satisfies $\psi''<0$.
\end{lem}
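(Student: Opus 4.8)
The plan is to treat the free boundary problem (\ref{42501}), (\ref{6641})--(\ref{6643}) in the same way as the case $w''(0)>0$, namely as a typical free boundary problem in the sense of Li and Yu \cite{Li-Yu} (Chapter 3.2), and then to read off the sign of the shock curvature at the corner from the already derived relation (\ref{6612}). First I would check that the downstream region $R(\delta)$ carries a genuinely hyperbolic problem: assumption {\bf (H1)} guarantees that the back side state is supersonic, and since the tail shock is pre-sonic (Proposition \ref{prop4}) we have $N_b<c_b$, so that $\beta_b<\phi<\alpha_b$ at the corner. Hence the shock is a non-characteristic boundary from which exactly one characteristic family leaves into $R(\delta)$, which is precisely the configuration required by the typical free boundary setup. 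The free boundary carries the single scalar condition $H(u,v)=0$ together with the evolution law (\ref{6642}), in which the front side data $(\hat u_f,\hat v_f)(u,v)$ are recovered from the back side state through the convexity of the arc $\wideparen{\mathrm{PI}}$, while the ramp contributes the slip condition (\ref{6643}).

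The key point is the solvability condition, Li and Yu's condition (2.28) in Chapter 3. On the arc $\wideparen{\mathrm{IJ}}$ the gradient $(H_u,H_v)$ is parallel to the normal $(G_u,G_v)$ of the composite wave curve, so the non-degeneracy of the boundary condition is governed by the coefficient $\mathcal{A}$ of (\ref{902a}). Assumption {\bf (H2)} says that the angle $\nu$ between $-(({u}_b^t)'(\tau_w),({v}_b^t)'(\tau_w))$ and $({u}_b^t(\tau_w),{v}_b^t(\tau_w))$ satisfies $0<\nu<\frac{\pi}{2}$; together with $N_b<N_f$ this yields (\ref{52603}) and hence $\mathcal{A}>0$ at $\mathrm{O}$ as in (\ref{6611}). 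This is exactly the solvability condition, so Li and Yu's existence theorem produces a classical solution of (\ref{42501}), (\ref{6641})--(\ref{6643}) on a small $R(\delta)$.

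It remains to show $\psi''<0$, which I would obtain from the curvature identity (\ref{6612}) evaluated at the corner. At $(x,\theta)=(0,\hat\alpha(\tau_w))$ one has $\tau_f=\tau_w$, $(u_f,v_f)=(\hat u(\tau_w),\hat v(\tau_w))$ and $\phi=\alpha_f$, so $\mathcal{L}_1=\mathcal{L}_2=0$ and therefore $\mathcal{C}=0$; thus (\ref{6612}) collapses to $\mathcal{A}\bar{\partial}_{_\Gamma}\phi=\mathcal{B}(\bar{\partial}_{-}c-\bar{\partial}_{+}c)$ at $\mathrm{O}$. Since $\mathcal{A}>0$ and, by $N_b<c_b<q_b$ and $N_b<N_f$, also $\mathcal{B}>0$ from (\ref{6610}), the sign of $\bar{\partial}_{_\Gamma}\phi$ is that of $\bar{\partial}_{-}c-\bar{\partial}_{+}c$; the slip condition expresses $\bar{\partial}_{-}c-\bar{\partial}_{+}c$ in terms of $w''$ through (\ref{52502}), so $w''(0)<0$ forces $\bar{\partial}_{_\Gamma}\phi<0$, i.e. $\psi''<0$ near the origin. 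This simultaneously confirms that the tail shock leaves the fan region $\Xi_1$ downward, which is consistent with the a priori formulation (\ref{6641})--(\ref{6643}). I expect the main obstacle to be the verification of the solvability condition: one must show that the nonlinear recovery $(u,v)\mapsto(\hat u_f,\hat v_f)$ together with the constraint $H(u,v)=0$ is compatible with the single incoming characteristic and non-degenerate, i.e. that $(H_u,H_v)$ is transversal to the $C_{+}$ direction, which is where {\bf (H2)}, through $\mathcal{A}>0$, does the essential work; checking that the a priori unknown upstream flow is genuinely an $r_{+}=\text{const}$ simple wave compatible with $H$ is the accompanying consistency issue.
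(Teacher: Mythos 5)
Your proposal is correct and follows essentially the same route as the paper's (very terse) proof: the problem is cast as a typical free boundary problem in the sense of Li and Yu, with assumption \textbf{(H2)} supplying the solvability condition through $\mathcal{A}>0$ at the corner, and the concavity $\psi''<0$ is read off from (\ref{6612}) combined with (\ref{52502}) after noting $\mathcal{L}_1=\mathcal{L}_2=0$ there. Your added remarks on the non-characteristic nature of the pre-sonic shock boundary and on the consistency of the unknown upstream simple wave with $H(u,v)=0$ are exactly the points the paper disposes of in the surrounding text (via Proposition \ref{prop4}, (\ref{9010a}), (\ref{6601}) and Remark \ref{9010b}) rather than inside the proof itself.
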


\begin{proof}
Problem (\ref{42501}), (\ref{6641})--(\ref{6643}) is also a typical free boundary value problem. Assumption {\bf (H2)} can deduce a solvability condition (see (2.28) in Chapter 3 in \cite{Li-Yu}) of this typical free boundary problem. From (\ref{6612}) %and the front side state of the shock is
%$(\hat{u}_f(u(x,y), v(x,y)), \hat{v}_f(u(x,y), v(x,y)))$ one
we can get $\psi''<0$.
\end{proof}

For convenience we denote the solution of the typical free boundary problem  (\ref{42501}), (\ref{6641})--(\ref{6643}) by $(u, v)=(u_d, v_d)(x,y)$.
Since the tail shock is pre-sonic, the front side state of the shock is
$\big(\hat{u}_f(u_d(x,y), v_d(x,y)), \hat{v}_f(u_d(x,y), v_d(x,y))\big)$. Since $\psi''<0$, for any $(x,y)\in \Xi_2:=\{(x,y)\mid \psi(x)<y<x\tan(\hat{\alpha}(\tau_w)), 0<x<\delta\}$ there exists a unique $\bar{x}\in (0, x)$ such that
$
\frac{y-\psi(\bar{x})}{x-\bar{x}}=\psi'(\bar{x})
$.
We define
$$
(u_1, v_1)(x,y)=\big(\hat{u}_f(u_d(\bar{x},\psi(\bar{x})), v_d(\bar{x},\psi(\bar{x}))), \hat{v}_f(u_d(\bar{x},\psi(\bar{x})), v_d(\bar{x},\psi(\bar{x})))\big), \quad (x,y)\in \Xi_2.
$$
Then $(u, v)=(u_1, v_1)(x,y)$ is actually a simple wave solution of the system (\ref{42501}) on $\Xi_2$.

We define
\begin{equation}\label{72409}
(u, v)=\left\{
         \begin{array}{ll}
           (u_0, 0), & \hbox{$x\tan\phi_{po}<y<\delta\tan\phi_{po}$, $0<x<\delta$;} \\[4pt]
             (\tilde{u}, \tilde{v})(\arctan(y/x)), & \hbox{$x\tan(\hat{\alpha}(\tau_w))<y<x\tan\phi_{po}$, $0<x<\delta$;} \\[4pt]
(u_1, v_1)(x, y), & \hbox{$\psi(x)<y<x\tan(\hat{\alpha}(\tau_w))$, $0<x<\delta$;} \\[4pt]
           (u_d, v_d)(x, y), & \hbox{$w(x)<y<\psi(x)$, $0<x<\delta$.}
         \end{array}
       \right.
\end{equation}
Then, when $w''(0)<0$ the $(u, v)$ defined in (\ref{72409}) is a piecewise smooth solution to the problem (\ref{42501}), (\ref{RBDD}).

This completes the proof of Theorem \ref{Thm39}.
\vskip 4pt

\subsection{Stability of the shock-fan-shock composite wave II}
%In this part we shall use the hodograph transformation method to construct a supersonic flow over a wedge with a curved pre-sonic shock and a curved post-sonic shock. The main purpose is to discuss the structural conditions on these sonic-type shocks.
\subsubsection{\bf Problem and main results}
We consider (\ref{42501}) with the following initial-boundary conditions:
\begin{equation}\label{RBDD1}
\left\{
  \begin{array}{ll}
    (u, v)(0, y)=(\check{u}, \check{v})(y), & \hbox{$y>0$;} \\[6pt]
 \displaystyle \left(\frac{v}{u}\right)(x,y)=w'(x), & \hbox{$y=w(x)$, $x>0$,}
  \end{array}
\right.
\end{equation}
where $(\check{u}, \check{v})(y)\in C^1[0, +\infty)$ and  $w(x)\in C^2[0,+\infty)$ are given, $\check{v}(0)=0$,  and $w(0)=0$. Set $u_0=\check{u}(0)$ and $\theta_w=\arctan(w'(0))$.
We assume $u_0>c_0$, $\tau_2^i<\tau_0\leq \tau_c$, and ${\sigma}_m<\theta_w<{\sigma}_M$.

We purpose to construct a piecewise smooth solution with a curved post-sonic head shock
to the IBVP (\ref{42501}), (\ref{RBDD1}).
To this aim, we also make the following assumption:
\begin{description}
  \item[(H3)] $\frac{{\rm d}}{{\rm d}y}r_{-}(\check{u}(y), \check{v}(y))=0$ and $\frac{{\rm d}}{{\rm d}y}r_{+}(\check{u}(y), \check{v}(y))<0$ for $y\geq 0$.
\end{description}

\begin{rem}
Assumption {\bf (H3)} is a sufficient (not necessary) condition for the appearance of a post-sonic head shock.
\end{rem}

The main results are stated as follows.

\begin{thm}\label{thm23}
Assume $u_0>c_0$, $\tau_2^i<\tau_0\leq \tau_c$, and ${\sigma}_m<\theta_w<{\sigma}_M$.
Assume furthermore that assumptions {\bf (H1)--(H3)} hold. There exists a $\zeta_*$ such that if  $w''(0)>\zeta_*$ then the IBVP (\ref{42501}), (\ref{RBDD1}) admits a local piecewise smooth solution with a curved pre-sonic head shock, a centered wave, and a curved transonic tail shock; see Figure \ref{Figure13} (left).
\end{thm}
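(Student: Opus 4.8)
The plan is to build the solution piece by piece along the supersonic stream — from the data on $\{x=0\}$ toward the ramp $y=w(x)$ — fixing the type of the head shock and of the tail shock issuing from the corner $\mathrm{O}$ by a priori estimates before constructing the flow fields they separate. First I would solve the Cauchy problem for the incoming flow with data $(\check u,\check v)(y)$ by the local theory of Li and Yu. Assumption {\bf (H3)} forces $r_-(\check u,\check v)$ to be constant on $\{x=0\}$, and since $\bar{\partial}_{+}r_-=0$ by (\ref{52801}) the invariant $r_-$ stays constant throughout the domain of determinacy; hence the incoming field is a simple wave carried by $r_+$, with straight $C_-$ characteristics, and $\frac{\mathrm d}{\mathrm dy}r_+<0$ makes it a compression wave. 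This compression produces a shock rooted at $\mathrm{O}$ and supplies the monotonicity of $r_+$ that propagates into the estimates below.

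Second — the core of the argument — I would show that the head shock $\Gamma: y=\psi(x)$, $\psi(0)=0$, with inclination $\phi$, is pre-sonic. Combining the Rankine-Hugoniot relations (\ref{RH}), (\ref{Polar}) with the characteristic transport relations (\ref{11a})--(\ref{72804a}) and the decomposition $\bar{\partial}_{_\Gamma}=t_{+}\bar{\partial}_{+}+t_{-}\bar{\partial}_{-}$ of (\ref{72103}), I would compute $\phi$ and its variation along $\Gamma$ at $\mathrm{O}$ and verify the front sonic identity $N_f=c_f$, that is $\phi=\alpha_f$ on the upstream side. Proposition \ref{prop4} then supplies the admissible back state $\tau_b=\tau_{pr}(\tau_f)$ together with Liu's entropy condition, so $\Gamma$ is genuinely pre-sonic: it is an envelope of the incoming $C_{+}$ characteristics, and not itself a characteristic. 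Consequently the upstream field fails to be $C^1$ up to $\Gamma$, and I would control it through the explicit simple-wave structure together with the characteristic decomposition (\ref{cdr}), the upstream analogue of the weighted characteristic-decomposition estimates used on the downstream side of the post-sonic shock in Section 2.

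Third, behind $\Gamma$ I would insert a centered wave with principal part read off from the self-similar profile (\ref{72403}), (\ref{71901}), and then treat the tail shock. The curvature computation for the tail shock, namely the derivation of (\ref{6612}), is the engine: evaluating (\ref{6612}) at $\mathrm{O}$ with $\mathcal A>0$ and $\mathcal B>0$ from (\ref{6610})--(\ref{6611}), and substituting $\bar{\partial}_{-}c-\bar{\partial}_{+}c$ from the slip condition through (\ref{52502}), I would isolate the threshold $\zeta_*$ of $w''(0)$ at which $\bar{\partial}_{_\Gamma}(\phi-\theta)$ changes sign. For $w''(0)>\zeta_*$ the tail shock runs into the fan region and is transonic, so its front data come from the smooth centered wave, and the downstream flow together with the tail-shock location are produced by the typical free boundary problem (\ref{6621})--(\ref{6623}) of Li and Yu, solvable thanks to {\bf (H2)}. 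Splicing the incoming simple wave, the pre-sonic head shock, the centered wave, the transonic tail shock, and the downstream flow along the ramp then yields the asserted piecewise smooth solution.

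The hard part will be the second step. Unlike Theorem \ref{Thm39}, where the uniform incoming flow freezes the head shock, here the head-shock type is a priori unknown and must be read off from the incoming data alone; proving that {\bf (H3)} forces it to be pre-sonic — rather than post-sonic or transonic — and then obtaining uniform a priori bounds for the non-$C^1$ upstream field right up to the sonic free boundary $\Gamma$, is the crux. Calibrating $\zeta_*$ so that the pre-sonic head estimate and the transonic tail estimate hold simultaneously near $\mathrm{O}$ is the accompanying technical difficulty.
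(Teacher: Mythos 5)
Your second step --- the crux, as you yourself identify --- goes wrong on the type of the head shock, and this error propagates through the rest of the construction. You build the head shock as \emph{pre}-sonic (sonic on the upstream side, $N_f=c_f$, invoking Proposition \ref{prop4}). The theorem's wording does say ``pre-sonic head shock,'' but this conflicts with the paper's own setup (Section 3.4.1 announces a \emph{post}-sonic head shock, as does the remark following {\bf (H3)}), and the construction that actually works is post-sonic: sonic on the \emph{downstream} side, $N_b=c_b$, with $\tau_b=\tau_{po}(\tau_f)$ via Proposition \ref{prop3}, exactly as in (\ref{72503}). Your version cannot be repaired: Proposition \ref{prop4} requires the front-side specific volume to lie in $(\tau_1^i,\tau_2^i)$, whereas by (\ref{72104}) the incoming flow near the corner has $\tau\in(\tau_2^i,\tau_c)$ --- precisely the regime of Proposition \ref{prop3} and outside that of Proposition \ref{prop4} --- so no admissible pre-sonic shock with the incoming flow as its front state exists. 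Consequently your picture of the singularity is also backwards: since $N_f>c_f$, the shock is steeper than both families of upstream characteristics, the Cauchy solution is $C^1$ up to $\Gamma_h$, and it is the \emph{downstream} flow that fails to be $C^1$ (the paper later records $\bar{\partial}_{-}\tau_2=-\infty$ on $\Gamma_h$), because $\phi=\alpha_b$ makes the shock an envelope of the downstream $C_+$ characteristics. Controlling the upstream simple wave with (\ref{cdr}) therefore addresses a non-problem and leaves the real one untouched.

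Because of that downstream singularity, your third step is also missing the essential machinery. You propose to place a centered wave directly behind the head shock, but with a non-uniform incoming flow the back-side states vary along the curved shock, so between the head shock and the centered wave there is a genuinely non-centered region ($\Sigma_h$ in the paper). The paper's route is: establish the structural conditions $\bar{\partial}_{_\Gamma}r_{+}^{b}<0$ and $\bar{\partial}_{_\Gamma}r_{-}^{b}<0$ of (\ref{72204}) (the analogue of Lemma \ref{61302}), pass to the hodograph plane, solve the linear problem (\ref{HT}) with the data (\ref{61706}), (\ref{62501}) on $\Delta_h\cup\Delta_2$, and prove injectivity of the map $(x_2,y_2)$ back to the physical plane --- this is how the singular downstream flow is constructed, and nothing in your outline substitutes for it. Finally, your derivation of the threshold $\zeta_*$ uses (\ref{6612}), but that identity was obtained under (\ref{6601}), i.e.\ $r_{+}^{f}\equiv\mathrm{const}$ along the tail shock, which holds in Theorem \ref{Thm39} and fails here: the flow upstream of the tail shock is the hodograph-constructed field, along which $\bar{\partial}_{_\Gamma}r_{+}^{f}\neq 0$. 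The correct identity is (\ref{73006}), carrying the extra terms $\mathcal{C}_2\bar{\partial}_{_\Gamma}r_{+}^{f}$ and $-\mathcal{A}\cos\phi\,\partial_x\alpha_f$; it is exactly these terms that make $\zeta_*$ a nonzero constant depending on $\check{u}(0)$, $\check{v}(0)$, $\check{u}'(0)$, whereas your computation would reproduce the threshold $0$ of Theorem \ref{Thm39}.
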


\begin{thm}\label{thm25}
Assume $u_0>c_0$, $\tau_2^i<\tau_0\leq \tau_c$, and ${\sigma}_m<\theta_w<{\sigma}_M$.
Assume furthermore that assumptions {\bf (H1)--(H3)} hold. Then there exists a ramp $y=w(x)$ such that the problem (\ref{42501}), (\ref{RBDD1}) admits a local piecewise smooth solution with a curved pre-sonic head shock, a centered wave, and a curved post-sonic tail shock; see Figure \ref{Figure13} (right).
\end{thm}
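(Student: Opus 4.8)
The plan is to build the solution as three matched pieces --- the post-sonic head shock together with its centered wave, and the downstream region bounded by the post-sonic tail shock and the ramp that we are free to construct --- and to resolve the tail shock by a hodograph transformation. The first two pieces are produced essentially as in the self-similar solution (\ref{71901}) and in the proof of Theorem \ref{Thm39}: since $\tau_2^i<\tau_0\le\tau_c$, Proposition \ref{prop3} governs the shock issuing from the corner whose front state is the incoming flow, and assumption {\bf (H3)} ($r_-\equiv\mathrm{const}$ on $x=0$) forces the incoming flow to be a simple wave. Because a supersonic flow adjacent to a straight characteristic must be a simple wave (cf.\ (\ref{6601})), the flow immediately upstream of the tail shock is again a simple wave with $r_+\equiv\mathrm{const}$, so its front states lie on the arc $\wideparen{\mathrm{PI}}$. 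On the tail shock I would then impose the post-sonic branch of the Rankine--Hugoniot relations, i.e.\ $N_b=c_b$, equivalently $\phi=\alpha_b$, so that the shock coincides with the $C_+$ direction of the flow on its back side.

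The difficulty is that $\phi=\alpha_b$ makes the tail shock an envelope of the $C_+$ characteristics of the downstream flow rather than a characteristic; hence $\bar{\partial}_{+}\rho$ blows up on approach to it and the downstream flow is not $C^1$ up to the shock. To remove this singularity I would pass to the hodograph plane $(r_+,r_-)$. Since the downstream flow is isentropic and irrotational, (\ref{52801}) transforms into the linear system (\ref{HT}) for $x(r_+,r_-),y(r_+,r_-)$, with the linear characteristic decomposition (\ref{cdh}). The crucial point is that, by (\ref{53002}), the blow-up of $\bar{\partial}_{+}\rho$ at the envelope is precisely the statement $\hat{\partial}_{+}x=-\partial_{r_+}x\to0$; thus the singular free-boundary condition degenerates into the \emph{regular} condition $\partial_{r_+}x=0$ along the hodograph image of the tail shock.

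I would then solve a boundary value problem for (\ref{cdh}) in the hodograph domain: the Rankine--Hugoniot relations together with the post-sonic condition fix the shock image as a non-characteristic curve carrying the regular data $\partial_{r_+}x=0$, while the upstream simple wave $r_+\equiv r_+^{po}$ supplies data along a characteristic side, yielding a Goursat/mixed problem for a linear hyperbolic system that is locally solvable by standard estimates. The monotonicity built into {\bf (H3)} (and the supersonicity of {\bf (H1)}) is meant to keep the Jacobian $j=\partial(r_+,r_-)/\partial(x,y)$ finite and nonzero in the interior while vanishing only on the shock image. Transforming back produces the physical downstream flow; the ramp is recovered as the streamline forming the lower boundary of this region, and since the ramp is not prescribed this recovered curve is taken to be $w$, with $w\in C^2$ inherited from the $C^1$ regularity of $(x,y)$ in the hodograph plane. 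Matching of $\phi$ and of the state across the tail shock at the corner, together with the curvature formula (\ref{6612}) to identify the post-sonic character, then assembles the three pieces into the asserted piecewise smooth solution.

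The hardest part will be the solvability and back-transformation near the degenerate boundary: one must show that (\ref{cdh}) with the Neumann-type condition $\partial_{r_+}x=0$ on the non-characteristic shock image is well posed, and that the hodograph image is genuinely two-dimensional --- that it does not pinch at the corner, where in the self-similar picture the downstream state is nearly constant and the image tends to collapse --- so that the inverse map exists and yields a bona fide post-sonic shock rather than a spurious singularity. Controlling $j$ uniformly up to the corner is the crux, and it is exactly here that the freedom in choosing the ramp, exploited through {\bf (H3)} and (\ref{6612}), must be used to keep the construction non-degenerate.
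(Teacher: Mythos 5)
Your proposal correctly identifies the two load-bearing ideas of this theorem --- that the hodograph transformation converts the envelope singularity at a sonic shock into a degenerate boundary condition ``$\partial x=0$'' for the linear system (\ref{HT}), and that the ramp is not prescribed but recovered a posteriori as a streamline --- but the construction is assembled around the wrong shock type and the wrong flow region. Despite the wording of the statement (whose adjectives are swapped relative to the abstract and to the subsection ``Existence of a pre-sonic tail shock''), the head shock here is post-sonic and the tail shock is \emph{pre}-sonic: it satisfies $N_f=c_f$, hence $\phi=\alpha_f$ (Proposition \ref{prop4}), so the singular derivative $\bar{\partial}_{-}\tau=-\infty$ lives in the flow \emph{upstream} of the tail shock, i.e.\ in the region between the two shocks, while the flow downstream of the tail shock is regular and is obtained by an ordinary Cauchy problem (\ref{72306}). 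Your condition $N_b=c_b$ on the tail shock places the degeneracy on the downstream side and is incompatible with the $SFS$ geometry, in which the fan characteristics focus onto the tail shock from upstream. Relatedly, since {\bf (H3)} makes the incoming flow genuinely non-uniform, the head shock is curved and both $r_{+}^{b}$ and $r_{-}^{b}$ vary strictly along it (see (\ref{72204})); the intermediate region is therefore \emph{not} a simple wave, the front states of the tail shock do not lie on $\wideparen{\mathrm{PI}}$, and this region must itself be constructed in the hodograph plane with the degenerate datum $\hat{\partial}_{-}x=0$ on the image of the post-sonic head shock, as in the subsections on the head shock and the flow downstream of it. That construction is the bulk of the proof and is absent from your outline.

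The second structural gap is that the paper never solves a free boundary problem for the pre-sonic tail shock at all: it \emph{chooses} the hodograph image of the tail shock as an essentially arbitrary curve $r_{-}=f_t(r_{+})$ with $f_t'<0$, solves the linear Goursat problem (\ref{cdh}), (\ref{72303})--(\ref{72304}) for $\hat{\partial}_{\pm}x$ in the resulting angular domain, checks $\hat{\partial}_{\pm}x>0$ to get injectivity of $(x_2,y_2)$, reads off the physical shock $\wideparen{\mathrm{OD}}$ and its front and back states via $\tau_b=\tau_{pr}(\tau_f)$, and only afterwards defines $w$ by integrating $w'=v_3/u_3$ along the downstream flow produced by the Cauchy problem (\ref{72306}). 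This inverse ordering is precisely how the freedom ``there exists a ramp'' is spent. Trying instead to determine a sonic tail shock from the Rankine--Hugoniot conditions imposed as boundary conditions (your step) reintroduces the degenerate, characteristic-tangent free boundary that the inverse construction is designed to avoid, and the typical free boundary framework used for the transonic case in Theorem \ref{Thm39} does not apply to such a front. A smaller slip: at a shock tangent to the $C_{+}$ direction of a flow region, the computation at (\ref{61711}) gives $\hat{\partial}_{-}x=0$ and hence, by (\ref{53002}), blow-up of $\bar{\partial}_{-}\rho$, not $\hat{\partial}_{+}x\to 0$ and blow-up of $\bar{\partial}_{+}\rho$.
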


%=============
\begin{figure}[htbp]
\begin{center}
 \quad \includegraphics[scale=0.67]{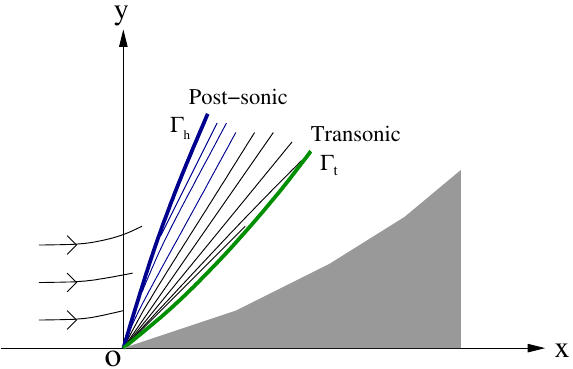}  \quad\quad\qquad\includegraphics[scale=0.575]{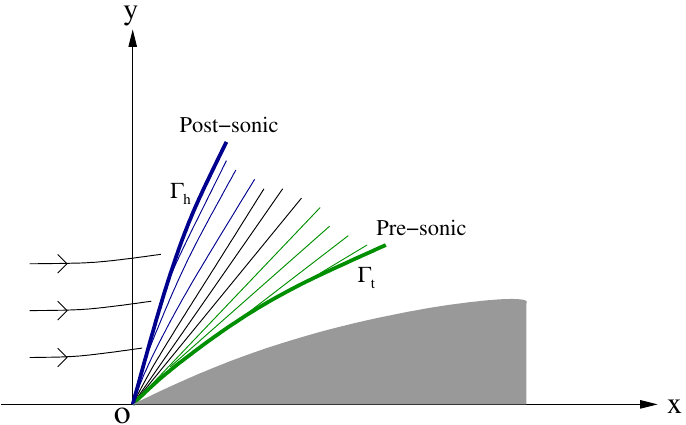}
\caption{\footnotesize Wave structure of the solution to the IBVP (\ref{42501}), (\ref{RBDD1}).}
\label{Figure13}
\end{center}
\end{figure}
%==========

\subsubsection{\bf Existence of a post-sonic head shock}
We first consider (\ref{42501}) with the data
\begin{equation}\label{72101}
 (u, v)(0, y)=(\check{u}, \check{v})(y), \quad y\geq 0.
\end{equation}
\begin{lem}
There exists a small $\delta>0$ such that the Cauchy problem (\ref{42501}), (\ref{72101}) admits a classical solution on the domain  $\Sigma_1(\delta)=\{(x, y)\mid y_2(x)\leq y\leq y_2(\delta), 0\leq x\leq\delta\}$, where $y=y_2(x)$ is a $C_{+}$ characteristic curve issued from the origin.
Moreover, under assumption {\bf (H3)} the solution satisfies
\begin{equation}\label{72104}
\tau_2^i<\tau<\tau_c, \quad\bar{\partial}_{+}r_{+}<0, \quad \mbox{and}\quad \bar{\partial}_{-}r_{-}=0.
\end{equation}
\end{lem}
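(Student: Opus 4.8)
The plan is to read this as a Cauchy problem for the strictly hyperbolic system (\ref{42501}) and then extract the three qualitative statements from the simple-wave structure that assumption {\bf (H3)} forces. First I would settle local existence. At the corner the data is supersonic, $u_0>c_0$, so $q>c$ and the speeds $\lambda_{\pm}$ are real and distinct there; by continuity this persists on $\{x=0,\ 0\le y\le y_2(\delta)\}$ for $\delta$ small. Since $(\check u,\check v)\in C^1$, the local existence theory for first-order quasilinear hyperbolic systems (Li and Yu \cite{Li-Yu}, Chap.\ 2) produces a $C^1$ solution on a neighborhood of the initial segment, hence on the characteristic triangle $\Sigma_1(\delta)$ whose lower boundary is the $C_{+}$ characteristic $y=y_2(x)$ from the origin. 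For $\delta$ small the characteristic fields stay near their corner values $\alpha_0=A_0$, $\beta_0=-A_0$, so every $C_{+}$ characteristic through a point of $\Sigma_1(\delta)$, traced backward, reaches the initial line $\{x=0,\ y\ge 0\}$ while staying above $y_2$.

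Next I would prove $\bar{\partial}_{-}r_{-}=0$ and the range bound. By (\ref{52801}) we have $\bar{\partial}_{+}r_{-}=0$, so $r_{-}$ is constant along each $C_{+}$ characteristic; since {\bf (H3)} makes $r_{-}(\check u,\check v)$ constant on the initial line and every $C_{+}$ characteristic reaches that line, $r_{-}$ is globally constant on $\Sigma_1(\delta)$, whence $\bar{\partial}_{-}r_{-}=0$. This makes the flow a simple wave: $\sigma,q,\tau,A$ are functions of $r_{+}$ alone, the $C_{-}$ characteristics are straight, and, because $r_{+}$ is constant along each $C_{-}$ characteristic by (\ref{52801}), the speed $q$ and hence $\tau=\ddot\tau(q)$ are \emph{constant along every $C_{-}$ characteristic}. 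Each such characteristic meets $\{x=0\}$ at a foot $y\in[0,y^\ast]$ with $y^\ast\to 0$ as $\delta\to 0$. On the data, (\ref{5802}) with $h'=\tau p'<0$ gives $\ddot\tau$ increasing in $q$, while (\ref{1201}) gives $\partial q/\partial r_{+}>0$; combined with the monotonicity $\tfrac{\mathrm d}{\mathrm d y}r_{+}(\check u,\check v)<0$ from {\bf (H3)}, the foot value $\tau$ is largest at the corner. Hence $\tau\le\tau_0\le\tau_c$ (strict away from $\mathrm O$), and for $\delta$ small $\tau>\tau_2^i$ by continuity with $\tau_0>\tau_2^i$, giving $\tau_2^i<\tau<\tau_c$ on $\Sigma_1(\delta)$.

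Finally I would establish $\bar{\partial}_{+}r_{+}<0$ by a Riccati argument along the $C_{-}$ characteristics. On the initial line, using $\bar{\partial}_{-}r_{+}=0$ to write $\partial_x r_{+}=-\tan\beta\,\partial_y r_{+}$, one computes
\begin{equation*}
\bar{\partial}_{+}r_{+}=(\sin\alpha-\cos\alpha\tan\beta)\,\partial_y r_{+}=\frac{\sin 2A}{\cos\beta}\,\frac{\mathrm d}{\mathrm d y}r_{+}(\check u,\check v)<0,
\end{equation*}
since $\sin 2A>0$, $\cos\beta>0$ near the corner, and $\tfrac{\mathrm d}{\mathrm d y}r_{+}(\check u,\check v)<0$. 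In the decomposition (\ref{cdr}) the term $\cos 2A\,\bar{\partial}_{-}r_{-}$ vanishes by the previous step, so with $W:=\bar{\partial}_{+}r_{+}$ one has $\bar{\partial}_{-}W=-\mathcal{F}\,W^2$ along each $C_{-}$ characteristic, where $\mathcal{F}=-\frac{q^2\tau p''(\tau)}{4(q^2-c^2)p'(\tau)\sin 2A}$. Because $\tau\in(\tau_2^i,\tau_c)$ forces $p''(\tau)>0$ by (\ref{81601}), while $q^2-c^2>0$, $p'(\tau)<0$, $\sin 2A>0$, we get $\mathcal{F}>0$ all along the (straight) characteristic. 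Since $W\equiv 0$ is an equilibrium of this Riccati equation, uniqueness prevents $W$ from ever reaching $0$ once it is negative at the foot; the only alternative to $W<0$ is blow-up to $-\infty$, which the $C^1$ solution excludes for $\delta$ small. Thus $\bar{\partial}_{+}r_{+}<0$ on $\Sigma_1(\delta)$.

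The main obstacle is this last step: the sign of $\bar{\partial}_{+}r_{+}$ is governed by a Riccati balance whose direction depends on $\mathcal{F}>0$, and that positivity rests on the a priori range $\tau>\tau_2^i$; the estimates must therefore be ordered so that the range bound (needing only the simple-wave structure, the constancy of $\tau$ along $C_{-}$, and the data monotonicity) is secured first and then fed into the Riccati sign analysis. A secondary delicacy is the borderline case $\tau_0=\tau_c$, where $\tau=\tau_c$ only at the single corner point, so the strict bound $\tau<\tau_c$ holds on $\Sigma_1(\delta)\setminus\{\mathrm O\}$.
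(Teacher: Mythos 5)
Your proposal is correct and takes essentially the same route as the paper: local existence from Li--Yu, the initial-line computation $(\bar{\partial}_{+}r_{+})(0,y)=\frac{\sin 2A}{\cos\beta}\,\frac{\mathrm d}{\mathrm d y}r_{+}(\check{u},\check{v})<0$ together with $(\bar{\partial}_{-}r_{-})(0,y)=0$, and propagation of these signs via the characteristic decomposition (\ref{cdr}); the paper's proof is a three-line compression of exactly this argument, and your Riccati analysis along the $C_{-}$ characteristics is the content hidden in its phrase ``then by (\ref{cdr}) one has (\ref{72104}).'' Your remark on the borderline case $\tau_0=\tau_c$, where the strict bound $\tau<\tau_c$ can only hold away from the corner, is a legitimate point that the paper's ``when $\delta$ is sufficiently small'' glosses over.
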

\begin{proof}
The local existence follows from Li and Yu \cite{Li-Yu}.
When $\delta$ is sufficiently small, we have $\tau_2^i<\tau<\tau_c$ in $\Sigma_1(\delta)$, and
$$
(\bar{\partial}_{-}r_{-})(0, y)=0\quad\mbox{and}\quad
(\bar{\partial}_{+}r_{+})(0, y)=\frac{\sin (2A(0,y))}{\cos(\beta(0,y))}\cdot \frac{{\rm d}}{{\rm d}y}r_{+}(\check{u}(y), \check{v}(y))<0\quad \mbox{for}\quad 0\leq y\leq \delta.
$$
Then by (\ref{cdr}) one has (\ref{72104}).
This completes the proof.
\end{proof}

For convenience we denote by $(u, v)=(u_1, v_1)(x,y)$ the local solution of the Cauchy problem (\ref{42501}), (\ref{72101}).

We look for a post-sonic head shock $\Gamma_h$: $y=\psi_h(x)$, where $\psi_h(0)=0$.
%We denote by: $y=\psi_h(x)$ the head shock of the composite wave.
Let $\phi$ be the inclination angle of the head shock and $\bar{\partial}_{_\Gamma}=\cos\phi\partial_x+\sin\phi\partial_y$. % and $\bar{\partial}_{\Gamma}=\cos\phi\partial_x+\sin\phi\partial_y$.
We still use subscripts `$f$' and `$b$' to denote the states on the front side and the back side of the shock.
Then, along $\Gamma_h$ there hold the following relations:
\begin{equation}\label{72503}
\begin{aligned}
&\qquad (u_f, v_f, \tau_f)=(u_1, v_1, \tau_1),\quad \tau_b=\tau_{po}(\tau_f), \quad N_f=\tau_f\sqrt{-\frac{h(\tau_b)-h(\tau_f)}{\tau_b^2-\tau_f^2}},\\& \phi=\arctan\Big(\frac{v_f}{u_f}\Big)+\arcsin\Big(\frac{N_f}{q_f}\Big)=\alpha_b,\quad L_f=L_b=\sqrt{u_f^2+v_f^2-N_f^2}, \quad N_b=\frac{\tau_bN_f}{\tau_f},\\& \qquad u_b=N_b\sin\phi+L_b\cos\phi, \quad v_b=L_b\sin\phi-N_b\cos\phi, \quad \frac{{\rm d}\psi_h}{{\rm d}x}=\tan\phi.
\end{aligned}
\end{equation}
Actually, by (\ref{72503}) we can obtain the post-sonic shock $\Gamma_h$ and the states on both sides of it.

By (\ref{72103}) and (\ref{72104}) we have
\begin{equation}\label{72105}
\bar{\partial}_{_\Gamma}r_{-}^{f}=0\quad \mbox{and}\quad \bar{\partial}_{_\Gamma}r_{+}^{f}<0\quad \mbox{on}\quad \Gamma_{h},
\end{equation}
where $r_{\pm}^{f}=r_{\pm}(u_f, v_f)$.

\begin{lem}
(Structural conditions)
Under assumption {\bf (H3)}, there hold
\begin{equation}\label{72204}
\bar{\partial}_{_\Gamma}r_{+}^{b}<0\quad \mbox{and}\quad \bar{\partial}_{_\Gamma}r_{-}^{b}<0\quad \mbox{on}\quad \Gamma_h,
\end{equation}
where $r_{\pm}^{b}=r_{\pm}(u_b, v_b)$.
\end{lem}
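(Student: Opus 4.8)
The plan is to convert the two inequalities in (\ref{72204}) into sign conditions on the variation of the back-state speed and flow angle along $\Gamma_h$, and then to read those signs off from the front data through the post-sonic shock relations. First I would use (\ref{62401}) together with $u_b=q_b\cos\sigma_b$, $v_b=q_b\sin\sigma_b$ and $\alpha_b=\sigma_b+A_b$, $\beta_b=\sigma_b-A_b$ to obtain
\begin{equation*}
c_b\bar{\partial}_{_\Gamma}r_+^{b}=\cos A_b\,\bar{\partial}_{_\Gamma}q_b+q_b\sin A_b\,\bar{\partial}_{_\Gamma}\sigma_b,\qquad
c_b\bar{\partial}_{_\Gamma}r_-^{b}=-\cos A_b\,\bar{\partial}_{_\Gamma}q_b+q_b\sin A_b\,\bar{\partial}_{_\Gamma}\sigma_b,
\end{equation*}
so that (\ref{72204}) becomes a pair of sign statements for these combinations.

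Next I would express $\bar{\partial}_{_\Gamma}q_b$ and $\bar{\partial}_{_\Gamma}\sigma_b$ through the shock. Differentiating the Bernoulli law along $\Gamma_h$ gives $\bar{\partial}_{_\Gamma}q_b=-\tau_b q_b\sin^2A_b\,\bar{\partial}_{_\Gamma}\rho_b$, and, exactly as in the derivation of (\ref{4250103})--(\ref{42804a}) but with the entropy terms absent for the potential flow (\ref{42501}), one finds
\begin{equation*}
\bar{\partial}_{_\Gamma}\sigma_b=\Big(\bar{\partial}_{_\Gamma}\phi-\frac{\bar{\partial}_{_\Gamma}m}{\rho_bL_b}\Big)+\frac{N_b(q_b^2-c_b^2)}{\rho_bL_bq_b^2}\,\bar{\partial}_{_\Gamma}\rho_b.
\end{equation*}
Here the post-sonic identities $N_b=c_b=q_b\sin A_b$ and $L_b=q_b\cos A_b$ are decisive: they force the $\bar{\partial}_{_\Gamma}\rho_b$-contributions to cancel in $r_+^{b}$ and to double in $r_-^{b}$, yielding the clean forms
\begin{equation*}
c_b\bar{\partial}_{_\Gamma}r_+^{b}=q_b\sin A_b\,\mathcal{X},\qquad
c_b\bar{\partial}_{_\Gamma}r_-^{b}=q_b\sin A_b\,\mathcal{X}-2\rho_bq_b\sin^2A_b\cos A_b\,\tau_{po}'(\tau_f)\,\bar{\partial}_{_\Gamma}\tau_f,
\end{equation*}
where $\mathcal{X}:=\bar{\partial}_{_\Gamma}\phi-\rho_b^{-1}L_b^{-1}\bar{\partial}_{_\Gamma}m$. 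Since $\tau_{po}'<0$ by Proposition \ref{prop3} and $\bar{\partial}_{_\Gamma}\tau_f<0$ (a consequence of (\ref{72105}) and (\ref{6606})), the last term is strictly negative; hence it suffices to establish $\mathcal{X}<0$, which then delivers both inequalities.

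The core of the argument, and the step I expect to be the main obstacle, is therefore the sign of $\mathcal{X}$. I would compute it from the front side, where (\ref{72104})--(\ref{72105}) say the upstream flow is a simple wave with $r_-^{f}\equiv\mathrm{const}$, so that by (\ref{1201}) and (\ref{6606}) along $\Gamma_h$ one has $\bar{\partial}_{_\Gamma}\sigma_f=\tfrac12\bar{\partial}_{_\Gamma}r_+^{f}$, $\bar{\partial}_{_\Gamma}q_f=\tfrac12 q_f\tan A_f\,\bar{\partial}_{_\Gamma}r_+^{f}$ and $\bar{\partial}_{_\Gamma}\tau_f=\tfrac{\tau_f}{\sin 2A_f}\bar{\partial}_{_\Gamma}r_+^{f}$. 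Differentiating $\phi=\sigma_f+\arcsin(N_f/q_f)$ and $m^2=-p'(\tau_b)$ directly along $\Gamma_h$ (the latter via the identity $p''(\tau_b)\tau_{po}'(\tau_f)(\tau_f^2-\tau_b^2)=2\tau_f(p'(\tau_f)-p'(\tau_b))$ from the proof of Proposition \ref{prop3}), using $L_f=L_b$ from (\ref{RH}) and inserting $N_f=m\tau_f$, $c_f^2=-\tau_f^2p'(\tau_f)$, $m^2=-p'(\tau_b)$, reduces $\mathcal{X}$ to $\mathcal{X}=\mathcal{K}\,\bar{\partial}_{_\Gamma}r_+^{f}$ with
\begin{equation*}
\mathcal{K}=\frac12+\frac{1}{2L_f\sin A_f\cos A_f}\Big[N_f\cos^2A_f-\frac{\tau_f(N_f^2-c_f^2)}{N_f(\tau_f+\tau_b)}\Big].
\end{equation*}

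The bracket appears sign-indefinite, and that is exactly where the work lies: multiplying it by $N_f(\tau_f+\tau_b)>0$, the numerator collapses, after cancellation, to $\tau_f c_f^2 L_f^2/q_f^2+\tau_b N_f^2\cos^2A_f$, which is manifestly positive. Hence $\mathcal{K}>\tfrac12>0$, and since $\bar{\partial}_{_\Gamma}r_+^{f}<0$ by (\ref{72105}) we conclude $\mathcal{X}<0$. Combined with the second paragraph this yields both $\bar{\partial}_{_\Gamma}r_+^{b}<0$ and $\bar{\partial}_{_\Gamma}r_-^{b}<0$ on $\Gamma_h$. The delicate part throughout is the careful bookkeeping of the post-sonic relations that produces the two cancellations; once those are in hand, the positivity of $\mathcal{K}$ is the only genuinely nontrivial point, and it turns out to be an algebraic identity rather than a quantitative estimate, so no smallness of $\delta$ is needed.
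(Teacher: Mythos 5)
Your proposal is correct and follows essentially the same route as the paper's proof: the same decomposition of $c_b\bar{\partial}_{_\Gamma}r_{\pm}^{b}$ into $\bar{\partial}_{_\Gamma}q_b$ and $\bar{\partial}_{_\Gamma}\sigma_b$, the same cancellation (for $r_+^b$) and doubling (for $r_-^b$) of the $\bar{\partial}_{_\Gamma}\rho_b$ terms forced by $N_b=c_b$, and the same reduction to the sign of $\bar{\partial}_{_\Gamma}\phi-\bar{\partial}_{_\Gamma}m/(\rho_bL_b)$ evaluated from the front-side simple-wave data. The only difference is presentational: you collect the front-side contribution into a single coefficient $\mathcal{K}$ and verify $\mathcal{K}>\tfrac12$ by one exact algebraic identity, whereas the paper reaches the same conclusion through a short chain of inequalities (discarding the harmless factor $(\tau_f+\tau_b)\rho_f>1$); the two computations agree term by term.
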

\begin{proof}
From (\ref{U}) and (\ref{62604}) we have
$$
m=\rho  N =\rho  q  (\cos\sigma \sin\phi-\sin\sigma \cos\phi).
$$
Thus, we have
\begin{equation}\label{4250103a}
\bar{\partial}_{_\Gamma} m=\frac{N }{q }\bar{\partial}_{_{\Gamma}}(\rho q )-\rho L \bar{\partial}_{_\Gamma}\sigma +\rho L \bar{\partial}_{_\Gamma} \phi\quad \mbox{on}\quad {\Gamma_h}.
\end{equation}

From the Bernoulli law (\ref{5801}) we have
\begin{equation}\label{62010a}
q \bar{\partial}_{_\Gamma} q +\tau p'(\tau)\bar{\partial}_{_\Gamma} \tau =0\quad \mbox{on}\quad {\Gamma_h}.
\end{equation}
This yields
$$
\bar{\partial}_{_\Gamma} (\rho q )=\frac{1}{q }(q ^2+\tau ^2p'(\tau))\bar{\partial}_{_\Gamma}\rho \quad \mbox{on}\quad {\Gamma_h}.
$$
Inserting this into (\ref{4250103a}), we known that along ${\Gamma_h}$,
\begin{equation}\label{250102a}
\bar{\partial}_{_\Gamma} \phi=\frac{\bar{\partial}_{_\Gamma} m}{\rho L }-\frac{N }{\rho L q ^2}(q ^2+\tau ^2p'(\tau))\bar{\partial}_{_\Gamma}\rho +\bar{\partial}_{_\Gamma}\sigma
\end{equation}
and
\begin{equation}\label{42804b}
\bar{\partial}_{_\Gamma} \sigma =\bar{\partial}_{_\Gamma} \phi-\frac{\bar{\partial}_{_\Gamma} m}{\rho L }+\frac{N }{\rho L q ^2}(q ^2+\tau ^2p'(\tau))\bar{\partial}_{_\Gamma}\rho.
\end{equation}
We also have
\begin{equation}\label{50701b}
\bar{\partial}_{_\Gamma}u=\bar{\partial}_{_\Gamma}(q\cos\sigma)=\cos\sigma\bar{\partial}_{_\Gamma}q
-q\sin\sigma\bar{\partial}_{_\Gamma}\sigma,
\end{equation}
\begin{equation}\label{50702b}
\bar{\partial}_{_\Gamma}v=\bar{\partial}_{_\Gamma}(q\sin\sigma)=\sin\sigma\bar{\partial}_{_\Gamma}q
+q\cos\sigma\bar{\partial}_{_\Gamma}\sigma.
\end{equation}

By (\ref{50701b})--(\ref{50702b}) we have
\begin{equation}\label{42801b}
\begin{aligned}
&\cos\alpha_b \bar{\partial}_{_\Gamma} u_b+\sin\alpha_b \bar{\partial}_{_\Gamma} v_b~=~
\cos A_b \bar{\partial}_{_\Gamma} q_b+q_b\sin A_b \bar{\partial}_{_\Gamma} \sigma_b\quad \mbox{on}\quad {\Gamma_h}.
\end{aligned}
\end{equation}
From (\ref{62010a}), we have
\begin{equation}\label{42803b}
\bar{\partial}_{_\Gamma} q_b=-\tau_b q_b \sin^2 A_b \bar{\partial}_{_\Gamma} \rho_b \quad \mbox{on}\quad {\Gamma_h}.
\end{equation}
From (\ref{42804b}) one has
\begin{equation}\label{42804ba}
\bar{\partial}_{_\Gamma} \sigma_b=\bar{\partial}_{_\Gamma} \phi-\frac{\bar{\partial}_{_\Gamma} m}{\rho_bL_b}+\frac{N_b}{\rho_bL_bq_b^2}\big(q_b^2+\tau_b^2p'(\tau_b)\big)\bar{\partial}_{_\Gamma}\rho_b\quad \mbox{on}\quad {\Gamma_h}.
\end{equation}

Inserting (\ref{42803b}) and  (\ref{42804ba}) into  (\ref{42801b}) and recalling that $N_b=c_b$ for the post-sonic shock, we get
$$
\cos\alpha_b \bar{\partial}_{_\Gamma} u_b+\sin\alpha_b \bar{\partial}_{_\Gamma} v_b=
q_b\sin A_b \left(\bar{\partial}_{_\Gamma} \phi-\frac{\bar{\partial}_{_\Gamma} m }{\rho_fL_f}\right)+
q_b\sin A_b\Big(\frac{1}{\rho_fL_f}-\frac{1}{\rho_bL_b}\Big)\bar{\partial}_{_\Gamma} m\quad \mbox{on}\quad {\Gamma_h}.
$$

We compute
\begin{equation}\label{72203}
\begin{aligned}
\bar{\partial}_{_\Gamma} \phi-\frac{\bar{\partial}_{_\Gamma} m }{\rho_fL_f}&=
-\frac{N_f }{\rho_f L_f q_f^2}(q_f^2+\tau_f^2 p'(\tau_f))\bar{\partial}_{_\Gamma}\rho_f +\bar{\partial}_{_\Gamma}\sigma_f\\&=\frac{N_f(q_f^2-c_f^2 )}{ L_f q_f^2\sin (2A_f)}\bar{\partial}_{_\Gamma}r_{+}^f +\frac{\bar{\partial}_{_\Gamma}r_{+}^f}{2}
\end{aligned}
\end{equation}
and
$$
\begin{aligned}
\Big(\frac{1}{\rho_fL_f}-\frac{1}{\rho_bL_b}\Big) \partial_{_\Gamma}m ~=~&\frac{\tau_f-\tau_b}{2\rho_fN_fL_f}\Bigg[\frac{2\tau_f}{\tau_f^2-\tau_b^2}\left(-p'(\tau_f)+\frac{2h(\tau_f)-2h(\tau_b)}{\tau_f^2-\tau_b^2}\right)
\partial_{_\Gamma}\tau_f\\&
\qquad\qquad\quad-\frac{2\tau_b}{\tau_f^2-\tau_b^2}\left(-p'(\tau_b)+\frac{2h(\tau_f)-2h(\tau_b)}{\tau_f^2-\tau_b^2}\right)
\partial_{_\Gamma}\tau_b\Bigg]\\~=~&\frac{\tau_f^2}{(\tau_f+\tau_b)N_fL_f}\left(-p'(\tau_f)+\frac{2h(\tau_f)-2h(\tau_b)}{\tau_f^2-\tau_b^2}\right)
\partial_{_\Gamma}\tau_f
\\~=~&\frac{c_f^2-N_f^2}{(\tau_f+\tau_b)\rho_fN_fL_f\sin(2A_f)}
\bar{\partial}_{_\Gamma}r_{+}^f.
\end{aligned}
$$
Thus, by (\ref{62401}) and (\ref{72105}) and $N_f>c_f$ we have
$$
\begin{aligned}
\bar{\partial}_{_\Gamma}r_{+}^{b}&=\frac{\cos\alpha_b \bar{\partial}_{_\Gamma} u_b+\sin\alpha_b \bar{\partial}_{_\Gamma} v_b}{c_b}\\[2pt]&<\frac{(c_f^2-N_f^2)\bar{\partial}_{_\Gamma}r_{+}^f}{N_fL_f\sin(2A_f)}
+\frac{N_f(q_f^2-c_f^2 )\bar{\partial}_{_\Gamma}r_{+}^f }{ L_f q_f^2\sin (2A_f)} +\frac{\bar{\partial}_{_\Gamma}r_{+}^f}{2}<\frac{\bar{\partial}_{_\Gamma}r_{+}^f}{2}<0\quad \mbox{on}\quad \Gamma_h.
\end{aligned}
$$

Using (\ref{6606}), (\ref{72501}), and (\ref{72105}) we have
$$
\bar{\partial}_{_\Gamma} \rho_b=-\frac{\bar{\partial}_{_\Gamma} \tau_b}{\tau_b^2}=
-\frac{\tau_{po}'(\tau_f)\bar{\partial}_{_\Gamma} \tau_f}{\tau_b^2}=
-\frac{\tau_{po}'(\tau_f)\bar{\partial}_{_\Gamma}r_{+}^f}{\tau_b^2\rho_f\sin (2A_f)}<0\quad \mbox{on}\quad \Gamma_h.
$$
Combining this with (\ref{72203}) we get
$$
\begin{aligned}
\bar{\partial}_{_\Gamma}r_{+}^{b}&=\frac{-(\cos\beta_b \bar{\partial}_{_\Gamma} u_b+\sin\beta_b \bar{\partial}_{_\Gamma} v_b)}{c_b}= \bar{\partial}_{_\Gamma} \phi-\frac{\bar{\partial}_{_\Gamma} m }{\rho_bL_b}+
\tau_b\sin(2 A_b)\bar{\partial}_{_\Gamma} \rho_b<0\quad \mbox{on}\quad \Gamma_h.
\end{aligned}
$$
This completes the proof.
\end{proof}

%\begin{rem}
%The structural conditions (\ref{72204}) are necessary for the existence of a downstream flow of the post-sonic shock.
%\end{rem}

\subsubsection{\bf The flow downstream of the post-sonic shock}
Let
$$
\wideparen{\mathrm{O_1B_1}}=\{(r_{+}, r_{-})\mid (r_{+}, r_{-})=({r}_{+}^{b}(x, y), {r}_{-}^{b}(x,y)), ~ (x,y)\in \Gamma_h\}
$$
be a curve in the $(r_{+}, r_{-})$-plane; see Figure \ref{Figure14} (right).
Then by (\ref{72204}) the mapping
$$
(r_{+}, r_{-})=({r}_{+}^{b}(x, y), {r}_{-}^{b}(x,y)), \quad (x,y)\in \Gamma_h
$$
has an inverse noted by
\begin{equation}\label{61706}
(x, y)=(x_b(r_{+}, r_{-}), y_b(r_{+}, r_{-})), \quad (r_{+}, r_{-})\in \wideparen{\mathrm{O_1B_1}}.
\end{equation}

%=============
\begin{figure}[htbp]
\begin{center}
\includegraphics[scale=0.45]{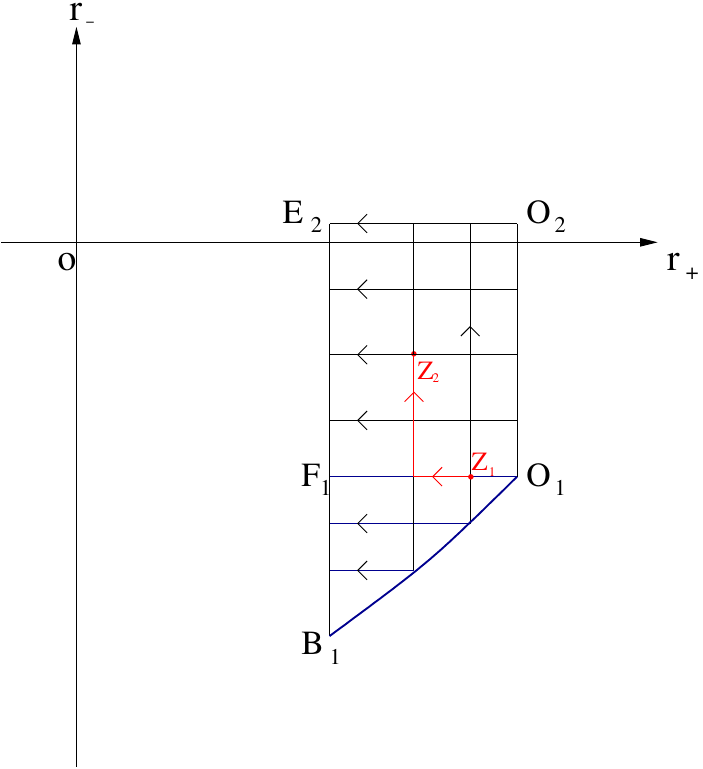}\qquad\qquad\qquad \includegraphics[scale=0.45]{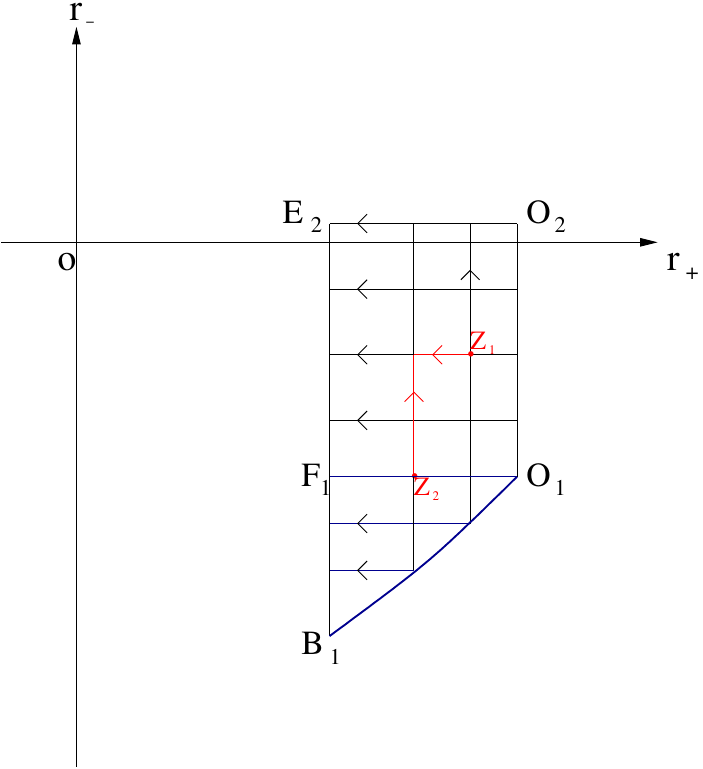}
\caption{\footnotesize Flow states between the post-sonic head shock and the transonic tail shock.}
\label{Figure14}
\end{center}
\end{figure}
%==========

In view of (\ref{72204}), the curve $\wideparen{\mathrm{O_1B_1}}$ can be represented by a function
$$
r_{-}=f_h(r_{+}), \quad r_{+}^{po}-\delta'\leq r_{+}\leq r_{+}^{po},
$$
where $f_h(r_{+}^{po})= r_{-}^{po}$, $(r_{+}^{po},  r_{-}^{po})=(r_{+}(u_{po}, v_{po}), r_{-}(u_{po}, v_{po}))$, the state $(u_{po}, v_{po})$ is defined in (\ref{6602}), and $\delta'$ is small positive constant.

Let
$$
\overline{\mathrm{O_1O_2}}=\big\{(r_{+}, r_{-})\mid r_{+}=r_{+}^{po},~~ r_{-}^{po}\leq r_{-}\leq r_{-}^{w}) \big\},
$$
where $r_{-}^{w}=r_{-}(\hat{u}(\tau_w), \hat{v}(\tau_w))$.
We set
\begin{equation}\label{62501}
(x, y)=0\quad \mbox{on}\quad \overline{\mathrm{O_1O_2}}.
\end{equation}

We  consider
 (\ref{HT}) with the boundary conditions (\ref{61706}) and (\ref{62501}). The linear  problem (\ref{HT}),  (\ref{61706}) and (\ref{62501}) admits a classical solution denoted by $(x, y)=(x_{2}, y_{2})(r_{+}, r_{-})$ on the domain $\Delta_h\cup \Delta_2$, where
 $$\Delta_{h}=\big\{(r_{+}, r_{-})\mid f_h(r_{+})< r_{-}\leq r_{-}^{po},~ r_{+}^{po}-\delta'\leq r_{+}\leq r_{+}^{po}\big\}$$
and
$$
\Delta_2=\big\{(r_{+}, r_{-})\mid r_{-}^{po}< r_{-}\leq r_{-}^w,~ r_{+}^{po}-\delta'\leq r_{+}\leq r_{+}^{po}\big\}.
$$

\begin{lem}
Assume $\delta'$ to be sufficiently small. Then the mapping $(x, y)=(x_{2}, y_{2})(r_{+}, r_{-})$, $(r_{+}, r_{-})\in \Delta_h\cup \Delta_2$ is injective.
\end{lem}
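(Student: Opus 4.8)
The plan is to reduce global injectivity of the inverse hodograph map $(r_+,r_-)\mapsto(x_2,y_2)(r_+,r_-)$ to the nonvanishing of its Jacobian together with one monotonicity statement, and then to establish both by propagating signs of $\hat{\partial}_{\pm}x$ along characteristics. First I would compute the Jacobian directly from \eqref{HT}: since $\hat{\partial}_{\pm}=\mp\partial_{r_{\pm}}$ gives $\partial_{r_+}y=-\lambda_+\hat{\partial}_{+}x$, $\partial_{r_-}y=\lambda_-\hat{\partial}_{-}x$, $\partial_{r_+}x=-\hat{\partial}_{+}x$, and $\partial_{r_-}x=\hat{\partial}_{-}x$, one obtains
$$
\frac{\partial(x,y)}{\partial(r_+,r_-)}=(\lambda_+-\lambda_-)\,\hat{\partial}_{+}x\,\hat{\partial}_{-}x.
$$
As the flow is supersonic, $\lambda_+-\lambda_->0$, so the Jacobian is nonzero exactly where $\hat{\partial}_{+}x\neq0$ and $\hat{\partial}_{-}x\neq0$.

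Second, I would pin down the signs of $\hat{\partial}_{+}x$ and $\hat{\partial}_{-}x$. On the shock curve $\wideparen{\mathrm{O_1B_1}}$ the datum \eqref{61706} together with relation \eqref{53002}, $\bar{\partial}_{\pm}\rho=\rho\cos(\cdot)/(\sin 2A\,\hat{\partial}_{\pm}x)$, lets me read off both $\hat{\partial}_{\pm}x$ from $\bar{\partial}_{\pm}\rho$, whose signs are fixed by the structural conditions \eqref{72204} ($\bar{\partial}_{_\Gamma}r^b_{\pm}<0$). On the fan edge $\overline{\mathrm{O_1O_2}}$ the datum \eqref{62501} forces $x\equiv0$, hence $\hat{\partial}_{-}x=0$ there, while $\hat{\partial}_{+}x$ satisfies a linear transport ODE along the edge obtained from the first line of \eqref{cdh},
$$
\hat{\partial}_{-}(\hat{\partial}_{+}x)=\frac{\hat{\partial}_{+}\lambda_-}{\lambda_+-\lambda_-}\hat{\partial}_{-}x-\frac{\hat{\partial}_{-}\lambda_+}{\lambda_+-\lambda_-}\hat{\partial}_{+}x ,
$$
so $\hat{\partial}_{+}x$ stays nonzero on the edge once it is nonzero at the corner $\mathrm{O_1}$. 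I would then propagate both signs into $\Delta_h\cup\Delta_2$ using \eqref{cdh}, the companion equation for $\hat{\partial}_{+}(\hat{\partial}_{-}x)$: the coefficients are bounded on compact subsets of the hyperbolic region, the $r_+$-width of the domain is $\delta'$, so for $\delta'$ small the accumulated variation is small and $\hat{\partial}_{+}x$, $\hat{\partial}_{-}x$ retain the signs they carry on the two data curves; in particular $\hat{\partial}_{+}x\,\hat{\partial}_{-}x\neq0$ and the Jacobian is nonzero off the edge $\overline{\mathrm{O_1O_2}}$.

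Third, for global injectivity I would use a level-set argument for $x$. With $\partial_{r_+}x$ and $\partial_{r_-}x$ both nonzero of fixed sign, $x$ is strictly monotone in each coordinate direction, so each level set $\{x=c\}$ is a connected monotone graph $r_-=g(r_+)$ inside the rectangular domain. Parametrizing this curve by $r_+$ and imposing $dx=0$ gives
$$
\frac{dy}{dr_+}\Big|_{x=c}=\frac{1}{\partial_{r_-}x}\big(\partial_{r_+}y\,\partial_{r_-}x-\partial_{r_-}y\,\partial_{r_+}x\big)=-\frac{1}{\partial_{r_-}x}\frac{\partial(x,y)}{\partial(r_+,r_-)}\neq0,
$$
so $y$ is strictly monotone along each level set of $x$. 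Hence two distinct hodograph points with the same image would have equal $x$ and equal $y$ on the same connected level curve, contradicting strict monotonicity of $y$; the map is therefore one-to-one. The sole degeneracy is the collapse of $\overline{\mathrm{O_1O_2}}$ (where $\hat{\partial}_{-}x=0$) to the single corner $\mathrm{O}$, which is precisely the vertex of the centered wave and is consistent with reconstructing a single-valued flow.

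The hardest part will be the sign control near the degenerate edge $\overline{\mathrm{O_1O_2}}$: there $\hat{\partial}_{-}x$ vanishes and cannot be bounded away from zero, so instead of a uniform lower bound I must track the \emph{rate} at which $\hat{\partial}_{-}x$ leaves zero, i.e. the sign of $\hat{\partial}_{+}(\hat{\partial}_{-}x)=-\tfrac{\hat{\partial}_{-}\lambda_+}{\lambda_+-\lambda_-}\hat{\partial}_{+}x$ evaluated on the edge, and verify it is definite. Because \eqref{cdh} couples $\hat{\partial}_{+}x$ and $\hat{\partial}_{-}x$ rather than giving decoupled ODEs, the smallness of $\delta'$ has to be used quantitatively in this propagation; and the corner $\mathrm{O_1}$, where the shock datum \eqref{61706} meets the fan datum \eqref{62501}, requires a careful compatibility check so that the two independent sign determinations of $\hat{\partial}_{+}x$ agree.
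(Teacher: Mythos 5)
Your overall strategy---pin down the signs of $\hat{\partial}_{+}x$ and $\hat{\partial}_{-}x$ and convert that into injectivity---is the same as the paper's, but two essential pieces are missing or wrong. First, you assert that on the shock image $\wideparen{\mathrm{O_1B_1}}$ both $\hat{\partial}_{\pm}x$ have definite nonzero signs read off from $\bar{\partial}_{\pm}\rho$ via (\ref{53002}), and hence that the Jacobian $(\lambda_{+}-\lambda_{-})\hat{\partial}_{+}x\,\hat{\partial}_{-}x$ is nonzero off the edge $\overline{\mathrm{O_1O_2}}$. This is false: because the post-sonic shock is an integral curve of $\tfrac{\mathrm{d}y}{\mathrm{d}x}=\lambda_{+}(u_b,v_b)$ (i.e.\ $\phi=\alpha_b$), combining the tangency relation with (\ref{HT}) forces $\hat{\partial}_{-}x=0$ on all of $\wideparen{\mathrm{O_1B_1}}$ (equivalently $\bar{\partial}_{-}\rho=-\infty$ there --- this is exactly the post-sonic singularity the paper is organized around). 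So the map degenerates on the entire shock boundary, not just at the fan edge, and the only usable boundary information is $\hat{\partial}_{-}x=0$ on \emph{both} data curves together with $\hat{\partial}_{+}x>0$ on $\wideparen{\mathrm{O_1B_1}}$ (which does follow from $\bar{\partial}_{_\Gamma}x=\cos\alpha_b>0$ and the structural conditions (\ref{72204})).

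Second, your mechanism for propagating signs into the interior---"the $r_+$-width is $\delta'$, so the accumulated variation is small and the signs are retained"---cannot work, precisely because the datum for $\hat{\partial}_{-}x$ is \emph{zero} on the boundary: no smallness of the domain turns a zero boundary value into a strict sign. You correctly identify the quantity that must be controlled, namely $\hat{\partial}_{+}(\hat{\partial}_{-}x)=-\hat{\partial}_{-}\lambda_{+}\,\hat{\partial}_{+}x/(\lambda_{+}-\lambda_{-})$ where $\hat{\partial}_{-}x=0$, but you defer verifying its sign as "the hardest part"; that verification is the actual content of the proof. It rests on the monotonicity $\hat{\partial}_{\pm}\lambda_{-}>0$, $\hat{\partial}_{\pm}\lambda_{+}<0$, which follows from (\ref{alphar})--(\ref{alphar1}) because $\delta'$ small forces $\tau\in(\tau_1^i,\tau_2^i)$ where $p''<0$; this is the only place smallness of $\delta'$ enters. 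With those signs, both lines of (\ref{cdh}) become sign-preserving and yield $\hat{\partial}_{\pm}x>0$ in the open domain. Finally, since the Jacobian still vanishes on the closed boundary curves, the paper replaces your level-set argument by a direct two-point comparison: for arbitrary $\mathrm{Z}_1,\mathrm{Z}_2$ it integrates $\hat{\partial}_{\pm}x$ and $\hat{\partial}_{\pm}y=\lambda_{\pm}\hat{\partial}_{\pm}x$ along coordinate paths and uses the monotonicity of $\lambda_{\pm}$ to rule out $x(\mathrm{Z}_1)=x(\mathrm{Z}_2)$ and $y(\mathrm{Z}_1)=y(\mathrm{Z}_2)$ simultaneously; this avoids any appeal to a nonvanishing Jacobian up to the boundary, which your argument implicitly needs.
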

\begin{proof}
 Firstly, when $\delta'$ is sufficiently small,
 $$
\tau_1^i< \tau<\tau_2^i\quad \mbox{for}\quad (r_{+}, r_{-})\in\Delta_h\cup \Delta_2.
 $$
 Then by (\ref{alphar}) and (\ref{alphar1}) we have
 \begin{equation}\label{72205}
 \hat{\partial}_{\pm}\lambda_{-}>0\quad \mbox{and}\quad
  \hat{\partial}_{\pm}\lambda_{+}<0\quad \mbox{for}\quad (r_{+}, r_{-})\in\Delta_h\cup \Delta_2.
 \end{equation}
 %We also assume

 In view of (\ref{9501a}),
when $\delta'$ is sufficiently small we have %(by a coordinate rotation)
 \begin{equation}\label{72506}
 \lambda_{-}<\lambda_{+}\quad \mbox{for}\quad (r_{+}, r_{-})\in\Delta_h\cup \Delta_2.
\end{equation}

%\begin{equation}\label{72506}
%\lambda_{-}<0<\lambda_{+}
%\end{equation}

 Since the post-sonic shock $\Gamma_h$ is an integral curve of $\frac{{\rm d}y}{{\rm d}x}=\lambda_{+}(u_b(x,y), v_b(x,y))$, we have
$$
 \hat{\partial}_{+}y-f_h'(r_{+}) \hat{\partial}_{-}y=\lambda_{+}\big(\hat{\partial}_{+}x-f_h'(r_{+}) \hat{\partial}_{-}x\big)\quad\mbox{on}\quad \wideparen{\mathrm{O_1B_1}}.
$$
 Combining this with (\ref{HT}), we get
 \begin{equation}\label{61711}
  \hat{\partial}_{-}x=0 \quad\mbox{on}\quad \wideparen{\mathrm{O_1B_1}}.
 \end{equation}
 Furthermore, by (\ref{72204}) and $\bar{\partial}_{_\Gamma}x=\cos\phi=\cos\alpha_b$ we have
 \begin{equation}\label{61712}
  \hat{\partial}_{+}x>0 \quad\mbox{on}\quad \wideparen{\mathrm{O_1B_1}}.
 \end{equation}

By (\ref{62501}) we have
\begin{equation}\label{72502}
  \hat{\partial}_{-}x=0\quad \mbox{on}\quad \overline{\mathrm{O_1O_2}}.
\end{equation}

 Thus, by (\ref{cdh}), (\ref{72205}), (\ref{72506}), (\ref{61711})-(\ref{72502}) one has
\begin{equation}\label{72208}
   \hat{\partial}_{-}x>0\quad \mbox{and}\quad   \hat{\partial}_{+}x>0\quad \mbox{in}\quad (\Delta_h\cup \Delta_2)\setminus (\wideparen{\mathrm{O_1B_1}}\cup\overline{\mathrm{O_1O_2}}).
 \end{equation}
 Combining this with (\ref{HT}) we have
 \begin{equation}\label{72209}
   \hat{\partial}_{-}y=  \lambda_{-}\hat{\partial}_{-}x\quad \mbox{and}\quad    \hat{\partial}_{+}y=  \lambda_{+}\hat{\partial}_{+}x\quad \mbox{in}\quad (\Delta_h\cup \Delta_2)\setminus (\wideparen{\mathrm{O_1B_1}}\cup\overline{\mathrm{O_1O_2}}).
 \end{equation}

Let $\mathrm{Z}_1=(r_{+}^1, r_{-}^1)$ and $\mathrm{Z}_2=(r_{+}^2, r_{-}^2)$
be arbitrary two points in $\Delta_h\cup\Delta_2$. There are the following four cases: (1) $r_{+}^2<r_{+}^1$ and $r_{-}^2>r_{-}^1$; (2) $r_{+}^2<r_{+}^1$ and $r_{-}^2<r_{-}^1$; (3) $r_{+}^2=r_{+}^1$ and $r_{-}^2>r_{-}^1$; (4) $r_{+}^2<r_{+}^1$ and $r_{-}^2=r_{-}^1$.

For Case (1), see Figure \ref{Figure14} (left),  by (\ref{72208}) we have
\begin{equation}\label{9601a}
x(\mathrm{Z}_2)-x(\mathrm{Z}_1)=\underbrace{\int_{r_{+}^2}^{r_{+}^1}\hat{\partial}_{+}x(r_{+}, r_{-}^1) {\rm d}r_{+}}_{>0}+
\underbrace{\int_{r_{-}^1}^{r_{-}^2}\hat{\partial}_{-}x(r_{+}^2, r_{-}) {\rm d}r_{-}}_{>0}>0.
\end{equation}

For Case (2),  see Figure \ref{Figure14} (right), we have
\begin{equation}\label{9602b}
x(\mathrm{Z}_2)-x(\mathrm{Z}_1)=\int_{r_{+}^2}^{r_{+}^1}\hat{\partial}_{+}x(r_{+}, r_{-}^1) {\rm d}r_{+}-
\int_{r_{-}^2}^{r_{-}^1}\hat{\partial}_{-}x(r_{+}^2, r_{-}) {\rm d}r_{-}
\end{equation}
and
\begin{equation}
y(\mathrm{Z}_2)-y(\mathrm{Z}_1)=\int_{r_{+}^2}^{r_{+}^1}\lambda_{+}(r_{+}, r_{-}^1)\hat{\partial}_{+}x(r_{+}, r_{-}^1) {\rm d}r_{+}-
\int_{r_{-}^2}^{r_{-}^1}\lambda_{-}(r_{+}^2, r_{-})\hat{\partial}_{-}x(r_{+}^2, r_{-}) {\rm d}r_{-}.
\end{equation}
By (\ref{72205}) we have
\begin{equation}
\lambda_{+}(r_{+}, r_{-}^1)>\lambda_{+}(r_{+}^2, r_{-}^1)>\lambda_{-}(r_{+}^2, r_{-}^1)\quad \mbox{for}\quad r_{+}^2<r_{+}<r_{+}^1,
\end{equation}
and
\begin{equation}\label{9602c}
\lambda_{-}(r_{+}^2, r_{-})<\lambda_{-}(r_{+}^2, r_{-}^1)\quad \mbox{for}\quad r_{-}^2<r_{-}<r_{-}^1.
\end{equation}
Combining (\ref{72208}) and (\ref{9602b})--(\ref{9602c}), we know that $x(\mathrm{Z}_2)-x(\mathrm{Z}_1)$ and  $y(\mathrm{Z}_2)-y(\mathrm{Z}_1)$ can not be zero at the same time for Case (2).

By (\ref{72209}) we have $x(\mathrm{Z}_2)\neq x(\mathrm{Z}_1)$ for cases (3) and (4).
%there exist two characteristic lines with different type connecting the two points.
%(Remark: the two types of characteristic lines in the $(r_{+}, r_{-})$-plane are horizontal lines and plumb lines.)
%By (\ref{72208}) and (\ref{72209}) we know that either $x$ or $y$ is monotone along the connecting path.
Since $\mathrm{Z}_1$ an $\mathrm{Z}_2$ can be arbitrary, no two points from $(r_{+}, r_{-})\in \Delta_{h}\cup\Delta_2$ map to one point in the $(x,y)$-plane. This completes the proof.
\end{proof}
%So, the transformation $(x, y)=(x_{2}(r_{+}, r_{-}), y_{2}(r_{+}, r_{-}))$, $(r_{+}, r_{-})\in \Delta_h\cup \Delta_2$ can be inversed back to the $(x,y)$-plane.

Let
$$
\Sigma_h=\big\{(x, y)\mid (x, y)=(x_{2}, y_{2})(r_{+}, r_{-}), ~ (r_{+}, r_{-})\in \Delta_h\big\}
$$
and
$$
\Sigma_2=\big\{(x, y)\mid (x, y)=(x_{2}, y_{2})(r_{+}, r_{-}), ~ (r_{+}, r_{-})\in \Delta_2\big\}.
$$
We denote the inverse function of $(x, y)=(x_{2}(r_{+}, r_{-}), y_{2}(r_{+}, r_{-}))$, $(r_{+}, r_{-})\in \Delta_h\cup \Delta_2$ by
$$
(r_{+}, r_{-})=(\tilde{r}_{+}(x,y), \tilde{r}_{-}(x,y)), \quad (x,y)\in \Sigma_h\cup\Sigma_2.
$$

Let
$$
(u_2, v_2)(x,y)=\big(u(\tilde{r}_{+}(x,y), \tilde{r}_{-}(x,y)), v(\tilde{r}_{+}(x,y), \tilde{r}_{-}(x,y))\big),  \quad (x,y)\in \Sigma_h\cup\Sigma_2.
$$
Then  $(u, v)=(u_2, v_2)(x, y)$ is a solution of $\ref{42501}$ on the domain $\Sigma_h\cup\Sigma_2$, and $(u_2, v_2)(x, y)=(u_b, v_b)(x,y)$ for $(x,y)\in \Gamma_h$.
Moreover, the solution in  $\Sigma_2$ is a $C_{+}$ type centered wave with $\mathrm{O}$ as the center point.

\subsubsection{\bf Existence of a transonic tail shock}
In order to obtain a transonic tail shock to the IBVP (\ref{42501}), (\ref{RBDD1}), we consider (\ref{42501}) on an angular domain $R(\delta)=\{(x, y)\mid  0\leq x\leq \delta, w(x)\leq y\leq \psi_t(x)\}$,
where $y=\psi_t(x)$ ($\psi_t(0)=0$) is an unknown curve representing the tail shock. We prescribe the following  boundary conditions:

\noindent
on $y=\psi_t(x)$,
\begin{equation}\label{66211}
(\rho  u -\rho_fu_f)(u -u_f)+  (\rho  v -\rho_fv_f)(v -v_f)=0
\end{equation}
and
\begin{equation}\label{66221}
\frac{{\rm d}\psi_t(x)}{{\rm d}x}=-\frac{u-u_f}{v-v_f}, \quad \psi_t(0)=0,
\end{equation}
where $(u_f, v_f)(x, y)=(u_2, v_2)(x,y)$;

\noindent
on $y=w(x)$,
\begin{equation}\label{66231}
v(x,y)=u(x,y)w'(x).
\end{equation}

\begin{lem}
Assume  that assumptions {\bf (H1)--(H3)} hold.
There exists a constant $\zeta_*$ depending only on $\check{u}(0)$,  $\check{v}(0)$, and  $\check{u}'(0)$, such that when $w''(0)>\zeta_*$ the free boundary problem  (\ref{42501}), (\ref{66211})--(\ref{66231}) admits a local classical solution.
\end{lem}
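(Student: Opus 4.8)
The plan is to reduce the free boundary problem \eqref{42501}, \eqref{66211}--\eqref{66231} to the transonic tail-shock problem already solved in Section 3.4.3, the new ingredient being that the state $(u_f,v_f)=(u_2,v_2)$ ahead of the tail shock is now the flow generated by the post-sonic head shock rather than a plain Prandtl--Meyer fan. By the construction of Section 3.5.3, the restriction of $(u_2,v_2)$ to the region $\Sigma_2$ touching the corner is a $C_+$ type centered wave with $\mathrm{O}$ as its center; but, unlike the situation of Theorem \ref{Thm39}, it is a \emph{generalized} centered wave, since by \eqref{72204} both Riemann invariants of $(u_2,v_2)$ vary along the head shock, while its principal part on the rays emanating from $\mathrm{O}$ still carries the constant value $r_+\equiv r_+^{po}$. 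Thus at the corner the tail-shock front state coincides with the simple-wave situation of Section 3.4.2, and I would first re-derive the curvature identity \eqref{6612} for this setting, keeping the extra term proportional to $\bar{\partial}_{_\Gamma}r_+^f$ that the non-constancy of $r_+^f$ now contributes.

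The core of the argument is a corner curvature estimate showing that the tail shock is transonic---that it enters $\Sigma_2$---precisely when $w''(0)$ exceeds a threshold. As in Section 3.4.3 I would pass to the coordinates $x=x$, $y=x\tan\theta$ to resolve the centered-wave singularity of $(u_2,v_2)$ at $\mathrm{O}$ and obtain the analogue of \eqref{73001a} for $\bar{\partial}_{_\Gamma}(\phi-\theta)$. At $\mathrm{O}$ one has $\tau_f=\tau_w$, $(u_f,v_f)=(\hat u,\hat v)(\tau_w)$ and $\phi=\alpha_f=\hat\alpha(\tau_w)$, so that $\mathcal{L}_1=\mathcal{L}_2=0$, the $r_-^f$-term drops out, and the signs $\mathcal{A}>0$, $\mathcal{B}>0$ of \eqref{6610}--\eqref{6611} persist under \textbf{(H1)}--\textbf{(H2)}. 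The slip condition contributes, through \eqref{52502}, the term $\bar{\partial}_{-}c-\bar{\partial}_{+}c=2\kappa q\,w''(x)(1+[w'(x)]^2)^{-3/2}$, linear in $w''(0)$ at the corner. What is new is that the remaining background contribution---the $x\to0$ limit of the singular term together with the $\bar{\partial}_{_\Gamma}r_+^f$-term---no longer vanishes, since the straight-ramp, non-uniform incoming flow is not an exact self-similar solution. The decisive step is to show this background is a finite constant depending only on $\check u(0)$, $\check v(0)$, $\check u'(0)$: by \textbf{(H3)} and \eqref{62401} the incoming gradients at $\mathrm{O}$ reduce to $\tfrac{d}{dy}r_+(\check u,\check v)(0)$, which is transported through the Cauchy solution on $\Sigma_1$ and through the post-sonic head shock (the source of \eqref{72204}) into fixed corner values of the gradients of $(u_2,v_2)$. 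Solving $\bar{\partial}_{_\Gamma}(\phi-\theta)|_{\mathrm{O}}=0$ for $w''(0)$ defines $\zeta_*$, and for $w''(0)>\zeta_*$ one obtains $\bar{\partial}_{_\Gamma}(\phi-\theta)|_{\mathrm{O}}>0$, hence $\psi_t(x)>x\tan(\hat\alpha(\tau_w))$ for small $x>0$; the tail shock is therefore transonic and the choice $(u_f,v_f)=(u_2,v_2)$ is consistent.

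With the location of the shock settled a priori, I would invoke the abstract theory: \eqref{42501}, \eqref{66211}--\eqref{66231}, with $(u_2,v_2)$ as the known upstream field, the shock polar relation \eqref{66211}, and the slip condition \eqref{66231}, is a typical free boundary problem in the sense of Li and Yu \cite{Li-Yu} (Chapter 3.2). Assumption \textbf{(H2)}, expressing the tangency of the shock polar and the wave curve $\wideparen{\mathrm{IJ}}$ at the corner, yields the solvability condition (2.28) of Chapter 3 of \cite{Li-Yu}, exactly as in the lemmas of Section 3.4; the corresponding local existence theorem then produces a classical solution on $R(\delta)$ for $\delta$ small.

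I expect the main obstacle to be the quantitative corner estimate of the second paragraph. Because the straight-ramp background is no longer self-similar, the $x\to0$ limit of the singular term $-\mathcal{A}\,\sin(\phi-\theta)/(x\sec\theta)$ in the analogue of \eqref{73001a}, together with the new $\bar{\partial}_{_\Gamma}r_+^f$-contribution, must be evaluated with care and shown to be governed solely by $\check u(0)$, $\check v(0)$, $\check u'(0)$. This amounts to tracking the gradient of $(u_2,v_2)$ across the hodograph-defined region $\Sigma_h\cup\Sigma_2$ and verifying that $\zeta_*$ is finite and independent of the higher-order data of $w$; the bookkeeping of these transported gradients, rather than any soft argument, is where the real work lies.
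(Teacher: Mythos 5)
Your proposal matches the paper's proof in all essentials: the same curvature identity along the tail shock carrying the extra $\bar{\partial}_{_\Gamma}r_{+}^{f}$-term, the same evaluation at the corner (where $\phi=\alpha_f=\hat{\alpha}(\tau_w)$ kills the $r_{-}^{f}$-coefficient but leaves a nonvanishing background from $\partial_x r_{+}^{f}$ and $\partial_x\alpha_f$, so that $w''(0)>\zeta_*$ forces the shock into the centered-wave region), and the same final appeal to Li--Yu's typical free boundary problem for local existence. The only real difference is cosmetic: the paper resolves the corner singularity with the characteristic coordinates $(x,\zeta)$ of the generalized centered wave from Section 2.6.3 rather than the polar coordinates $(x,\theta)$ of Section 3.4.3 --- a choice forced by the fact that the rays $\theta=\mathrm{const.}$ are no longer $C_{+}$ characteristics of the non-self-similar upstream flow $(u_2,v_2)$, though the two coordinate systems agree to first order at the corner, so your computation lands at the same threshold.
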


\begin{proof}
As in (\ref{52501}) we have that along the tail shock,
\begin{equation}\label{73003}
\kappa t_{+}\sin(\beta-\phi)\bar{\partial}_{+}c+
\kappa t_{-}\sin(\phi-\alpha)\bar{\partial}_{-}c+(N_f-N)\bar{\partial}_{_\Gamma}\phi=\mathcal{L}_{11}\bar{\partial}_{_\Gamma}r_{-}^{f}+
\mathcal{L}_{21}\bar{\partial}_{_\Gamma}r_{+}^{f},
\end{equation}
where $$\mathcal{L}_{11}=-\frac{q_f\sin(\alpha_f-\phi)}{2\cos A_f}\quad \mbox{and}\quad \mathcal{L}_{21}=-\frac{q_f\sin(\beta_f-\phi)}{2\cos A_f}.$$

As in (\ref{52505}) we have
\begin{equation}\label{73005}
\kappa t_{+}\sin(\beta-\chi)\bar{\partial}_{+}c+
\kappa t_{-}\sin(\chi-\alpha)\bar{\partial}_{-}c=
\mathcal{L}_{12}\bar{\partial}_{_\Gamma}r_{-}^{f}+\mathcal{L}_{22}\bar{\partial}_{_\Gamma}r_{+}^{f},
\end{equation}
where
$$
\mathcal{L}_{12}=\frac{q_f(\rho-\rho_f)L_f\sin(\phi-\alpha_f)}{2\cos A_f}+\rho_fq_f(N-N_f)\left(\frac{\sin(\phi-\sigma_f)}{\sin(2A_f)}-\frac{\rho_fq_f\cos(\alpha_f-\phi)}{2\cos A_f}\right)
$$
and
$$
L_{22}=\frac{q_f(\rho-\rho_f)L_f\sin(\phi-\beta_f)}{2\cos A_f}+\rho_fq_f(N-N_f)\left(\frac{\sin(\sigma_f-\phi)}{\sin(2A_f)}-\frac{\rho_fq_f\cos(\beta_f-\phi)}{2\cos A_f}\right).
$$

Then as in (\ref{6612}) we have
\begin{equation}\label{73006}
\mathcal{A}\bar{\partial}_{_\Gamma}\phi=\mathcal{B}(\bar{\partial}_{-}c-\bar{\partial}_{+}c)
+\mathcal{C}_1\bar{\partial}_{_\Gamma}r_{-}^{f}+\mathcal{C}_2
\bar{\partial}_{_\Gamma}r_{+}^{f},
\end{equation}
where $\mathcal{A}$ and $\mathcal{B}$ are defined in (\ref{902a}) and (\ref{902b}), respectively,
$$
\mathcal{C}_{i}=\mathcal{L}_{i1}[t_{+}\sin(\beta-\chi)+t_{-}\sin(\chi-\alpha)]-\mathcal{L}_{i2}[t_{+}\sin(\beta-\phi)+t_{-}\sin(\phi-\alpha)], \quad i=1, 2.
$$

In order to overcome the difficulty caused by the multi-value singularity of $(u_f, v_f)$ at the origin, as in Section 2.6.3 we take the coordinate transformation
$x=x$, $y=\eta(x, \zeta)$ ($0<x<\delta$, $\tan(\hat{\alpha}(\tau_w))\leq \zeta\leq \tan\phi_{po}$)
where $\eta(x, \zeta)$ is defined by
$$
\eta_x(x, \zeta)=\tan(\alpha_f(x, \zeta)), \quad \eta_x(0, \zeta)=\zeta, \quad \eta(0, \zeta)=0.
$$
Then, $u_f$, $v_f$, $\alpha_f$, and $\beta_f$ can be seen as functions of $x$ and $\zeta$.

In terms of the $(x, \zeta)$-coordinates, we have
$$
\bar{\partial}_{+}=\cos\alpha_f\partial_x, \quad \bar{\partial}_{-}=\cos\beta_f\partial_x+\cos\beta_f(\tan\beta_f-\tan\alpha_f)\eta_{\zeta}^{-1}\partial_{\zeta},
$$
and
$$
\bar{\partial}_{_\Gamma}=\cos\phi\partial_x+\cos\phi\eta_{\zeta}^{-1}(\tan\phi-\tan\alpha_f)\partial_{\zeta},
$$
where $\eta_{\zeta}(x, \eta)=\int_{0}^{x}\sec^2(\alpha_f(r, \zeta))\partial_{\zeta}\alpha_f(r, \zeta){\rm d}r$.
Therefore, by (\ref{73006}) we have
\begin{equation}\label{81001}
\begin{aligned}
\mathcal{A}\bar{\partial}_{_\Gamma}(\phi-\alpha_f)~=~&\mathcal{B}(\bar{\partial}_{-}c-\bar{\partial}_{+}c)
+\mathcal{C}_1\bar{\partial}_{_\Gamma}r_{-}^{f}+\mathcal{C}_2
\bar{\partial}_{_\Gamma}r_{+}^{f}\\&-\mathcal{A}\cos\phi\partial_x\alpha_f
-\mathcal{A}\cos\phi\eta_{\zeta}^{-1}(\tan\phi-\tan\alpha_f)\partial_{\zeta}\alpha_f.
\end{aligned}
\end{equation}

At the point $(x, \zeta)=(0, \tan(\hat{\alpha}(\tau_w)))$, we have
$$
\begin{aligned}
& \tau_f=\tau_w, \quad (u_f, v_f)=(\hat{u}(\tau_w), \hat{v}(\tau_w)), \quad \phi=\alpha_f=\hat{\alpha}(\tau_w),
\quad \partial_{\zeta}\alpha_f=\sec^2\alpha_f, \\&\mathcal{C}_1=0, \quad
\bar{\partial}_{_\Gamma}r_{-}^{f}=\cos\phi\eta_{\zeta}^{-1}(\tan\phi-\tan\alpha_f)\partial_{\zeta}r_{-}^{f},\quad \mbox{and}\quad
\bar{\partial}_{_\Gamma}r_{+}^{f}=\cos\phi\partial_{x}r_{+}^f.
\end{aligned}
$$
So, there is a $\zeta_*$ such that if $w''(0)>\zeta_*$ then
$$
\mathcal{B}(\bar{\partial}_{-}c-\bar{\partial}_{+}c)+\mathcal{C}_2\cos\phi\partial_{x}r_{+}^f-\mathcal{A}\cos\phi\partial_x\alpha_f>0\quad \mbox{for}\quad (x,\zeta)=(0, \tan(\hat{\alpha}(\tau_w))).
$$
This implies that the tail shock will enter into the the centered wave region $\Sigma_2$ from the origin if the typical free boundary value problem admits a classical solution.

Then the local existence can be established by the solvability condition {\bf (H3)}.
This completes the proof of the lemma.
\end{proof}

For convenience we denote the local solution of the typical free boundary problem  (\ref{42501}), (\ref{66211})--(\ref{66231}) by $(u, v)=(u_d, v_d)(x,y)$.
Let
\begin{equation}\label{724081}
(u, v)=\left\{
         \begin{array}{ll}
          (u_1, v_1)(x,y), & \hbox{$\psi_h(x)<y<\psi_h(\delta)$, $0<x<\delta$;} \\[2pt]
           (u_2, v_2)(x,y), & \hbox{$\psi_t(x)<y<\psi_h(x)$, $0<x<\delta$;} \\[2pt]
           (u_d, v_d)(x, y), & \hbox{$w(x)<y<\psi_t(x)$, $0<x<\delta$.}
         \end{array}
       \right.
\end{equation}
Then, the $(u, v)$ defined in (\ref{724081}) is a piecewise smooth solution to the problem (\ref{42501}), (\ref{RBDD1}).
This completes the proof of Theorem \ref{thm23}.
%\begin{thm}
%Assume $w''(0)>\zeta_*$, then there exists a small $\delta>0$ such that the free boundary problem admits a classical solution on.
%\end{thm}

\subsubsection{\bf Existence of a pre-sonic tail shock}
In this part we intend to construct a supersonic ramp flow with a curved post-sonic shock and a curve pre-sonic shock.
%We now construct a pre-sonic shock curve.
Let $r_{-}=f_t(r_{+})$, $r_{+}^{po}-\delta'\leq r_{+}\leq r_{+}^{po}$ be a given function which satisfies
\begin{equation}\label{72302}
r_{-}^w=f_{t}(r_{+}^{po})\quad \mbox{and} \quad f_t'(r_{+})<0\quad \mbox{for}\quad r_{+}^{po}-\delta'\leq r_{+}\leq r_{+}^{po}.
\end{equation}

Let $\overline{\mathrm{O_2E_2}}=\{(r_{+}, r_{-})\mid r_{-}=r_{-}^w, ~r_{+}^{po}-\delta'\leq r_{+}< r_{+}^{po}\}$ and
$\wideparen{\mathrm{O_2D_2}}=\{(r_{+}, r_{-})\mid r_{-}=f_t(r_{+}),~ r_{+}^{po}-\delta'\leq r_{+}< r_{+}^{po}\}$; see Figure \ref{Figure15}.
On an angular domain $\Delta_{t}=\big\{(r_{+}, r_{-})\mid r_{-}^w<r_{-}<f_t(r_{+}),~ r_{+}^{po}-\delta'\leq r_{+}< r_{+}^{po}\big\}$ we consider (\ref{cdh}) with the boundary conditions:
\begin{equation}\label{72303}
(\hat{\partial}_{+}x)(r_{+}, r_{-})=(\hat{\partial}_{+}x_2)(r_{+}, r_{-}), \quad (r_{+}, r_{-})\in \overline{\mathrm{O_2E_2}};
\end{equation}
\begin{equation}\label{72304}
(\hat{\partial}_{-}x)(r_{+}, r_{-})=0, \quad (r_{+}, r_{-})\in\wideparen{\mathrm{O_2D_2}}.
\end{equation}

The linear  problem (\ref{cdh}), (\ref{72303})--(\ref{72304}) admits a unique $C^1$ solution  on $\overline{\Delta_{t}}$. Moreover, the solution satisfies
\begin{equation}\label{72305}
\hat{\partial}_{+}x>0\quad \mbox{and}\quad \hat{\partial}_{-}x>0\quad \mbox{in}\quad \Delta_{t}.
\end{equation}
We denote by $(\hat{\partial}_{+}x, \hat{\partial}_{-}x)=(\xi_1(r_{+}, r_{-}), \xi_2(r_{+}, r_{-}))$ the solution of the problem (\ref{cdh}), (\ref{72303})--(\ref{72304}).
For any $(r_{+}, r_{-})\in \overline{\Delta_t}$, we let
$$
x_{2}(r_{+}, r_{-})=x_{2}(r_{+}, r_{-}^{w})+\int_{r_{-}^{w}}^{r_{-}} \xi_2(r_{+}, r){\rm d}r$$
and
$$
y_{2}(r_{+}, r_{-})=y_{2}(r_{+}, r_{-}^{w})-\int_{r_{-}^{w}}^{r_{-}} \lambda_{-}(r_{+}, r)\xi_2(r_{+}, r){\rm d}r.
$$
Then the function $(x, y)=(x_{2}(r_{+}, r_{-}), y_{2}(r_{+}, r_{-}))$ satisfies (\ref{HT}) in $\overline{\Delta_t}$.
By (\ref{72305}) we know that the mapping $(x, y)=(x_{2}(r_{+}, r_{-}), y_{2}(r_{+}, r_{-}))$,  $(r_{+}, r_{-})\in \overline{\Delta_t}$ is injective.

%=============
\begin{figure}[htbp]
\begin{center}
\includegraphics[scale=0.45]{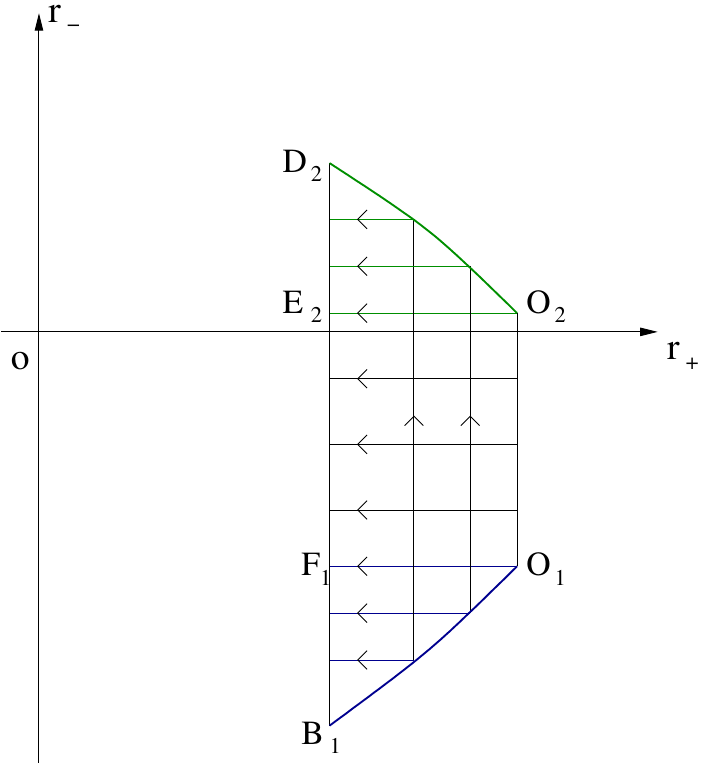}
\caption{\footnotesize Flow states between the post-sonic head shock and the pre-sonic tail shock.}
\label{Figure15}
\end{center}
\end{figure}
%==========

Let
$$
\Sigma_t=\{(x,y)\mid (x, y)=(x_{2}(r_{+}, r_{-}), y_{2}(r_{+}, r_{-})), (r_{+}, r_{-})\in {\Delta_t}\}.
$$
From (\ref{72208}) and (\ref{72305}) we also have  $\Sigma_t\cap\Sigma_h=\emptyset$.
%We can see that $(r_{+}, r_{-})=(r_{+}^{t}, r_{-}^t)(x,y)$ satisfies (\ref{52801}) in $\Sigma_t$ and

We denote the inverse function of $(x, y)=(x_{2}(r_{+}, r_{-}), y_{2}(r_{+}, r_{-}))$, $(r_{+}, r_{-})\in \Delta_t$ by
$$
(r_{+}, r_{-})=(\tilde{r}_{+}(x,y), \tilde{r}_{-}(x,y)), \quad (x,y)\in \Sigma_t.
$$
Then the function $(r_{+}, r_{-})=(\tilde{r}_{+}(x,y), \tilde{r}_{-}(x,y))$ satisfies (\ref{52801}) in $\Sigma_t$.

Let
$
\wideparen{\mathrm{OD}}
$ be the curve $(x, y)=(x_{2}(r_{+}, r_{-}), y_{2}(r_{+}, r_{-}))$, $(r_{+}, r_{-})\in \wideparen{\mathrm{O_2D_2}}$.
We have
$$
\frac{{\rm d} y_{2}(r_{+}, f_t(r_{+}))}{{\rm d} r_{+}}=\lambda_{+}(r_{+}, f_t(r_{+}))\cdot \frac{{\rm d} x_{2}(r_{+}, f_t(r_{+}))}{{\rm d} r_{+}}, \quad r_{+}^{po}-\delta'\leq r_{+}\leq r_{+}^{po}.
$$
For convenience we denote the curve $
\wideparen{\mathrm{OD}}
$ by a $y=\psi_t(x)$.
%This implies that $\wideparen{OD}$ is a pre-sonic shock curve.

Let
$$
(u_2, v_2)(x,y)=\big(u(\tilde{r}_{+}(x,y), \tilde{r}_{-}(x,y)), v(\tilde{r}_{+}(x,y), \tilde{r}_{-}(x,y))\big), \quad (x,y)\in\Sigma_t.
$$
Let $\tau_2(x,y)=\ddot{\tau}\big(\sqrt{u_2^2(x,y)+v_2^2(x,y)}\big)$.
Then, by (\ref{53002}) and (\ref{61711}) we have
 $$
 \bar{\partial}_{-}\tau_2(x,y)=-\infty  \quad\mbox{on}\quad \Gamma_h\cup \wideparen{\mathrm{OD}}.
 $$

%\subsubsection{Existence of a supersonic ramp flow}
We next determine the sharp of the ramp such that the images of the front side states of the pre-sonic shock in the $(r_{+}, r_{-})$-plane is $\wideparen{\mathrm{O_2D_2}}$.

Let
$$
(\tau_{f}, u_{f}, v_{f})(x,y)=(\tau_2, u_2, v_2)(x,y)\quad \mbox{and}\quad \tau_b(x,y)=\tau_{pr}(\tau_{f}(x,y)), \quad (x,y)\in\wideparen{\mathrm{OD}}.
$$
On $\wideparen{\mathrm{OD}}$, we define
$$
N_f =\tau_f \sqrt{-\frac{2h(\tau_b)-2h(\tau_f)}{\tau_b^2-\tau_f^2}}, \quad \phi=\arctan(\lambda_{+}(\tilde{r}_{+} , \tilde{r}_{-})), \quad L_f =\sqrt{u_f^2+v_f^2-N_f^2}.
$$
$$
u_b =N_b \sin\phi +L_f \cos\phi , \quad \mbox{and}\quad v_b =-N_b \cos\phi +L_f \sin\phi .
$$
Therefore, if the states on the front and back sides of the shock $\wideparen{\mathrm{OD}}$ are $(u_f , v_f)$ and $(u_b , v_b )$, respectively, then $\wideparen{\mathrm{OD}}$ is an admissible pre-sonic shock.

\begin{rem}
By the assumption of the function $r_{-}=f_t(r_{+})$ we know that
along the pre-sonic shock, the front side states satisfy the structural conditions
$$
\bar{\partial}_{_\Gamma}r_{+}^{f}<0\quad \mbox{and}\quad \bar{\partial}_{_\Gamma}r_{-}^{f}>0.
$$
%The structural conditions are necessary for the existence of a upstream flow.
\end{rem}

%=============
\begin{figure}[htbp]
\begin{center}
 \includegraphics[scale=0.7]{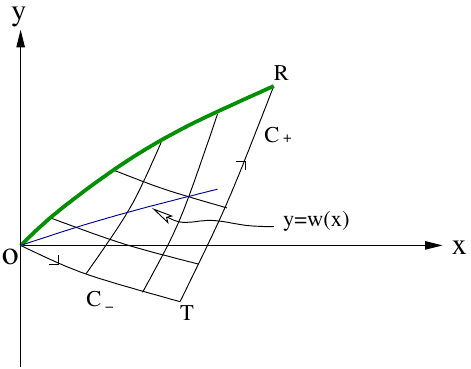}
\caption{\footnotesize Supersonic flow downstream of the pre-sonic shock.}
\label{Figure16}
\end{center}
\end{figure}
%==========

In order to find a flow downstream of the pre-sonic shock $\wideparen{\mathrm{OD}}$, we consider (\ref{42501}) with  data
\begin{equation}\label{72306}
(u, v)=(u_b , v_b )(x,y), \quad (x,y)\in \wideparen{\mathrm{OD}}.
\end{equation}
The problem (\ref{42501}), (\ref{72306}) is a Cauchy problem and has a local $C^1$ solution, i.e., there exists a point $\mathrm{R}\in \wideparen{\mathrm{OD}}$ such that the Cauchy problem (\ref{42501}), (\ref{72306}) admits a $C^1$ solution on an angular domain bounded by $\wideparen{\mathrm{OR}}$, $\wideparen{\mathrm{OT}}$, and $\wideparen{\mathrm{RT}}$, where $\wideparen{\mathrm{OT}}$ is a forward $C_{-}$ characteristic curve issued from $\mathrm{O}$ and $\wideparen{\mathrm{RT}}$ is a backward $C_{+}$ characteristic curve issued from $\mathrm{R}$; see Figure \ref{Figure16}.
For convenience, we denote the solution by $(u, v)=(u_3, v_3)(x,y)$.

Let $y=w(x)$, $x>0$ be defined by
$$
\frac{{\rm d}w(x)}{{\rm d}x}=\Big(\frac{v_3}{u_3}\Big)(x, w(x)), \quad w(0)=0.
$$
There is a small $\delta>0$ such that the function $y=w(x)$
can be defined for $0\leq x\leq \delta$.

Let
\begin{equation}\label{72307}
(u, v)=\left\{
         \begin{array}{ll}
           (u_1, v_1)(x,y), & \hbox{$\psi_h(x)<y<\psi_h(\delta)$, $0<x<\delta$;} \\[2pt]
            (u_2, v_2)(x,y), & \hbox{$\psi_t(x)<y<\psi_h(x)$, $0<x<\delta$;} \\[2pt]
            (u_3, v_3)(x,y), & \hbox{$w(x)<y<\psi_t(x)$, $0<x<\delta$.}
         \end{array}
       \right.
\end{equation}
Then the function $(u, v)$ defined in (\ref{72307}) is a piecewise smooth solution to the problem (\ref{42501}), (\ref{RBDD1}). This completes the proof of Theorem \ref{thm25}.

\vskip 32pt
\section*{Acknowledgement}
This work was partially supported by National Natural Science Foundation of China (No. 12071278) and Natural Science Foundation of Shanghai (No. 23ZR1422100).

\vskip 32pt
%%%%%%%%%%%%%%%%%%%%%%%%%%%%%%%%%%%%%%%%%%%%%%%%%%%%%%%%%%%%%%%%%%%%%%%%%%%%%%%%%%%%%%%%
\small

\end{document}